\def\UseBibTeX{1}
\documentclass[11pt,a4paper]{amsart}
\usepackage[T1]{fontenc}
\usepackage{lmodern}
\usepackage{microtype}
\usepackage{mathtools,amssymb,mathrsfs}
\usepackage[a4paper,margin=28mm,headheight=14pt]{geometry}
\usepackage{enumitem}
\usepackage{xcolor}
\usepackage{tikz}
\usetikzlibrary{arrows.meta,patterns}
\usepackage{placeins}
\definecolor{linkblue}{RGB}{25,53,85}
\usepackage[colorlinks=true,linkcolor=linkblue,citecolor=linkblue,urlcolor=linkblue,
pdfencoding=auto,psdextra]{hyperref}
\hypersetup{pdftitle={Global solutions to two-dimensional compressible MHD equations with density-dependent bulk viscosity},
pdfauthor={Yachun Li, Peng Lu, Zhaoyang Shang},
pdfsubject={Research manuscript: density-dependent bulk viscosity}}
\numberwithin{equation}{section}
\allowdisplaybreaks[2]
\setlist[enumerate]{label=\textup{(\roman*)},leftmargin=*,itemsep=3pt}
\setlist[itemize]{leftmargin=*,itemsep=3pt}
\newtheorem{theorem}{Theorem}[section]
\newtheorem{proposition}[theorem]{Proposition}
\newtheorem{lemma}[theorem]{Lemma}
\newtheorem{corollary}[theorem]{Corollary}
\theoremstyle{definition}
\newtheorem{definition}[theorem]{Definition}
\theoremstyle{remark}
\newtheorem{remark}[theorem]{Remark}
\newcommand{\TT}{\mathbb T^2}
\newcommand{\RR}{\mathbb R}

\newcommand{\Div}{\operatorname{div}}
\newcommand{\Curl}{\operatorname{curl}}

\newcommand{\Cov}{\operatorname{Cov}}

\newcommand{\M}{\mathbb M}
\newcommand{\dd}{\,\mathrm d}
\newcommand{\dx}{\,\mathrm dx}
\newcommand{\dt}{\,\mathrm dt}
\newcommand{\weakto}{\rightharpoonup}
\newcommand{\weakstar}{\overset{*}{\rightharpoonup}}
\newcommand{\avg}[1]{\langle #1\rangle}
\newcommand{\norm}[1]{\lVert #1\rVert}
\newcommand{\abs}[1]{\lvert #1\rvert}
\newcommand{\ip}[2]{\langle #1,#2\rangle}
\title[Compressible MHD with power-law bulk viscosity]{Global solutions to the two-dimensional compressible MHD equations with density-dependent bulk viscosity}
\author[Y. Li]{Yachun Li}
\address{School of Mathematical Sciences, MOE--LSC, and SHL--MAC,
Shanghai Jiao Tong University, Shanghai 200240, P.R. China}
\email{ycli@sjtu.edu.cn}
\author[P. Lu]{Peng Lu}
\address{Department of Mathematics, Zhejiang Sci-Tech University,
Hangzhou, Zhejiang 310018, P.R. China}
\email{plu25@zstu.edu.cn}
\author[Z. Shang]{Zhaoyang~Shang}
\address{School of Finance, Shanghai Lixin University of Accounting
and Finance, Shanghai 201209, P.R. China}
\email{shangzhaoyang@sjtu.edu.cn}
\thanks{Corresponding author: Zhaoyang Shang.}
\date{October 4, 2026}
\subjclass[2020]{35Q35, 35D30, 76N10, 76W05}
\keywords{Compressible magnetohydrodynamics, density-dependent viscosity, large data,
effective viscous flux, relative energy, acoustic damping}
\begin{document}
\begin{abstract}
We study the two-dimensional periodic resistive compressible
magnetohydrodynamic equations with constant shear viscosity and magnetic
diffusivity, bulk viscosity $\rho^\beta$, and pressure $\rho^\gamma$.
For every $\beta>1$ and $\gamma>1$, we construct global finite-energy
renormalized weak solutions from strictly positive bounded
$W^{1,q}$ densities, $q>2$, and arbitrary $H^1$ velocity and magnetic data.
The construction combines approximation-independent finite density
moments, generalized effective-flux identities, and a joint logarithmic
renormalization of the density and its entropy defect.
An augmented relative energy containing the $\beta$-power density
potential yields weak--strong uniqueness against positive $H^2$ strong
solutions for the same full pressure range.
We also establish global strong existence for a large-background family
when $\gamma>\max\{5-2\beta,\beta+1\}$.
Writing $m=\int\rho_0$, the density perturbation is bounded in the
scaled norm $m^{(\gamma-3)/2}\|\rho_0-m\|_{H^2}$, while the fixed
$H^2$ velocity and magnetic norms need not be small.
For sufficiently large $m$, frequency-dependent acoustic dissipation,
staged vorticity weights, and a separate initial-layer flux estimate
close a simultaneous bootstrap. The admissible pressure exponents have
infimum $7/3$, with the endpoint excluded.
For $\beta>2$ the relevant damping rate is the minimum of the
viscous and pressure-relaxation rates; keeping both rates extends
the argument without an upper restriction on $\beta$.
The strong theorem has no upper restriction on $\gamma$ under this
scaled density hypothesis; for fixed additive density perturbations it
applies in the stated lower range up to $\gamma=3$.
\end{abstract}

\maketitle
\section{Introduction and main results}\label{sec:introduction}

\subsection{The equations and the existence problem}
Let \(\TT=\RR^2/\mathbb Z^2\) be the unit flat torus.
We consider the resistive compressible magnetohydrodynamic system
\begin{equation}\label{eq:system}
\begin{aligned}
 &\partial_t\rho+\Div(\rho u)=0,\\
 &\partial_t(\rho u)+\Div(\rho u\otimes u)+\nabla\rho^\gamma
   =\mu\Delta u+\nabla\bigl((\mu+\rho^\beta)\Div u\bigr)
        +\Div\M(H),\\
 &\partial_tH+\Div(H\otimes u-u\otimes H)=\nu\Delta H,
       \qquad \Div H=0,
\end{aligned}
\end{equation}
where \(\rho\ge0\) is the density, \(u\in\RR^2\) the velocity, and
\(H\in\RR^2\) the magnetic field. The constants \(\mu,\nu\) are positive,
and
\[
 \M(H)=H\otimes H-\frac12|H|^2I.
\]
We use \((v\otimes w)_{ij}=v_iw_j\),
\((\nabla u)_{ij}=\partial_j u_i\), and row-wise divergence.
Thus the viscous operator is exactly the one displayed in
\eqref{eq:system}; its compressional coefficient is \(2\mu+\rho^\beta\).
All boundary conditions are periodic.

We address weak existence, comparison with strong solutions, and global
strong existence for a specified large-background family. The common
viscosity range is
\begin{equation}\label{eq:parameters}
             \beta>1,\qquad \gamma>1.
\end{equation}
The viscosity exponent has no upper restriction. Constants may depend
on its fixed value; no uniformity as $\beta\to\infty$ is asserted.
The weak-existence and weak--strong uniqueness results allow every
$\gamma>1$. The strong-existence theorem has additional pressure and
scaled-density hypotheses stated below; it is not an arbitrary-data
strong-existence theorem.
For weak existence no smallness condition is imposed on the initial energy.
The initial data satisfy, for some \(q_0>2\),
\begin{equation}\label{eq:data}
\begin{gathered}
 \rho_0\in W^{1,q_0}(\TT),\qquad
 0<c_0\le\rho_0\le C_0<\infty,\\
 u_0,H_0\in H^1(\TT;\RR^2),\qquad \Div H_0=0.
\end{gathered}
\end{equation}
In particular, \emph{large data} refers to arbitrary sizes of the norms
in \eqref{eq:data}. We do not claim existence for every datum having
only finite energy.

The distinction between weak and strong existence is important here.
Finite-energy weak existence requires sufficient compactness to identify
the pressure, the viscosity, and the convective and magnetic stresses.
It does not require a pointwise upper bound on the density.
The estimates below retain every fixed finite density moment, with
constants depending on the exponent. They do not justify passage to
an \(L^\infty\) density bound.

\subsection{Context and the compactness mechanism}
The density-dependent viscosity law originates in the work of
Va\u{\i}gant and Kazhikhov~\cite{VaigantKazhikhov1995} on planar
compressible Navier--Stokes equations. Their global theory for
\(\beta>3\) showed how a growing compressional viscosity can control
large density variations. For the periodic Navier--Stokes system,
Huang and Li~\cite{HuangLi2016} developed global strong and weak
existence results in the range \(\beta>4/3\), including initial vacuum
under their respective data assumptions.
For classical solutions of the MHD equations on the torus and
in the whole plane, we refer to Mei~\cite{Mei2015}, together
with the accompanying corrigendum~\cite{Mei2015Corrigendum}.
For compressible MHD in a bounded planar domain,
Chen, Huang, and Shi~\cite{ChenHuangShi2022} established large-data
strong and weak existence with \(\beta>4/3\) and appropriate boundary
conditions. These statements concern their specified geometries,
data classes, and notions of solution; a pure-power viscosity theorem
cannot simply be invoked for a mixed viscosity law.
In a different regime, Liu, Ma, and Xu~\cite{LiuMaXu2026}
proved global existence for periodic compressible MHD when
the constant volume viscosity is sufficiently large.
That coefficient condition is not imposed in the present
density-dependent setting.

Relative-energy methods for MHD weak--strong uniqueness were developed,
in particular, by Yan~\cite{Yan2013}. For density-dependent viscosities,
Bresch, Noble, and Vila~\cite{BreschNobleVila2016} introduced related
relative-entropy structures. Those works concern different constitutive
or thermodynamic settings; we derive the coefficient-difference and
magnetic cancellations for \eqref{eq:system} explicitly.
Large-density global strong existence has also been studied for
three-dimensional MHD with density-dependent viscosities by
Li, Lu, and Shang~\cite{LiLuShang2025}. Our shear viscosity remains a
fixed constant, the geometry is periodic and two-dimensional, and the
density hypothesis below is an explicit scaled perturbation condition.
We do not invoke a theorem for another viscosity law to supply the
strong estimates in this paper.

The general finite-energy compactness theory for compressible flows
provides another perspective; see, for example,
Feireisl, Novotn\'y, and Petzeltov\'a~\cite{FeireislNovotnyPetzeltova2001}.
In the present problem, however, identifying the density-dependent
viscous stress requires control of a product involving the density
and the divergence of the velocity. This issue remains after the
basic energy estimate.

A related recent Navier--Stokes preprint by Li and
Yang~\cite{LiYang2026} must be read with its version-dependent
hypotheses. In version~4, Theorem~4 in the range \(\beta>1\),
\(\gamma>1\) assumes an additional fixed-exponent uniform
high-momentum estimate for the approximating sequence.
We do not assume that estimate here. We also do not assert the
pointwise density bounds or large-time conclusions studied there.
The solution class considered here is characterized instead by
finite energy and finite moments, without pointwise density bounds.

The proof separates two estimates that have different purposes.
For fixed \(\delta>0\), the mixed law
\[
       \lambda_\delta(r)=r^\beta+\delta r^2
\]
permits a pointwise density estimate and a global continuation
argument for the regularized system. Constants in that argument may
depend on \(\delta\). By contrast, the subsequent compactness argument
uses only estimates uniform in \(\delta\): energy, every fixed finite
density and magnetic moment, and time compactness in negative
Sobolev spaces.

The density moment estimate is based on a transported viscosity
primitive. Set $B=\max\{\beta,2\}$, which accounts for both powers
in the approximate viscosity. With \(p=2m\beta+1\), the scalar
inequality has growth exponent
\[
          \alpha_m=
          \frac{\max\{1+1/(2m),B/2\}}{\beta+1/(2m)}<1.
\]
The additional $B/2$ term comes from the spatial mean of the viscous
flux. It cannot be discarded when $\beta>2$, but it remains strictly
sublinear. This is where the strict inequality \(\beta>1\) enters
the uniform moment argument. These moments make two bounded density multipliers,
\[
       b_0(r)=\frac1{2\mu+r^\beta},
       \qquad b_1(r)=\frac r{2\mu+r^\beta},
\]
available in a generalized effective-flux identity.
The renormalized source terms and the spatial mean terms in this
identity are kept explicitly. The remaining entropy defect \(D\)
is eliminated by jointly renormalizing its transport equation
and the continuity equation with
\[
                    \Phi_\varepsilon(\rho,D)
                      =\rho\log(1+D/\varepsilon).
\]
The density factor permits an integrable coefficient to replace
a spatially uniform Gronwall coefficient, while the zero-density
set is treated directly through the associated Young measure.

\subsection{Definition and statement}
For \(r\ge0\) and \(m\in\RR^2\), define the lower-semicontinuous
kinetic energy
\begin{equation}\label{eq:kinetic}
 \mathcal K(r,m)=
 \begin{cases}
  |m|^2/(2r),&r>0,\\
  0,&r=0,\ m=0,\\
  +\infty,&r=0,\ m\ne0.
 \end{cases}
\end{equation}

\begin{definition}\label{def:weak}
A finite-energy renormalized weak solution of
\eqref{eq:system} with data \eqref{eq:data} is a triple
\((\rho,u,H)\), with \(m=\rho u\), having the regularity in
\eqref{eq:regularity} below, satisfying \(\Div H=0\) and the
following identities. All test functions are smooth, periodic
in space, and compactly supported in time in \([0,\infty)\).
For scalar \(\phi\),
\begin{equation}\label{eq:weak-mass}
 \int_0^\infty\!\!\int_{\TT}
       (\rho\phi_t+m\cdot\nabla\phi)\dx\dt
       +\int_{\TT}\rho_0\phi(0)\dx=0.
\end{equation}
For vector fields \(\varphi,\psi\),
\begin{equation}\label{eq:weak-momentum}
\begin{aligned}
0={}&\int_0^\infty\!\!\int_{\TT}
 \bigl[m\cdot\varphi_t+(\rho u\otimes u):\nabla\varphi
                  +\rho^\gamma\Div\varphi
                  -\mu\nabla u:\nabla\varphi\\
 &\qquad
 -(\mu+\rho^\beta)(\Div u)(\Div\varphi)
                  -\M(H):\nabla\varphi\bigr]\dx\dt
       +\int_{\TT}\rho_0u_0\cdot\varphi(0)\dx,
\end{aligned}
\end{equation}
and
\begin{equation}\label{eq:weak-magnetic}
 \int_0^\infty\!\!\int_{\TT}
 \bigl[H\cdot\psi_t+(H\otimes u-u\otimes H):\nabla\psi
                     -\nu\nabla H:\nabla\psi\bigr]\dx\dt
 +\int_{\TT}H_0\cdot\psi(0)\dx=0.
\end{equation}
For every \(b\in C^1([0,\infty))\) with compactly supported
derivative,
\begin{equation}\label{eq:renormalized}
 \int_0^\infty\!\!\int_{\TT}
 \bigl[b(\rho)\phi_t+b(\rho)u\cdot\nabla\phi
          -(\rho b'(\rho)-b(\rho))(\Div u)\phi\bigr]\dx\dt
 +\int_{\TT}b(\rho_0)\phi(0)\dx=0.
\end{equation}
Finally, the energy inequality \eqref{eq:energy-main} holds
for the canonical time representatives specified below.
\end{definition}

\begin{theorem}\label{thm:main}
Assume \(\mu,\nu>0\), \eqref{eq:parameters}, and
\eqref{eq:data}. There exists a global finite-energy
renormalized weak solution of \eqref{eq:system}.
For every \(T<\infty\) and every fixed \(1\le p<\infty\),
\begin{equation}\label{eq:regularity}
\begin{gathered}
 \rho\ge0,\qquad
 \rho\in L^\infty(0,T;L^p(\TT))\cap C([0,T];L^p(\TT)),\\
 u,H\in L^2(0,T;H^1(\TT)),\qquad
 \sqrt\rho\,u\in L^\infty(0,T;L^2(\TT)),\\
 H\in C([0,T];L^2(\TT))\cap L^\infty(0,T;L^p(\TT)),\\
 \rho^{\beta/2}\Div u\in L^2((0,T)\times\TT).
\end{gathered}
\end{equation}
For every \(1<r<2\), the momentum has a representative
\(m\in C_{\mathrm{weak}}([0,T];L^r(\TT))\).
The density, momentum, and magnetic field attain the prescribed
initial data in these time topologies.

With these representatives, set
\begin{equation}\label{eq:energy-definition}
\begin{aligned}
 E(t)&=\int_{\TT}\left[
       \mathcal K(\rho(t),m(t))
          +\frac{\rho(t)^\gamma}{\gamma-1}
          +\frac{|H(t)|^2}{2}\right]\dx,\\
 \mathfrak d(t)&=\int_{\TT}\left[
      \mu|\nabla u|^2+(\mu+\rho^\beta)|\Div u|^2
                          +\nu|\nabla H|^2\right](t)\dx.
\end{aligned}
\end{equation}
Then
\begin{equation}\label{eq:energy-main}
                E(t)+\int_0^t\mathfrak d(s)\dd s\le E(0)
                    \qquad\text{for every }t\ge0.
\end{equation}
There is also a common full-measure set \(S\subset(0,\infty)\)
such that
\begin{equation}\label{eq:energy-start}
 E(t)+\int_s^t\mathfrak d(\tau)\dd\tau\le E(s),
       \qquad s\in S\cup\{0\},\quad t\ge s.
\end{equation}
Moreover,
\begin{equation}\label{eq:initial-kinetic}
 E(t)\longrightarrow E(0),\qquad
 \frac{m(t)}{\sqrt{\rho(t)}}\longrightarrow\sqrt{\rho_0}\,u_0
                      \quad\text{strongly in }L^2(\TT)
                      \quad(t\downarrow0),
\end{equation}
where the quotient is defined to be zero on the vacuum set.
\end{theorem}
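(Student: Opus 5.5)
We follow the route announced in the introduction. There are three layers: regularize the viscosity, derive bounds uniform in the regularization, and pass to the limit with a renormalized compactness argument that removes the entropy defect.

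\emph{Approximation.} For fixed $\delta>0$ we replace $\rho^\beta$ by $\lambda_\delta(\rho)=\rho^\beta+\delta\rho^2$ and solve the resulting system with the data \eqref{eq:data}, mollified if needed. Local strong solutions exist by standard theory, since $\rho_0$ is positive and lies in $W^{1,q_0}$. They are continued globally by a Va\u{\i}gant--Kazhikhov-type argument. Its ingredients are the effective viscous flux $F=(2\mu+\lambda_\delta(\rho))\Div u-\rho^\gamma-\tfrac12|H|^2$, the transported primitive of $\lambda_\delta(\rho)/\rho$, and the $\delta\rho^2$ term, which yields an upper bound on $\rho_\delta$ with constants depending on $\delta$. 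Positivity of $\rho_\delta$ then gives $H^1$ and $W^{1,q}$ control, and continuation is routine. The approximate solutions satisfy the energy identity exactly.

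\emph{Uniform bounds.} Independently of $\delta$ we have the energy bound, $\sqrt{\rho}u\in L^\infty L^2$, $u,H\in L^2H^1$, and $\lambda_\delta(\rho)^{1/2}\Div u\in L^2$. The key step is the moment estimate: $\sup_t\int\rho_\delta^{p}\le C(p,T)$ for every finite $p=2m\beta+1$. We obtain it by writing the transport equation for $\theta(\rho)=\int^\rho\lambda_\delta(s)/s\,ds$ along particle paths as
$\tfrac{D}{Dt}\theta(\rho)=-F-\rho^\gamma-\tfrac12|H|^2+\text{mean terms}$.
The effective flux satisfies $\nabla F=(\partial_t+u\cdot\nabla)\text{-type}$ terms plus $\Curl$ terms, so it is handled through $\Delta^{-1}\Div$ of $\rho u$, $\rho u\otimes u$ and $\M(H)$. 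Testing against $\rho^{p-1}$ gives a scalar inequality $y'\le C(1+y^{\alpha_m})$ with $\alpha_m<1$. The magnetic part requires $H\in L^\infty L^p$; we get this from the parabolic induction equation in 2D via Gagliardo--Nirenberg and the energy bound, or simply by testing with $|H|^{p-2}H$. The moments give $\rho_\delta$ bounded in $L^\infty L^p$, hence $\rho^\beta\Div u$ bounded in $L^{2-}$ with spatial integrability arbitrary, and the momentum bounded in $L^\infty L^r$ for $r<2$. The time derivatives are bounded in negative Sobolev spaces.

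\emph{Passage to the limit.} Aubin--Lions yields weak limits $\rho$ and $m=\rho u$, strong convergence of $H$ in $L^2L^2$, and identification of $\rho u\otimes u$ and $\M(H)$. The nonlinearities $\rho^\gamma$ and $\rho^\beta\Div u$ remain. We apply the generalized effective-flux identities with the multipliers $b_0,b_1$. These express $\overline{b_1(\rho)\,\cdot}$ in terms of weak limits, using the div-curl lemma on $(2\mu+\rho^\beta)\Div u-\rho^\gamma$ tested against $b_j(\rho)$, while keeping the spatial means. From this we derive a transport inequality for the defect $D=\overline{\rho\log\rho}-\rho\log\rho\ge0$, or an analogous entropy defect, of the form
$\partial_tD+\Div(Du)\le a(t,x)D$
with $a\in L^1$ weighted by density. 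We then renormalize jointly with $\Phi_\varepsilon(\rho,D)=\rho\log(1+D/\varepsilon)$, integrate, and let $\varepsilon\to0$ to get $D\equiv0$, using $D(0)=0$ and treating the vacuum set through the Young measure. This gives strong convergence of $\rho_\delta$, which identifies $\rho^\gamma$ and $\rho^\beta\Div u$ and the renormalized equation \eqref{eq:renormalized} via DiPerna--Lions, since $u\in L^2H^1$.

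\emph{Energy and time continuity.} Lower semicontinuity of $\mathcal K$ and of the convex dissipation passes the energy identity to \eqref{eq:energy-main}. For \eqref{eq:energy-start} we take $S$ to be the set of Lebesgue points where $\rho_\delta(s)$ and $m_\delta(s)$ converge strongly along a subsequence; strong convergence holds a.e.\ in time by the compactness just proved. Strong continuity of $\rho$ in $L^p$ follows from the renormalized equation and the moment bounds, and weak continuity of $m$ follows from the equation. Finally, \eqref{eq:initial-kinetic} follows by combining $\limsup E(t)\le E(0)$ with weak lower semicontinuity and uniform convexity in $m/\sqrt\rho$.

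The \textbf{main obstacle} is the defect step. The Gronwall coefficient coming from $\rho^\beta\Div u$ is only integrable against the density, not uniformly bounded, which is exactly why the factor $\rho$ appears in $\Phi_\varepsilon$. Close behind is keeping $\alpha_m<1$ uniformly in $\delta$ when $\beta>2$.
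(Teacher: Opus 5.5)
Your overall architecture is the paper's: the mixed law $\rho^\beta+\delta\rho^2$, $\delta$-dependent continuation, $\delta$-uniform moments, the flux identity with $b_0,b_1$, the defect equation, and the joint renormalization $\rho\log(1+D/\varepsilon)$. However, the moment step does not close as you describe it.

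Testing the continuity equation against $\rho^{p-1}$ gives $\frac{d}{dt}\int\rho^p=-(p-1)\int\rho^p\Div u$. Substituting $a_\delta\Div u=\mathcal B+\rho^\gamma+\frac12|H|^2$ and $\mathcal B=\avg{\mathcal B}-\xi_t-\eta$ leaves $\int(\rho^p/a_\delta)\xi_t$. Integrating this by parts in time reintroduces a compression term comparable to $\int\xi\,(\rho^p/a_\delta)\Div u$. Against $\sqrt{a_\delta}\Div u\in L^2$ this needs $\int\xi^2\rho^{2p-3\beta}$, which $\int\rho^p$ does not control once $p>3\beta$.

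The paper instead makes $\Lambda_\delta(\rho)-\xi$ the transported unknown. Here $\Lambda_\delta'(r)=a_\delta(r)/r$, so $2\mu\log r$ is included; your $\theta$ omits it, and your displayed identity for $D_t\theta$ needs it. The transported equation is $D_u(\Lambda_\delta(\rho)-\xi)+\rho^\gamma+\frac12|H|^2=\eta-u\cdot\nabla\xi-\avg{\mathcal B}$, and the paper estimates $I=\int\rho W^{k}$ with $W=(\Lambda_\delta(\rho)-\xi)_+$, $k=2m$. The weight $\rho$ turns $D_u$ into an exact time derivative with no $\Div u$ term. The positive part lets you drop both pressures and keeps $\log\rho$ at small density out of the bound. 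The sublinear exponent then comes from coercivity: $y^{\beta+1/k}\le C(1+I^{1/k}+y^{1/k}\norm{\xi}_s)$ with $y=\norm{\rho}_p$ and $\norm{\xi}_s\le Cy^{1/2}$ from the kinetic energy. The resulting inequality is $f'\le C(1+e_\delta(t))(1+f)^{\alpha_m}$ for $f=I^{1/k}$, and its coefficient is only integrable in time, not the constant you wrote.

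The second gap is in \eqref{eq:energy-start}. You need $E_n(s)\to E(s)$ for $s\in S$. Density compactness gives no strong convergence of $m_n=\rho_nu_n$. Even strong convergence of $\rho_n(s)$ and $m_n(s)$ would not suffice, because $\int\mathcal K(\rho,m)$ is only lower semicontinuous near vacuum.

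What is missing is the weighted kinetic compactness $\sqrt{\rho_n}u_n\to\sqrt\rho\,u$ in $L^2(Q_T)$. The weak limit is identified from strong $\sqrt{\rho_n}$ and weak $u_n$. The norms converge because $\int_{Q_T}\rho_n|u_n|^2=\int_0^T\langle m_n,u_n\rangle\to\int_0^T\langle m,u\rangle$, pairing $m_n\to m$ strongly in $L^2H^{-3/5}$ with $u_n\weakto u$ in $L^2H^1$. Then $E_n\to E$ in $L^1(0,T)$, and $S$ is the full-measure set of pointwise convergence after extraction, made common over $T=1,2,\dots$ by a diagonal argument.

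By contrast, your derivation of \eqref{eq:energy-main} at every $t$ is correct. It uses weak convergence of $(\rho_n,m_n,H_n)(t)$ at each fixed time and joint lower semicontinuity. This is slightly more direct than the paper's route through almost every $t$ followed by time lower semicontinuity of the canonical energy.
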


\begin{remark}\label{rem:scope}
No smallness assumption is imposed on the initial data.
The weak-existence theorem alone does not assert a pointwise upper or positive lower
bound for the limiting density, uniqueness of general weak
solutions, or global existence of strong solutions.
Strong continuity in a given finite \(L^p\) space uses a higher
fixed moment; no estimate uniform as \(p\to\infty\) is involved.
The starting times in \eqref{eq:energy-start} are not enlarged
to all times, and no global energy equality is claimed.
The proof treats the full interval $\beta>1$ directly. With
$B=\max\{\beta,2\}$, the density-moment exponent in
\eqref{uni:sublinear-ode} remains strictly below one.
The fixed-parameter closure \eqref{fix:density-closure} compares
the powers $1+3B/8$ and $(B+2)/3$ with $B$; both are strictly
smaller because $B\ge2$. Thus the artificial quadratic viscosity
controls the lower-$\beta$ range, while the original viscosity
supplies the dominant power when $\beta>2$.
The bounded multipliers in \eqref{comp:quotient-sources} require
only $\beta>1$. In particular, $\beta=4/3$ and $\beta=2$ are
included in the weak theorem, without endpoint limiting arguments.
\end{remark}

\subsection{Weak--strong uniqueness and large-background strong solutions}

The comparison result uses a positive strong reference in the following
class, on each finite interval $[0,T]$:
\begin{equation}\label{eq:strong-class}
\begin{gathered}
 0<c_T\le r\le C_T,\qquad \Div B=0,\qquad
 r,U,B\in C([0,T];H^2),\\
 U,B\in L^2(0,T;H^3),\qquad
 U_t,B_t\in L^2(0,T;H^1),\qquad r_t\in C([0,T];H^1).
\end{gathered}
\end{equation}
\begin{theorem}[Weak--strong uniqueness]\label{thm:weak-strong}
Let $\beta>1$ and $\gamma>1$.
Suppose $(\rho,u,H)$ is a finite-energy renormalized weak solution in
Definition~\ref{def:weak}, and $(r,U,B)$ solves the same equations in
\eqref{eq:strong-class}. If their initial density, momentum, and magnetic
field agree, the solutions coincide on $[0,T]$.
\end{theorem}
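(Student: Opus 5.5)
We follow the standard relative-energy method. The argument derives a relative energy inequality for the weak solution against the smooth positive reference $(r,U,B)$, bounds the remainder by a Gronwall-type integral of the relative energy itself, and uses that the relative energy vanishes at $t=0$. The relative energy is
\[
 \mathcal E(t)=\int_{\TT}\Bigl[\tfrac12\rho|u-U|^2+\Pi(\rho|r)+\Lambda(\rho|r)+\tfrac12|H-B|^2\Bigr]\dx ,
\]
with two density terms. The first is the pressure potential $\Pi(\rho|r)=\frac{1}{\gamma-1}\bigl(\rho^\gamma-r^\gamma-\gamma r^{\gamma-1}(\rho-r)\bigr)$. The second is an augmented $\beta$-power term $\Lambda(\rho|r)=G(\rho)-G(r)-G'(r)(\rho-r)$, built from the convex potential $G(s)=s^\beta$ (suitably normalized). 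We include $\Lambda$ because the coefficient-difference term $(\rho^\beta-r^\beta)\Div U\,\Div(u-U)$ is not controlled by $\Pi$ when $\beta>\gamma$. With $\Lambda$ present, the missing quantity $\int|\rho^\beta-r^\beta|^2/(\text{weight})$ is dominated by $\Lambda$ together with the dissipation $\rho^\beta|\Div(u-U)|^2$ and the fixed finite moments from \eqref{eq:regularity}.

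\textbf{Step 1: the relative energy inequality.} We test \eqref{eq:weak-momentum} with $\varphi=U$ and \eqref{eq:weak-magnetic} with $\psi=B$; this is admissible after time cut-offs, since $U,B\in C([0,T];H^2)$ with time derivatives in $L^2H^1$. We test \eqref{eq:weak-mass} with $\phi=\frac12|U|^2-\frac{\gamma}{\gamma-1}r^{\gamma-1}-G'(r)$. The renormalized equation \eqref{eq:renormalized} with $b(\rho)=G(\rho)$ is obtained by truncation, legitimate because of the moments and because $\rho^{\beta/2}\Div u\in L^2$. It gives the evolution of $\int G(\rho)$, in which the source $(\rho G'-G)\Div u=(\beta-1)\rho^\beta\Div u$ appears. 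We combine these with the energy inequality \eqref{eq:energy-start}, valid from $s=0$, and then use the strong equations for $(r,U,B)$ to rewrite every term as a quadratic in the differences. The resulting form is $\mathcal E(t)+\int_0^t\mathfrak D\le\int_0^t\mathcal R$. Here
\[
 \mathfrak D=\mu|\nabla(u-U)|^2+(\mu+\rho^\beta)|\Div(u-U)|^2+\nu|\nabla(H-B)|^2 .
\]

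\textbf{Step 2: estimating the remainder.} The remainder $\mathcal R$ contains five groups of terms.
\begin{enumerate}
\item The convective term $\rho(u-U)\cdot\nabla U(u-U)$ is bounded by $\|\nabla U\|_\infty\mathcal E$.
\item The pressure remainder $(\rho^\gamma-r^\gamma-\gamma r^{\gamma-1}(\rho-r))\Div U$ is bounded by $\|\Div U\|_\infty\mathcal E$.
\item The term $(\rho-r)(\partial_tU+U\cdot\nabla U+\dots)\cdot(U-u)$, coming from the strong momentum equation divided by $r$, is treated in two regions. Where $\rho$ is near $r$, Young's inequality applies. Where $\rho$ is far from $r$, we use $|\rho-r|\lesssim\Pi(\rho|r)$ together with Sobolev embedding of $u-U$ in $L^p$ for every $p<\infty$ (dimension two). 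The resulting smallness is absorbed into $\mathfrak D$, and the rest is controlled by $h(t)\mathcal E$ with $h\in L^1(0,T)$, using $U_t\in L^2H^1$.
\item The magnetic cross terms $\M(H)-\M(B)$ against $\nabla U$, and $(H-B)\otimes(u-U)$ against $\nabla B$, cancel up to quadratic remainders. These are bounded by $\|\nabla B\|_\infty\|H-B\|_2\|\nabla(u-U)\|_2$ plus $\|\nabla U\|_\infty\|H-B\|_2^2$, which Young's inequality handles. The explicit cancellation has to be written out once.
\item The viscous coefficient difference $(\rho^\beta-r^\beta)\Div U\,\Div(u-U)$ is paired with the $G$-renormalization term $(\beta-1)(\rho^\beta-r^\beta-\beta r^{\beta-1}(\rho-r))\Div U$, giving $\le\|\Div U\|_\infty\Lambda$.
\end{enumerate}
Gronwall's lemma then gives $\mathcal E\equiv0$. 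Hence $\rho=r$, $H=B$, and $\rho|u-U|^2=0$; since $\rho=r\ge c_T$, this gives $u=U$.

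\textbf{Main obstacle.} The delicate point is item (v). After pairing, one is left with $(\rho^\beta-r^\beta)\Div U\,\Div(u-U)$ minus the part absorbed by $\Lambda$. Young's inequality yields $\epsilon(\mu+\rho^\beta)|\Div(u-U)|^2$ plus $C|\rho^\beta-r^\beta|^2/(\mu+\rho^\beta)$. The latter is $\lesssim\Lambda(\rho|r)+\Pi(\rho|r)$ both for $\rho$ near $r$ and for $\rho\gg r$, where it is $\sim\rho^\beta\lesssim\Lambda$ because $\beta>1$, and near vacuum, where it is bounded by a constant times $\Pi$. So this should close. The remaining concern is justifying the renormalization with $b=G$, which is unbounded, and also the time regularity needed to evaluate the density terms of $\mathcal E$ at every $t$. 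For that we use $\rho\in C([0,T];L^p)$ and the representative of $m$ in Theorem~\ref{thm:main}, together with \eqref{eq:initial-kinetic} at $t=0$.
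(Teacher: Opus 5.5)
Your strategy matches the paper's. You use the same augmented relative energy, with your $\Lambda$ in the role of $h_\beta$, and the same tests of the weak equations against the reference fields combined with the energy inequality from time zero. You also close the same way, by Young's inequality plus the pointwise bound $(\rho^\beta-r^\beta)^2/(\mu+\rho^\beta)\lesssim\Lambda(\rho|r)$.

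Item (v), however, misstates what the augmented potential contributes. Its time derivative is not just $(\beta-1)\Lambda\Div U$. With $G(s)=s^\beta$ and $w=u-U$, the renormalized equation for $G(\rho)$ and the strong continuity equation give
\[
 \frac{\mathrm d}{\mathrm dt}\int_{\TT}\Lambda(\rho|r)\dx
 =-(\beta-1)\int_{\TT}(\rho^\beta-r^\beta)\Div w\dx
 -(\beta-1)\int_{\TT}\Lambda(\rho|r)\Div U\dx
 -\int_{\TT}(\rho-r)\,w\cdot\nabla G'(r)\dx .
\]
Nothing here cancels or ``pairs with'' the coefficient difference $-\int(\rho^\beta-r^\beta)\Div U\,\Div w$, so your claimed bound by $\|\Div U\|_\infty\Lambda$ is false. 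The first term is a new bulk difference that is not multiplied by $\Div U$. It is harmless only because the estimate in your last paragraph applies to it verbatim. The two bulk terms combine as $-\int(\rho^\beta-r^\beta)(\Div U+\beta-1)\Div w$, and Young's inequality bounds this by $\varepsilon\int(\mu+\rho^\beta)|\Div w|^2+C(1+\|\Div U\|_\infty^2)\int\Lambda$. This is exactly the paper's factor $D_U+1$ in \eqref{ws:bulk-absorption}, which differs from yours only by normalization. So the augmented term is added for coercivity, and its derivative carries a viscosity-growth cost that must be absorbed; it does not produce a cancellation. The third term above is a density-force term of the type in your item (iii), with $\nabla G'(r)\in L^\infty L^4$.

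Three smaller points:
\begin{enumerate}
\item In items (iii)--(iv) the factor that appears is a norm of $u-U$ itself in some $L^q$, not $\|\nabla(u-U)\|_2$.
\item Its constant mode is controlled only through the mass-weighted Poincar\'e inequality $\|w\|_q\le C_q(\|\nabla w\|_2+\|\sqrt\rho\,w\|_2)$. This needs the positive total mass and one fixed density moment, and you should state it.
\item In the far region of (iii), the pointwise bound $|\rho-r|\lesssim\Pi$ must enter through Cauchy--Schwarz, so that exactly $\mathcal E^{1/2}$ appears and Gronwall stays linear. The paper does this globally, without a near/far split, via $(\rho-r)^2/(1+\rho)\lesssim h_\beta(\rho|r)$ and the weight $1+\rho$. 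This works for every $\gamma>1$ with $F_{\rm ref}$ only in $L^2L^4$.
\end{enumerate}
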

The density moments and the initial energy trace in the weak class are
part of this statement. It is not a uniqueness assertion for an
unspecified, larger distributional solution class, nor does it extend
the lifespan of a reference strong solution by itself.

For the strong result define
\begin{equation}\label{eq:gamma-threshold}
 \Gamma_*(\beta)=
 \max\{5-2\beta,\beta+1\}.
\end{equation}
\begin{theorem}[Global strong solutions at large background density]
\label{thm:strong}
Fix $\mu,\nu>0$, $\beta>1$, $\gamma>\Gamma_*(\beta)$, and $R>0$.
There is $m_0=m_0(\mu,\nu,\beta,\gamma,R)$ with the following property.
If $m\ge m_0$ and $\rho_0,u_0,H_0\in H^2(\TT)$ satisfy
\begin{equation}\label{eq:scaled-data}
 \int_{\TT}\rho_0\dx=m,\qquad \Div H_0=0,\qquad
 m^{(\gamma-3)/2}\|\rho_0-m\|_{H^2}
       +\|u_0\|_{H^2}+\|H_0\|_{H^2}\le R,
\end{equation}
then \eqref{eq:system} has a unique global strong solution in
\eqref{eq:strong-class}, with $(r,U,B)$ replaced by $(\rho,u,H)$.
There is $C_R$, independent of $m$ and time, such that
\begin{equation}\label{eq:strong-uniform}
\begin{gathered}
 \tfrac34m\le\rho(t,x)\le\tfrac54m,\\
 \sup_{t\ge0}\left[
 m^{(\gamma-3)/2}\|\rho(t)-m\|_{H^2}
           +\|u(t)\|_{H^1}+\|H(t)\|_{H^1}\right]\le C_R.
\end{gathered}
\end{equation}
The uniform $H^2$ bounds for $u,H$ may depend on the fixed value of $m$.
Every weak solution in Definition~\ref{def:weak} with these same data
coincides with this strong solution on every finite interval.
\end{theorem}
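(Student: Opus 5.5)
We prove Theorem~\ref{thm:strong} by a continuation argument. Local existence in the class \eqref{eq:strong-class} is standard for $H^2$ data with positive density, so we only need a priori bounds that do not depend on time. These bounds are obtained under a bootstrap hypothesis on the same quantities with doubled constants. Uniqueness, and the final sentence on weak solutions, then follow from Theorem~\ref{thm:weak-strong}.

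\textbf{Rescaling.} Write $\rho=m(1+\sigma)$ and measure $\sigma$ through $\eta:=m^{(\gamma-1)/2}\sigma$. This is the same as $m^{(\gamma-3)/2}(\rho-m)$ up to a factor $m$, chosen so that the linearized acoustic part becomes symmetric. Indeed, the linearization around $(m,0,0)$ reads
\[
\partial_t\sigma+\Div u=0,\qquad \partial_t u+\gamma m^{\gamma-1}\nabla\sigma=\tfrac{2\mu+m^\beta}{m}\nabla\Div u+\tfrac{\mu}{m}\nabla^\perp\Curl u+\cdots .
\]
The acoustic speed is therefore $c_m\sim m^{(\gamma-1)/2}$ and the compressional damping is $\kappa_m\sim m^{\beta-1}$.

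\textbf{Order of the estimates.}
\begin{enumerate}
\item Use the basic energy identity together with a Poincaré argument on the potential part, to obtain decay of $\|\sqrt\rho u\|_{L^2}$ and of $\|H\|_{L^2}$.
\item Prove $H^1$ and $H^2$ estimates for the irrotational pair $(\eta,\Div u)$ with weights $\langle\xi\rangle$. The frequency-dependent dissipation is $\min\{\kappa_m|\xi|^2,\ c_m^2/\kappa_m\}$: it is viscous at low frequency and pressure-relaxation at high frequency. Keeping both rates is what removes any upper restriction on $\beta$.
\item Estimate the vorticity $\Curl u$ and the magnetic field $H$ separately. These parts see only the weak damping $\mu/m$ and $\nu$, and the Lorentz force $\Div\M(H)/\rho$ is of size $O(1/m)$. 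We use staged weights: first $\|\Curl u\|_{L^2}$, then $\|\nabla\Curl u\|_{L^2}$, with time weights $\min\{1,t\}$ so that the non-small $H^2$ data are absorbed.
\item Treat the initial layer $t\in[0,t_m]$ separately, with $t_m\sim\kappa_m^{-1}$. Here the effective flux $F=(2\mu+\rho^\beta)\Div u-(\rho^\gamma-\overline{\rho^\gamma})$ is not yet small. We bound it through its elliptic equation $\nabla F=\rho\dot u+\mu\nabla^\perp\Curl u-\Div\M(H)$, plus the time-integrated $L^2_tH^1$ dissipation. This yields $\|\Div u\|_{L^\infty_tL^2}\lesssim m^{-\beta}(\cdots)$.
\end{enumerate}

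\textbf{Closing the bootstrap.} The bounds close by comparing the powers of $m$ in the nonlinear terms. The two dangerous terms are $u\cdot\nabla\sigma$ and $(\rho^\beta-m^\beta)\Div u$ in the $H^2$ energy of $\eta$. Their sizes relative to the dissipation are
\[
m^{(5-2\beta-\gamma)/2}\quad\text{and}\quad m^{(\beta+1-\gamma)/2}.
\]
Both are negative powers of $m$ exactly when $\gamma>\Gamma_*(\beta)$, so these terms become small for $m\ge m_0$. The pointwise bound $\tfrac34 m\le\rho\le\tfrac54 m$ follows from $\|\sigma\|_{L^\infty}\lesssim\|\sigma\|_{H^2}\lesssim m^{-(\gamma-1)/2}C_R\to0$.

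\textbf{Main obstacle.} The hardest step is the coupling between the weakly damped vorticity/magnetic part (step 3) and the strongly damped acoustic part (step 2). The transport term $u\cdot\nabla u$ with $u=O(1)$ feeds the acoustic energy through commutators that are not small. I would first place these commutators on the low-frequency viscous side, estimating them by $\kappa_m^{-1}\|\nabla u\|_{H^1}^2$. The uniform $H^1$ bounds in \eqref{eq:strong-uniform} would then come from the integrated dissipation, and the $H^2$ bounds for $u$ and $H$ are allowed to depend on $m$. The initial-layer flux estimate in step 4 is what prevents time-weight losses near $t=0$.
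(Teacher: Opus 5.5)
Your outline has several of the right ingredients: acoustic speed $m^{(\gamma-1)/2}$, compressional rate $D\asymp m^{\beta-1}$, the two-regime rate $\min\{D|\xi|^2,c_s^2/D\}$, an initial layer, and staging. But two load-bearing steps would not close as described.

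\emph{The vorticity--magnetic coupling.} You treat $\Curl u$ and $H$ separately and regard the Lorentz force $\rho^{-1}\Div\M(H)$ as an $O(1/m)$ perturbation. The shear damping of the vorticity is only $\mu/m$. Its forcing must therefore be integrated over times of order $m$, since only $\int_0^T\|\nabla u\|_2^2\,dt\le C_Rm$ is available, and this uses up the factor $1/m$ exactly. Testing the vorticity equation gives a bound of the form
\[
 \sup_t\|\Curl u\|_2^2\le C_R+C\sup_t\|H\|_\infty^2\cdot\frac1m\int_0^T\|\Curl H\|_2^2\,dt .
\]
The time integral is $O(1)$ by the energy. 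However, $\|H\|_\infty$ is controlled through the current equation, which is driven by $\nabla u$, so it depends polynomially on your velocity bootstrap height $V$. In the paper, $\sup\|H\|_\infty^2\le C_R(1+V^3)$ (Lemma~\ref{sl:magnetic}). Consequently $\sup\|u\|_{H^1}^2\le V$ can never be improved, with doubled constants or otherwise. The paper closes this step only through the mixed energy $\frac12\int\bigl((m^2/\rho)(\Curl u)^2+(\Curl H)^2\bigr)$. There the principal Lorentz and stretching couplings cancel exactly, leaving only $(m^2/\rho^2-1)$ residuals. This makes the principal constant in the $H^1$ velocity bound independent of every bootstrap height (Lemma~\ref{sl:mixed}).

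\emph{Where $\Gamma_*(\beta)$ comes from.} You assert, without derivation, the exponents $(5-2\beta-\gamma)/2$ and $(\beta+1-\gamma)/2$ for $u\cdot\nabla\sigma$ and $(\rho^\beta-m^\beta)\Div u$. These are not the terms that limit the argument. The binding term is the baroclinic feedback $\rho^{-1}\nabla\rho\times\nabla S$ in the vorticity equation, with $S=(2\mu+\rho^\beta)\Div u$. It requires $m^{-1}\int_0^T\|\rho/m-1\|_\infty^2\|\nabla S\|_2^2\,dt=o(1)$ uniformly in $T$.

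The velocity source in the acoustic inequality stays active on the shear time scale, so for $\beta\le2$ the available budget is only $\int_0^T\|\nabla S\|_2^2\,dt\lesssim m^3$. With the coarse bound $\|\rho/m-1\|_\infty^2\lesssim m^{1-\gamma}$, this yields only $\gamma>3$.

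To reach $5-2\beta$ one must lower the high acoustic energy $\mathcal A_1\asymp m^{1-2\beta}\|\nabla S\|_2^2+m^\gamma\|\nabla q\|_2^2$, with $q=\nabla\log\rho$, from $O(m)$ to $O(m^{3-2\beta+})$. This takes a layer of length $O(\log m/D)$; a layer of length $D^{-1}$ reduces it only by a constant factor. It also takes finitely many stages, for the following reason. Let $v$ be the solenoidal part of $u$. The differentiated strain $m^\gamma\int\nabla q\,\nabla^2v\,q$ can be paid by the dissipation of a weighted palinstrophy $m^{1-h}\|\nabla\Curl u\|_2^2$ only when $a+h+\beta+1-2\gamma<0$, where $\mathcal A_1\lesssim m^a$ is the current input. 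Meanwhile the output bound $m^{\max\{3-2\beta,\beta+1-h\}}$ improves only as $h$ grows, so $a$ and $h$ must be alternated. Your time weights $\min\{1,t\}$ act at the wrong scale and do not implement this alternation.

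On the layer itself, what is needed is the integrated budget $\int_0^{t_L}\|\nabla S\|_2^2\,dt\lesssim m^{\beta+1}$. Paired with the coarse density bound, it gives the power $m^{\beta+1-\gamma}$ (Lemma~\ref{sl:dynamic-feedback}). An $L^\infty_tL^2$ bound on $\Div u$ is not what is needed, and $\Div u$ need not be small at $t=0$.

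Finally, $\|\sqrt\rho\,u\|_2$ and $\|H\|_2$ do not decay in general, because $\int\rho u$ and $\avg{H}$ are conserved. Only uniform bounds and the time-integrated budgets hold.
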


\begin{remark}[The density scaling and the pressure range]
\label{rem:strong-scope}
The number $R$ is arbitrary and is fixed before $m$ is chosen. Thus the
velocity and magnetic norms need not be small. For $\gamma>3$, however,
\eqref{eq:scaled-data} requires an absolutely small density perturbation,
$\|\rho_0-m\|_{H^2}\le Rm^{-(\gamma-3)/2}$; the absence of an upper
bound on $\gamma$ must not be separated from this hypothesis.
For $\gamma<3$ the same condition even allows an additive density
perturbation of order $m^{(3-\gamma)/2}$, which is still small relative
to $m$. Nonzero mean magnetic fields are allowed.

In particular, for fixed $r_0,u_0,H_0\in H^2$ with $\int r_0=0$,
$\Div H_0=0$, and $\rho_0=m+r_0$, the theorem applies when
\begin{equation}\label{eq:fixed-additive-range}
 1<\beta<2,\qquad \Gamma_*(\beta)<\gamma\le3.
\end{equation}
If $r_0=0$, it applies to every $\gamma>\Gamma_*(\beta)$.
The lower threshold is $5-2\beta$ for $1<\beta\le4/3$ and
$\beta+1$ for $\beta\ge4/3$.
Its minimum is $7/3$, attained at $\beta=4/3$, but the theorem does not
include equality. For example, $\beta=33/25$ and $\gamma=12/5$ satisfy
both $\beta<4/3$ and $\gamma<5/2$. These are sufficient conditions from the present
estimates, not claimed necessary or optimal conditions for the equations.
Figure~\ref{fig:parameter-regions} summarizes these parameter ranges,
with $\gamma$ on the horizontal axis and $\beta$ on the vertical axis.
\end{remark}

\begin{figure}[!htbp]
\centering
\begin{tikzpicture}[x=2.08cm,y=1.70cm,
    font=\small,
    threshold/.style={draw=linkblue,line width=0.85pt,dash pattern=on 4pt off 2.5pt},
    guide/.style={draw=black!25,line width=0.35pt,densely dotted}]
  \coordinate (lower) at (3,1);
  \coordinate (corner) at ({7/3},{4/3});
  \coordinate (upper) at (3,2);
  \fill[black!5] (1,1) rectangle (5.4,4.2);
  \fill[linkblue!18]
    (3,1) -- (5.4,1) -- (5.4,4.2) -- (5.2,4.2)
    -- (corner) -- cycle;
  \fill[pattern=north east lines,pattern color=linkblue!55]
    (lower) -- (upper) -- (corner) -- cycle;
  \draw[linkblue!75,line width=0.65pt] (3,1) -- (3,2);

  \draw[guide] ({7/3},0) -- (corner) -- (0,{4/3});
  \draw[guide] (0,2) -- (3,2);
  \draw[black!45,densely dashed,line width=0.45pt] (1,0) -- (1,4.2);
  \draw[black!45,densely dashed,line width=0.45pt] (0,1) -- (5.4,1);

  \draw[threshold] (lower) -- (corner);
  \draw[threshold,-{Stealth[length=2mm,width=1.4mm]}] (corner) -- (5.2,4.2);
  \foreach \point in {lower,corner,upper}
    \draw[linkblue,fill=white,line width=0.75pt] (\point) circle[radius=1.8pt];

  \node[align=center,text=black!75,font=\small] at (2.75,3.65)
    {Weak existence and weak--strong uniqueness\\
     throughout $\beta>1$, $\gamma>1$};
  \node[align=center,text=linkblue] at (4.25,1.72)
    {Global strong solutions\\[2pt]$\gamma>\Gamma_*(\beta)$};

  \node[rotate=39.25,anchor=south,text=linkblue] at (3.85,2.99)
    {$\gamma=\beta+1$};
  \node[text=linkblue] at (3.98,0.58) {$\gamma=5-2\beta$};
  \draw[black!45,line width=0.4pt] (3.46,0.69) -- (2.78,1.11);

  \draw[-{Stealth[length=2mm]},line width=0.65pt] (0,0) -- (5.65,0)
    node[right] {$\gamma$};
  \draw[-{Stealth[length=2mm]},line width=0.65pt] (0,0) -- (0,4.45)
    node[above] {$\beta$};
  \node[below left] at (0,0) {$0$};
  \foreach \x/\lab in {1/1,2/2,{7/3}/{\frac73},3/3,4/4,5/5}
    \draw ({\x},0) -- ++(0,-0.055) node[below=2pt] {$\lab$};
  \foreach \y/\lab in {1/1,{4/3}/{\frac43},2/2,3/3,4/4}
    \draw (0,{\y}) -- ++(-0.045,0) node[left=2pt] {$\lab$};
\end{tikzpicture}
\caption{Parameter regions for Theorems~\ref{thm:main},
\ref{thm:weak-strong}, and~\ref{thm:strong}.
Weak existence and weak--strong uniqueness hold throughout
$\beta>1$, $\gamma>1$, in their stated data and solution classes.
The blue region is the sufficient strong-solution range
$\gamma>\Gamma_*(\beta)$, subject to the large-background and
scaled-data hypotheses~\eqref{eq:scaled-data}.
The dashed threshold is excluded; its sole bend is
$(\gamma,\beta)=(7/3,4/3)$.
The hatched subset adds $\gamma\le3$ and describes the fixed-additive
density case~\eqref{eq:fixed-additive-range}; its vertical edge
$\gamma=3$ is included for $1<\beta<2$.
The plotting window is truncated; neither exponent has an upper
bound in the strong region. No necessity or optimality is asserted.}
\label{fig:parameter-regions}
\end{figure}
\FloatBarrier

The strong argument uses a corrected high acoustic energy and a weighted
vorticity energy. Frequency-dependent density dissipation pays for the
derivative strain without requiring an unavailable density third
derivative. For $1<\beta\le2$, a finite sequence of increasing
vorticity-weight exponents progressively improves the acoustic energy bound. Each stage
uses only the improvement already obtained at the preceding stage;
the energy does not increase when the weight changes. A separate
short-time flux estimate treats the initial layer, after which the
improved density bound makes the baroclinic and finite-exponent
vorticity feedback small. All bootstrap heights are fixed before
the density threshold is selected.
There are two damping regimes. For $1<\beta\le2$ the proof resolves
low and high density frequencies. For $\beta>2$, the minimum of
the viscous rate $D\asymp m^{\beta-1}$ and the pressure-relaxation
rate $\Gamma\asymp m^{\gamma-\beta}$ grows faster than $m$ under
$\gamma>\beta+1$. It then controls the entire differentiated
density strain without a moving cutoff. This second argument is
needed to cover the superquadratic viscosity range; simply keeping
the former damping rate would impose the unnecessary restriction
$\gamma>2\beta-1$.

\subsection{Organization}
Section~\ref{sec:preliminaries} records the analytic tools.
Section~\ref{sec:uniform} proves the approximation-independent
estimates on intervals of existence.
Section~\ref{sec:fixed} constructs the approximate solutions and
extends them globally for each fixed parameter.
Section~\ref{sec:compactness} proves the generalized flux identity
and removes the density defect.
Section~\ref{sec:assembly} passes to the original equations,
constructs the time representatives, and proves
Theorem~\ref{thm:main}. Section~\ref{sec:weak-strong} proves
Theorem~\ref{thm:weak-strong}. The remaining sections develop the
magnetic, acoustic, and vorticity estimates and the simultaneous
bootstrap for Theorem~\ref{thm:strong}; the final section states the
limits of the results.

\section{Preliminaries}\label{sec:preliminaries}

We normalize the measure of \(\TT\) to one. Unless otherwise indicated,
\(\norm{f}_p=\norm{f}_{L^p(\TT)}\), \(\norm{f}_{H^s}=\norm{f}_{H^s(\TT)}\),
and \(L^a_tX=L^a(0,T;X)\). Negative-order spaces are the usual periodic
Bessel-potential spaces. We write \(\avg{f}=\int_{\TT}f\dx\) and
\(\Curl v=\partial_1v_2-\partial_2v_1\).
The inverse \(\Delta^{-1}\) and all associated Fourier multipliers have
zero symbol at frequency zero. Constants may depend on the fixed torus
and on explicitly fixed exponents.

We use periodic Sobolev and Calder\'on--Zygmund estimates, including
\begin{align}
 \norm{v-\avg{v}}_q&\le C_q\norm{\nabla v}_2
       &&(1\le q<\infty),\label{pre:sobolev}\\
 \norm{v}_4^2&\le C\norm{v}_2
                    \bigl(\norm{\nabla v}_2+\norm{v}_2\bigr),\label{pre:gn}\\
 \norm{\nabla\Delta^{-1}\Div f}_q&\le C_q\norm{f}_q
       &&(1<q<\infty).\label{pre:elliptic}
\end{align}
The low-frequency term in \eqref{pre:gn} will always be retained.
For \(1<q<2\), we also use the finite critical embedding
\(W^{1,q}(\TT)\hookrightarrow L^{2q/(2-q)}(\TT)\).
Compactness in time is obtained from the Aubin--Lions--Simon theorem
\cite{Simon1987}; time traces in Hilbert triples are used only when
both the spatial and time-derivative hypotheses have been verified.

\begin{lemma}[Weighted Poincar\'e inequality]\label{pre:weighted-poincare}
Let \(\rho\ge0\), \(0<M_-\le\int\rho\le M_+\), and
\(\norm{\rho}_\kappa\le K\) for some fixed \(\kappa>1\).
For every \(1\le q<\infty\) and \(v\in H^1(\TT)\),
\begin{equation}\label{pre:weighted-poincare-est}
 \norm{v}_q\le
 C_{q,\kappa,M_-,M_+,K}
       \bigl(\norm{\sqrt\rho\,v}_2+\norm{\nabla v}_2\bigr).
\end{equation}
In particular, no positive lower bound for \(\rho\) is required.
\end{lemma}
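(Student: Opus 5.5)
The plan is to reduce the estimate to controlling the spatial mean \(\avg v\), since the oscillation is already handled by \eqref{pre:sobolev}: \(\norm{v-\avg v}_q\le C_q\norm{\nabla v}_2\). It therefore suffices to prove
\[
 \abs{\avg v}\le C\bigl(\norm{\sqrt\rho\,v}_2+\norm{\nabla v}_2\bigr),
\]
with \(C\) depending only on \(\kappa,M_-,M_+,K\). Then \(\norm{v}_q\le\abs{\avg v}+\norm{v-\avg v}_q\) gives \eqref{pre:weighted-poincare-est}.

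The mean is extracted by integrating against \(\rho\). Writing \(v=\avg v+(v-\avg v)\), we get
\[
 \avg v\int\rho\,\dx=\int\rho v\,\dx-\int\rho\,(v-\avg v)\dx .
\]
We bound the right-hand side term by term.

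\emph{First term.} By Cauchy--Schwarz,
\[
 \Bigl|\int\rho v\Bigr|\le\Bigl(\int\rho\Bigr)^{1/2}\norm{\sqrt\rho\,v}_2\le M_+^{1/2}\norm{\sqrt\rho\,v}_2 .
\]

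\emph{Second term.} Let \(\kappa'=\kappa/(\kappa-1)<\infty\) be the dual exponent. By H\"older and \eqref{pre:sobolev},
\[
 \Bigl|\int\rho(v-\avg v)\Bigr|\le\norm{\rho}_\kappa\norm{v-\avg v}_{\kappa'}\le K C_{\kappa'}\norm{\nabla v}_2 .
\]
This is the only place where \(\kappa>1\) is needed: it makes \(\kappa'\) finite, and in two dimensions \(H^1\) embeds in every finite \(L^{\kappa'}\).

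Dividing by \(\int\rho\ge M_->0\) gives the claimed mean bound with \(C=M_-^{-1}\max\{M_+^{1/2},KC_{\kappa'}\}\), and the result follows for every \(1\le q<\infty\).

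No positive lower bound on \(\rho\) enters, since only \(\int\rho\ge M_-\) is used. There is no substantial obstacle. The only point needing care is that the constant must not depend on \(\rho\) beyond \(M_\pm\) and \(K\), and the explicit computation above makes this transparent without resorting to a compactness or contradiction argument.
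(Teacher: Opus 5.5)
Your proposal is correct and follows the paper's proof essentially line for line. The paper also integrates against \(\rho\) to extract \(M\abs{\avg v}\), uses Cauchy--Schwarz for \(\int\rho v\), and uses H\"older with \(\rho\in L^\kappa\) plus \eqref{pre:sobolev} at the finite dual exponent \(\kappa'\). It then applies \eqref{pre:sobolev} once more to recover \(\norm{v}_q\).
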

\begin{proof}
Writing \(M=\int\rho\), we have
\[
 M\abs{\avg{v}}
 \le \abs{\int\rho v\dx}
       +\abs{\int\rho(v-\avg{v})\dx}
 \le \sqrt M\norm{\sqrt\rho\,v}_2
       +\norm{\rho}_\kappa\norm{v-\avg{v}}_{\kappa'}.
\]
Since \(\kappa'<\infty\), \eqref{pre:sobolev} controls the last norm.
Another application of \eqref{pre:sobolev} proves the assertion.
\end{proof}

\begin{lemma}[A derivative commutator estimate]\label{pre:commutator}
Let \(\mathcal R_{ij}=\partial_i\partial_j\Delta^{-1}\), and set
\([v,\mathcal R_{ij}]f=v\mathcal R_{ij}f-\mathcal R_{ij}(vf)\).
If \(1<p,q<\infty\) and \(1/r=1/p+1/q<1\), then
\begin{equation}\label{pre:commutator-est}
 \norm{\nabla[v,\mathcal R_{ij}]f}_r
 \le C_{p,q}\norm{\nabla v}_p\norm{f}_q.
\end{equation}
\end{lemma}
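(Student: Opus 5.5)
The plan is to reduce the estimate to a standard bilinear Calderón-type commutator bound. All other terms will be handled by Hölder's inequality and the $L^s$-boundedness of $\mathcal R_{ij}$. By density it suffices to take $v,f$ smooth and periodic. Since $\mathcal R_{ij}$ annihilates constants and has zero symbol at frequency zero, we may replace $v$ by $v-\avg v$; the commutator is unchanged. Fix a derivative $\partial_k$ and use the Leibniz rule:
\[
 \partial_k[v,\mathcal R_{ij}]f
 =(\partial_kv)\,\mathcal R_{ij}f-\mathcal R_{ij}\bigl((\partial_kv)f\bigr)
  +[v,\mathcal R_{ij}]\partial_kf .
\]
The first two terms are immediate. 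Hölder with $1/r=1/p+1/q$ and the Calderón--Zygmund bound $\norm{\mathcal R_{ij}g}_s\le C_s\norm g_s$ for $1<s<\infty$ (as in \eqref{pre:elliptic}) give $C\norm{\nabla v}_p\norm f_q$. The condition $r>1$ is used exactly here, since $\mathcal R_{ij}$ acts on $L^r$.

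The whole difficulty is the last term, $[v,\mathcal R_{ij}]\partial_kf$. Here a derivative falls on $f$, which is only in $L^q$, and it must be transferred to $v$. First I would use the kernel representation. Writing $\mathcal R_{ij}$ through its periodic Calderón--Zygmund kernel $K_{ij}$ (homogeneous of degree $-2$ near the origin, plus a smooth remainder), we get formally
\[
 [v,\mathcal R_{ij}]\partial_kf(x)=\int\bigl(v(x)-v(y)\bigr)K_{ij}(x-y)\,\partial_kf(y)\dd y .
\]
An integration by parts in $y$ produces $\mathcal R_{ij}((\partial_kv)f)$, which is again controlled as above. It also produces the Calderón commutator
\[
 \int\frac{v(x)-v(y)}{|x-y|}\,\Omega(x-y)\,|x-y|^{-2}f(y)\dd y ,
\]
whose kernel has the Calderón form $A(x,y)=(v(x)-v(y))\,\partial K_{ij}(x-y)$. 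I would then invoke the Coifman--Meyer theorem (equivalently, the $L^p$ theory of Calderón's first commutator in its bilinear form with $\nabla v\in L^p$ rather than $L^\infty$). This gives $\norm{\cdot}_r\le C\norm{\nabla v}_p\norm f_q$ for $1<p,q<\infty$ and $1/r=1/p+1/q<1$.

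If one prefers a self-contained route on $\TT$, I would use a Bony paraproduct decomposition with periodic Littlewood--Paley blocks. One splits $v\,\mathcal R\partial_kf-\mathcal R(v\partial_kf)=\sum_{\text{low-high, high-low, high-high}}$. In the low--high piece, $[S_{j-1}v,\mathcal R]\Delta_j\partial_kf$ is a smooth symbol commutator. A first-order Taylor expansion of the symbol moves one derivative onto $S_{j-1}v$. The piece is then a paraproduct $T_{\nabla v}$ applied to an $L^q$ function, bounded by the Coifman--Meyer paraproduct estimate. In the high--low and high--high pieces the derivative on the low frequency of $f$ is paid by $2^{j}$. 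That factor is absorbed by $\norm{\Delta_jv}\lesssim2^{-j}\norm{\Delta_j\nabla v}$, and square-function bounds conclude.

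The zero mode contributes nothing because all multipliers vanish there. The main obstacle is this bilinear commutator step: justifying the bound with only $\nabla v\in L^p$, $p<\infty$. On the torus it requires either periodizing the Euclidean Coifman--Meyer theorem (transference for periodic multipliers) or carrying out the paraproduct argument. I would cite the Coifman--Rochberg--Weiss / Coifman--Meyer result, transferred to $\TT$ via standard transference of multilinear multipliers, rather than reprove it.
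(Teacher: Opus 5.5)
Your proposal is correct and is essentially the paper's argument: the paper likewise just invokes the nonendpoint derivative form of the Coifman--Meyer commutator estimate, and your Leibniz and integration-by-parts reduction to Calder\'on's first commutator only unpacks that citation (up to harmless local terms $c\,(\partial_l v)f$, which come from the $\tfrac{\delta_{ij}}{2}\,\delta_0$ component of the kernel of $\mathcal R_{ij}$ and from the principal-value boundary circle and are controlled by H\"older's inequality). The only real difference is the passage to $\TT$: the paper splits the periodic kernel into a localized Euclidean kernel plus a smooth remainder and controls the remainder by Poincar\'e's inequality after subtracting $\avg{v}$, which is more elementary than the multilinear transference you flag as the main obstacle and avoids the zero-mode bookkeeping that transferring the discontinuous symbol would require.
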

This is the nonendpoint derivative form of the
Coifman--Meyer commutator estimate \cite{CoifmanMeyer1978};
its periodic form follows by localization, the smooth remainder
being controlled by Poincar\'e's inequality after subtracting
\(\avg{v}\). The compactness argument uses only
\((p,q,r)=(2,6,3/2)\). An estimate of the undifferentiated commutator,
when needed, follows separately from H\"older's inequality and the
\(L^q\)-boundedness of \(\mathcal R_{ij}\).

\begin{lemma}[Friedrichs commutator in conservative form]
\label{pre:friedrichs}
Suppose \(z\in L^\infty_tL^p_x\), \(u\in L^2_tH^1_x\), and
\(2<p<\infty\). For a periodic spatial mollifier \(J_\ell\), write
\(z_\ell=J_\ell*z\). Then, with \(1/r=1/2+1/p\),
\begin{equation}\label{pre:friedrichs-est}
 c_\ell=\Div(uz_\ell)-J_\ell*\Div(uz)
       \longrightarrow0\quad\hbox{in }L^2_tL^r_x.
\end{equation}
Moreover,
\(\norm{c_\ell}_{L^2_tL^r_x}
 \le C\norm{\nabla u}_{L^2_{t,x}}\norm{z}_{L^\infty_tL^p_x}\).
\end{lemma}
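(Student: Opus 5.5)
The plan is the classical DiPerna--Lions argument, written in conservative form. First I will check that \(c_\ell\) is well defined. Since \(u\in L^2_tH^1_x\), the embedding \eqref{pre:sobolev} gives \(u\in L^2_tL^s_x\) for every \(s<\infty\). Hence \(uz\in L^2_tL^{r}_x\), and both divergences make sense as distributions.

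Expanding, \(c_\ell=u\cdot\nabla z_\ell+(\Div u)z_\ell-J_\ell*\Div(uz)\). I would rewrite the bracket through the commutator kernel: for a.e.\ \(t\),
\[
c_\ell(x)=\int_{\TT}\nabla J_\ell(x-y)\cdot\bigl(u(x)-u(y)\bigr)z(y)\dd y+(\Div u)(x)\,z_\ell(x).
\]
This identity holds first for smooth \(u\), and then by density. The second term is bounded by \(\norm{\nabla u}_2\norm{z}_p\) in \(L^r_x\) by H\"older's inequality.

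For the first term, write \(u(x)-u(y)=\int_0^1\nabla u(x-s(x-y))(x-y)\dd s\). Since \(|x-y|\,|\nabla J_\ell(x-y)|\le C\ell^{-2}\mathbf 1_{|x-y|\le\ell}\) is an \(L^1\)-normalized family, Minkowski's and Young's inequalities give the bound \(C\norm{\nabla u}_2\norm{z}_p\) in \(L^r_x\). Both bounds are uniform in \(\ell\), and integrating in time proves the stated estimate \(\norm{c_\ell}_{L^2_tL^r_x}\le C\norm{\nabla u}_{L^2_{t,x}}\norm{z}_{L^\infty_tL^p_x}\).

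For the convergence I will use density together with the uniform bound. The map \((u,z)\mapsto c_\ell\) is bilinear and bounded uniformly in \(\ell\) from \(L^2_tH^1_x\times L^\infty_tL^p_x\) into \(L^2_tL^r_x\). The main obstacle is that \(L^\infty_tL^p_x\) is not separable, so \(z\) cannot be approximated in its norm by smooth functions. To handle this, I approximate only \(u\) in \(L^2_tH^1_x\) by functions smooth in \(x\), with error \(\varepsilon\) contributing at most \(C\varepsilon\norm{z}_{L^\infty_tL^p_x}\). It then suffices to prove convergence for \(u\) smooth in space and \(z\) merely \(L^\infty_tL^p_x\).

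For such \(u\), I return to the kernel form. At fixed \(t\), the first term equals \(\int\nabla J_\ell(x-y)\cdot\bigl(u(x)-u(y)\bigr)z(y)\dd y\), and a Taylor expansion shows that it converges to \(-(\Div u)z\) in \(L^p_x\). Indeed, after subtracting \(-\nabla u(x):\int\nabla J_\ell(w)\otimes w\,z(x-w)\dd w\), the remainder is \(O(\ell\norm{\nabla^2u}_\infty\norm{z}_p)\). The main part converges to \(-(\Div u)z\) because \(\int\partial_jJ_\ell(w)w_i\dd w=-\delta_{ij}\) and translations are continuous in \(L^p\). The second term \((\Div u)z_\ell\) converges to \((\Div u)z\), so the two limits cancel and \(c_\ell(t)\to0\) in \(L^r_x\) for a.e.\ \(t\).

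The \(t\)-wise bound \(C\norm{\nabla u(t)}_2\norm{z}_{L^\infty_tL^p_x}\) is square integrable in time. Dominated convergence then gives \(c_\ell\to0\) in \(L^2_tL^r_x\), which completes the argument.
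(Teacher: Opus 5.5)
Your proposal is correct and follows the paper's route. It uses the same kernel representation of $c_\ell$, the fundamental theorem of calculus with H\"older/Minkowski for the uniform bound, convergence for spatially smooth $u$ via translation continuity of $z$, and approximation of $u$ alone in $L^2_tH^1_x$. You also make explicit the Taylor remainder and the dominated convergence in time that the paper leaves implicit.

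One harmless sign slip: since $u(x)-u(x-w)\approx\nabla u(x)w$, the subtracted main term should be $+\nabla u(x):\int\nabla J_\ell(w)\otimes w\,z(x-w)\dd w$. That sign is what gives your stated limit $-(\Div u)z$.
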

\begin{proof}
In the distributional convolution formula,
\[
 c_\ell(x)=z_\ell(x)\Div u(x)
 +\int\nabla J_\ell(h)\cdot
             \bigl(u(x)-u(x-h)\bigr)z(x-h)\dd h.
\]
The fundamental theorem of calculus and H\"older's inequality give
the stated uniform bound. For smooth \(u\), spatial translation
continuity of \(z\) shows that the integral converges to
\(-z\Div u\), whereas the first term converges to \(z\Div u\).
Their sum therefore converges to zero in \(L^2_tL^r_x\).
Approximation of \(u\) in \(L^2_tH^1_x\), followed by the uniform
bound, proves \eqref{pre:friedrichs-est}. This is the commutator
underlying renormalization \cite{DiPernaLions1989}. It concerns fixed
limiting functions \(z,u\), not strong convergence between distinct
approximate solutions.
\end{proof}
When this lemma is applied to an equation with a source, the source
and the derivatives of the renormalizing function must additionally
have the stated dual integrability. Those products will be checked
explicitly in the density-compactness argument.

\begin{lemma}[Periodic Jacobian pairing]\label{pre:wente}
For smooth periodic \(v=(v_1,v_2)\) and \(\phi\),
\begin{equation}\label{pre:wente-est}
 \abs{\int\phi\det\nabla v\dx}
 \le C\norm{\nabla\phi}_2\norm{\nabla v}_2^2.
\end{equation}
For \(v,\phi\in H^1\), the left-hand side is understood as the
continuous distributional pairing obtained by approximation.
\end{lemma}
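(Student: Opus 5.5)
The plan is to prove the periodic Wente-type estimate \eqref{pre:wente-est} through the divergence structure of the Jacobian together with the Hardy-space/BMO duality. For smooth periodic \(v\) we have the null-form identity
\[
 \det\nabla v=\partial_1v_1\,\partial_2v_2-\partial_2v_1\,\partial_1v_2
 =\partial_1\bigl(v_1\partial_2v_2\bigr)-\partial_2\bigl(v_1\partial_1v_2\bigr).
\]
Consequently \(\int\det\nabla v\dx=0\), so \(\phi\) may be replaced by \(\phi-\avg{\phi}\). By the same identity we may replace \(v_1\) by \(v_1-\avg{v_1}\) without changing \(\det\nabla v\).

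The key step is the Coifman--Lions--Meyer--Semmes compensated compactness theorem. Write \(\det\nabla v=\nabla v_1\cdot\nabla^\perp v_2\), where \(\nabla^\perp v_2=(\partial_2v_2,-\partial_1v_2)\) is divergence free and \(\nabla v_1\) is curl free. This product lies in the (periodic, local) Hardy space \(\mathcal H^1\), with
\[
 \norm{\det\nabla v}_{\mathcal H^1}\le C\norm{\nabla v_1}_2\norm{\nabla v_2}_2\le C\norm{\nabla v}_2^2 .
\]
Since the mean vanishes, the periodic version follows by localizing with a smooth partition of unity on a fundamental domain. The smooth remainder terms are controlled by Poincar\'e's inequality after subtracting averages, in the same way as in the commutator discussion after Lemma~\ref{pre:commutator}.

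Next we use Fefferman duality \((\mathcal H^1)^*=\mathrm{BMO}\) and the two-dimensional embedding \(H^1(\TT)\hookrightarrow\mathrm{BMO}\), which follows from Poincar\'e on balls: \(\frac1{|Q|}\int_Q|\phi-\phi_Q|\le C\norm{\nabla\phi}_{L^2(Q)}\). Together they give
\[
 \Bigl|\int\phi\det\nabla v\dx\Bigr|\le C\norm{\phi-\avg{\phi}}_{\mathrm{BMO}}\norm{\det\nabla v}_{\mathcal H^1}\le C\norm{\nabla\phi}_2\norm{\nabla v}_2^2 .
\]
If one prefers to avoid Hardy spaces, an alternative route is available. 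Integrate by parts to get \(\int\phi\det\nabla v=-\int v_1\,\nabla^\perp\phi\cdot\nabla v_2\), then apply the classical Wente estimate to the solution \(w\) of \(-\Delta w=\det\nabla v\) (mean zero), which gives \(\norm{\nabla w}_2\le C\norm{\nabla v}_2^2\). Finally pair \(\int\phi\det\nabla v=\int\nabla\phi\cdot\nabla w\) and use Cauchy--Schwarz. This last formulation is cleanest, so I would state the proof this way: reduce to the \(\dot H^1\) bound on \(w\), and cite Wente (or CLMS) for it, adapted to the torus.

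Finally, the extension to \(v,\phi\in H^1\) is by density. The trilinear form is bounded by the right-hand side, which is continuous in \(H^1\times H^1\). Smooth approximations therefore form a Cauchy sequence of pairings, and the limit is independent of the approximating sequence. The main obstacle is the periodic adaptation of the Hardy/Wente estimate, specifically the localization and the mean-zero bookkeeping. I expect this to be routine once averages are subtracted, since the torus is compact and the identity above is exact.
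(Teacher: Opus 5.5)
Your final formulation is correct and is exactly the paper's proof. It uses the periodic Wente (equivalently CLMS) bound $\norm{\nabla(-\Delta)^{-1}\det\nabla v}_2\le C\norm{\nabla v_1}_2\norm{\nabla v_2}_2$, the zero mean of the Jacobian, one integration by parts followed by Cauchy--Schwarz, and continuity of the trilinear form for the $H^1$ case. One harmless slip: with your own convention $\nabla^\perp=(\partial_2,-\partial_1)$, the intermediate identity in your alternative route should read $\int\phi\det\nabla v\dx=+\int v_1\nabla^\perp\phi\cdot\nabla v_2\dx$, but that step is not needed for the pairing $\int\phi\det\nabla v\dx=\int\nabla\phi\cdot\nabla w\dx$.
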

\begin{proof}
The periodic Wente estimate, equivalently the Hardy-space
Jacobian estimate \cite{CLMS1993}, gives
\[
 \norm{\nabla(-\Delta)^{-1}\det\nabla v}_2
 \le C\norm{\nabla v_1}_2\norm{\nabla v_2}_2.
\]
The Jacobian has zero mean. Integration by parts against
\(\phi-\avg{\phi}\) proves \eqref{pre:wente-est}.
\end{proof}
This estimate is used only at the fixed-parameter level to control
the Jacobian term in the effective-flux energy identity.

\begin{lemma}[Logarithmic div--curl estimate]\label{pre:bkm}
For each fixed \(q>2\) and \(v\in W^{2,q}(\TT;\RR^2)\),
\begin{equation}\label{pre:bkm-est}
 \norm{\nabla v}_\infty
 \le C_q\bigl(\norm{\Div v}_\infty+\norm{\Curl v}_\infty\bigr)
          \log\bigl(e+\norm{\nabla^2v}_q\bigr)
       +C_q\bigl(1+\norm{\nabla v}_2\bigr).
\end{equation}
\end{lemma}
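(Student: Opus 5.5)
The estimate is a periodic form of the Brezis--Gallouet--Wainger (Beale--Kato--Majda) logarithmic inequality. The plan is to reduce to the div--curl decomposition and then apply a Littlewood--Paley splitting with a cutoff frequency chosen logarithmically.

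\emph{Step 1: reduction to Riesz transforms.} On \(\TT\), write \(v=\avg{v}+\nabla\Delta^{-1}\Div v-\nabla^\perp\Delta^{-1}\Curl v\), where \(\nabla^\perp=(-\partial_2,\partial_1)\). This Hodge identity holds for mean-zero fields, and \(\nabla v\) is unaffected by the mean. Hence
\[
\nabla v=\mathcal R(\Div v)+\mathcal R'(\Curl v),
\]
where \(\mathcal R,\mathcal R'\) are matrices of second-order Riesz transforms \(\mathcal R_{ij}\) acting on mean-zero functions. It therefore suffices to show that for a zero-order Calder\'on--Zygmund multiplier \(T\) and a mean-zero \(f=\Div v\) or \(\Curl v\),
\[
\norm{Tf}_\infty\le C_q\norm{f}_\infty\log\bigl(e+\norm{\nabla f}_q\bigr)+C_q\bigl(1+\norm{f}_2\bigr).
\]
We have \(\norm{\nabla f}_q\le C\norm{\nabla^2v}_q\) and \(\norm{f}_2\le C\norm{\nabla v}_2\). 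Since \(\log\) is monotone, this gives \eqref{pre:bkm-est}.

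\emph{Step 2: frequency splitting.} Take a periodic Littlewood--Paley decomposition \(f=\sum_{j\ge0}\Delta_jf\); the zero mode is absent. Fix \(N\ge1\) and split \(Tf=\sum_{j\le N}T\Delta_jf+\sum_{j>N}T\Delta_jf\).

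For the low frequencies, each \(T\Delta_j\) with \(j\ge1\) has a kernel whose \(L^1\) norm is bounded uniformly in \(j\). This follows from dilation invariance of the smooth annular multiplier, transferred to the torus by Poisson summation. Consequently
\[
\sum_{1\le j\le N}\norm{T\Delta_jf}_\infty\le CN\norm{f}_\infty .
\]
The block \(j=0\) is finite-dimensional and is bounded by \(C\norm{f}_2\).

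For the high frequencies, Bernstein's inequality on annuli of size \(2^j\) gives
\[
\norm{T\Delta_jf}_\infty\le C2^{2j/q}\norm{\Delta_jf}_q\le C2^{j(2/q-1)}\norm{\nabla f}_q .
\]
Since \(q>2\), the exponent \(2/q-1\) is negative. Summing over \(j>N\) gives \(C_q2^{-N(1-2/q)}\norm{\nabla f}_q\).

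\emph{Step 3: choice of \(N\).} Choose \(N\) to be the least integer with \(2^{N(1-2/q)}\ge e+\norm{\nabla f}_q\). Then \(N\le C_q\log(e+\norm{\nabla f}_q)+1\), and the high-frequency tail is at most a constant. The low part contributes \(C_q\norm{f}_\infty\log(e+\norm{\nabla f}_q)+C\norm{f}_\infty\). The leftover \(C\norm{f}_\infty\) is absorbed by the logarithmic term, because \(\log(e+\cdot)\ge1\). Collecting the terms proves the bound of Step 1 with the additive constant \(C_q(1+\norm{f}_2)\).

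\emph{Main obstacle.} The step needing the most care is the uniform \(L^1\) bound for the periodic kernels of \(T\Delta_j\), together with periodic Bernstein. I would handle it by periodizing the whole-space kernels: the Poisson summation formula applies because these kernels are Schwartz at scale \(2^{-j}\), so their periodizations have \(L^1(\TT)\) norm at most the \(L^1(\RR^2)\) norm. With this standard transference in hand, the remaining steps are routine bookkeeping.
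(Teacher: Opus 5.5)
Your proposal is correct and follows essentially the same route as the paper: the div--curl (Riesz transform) representation of \(\nabla v\), a periodic dyadic splitting with the lowest block bounded by \(\norm{\nabla v}_2\), the intermediate blocks bounded uniformly by \(\norm{\Div v}_\infty+\norm{\Curl v}_\infty\), the high tail summed via Bernstein using \(q>2\), and the choice \(N\sim\log(e+\norm{\nabla^2v}_q)\). One harmless slip: with \(\nabla^\perp=(-\partial_2,\partial_1)\) the Hodge identity is \(v-\avg{v}=\nabla\Delta^{-1}\Div v+\nabla^\perp\Delta^{-1}\Curl v\), with a plus sign, which does not affect any of your bounds.
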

\begin{proof}
A periodic dyadic decomposition and the div--curl representation
of \(\nabla v\) give, for every integer \(N\ge1\),
\[
 \norm{\nabla v}_\infty
 \le C\norm{\nabla v}_2
   +CN\bigl(\norm{\Div v}_\infty+\norm{\Curl v}_\infty\bigr)
   +C_q2^{-N(1-2/q)}\norm{\nabla^2v}_q.
\]
The finite low-frequency block is controlled by Bernstein's
inequality. Each intermediate block is a uniformly bounded
localized multiplier applied to \(\Div v,\Curl v\); the last term
follows from Bernstein's inequality and summation of the high
frequencies. Choose \(N\) comparable to
\(1+(1-2/q)^{-1}\log(e+\norm{\nabla^2v}_q)\).
\end{proof}
The logarithm in \eqref{pre:bkm-est} is used to close the density
gradient estimate for each fixed \(\delta>0\). It is not used to
assert a density maximum bound uniform as \(\delta\downarrow0\).

\section{Uniform estimates for the mixed-viscosity approximation}
\label{sec:uniform}

Throughout this section, \(\mu,\nu>0\), \(\beta>1\), and
\(\gamma>1\) are fixed. For \(0<\delta\le1\), set
\begin{equation}\label{uni:coefficients}
 \lambda_\delta(r)=r^\beta+\delta r^2,\qquad
 a_\delta(r)=2\mu+\lambda_\delta(r),\qquad P(r)=r^\gamma.
\end{equation}
We write \(B=\max\{\beta,2\}\); this is the largest growth exponent
in the mixed viscosity, not a restriction on \(\beta\).
Consider an exact positive-density solution, on an interval where
it already exists, of
\begin{equation}\label{uni:mixed-system}
\begin{aligned}
 &\rho_t+\Div(\rho u)=0,\\
 &(\rho u)_t+\Div(\rho u\otimes u)+\nabla P(\rho)
   =\mu\Delta u+\nabla((\mu+\lambda_\delta(\rho))\Div u)
      +\Div\M(H),\\
 &H_t+\Div(H\otimes u-u\otimes H)=\nu\Delta H,
       \qquad \Div H=0.
\end{aligned}
\end{equation}
Here \(\M(H)=H\otimes H-\frac12|H|^2I\), with row-wise divergence.
Subscripts \(\delta\) on the solution are suppressed. We first
prove the estimates for smooth solutions; their validity for
positive-density \(H^2\) strong solutions is explained below.

\begin{proposition}\label{uni:uniform-moments}
Let \(T<\infty\). Suppose solutions of \eqref{uni:mixed-system}
exist on \([0,\tau]\), where \(\tau\le T\), and their initial data obey
\begin{equation}\label{uni:initial-bounds}
 0<M_-\le\int\rho_0\dx\le M_+,\qquad
 \norm{\rho_0}_\infty+\norm{u_0}_{H^1}+\norm{H_0}_{H^1}\le K.
\end{equation}
Then, for every fixed \(1\le p<\infty\),
\begin{equation}\label{uni:moment-bound}
 \sup_{0\le t\le\tau}
      \bigl(\norm{\rho(t)}_p+\norm{H(t)}_p\bigr)\le C_{p,T}.
\end{equation}
The constants depend only on the indicated initial bounds,
\(\mu,\nu,\beta,\gamma,T\), and the fixed exponent. They are
independent of \(\delta\), of \(\tau\le T\), of the density maximum
during the evolution, and of a common positive density lower bound.
\end{proposition}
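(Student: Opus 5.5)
The plan is to follow the Va\u{\i}gant--Kazhikhov/Huang--Li strategy via a transported viscosity primitive. First I record the basic energy identity for \eqref{uni:mixed-system}, which is independent of $\delta$:
\[
 \sup_t\int\Bigl[\tfrac12\rho|u|^2+\tfrac{\rho^\gamma}{\gamma-1}+\tfrac12|H|^2\Bigr]\dx
 +\int_0^\tau\!\!\int\bigl[\mu|\nabla u|^2+(\mu+\lambda_\delta(\rho))|\Div u|^2+\nu|\nabla H|^2\bigr]\le C.
\]
The initial energy is bounded by $K$ through \eqref{uni:initial-bounds}, and mass is conserved, so $\int\rho=\int\rho_0\in[M_-,M_+]$. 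With $\kappa=\gamma$, Lemma~\ref{pre:weighted-poincare} then controls $\norm{u}_{L^2_tL^q}$ for every finite $q$. The magnetic moments $\norm{H}_p$ follow from the $L^\infty_tL^2\cap L^2_tH^1$ bound only in an integrated sense, so the pointwise-in-time bound on $\norm H_p$ requires a separate step, carried out at the end.

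Next I introduce the effective viscous flux $F=a_\delta(\rho)\Div u-P(\rho)-\tfrac12|H|^2+\avg{\cdot}$ corrections, and the vorticity $\omega=\Curl u$. Applying $\Div$ to the momentum equation gives the representation
\[
 F-\avg F=\Delta^{-1}\Div\bigl[(\rho u)_t+\Div(\rho u\otimes u)-\Div(H\otimes H)\bigr].
\]
Let $\theta_\delta$ be the primitive with $\theta_\delta'(r)=a_\delta(r)/r$, so $\theta_\delta(\rho)\sim\frac{1}{\beta}\rho^\beta+\delta\rho^2+2\mu\log\rho$. The continuity equation then gives
\[
 \partial_t\theta_\delta(\rho)+u\cdot\nabla\theta_\delta(\rho)=-a_\delta(\rho)\Div u=-P(\rho)-F+(\text{magnetic}).
\]
Combining with the representation of $F$, the term $\Delta^{-1}\Div(\rho u)_t$ moves to the left. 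I define $\xi=\Delta^{-1}\Div(\rho u)$, so that $D_t(\theta_\delta(\rho)+\xi)=-P-\avg F+[u_i,\mathcal R_{ij}](\rho u_j)-\mathcal R_{ij}(H_iH_j)+\tfrac12|H|^2$. I then multiply by $\rho\,(\theta_\delta+\xi)_+^{2m-1}$, or equivalently work with $\int\rho\,\theta^{2m}$, which is a density moment of order $2m\beta+1=p$. The pressure term has a good sign and is discarded.

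The remaining work is to estimate the error terms against $\norm{\rho}_p$:
\begin{itemize}
 \item The commutator is handled by Lemma~\ref{pre:commutator} and Lemma~\ref{pre:weighted-poincare}; its bound involves $\norm{\nabla u}_2\norm{\rho u}_q$, and $\norm{\rho u}_q\le\norm{\sqrt\rho}_{\cdot}\norm{\sqrt\rho u}_2$-type interpolations contribute a power $\norm\rho_p^{1/2+}$.
 \item $\xi$ is bounded via $\norm{\rho u}_{L^{r}}$ by similar interpolation.
 \item The Riesz transforms of $H\otimes H$ are bounded by $\norm{H}_{2p'}^2$.
 \item The mean $\avg F=\avg{a_\delta(\rho)\Div u}-\avg{P}-\dots$ is the delicate term. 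It is bounded by $\norm{a_\delta(\rho)^{1/2}}_2\norm{a_\delta^{1/2}\Div u}_2$, which involves $\norm{\rho}_{B}^{B/2}$ times an $L^2_t$ function. This produces the $B/2$ term in $\alpha_m$.
\end{itemize}
Collecting terms yields the scalar inequality \eqref{uni:sublinear-ode}:
\[
 \frac{d}{dt}Y\le g(t)\bigl(1+Y^{\alpha_m}\bigr),\qquad Y=\int\rho\,\theta^{2m},\quad g\in L^1(0,T),
\]
with $\alpha_m<1$ because $\beta>1$ and $B/2<\beta+1/(2m)$ when $\beta>2$ (and $1<\beta+1/(2m)$ otherwise; for $\beta\le2$ I would check that $\delta\rho^2$ does not spoil it, using $\delta\le1$). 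Sublinearity integrates the inequality without Gronwall blowup. This gives $\norm{\rho}_p$ for $p=2m\beta+1$, and hence every $p$ by interpolation with mass.

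The magnetic moments are obtained by multiplying the induction equation by $|H|^{p-2}H$. The stretching term is bounded by $\int|\nabla u||H|^p\le\norm{\nabla u}_2\norm{H}_{2p}^p$, and Gagliardo--Nirenberg on $|H|^{p/2}$ together with the dissipation $\nu\int|H|^{p-2}|\nabla H|^2$ gives a Gronwall bound with integrable coefficient $\norm{\nabla u}_2^2$. Since the density argument needs $H$ moments, I would do this first: it uses only energy.

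\textbf{Main obstacle.} I expect the main obstacle to be the mean-flux and commutator terms. They require careful exponent bookkeeping so that every $\rho$-power sits strictly below one relative to $Y$, uniformly in $\delta$ and without using $\norm\rho_\infty$. If the direct H\"older fails, I would first try absorbing part of the bound into the time-integrated dissipation $\int\lambda_\delta|\Div u|^2$.
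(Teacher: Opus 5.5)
Your outline is the paper's proof. It has the same ingredients: energy plus the weighted Poincar\'e inequality, magnetic moments obtained first by testing with $|H|^{q-2}H$, and the transported quantity $\Lambda_\delta(\rho)-\xi$ (your $\theta_\delta+\xi$, with the opposite sign convention for $\xi$). It also uses the test $\rho W^{2m-1}$ with $W$ its positive part and $p=2m\beta+1$, the mean flux producing $y^{B/2}$ through $\int a_\delta\le C(1+y^B)$, and a sublinear scalar inequality. The only structural difference is that you keep the commutator $[u_i,\mathcal R_{ij}](\rho u_j)$. The paper instead bounds $\eta$ and $u\cdot\nabla\xi$ separately by Calder\'on--Zygmund estimates. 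At the level of finite moments no cancellation is needed, so the Coifman--Meyer estimate buys nothing here; it matters for the pointwise bound in Section~\ref{sec:fixed}.

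One step is misstated, and it is exactly the bookkeeping you flag. Write $f=(\int\rho W^{2m}\dx)^{1/(2m)}$. H\"older in $\rho\dx$ and then in $L^p\times L^{p'}$ gives $f'\le\norm{\rho}_p^{1/(2m)}\norm{G}_s+C|\avg{F}|$ with $s=p/\beta>2$. So every right-hand term must be placed in $L^{p/\beta}$.

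In that space the bound $\norm{\rho u}_s\le\norm{\sqrt\rho}_{2s/(2-s)}\norm{\sqrt\rho\,u}_2$ is unavailable, since it requires $s<2$. Hence the commutator is \emph{not} $O(\norm{\rho}_p^{1/2+})$. What holds is
$\norm{\rho u}_s\le\norm{\rho}_p\norm{u}_{p/(\beta-1)}\le C\norm{\rho}_p(1+\norm{\nabla u}_2)$.
Thus the convective term is linear in $y=\norm{\rho}_p$, with coefficient $C(1+e_\delta)$. This is the numerator $1+1/(2m)$ in $\alpha_m$.

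The gain over linear growth then comes only from the coercivity $y^{\beta+1/(2m)}\le C(1+f)$, and this is where $\beta>1$ is used. The $\sqrt\rho\,u$ interpolation is the right tool only in that coercivity step, where it gives $\norm{\xi}_s\le C\norm{\rho u}_{2s/(s+2)}\le Cy^{1/2}$.

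Two slips are harmless. The magnetic terms should read $+\mathcal R_{ij}(H_iH_j)-\tfrac12|H|^2$, which is irrelevant once both are bounded by $\norm{H}_{2s}^2$. Also, the exponent $\alpha_m$ governs $f$, not $\int\rho W^{2m}$ itself, although both inequalities are sublinear.
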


\begin{proof}
Write \(d=\Div u\), \(\omega=\Curl u\), \(M=\int\rho_0\dx\), and
\[
 E(t)=\int\left(\frac12\rho|u|^2
                 +\frac{\rho^\gamma}{\gamma-1}
                 +\frac12|H|^2\right)\dx,\qquad
 e_\delta(t)=\int\left(a_\delta d^2+\mu\omega^2
                               +\nu|\nabla H|^2\right)\dx.
\]
The kinetic and magnetic work terms cancel. The periodic identity
\(\norm{\nabla u}_2^2=\norm{d}_2^2+\norm{\omega}_2^2\) identifies
the viscous dissipation exactly, and yields
\begin{equation}\label{uni:energy}
 E(t)+\int_0^t e_\delta(s)\dd s=E(0),\qquad
 \norm{\nabla u(t)}_2^2\le C_\mu e_\delta(t).
\end{equation}
In particular \(E(0)\) is bounded uniformly by
\eqref{uni:initial-bounds}. Applying Lemma~\ref{pre:weighted-poincare}
with \(\kappa=\gamma\) gives, for every fixed finite \(q\),
\begin{equation}\label{uni:velocity}
 \norm{u(t)}_q\le C_q(1+\norm{\nabla u(t)}_2).
\end{equation}
Only the basic pressure energy is used here.

\medskip\noindent\textit{Magnetic moments.}
The induction equation is
\(H_t+u\cdot\nabla H=H\cdot\nabla u-Hd+\nu\Delta H\).
For a fixed \(q\ge2\), testing by \(|H|^{q-2}H\) gives
\begin{equation}\label{uni:magnetic-test}
\begin{aligned}
 &\frac1q\frac{\dd}{\dt}\int|H|^q\dx
 +\nu\int|H|^{q-2}|\nabla H|^2\dx
   +\nu(q-2)\int|H|^{q-2}|\nabla|H||^2\dx\\
 &=\int|H|^{q-2}H_iH_j\partial_j u_i\dx
       -\left(1-\frac1q\right)\int d|H|^q\dx.
\end{aligned}
\end{equation}
Put \(Z=|H|^{q/2}\). The dissipation controls
\(4\nu(q-1)q^{-2}\norm{\nabla Z}_2^2\); the right-hand side is at most
\(C_q\norm{\nabla u}_2\norm{Z}_4^2\).
Using \eqref{pre:gn}, including its low-frequency term, and Young's
inequality, we obtain
\begin{equation}\label{uni:magnetic-moments}
 \frac{\dd}{\dt}\norm{H}_q^q
   +c_{q,\nu}\norm{\nabla Z}_2^2
 \le C_{q,\nu}(1+\norm{\nabla u}_2^2)\norm{H}_q^q,\qquad
 \sup_{t\le\tau}\norm{H(t)}_q\le C_{q,T}.
\end{equation}
The last estimate follows from \eqref{uni:energy} and
\(H_0\in H^1\hookrightarrow L^q\). In particular, magnetic moments
have been obtained without any higher density moment.

\medskip\noindent\textit{A transported scalar.}
Define zero-mean potentials by
\begin{equation}\label{uni:potentials}
 -\Delta\xi=\Div(\rho u),\qquad
 -\Delta\eta=\Div\Div(\rho u\otimes u-H\otimes H),
\end{equation}
and introduce the full effective flux and its primitive
\begin{equation}\label{uni:flux-primitive}
 \mathcal B=a_\delta d-P(\rho)-\frac12|H|^2,\qquad
 \Lambda_\delta(r)=2\mu\log r+\frac{r^\beta-1}{\beta}
                                  +\frac\delta2(r^2-1).
\end{equation}
Taking the divergence of the momentum equation gives
\(\xi_t+\eta+\mathcal B-\avg{\mathcal B}=0\).
Since \(r\Lambda_\delta'(r)=a_\delta(r)\), the continuity equation
implies \(D_u\Lambda_\delta(\rho)=-a_\delta d\), where
\(D_u=\partial_t+u\cdot\nabla\). Consequently,
\begin{equation}\label{uni:transport}
 D_u(\Lambda_\delta(\rho)-\xi)+P(\rho)+\frac12|H|^2
       =\eta-u\cdot\nabla\xi-\avg{\mathcal B}.
\end{equation}
Both the magnetic pressure and the fluid pressure on the left
are nonnegative. No modification of the continuity equation
has been made.

\medskip\noindent\textit{A fixed density moment.}
Fix an integer \(m\ge1\), and set
\begin{equation}\label{uni:indices}
\begin{gathered}
 k=2m,\quad \epsilon=k^{-1},\quad p=k\beta+1,\quad
 s=\frac p\beta=\frac{kp}{p-1}>2,\\
 y=\norm{\rho}_p,\quad
 W=(\Lambda_\delta(\rho)-\xi)_+,\quad
 I=\int\rho W^k\dx,\quad f=I^{1/k}.
\end{gathered}
\end{equation}
The continuity equation and the \(C^1\) chain rule for the positive
part to the power \(k\) give
\begin{equation}\label{uni:positive-test}
 \frac1k I'
 \le\int\rho W^{k-1}|\eta-u\cdot\nabla\xi|\dx
       +|\avg{\mathcal B}|\int\rho W^{k-1}\dx.
\end{equation}
H\"older's inequality, first in the measure \(\rho\dx\) and then
with spatial exponents \(p,p'\), uses the identity
\begin{equation}\label{uni:weighted-holder}
 \left(\int\rho|G|^k\dx\right)^{1/k}
       \le y^\epsilon\norm{G}_{kp'}
       =y^\epsilon\norm{G}_s.
\end{equation}
To include times when \(I=0\), let \(f_\zeta=(I+\zeta)^{1/k}\),
\(\zeta>0\). Then \eqref{uni:positive-test} yields
\begin{equation}\label{uni:f-derivative}
 f_\zeta'
 \le \left(\frac I{I+\zeta}\right)^{(k-1)/k}
       \left[y^\epsilon\norm{\eta-u\cdot\nabla\xi}_s
                         +M^\epsilon|\avg{\mathcal B}|\right]
 \le y^\epsilon\norm{\eta-u\cdot\nabla\xi}_s
                         +M^\epsilon|\avg{\mathcal B}|.
\end{equation}

Choose the fixed, finite exponents
\begin{equation}\label{uni:elliptic-indices}
\begin{gathered}
 q=\frac{2p}{\beta-1},\qquad r=\frac{2p}{\beta+1};\\
 \frac1s=\frac1p+\frac2q,\quad
 \frac1r=\frac1p+\frac1q,\quad
 \frac1s=\frac1q+\frac1r.
\end{gathered}
\end{equation}
Here \(\beta>1\) ensures \(q<\infty\), and all singular-integral
exponents are strictly between one and infinity. Thus
\begin{equation}\label{uni:eta-xi}
\begin{aligned}
 \norm{\eta}_s
 &\le C_s\bigl(y\norm{u}_q^2+\norm{H}_{2s}^2\bigr),\\
 \norm{u\cdot\nabla\xi}_s
 &\le\norm{u}_q\norm{\nabla\xi}_r
 \le C_r\norm{u}_q\norm{\rho u}_r
 \le C_r y\norm{u}_q^2.
\end{aligned}
\end{equation}
Equations \eqref{uni:velocity} and \eqref{uni:magnetic-moments}
therefore bound their sum by
\(C_{m,T}[y(1+e_\delta)+1]\).
The spatial mean of the flux is controlled separately:
\begin{equation}\label{uni:mean-flux}
 |\avg{\mathcal B}|
 \le\left(\int a_\delta d^2\dx\right)^{1/2}
        \left(\int a_\delta\dx\right)^{1/2}
        +\int(P(\rho)+|H|^2/2)\dx
 \le C\bigl[1+\sqrt{e_\delta}(1+y^{B/2})\bigr].
\end{equation}
Indeed \(p=k\beta+1>B\), so that \(\delta\le1\) and finite measure give
\(\int a_\delta\le2\mu+y^\beta+y^2\le C(1+y^B)\).
The pressure integral is controlled by energy for every
\(\gamma>1\); no comparison between \(\gamma\) and \(\beta\) is needed.
It follows that
\begin{equation}\label{uni:scalar-ode-input}
 f_\zeta'\le C_{m,T}(1+e_\delta)
                 (1+y^{\max\{1+\epsilon,B/2\}}).
\end{equation}

\medskip\noindent\textit{Coercivity and closure.}
For \(r\ge2\),
\(\Lambda_\delta(r)\ge c_\beta r^\beta\), where
\(c_\beta=(1-2^{-\beta})/\beta\). Since
\(\Lambda_\delta(\rho)\le W+|\xi|\), splitting at \(\rho=2\) and
using \eqref{uni:weighted-holder} gives
\begin{equation}\label{uni:coercivity-first}
 \int\rho^p\dx\le C_m\left(1+I+\int\rho|\xi|^k\dx\right),\qquad
 y^{\beta+\epsilon}\le C_m(1+f+y^\epsilon\norm{\xi}_s).
\end{equation}
Set \(q_0=2s/(s+2)\in(1,2)\); then
\(1/q_0=1/2+1/s\). The finite critical Sobolev estimate and
the kinetic energy bound imply
\begin{equation}\label{uni:xi-lower-power}
 \norm{\xi}_s
 \le C_s\norm{\xi}_{W^{1,q_0}}
 \le C_s\norm{\rho u}_{q_0}
 \le C_s\norm{\sqrt\rho u}_2\norm{\sqrt\rho}_s
 \le C_{m,T}\norm{\rho}_{s/2}^{1/2}
 \le C_{m,T}y^{1/2}.
\end{equation}
The last step uses \(1<s/2=p/(2\beta)<p\).
The term \(y^{\epsilon+1/2}\) is strictly lower order than
\(y^{\epsilon+\beta}\). Absorbing it yields
\begin{equation}\label{uni:coercivity}
 y^{\beta+\epsilon}\le C_{m,T}(1+f)
                         \le C_{m,T}(1+f_\zeta).
\end{equation}
Combining this with \eqref{uni:scalar-ode-input}, we obtain
\begin{equation}\label{uni:sublinear-ode}
 \begin{gathered}
 f_\zeta'\le C_{m,T}(1+e_\delta)(1+f_\zeta)^{\alpha_m},
 \\
 \alpha_m=\frac{\max\{1+1/(2m),B/2\}}{\beta+1/(2m)}<1.
 \end{gathered}
\end{equation}
The strict inequality follows from \(1<\beta\) and
\(B/2=\max\{\beta/2,1\}<\beta\); no upper bound on \(\beta\)
is used.
Hence
\begin{equation}\label{uni:integrated-ode}
 (1+f_\zeta(t))^{1-\alpha_m}
 \le(1+f_\zeta(0))^{1-\alpha_m}
   +C_{m,T}(1-\alpha_m)\left(T+\int_0^\tau e_\delta\dt\right).
\end{equation}
The initial term is uniform: \((\Lambda_\delta(\rho_0))_+\)
is bounded by \eqref{uni:initial-bounds}, whereas
\(\norm{\xi_0}_\infty\le C\norm{\rho_0u_0}_4\le C\).
In particular the negative logarithm at small density does not
enter this upper bound. Taking \(\zeta\downarrow0\) proves the
uniform \(L^{2m\beta+1}\) density bound. Given any finite target
exponent, one chooses a fixed larger \(m\) and uses the finite
measure of \(\TT\). No bound uniform as \(m\to\infty\) is asserted.
\end{proof}

\begin{corollary}[Vanishing additional stress]\label{uni:small-stress}
Under the assumptions of Proposition~\ref{uni:uniform-moments},
for every fixed \(1\le r<2\),
\begin{equation}\label{uni:small-stress-est}
 \norm{\delta\rho^2\Div u}_{L^2_tL^r_x}
 \le C_{r,T}\sqrt\delta.
\end{equation}
\end{corollary}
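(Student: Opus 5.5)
Write \(\delta\rho^2\Div u=\sqrt\delta\,\rho\cdot(\sqrt\delta\,\rho\Div u)\) and bound the two factors separately. The energy identity \eqref{uni:energy} gives
\[
\int_0^\tau\!\!\int\delta\rho^2 d^2\dx\dt\le\int_0^\tau e_\delta\dt\le E(0)\le C,
\]
because \(a_\delta\ge\delta\rho^2\) pointwise. Hence \(\sqrt\delta\,\rho\,d\) is bounded in \(L^2_tL^2_x\) uniformly in \(\delta\). The remaining factor \(\sqrt\delta\,\rho\) carries the small constant, and Proposition~\ref{uni:uniform-moments} bounds \(\rho\) in \(L^\infty_tL^p_x\) for every fixed finite \(p\), uniformly in \(\delta\).

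Fix \(1\le r<2\) and choose \(p\) by \(1/r=1/2+1/p\); then \(p=2r/(2-r)\) is finite since \(r<2\), and for \(r=1\) one takes \(p=2\). For a.e. \(t\), H\"older's inequality in space gives
\[
\norm{\delta\rho^2d}_r\le\sqrt\delta\,\norm{\rho}_p\,\norm{\sqrt\delta\rho d}_2\le\sqrt\delta\,C_{p,T}\,\norm{\sqrt\delta\rho d}_2 .
\]
Squaring, integrating over \((0,\tau)\) and using the energy bound yields
\[
\norm{\delta\rho^2\Div u}_{L^2_tL^r_x}\le C_{p,T}\sqrt{E(0)}\,\sqrt\delta .
\]
This is \eqref{uni:small-stress-est}, with the constant depending on \(r\) through \(p\) and independent of \(\delta\) and \(\tau\le T\).

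Neither step is difficult. The only point needing care is that the density factor must be controlled in \(L^\infty_t\) rather than \(L^2_t\), since the dissipation is only \(L^1\) in time. That is why the time-uniform moments of Proposition~\ref{uni:uniform-moments} are invoked instead of any \(L^\infty\) density bound. The restriction \(r<2\) is exactly what keeps \(p\) finite.
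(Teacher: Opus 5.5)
Your proof is correct and is essentially the paper's own argument. You use the same splitting $\delta\rho^2 d=\sqrt\delta\,\rho\cdot(\sqrt\delta\,\rho d)$ and H\"older's inequality with $1/r=1/2+1/q_r$, $q_r=2r/(2-r)<\infty$, and you bound the two factors by the uniform moment bound of Proposition~\ref{uni:uniform-moments} in $L^\infty_tL^{q_r}_x$ and by the energy identity \eqref{uni:energy}.
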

\begin{proof}
Use \(1/r=1/2+1/q_r\), \(q_r=2r/(2-r)<\infty\), to obtain
\[
 \norm{\delta\rho^2d}_{L^2_tL^r_x}
 \le\sqrt\delta\,\norm{\rho}_{L^\infty_tL^{q_r}_x}
                    \norm{\sqrt\delta\,\rho d}_{L^2_{t,x}}.
\]
The two factors are bounded by
\eqref{uni:moment-bound} and \eqref{uni:energy}.
\end{proof}

\begin{remark}\label{uni:strong-solution-tests}
The preceding identities also hold on compact subintervals for
positive-density strong solutions satisfying
\(\rho,u,H\in C_tH^2_x\), \(u_t,H_t\in L^2_tH^1_x\), and
\(\rho_t\in C_tH^1_x\). In fact
\((\rho u)_t\in L^2_tH^1_x\), so that
\(\xi_t\in L^2_tH^2_x\) and
\(\partial_t(\Lambda_\delta(\rho)-\xi)\in L^2_tH^1_x\).
The positive-part integral is therefore absolutely continuous,
and its chain rule follows by smooth approximation of the
positive part. Magnetic power tests are justified in the same
way. The individual density bounds needed for these chain rules
do not enter the estimates above.
The present argument neither constructs the approximate solutions
globally nor proves that the additional dissipation
\(\delta\int\rho^2d^2\) tends to zero. It supplies uniform finite
moments and the vanishing stress \eqref{uni:small-stress-est}.
\end{remark}

\section{Finite-parameter global solvability}\label{sec:fixed}

Fix \(\beta>1\), \(\gamma>1\), and \(0<\delta\le1\), and set
\begin{equation}\label{fix:coefficients}
 \lambda_\delta(r)=r^\beta+\delta r^2,\qquad
 a_\delta(r)=2\mu+\lambda_\delta(r),\qquad P(r)=r^\gamma.
\end{equation}
As in Section~\ref{sec:uniform}, write \(B=\max\{\beta,2\}\ge2\).
The equations are \eqref{eq:system}, with \(\rho^\beta\) in the bulk
viscosity replaced by \(\lambda_\delta(\rho)\).
We abbreviate \(a=a_\delta(\rho)\), \(\lambda=\lambda_\delta(\rho)\), and set
\begin{equation}\label{fix:notation}
 \begin{gathered}
 D_t=\partial_t+u\cdot\nabla,\qquad d=\Div u,\qquad v=D_tu,\\
 \mathcal B=ad-P-\tfrac12|H|^2,\qquad
 \omega=\partial_1u_2-\partial_2u_1,\qquad
 \nabla^\perp=(-\partial_2,\partial_1).
 \end{gathered}
\end{equation}
In particular, \(\mathcal B\) includes the magnetic pressure.
We use the matrix convention \(L_{ij}=\partial_j u_i\) and row divergence
for tensors. The momentum equation becomes
\begin{equation}\label{fix:flux-equations}
 \rho v=\nabla\mathcal B+\mu\nabla^\perp\omega+H\cdot\nabla H,\qquad
 \Delta\mathcal B=\Div(\rho v-H\cdot\nabla H).
\end{equation}
Taking the scalar curl gives the corresponding constant-coefficient
elliptic equation for \(\omega\).
Constants in this section may depend on \(\delta\), the time horizon, the
positive lower bound of the initial density, and the initial \(H^2\)
norms. They will not be used as parameter-uniform bounds in the weak limit.

\subsection{Positive-density local construction}

\begin{proposition}\label{fix:local}
Let \((\rho_0,u_0,H_0)\in H^2(\TT)\), with
\(0<c_0\le\rho_0\le C_0\) and \(\Div H_0=0\).
There is a time \(T_0>0\) and a unique solution such that
\begin{equation}\label{fix:local-class}
 \begin{gathered}
 \rho\in C([0,T_0];H^2),\qquad c_0/2\le\rho\le2C_0,\\
 u,H\in C([0,T_0];H^2)\cap L^2(0,T_0;H^3),\\
 u_t,H_t\in L^2(0,T_0;H^1),\qquad
 \rho_t\in C([0,T_0];H^1).
 \end{gathered}
\end{equation}
The time \(T_0\) has a positive lower bound on each bounded set of such
initial data with a common positive density interval.
The same assertion holds for any nonnegative \(C^3\) bulk viscosity
and \(C^3\) pressure on a neighborhood of that interval.
\end{proposition}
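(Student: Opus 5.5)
We will prove Proposition~\ref{fix:local} by a linearization and contraction (or compactness) scheme in the class \eqref{fix:local-class}. The system is parabolic in \((u,H)\) once the density is frozen, and hyperbolic (transport) in \(\rho\). Since \(c_0\le\rho_0\le C_0\), all coefficients stay in a compact interval on a short time. Given an iterate \((\bar u,\bar H)\) in a ball
\[
X_{T}=\{\bar u,\bar H\in C([0,T];H^2)\cap L^2(0,T;H^3),\ \bar u_t,\bar H_t\in L^2(0,T;H^1)\}
\]
of radius \(2K\), with \(K\) comparable to the initial \(H^2\) norms, the scheme is as follows.

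First we solve the linear transport equation \(\rho_t+\bar u\cdot\nabla\rho+\rho\Div\bar u=0\) by characteristics. Along characteristics \(\rho=\rho_0\exp(-\int\Div\bar u)\), so \(\|\Div\bar u\|_{L^1_tL^\infty}\le CT^{1/2}K\) gives \(c_0/2\le\rho\le2C_0\) for small \(T\). The \(H^2\) energy estimate for transport, using \(\bar u\in L^2H^3\), gives \(\rho\in C([0,T];H^2)\) with a bound \(\|\rho_0\|_{H^2}\exp(CT^{1/2}K)\), and \(\rho_t\in C([0,T];H^1)\) follows from the equation.

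Next, with this \(\rho\), we solve the linear parabolic system
\[
\rho u_t-\mu\Delta u-\nabla\bigl((\mu+\lambda(\rho))\Div u\bigr)=-\rho\bar u\cdot\nabla\bar u-\nabla P(\rho)+\Div\M(\bar H),
\]
and \(H_t-\nu\Delta H=-\Div(\bar H\otimes\bar u-\bar u\otimes\bar H)\). The operator is uniformly strongly elliptic, with Lamé coefficients \(\mu\) and \(2\mu+\lambda(\rho)\), and \(\rho\) is bounded below. Galerkin approximation with the standard estimates gives maximal regularity at the \(H^2\) level. The estimates are obtained by testing with \(u_t\) and with \(-\Delta u_t\), or equivalently by differentiating in time and testing with \(u_{tt}\), then using elliptic regularity of the Lamé operator with \(H^2\) coefficients. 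The resulting bound is \(\|u\|_{X_T}\le C(\|u_0\|_{H^2}+T^{\theta}F(K,\|\rho\|_{H^2}))\) for some \(\theta>0\).

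The \(C^3\) assumption on \(\lambda\) and \(P\) in a neighborhood of \([c_0/2,2C_0]\) is what makes the compositions \(\lambda(\rho),P(\rho)\in H^2\) with the needed estimates. In two dimensions, \(H^2\) is an algebra, so all products are controlled. For \(T\) small, depending only on the bounds of the data and the density interval, the map \((\bar u,\bar H)\mapsto(u,H)\) sends the ball to itself. We take \(u(0)=u_0\) and do not need compatibility conditions beyond \(H^2\) data, since only \(u_t\in L^2H^1\) is claimed.

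Then we show the map contracts in the weaker norm \(C([0,T];L^2)\cap L^2(0,T;H^1)\) for \((u,H)\), coupled with \(C([0,T];L^2)\) for the density differences. This uses the standard \(L^2\) energy estimates on differences; the density difference solves a transport equation with source \((\bar u_1-\bar u_2)\cdot\nabla\rho_2\). Bounded sets of \(X_T\) are closed under weak-star limits in the weaker norm, so the fixed point lies in the class \eqref{fix:local-class}. Continuity in time in \(H^2\) (rather than weak continuity) follows for \(\rho\) from the transport structure, via a renormalized \(H^2\) energy equality. For \((u,H)\) it follows from \(L^2H^3\cap H^1H^1\hookrightarrow C H^2\). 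The same difference estimate gives uniqueness in the class.

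The main obstacle is the \(H^2\) estimate for \(u\) in the second step. The coefficient \(\lambda(\rho)\) is only \(H^2\) in space and \(H^1\) in time, so differentiating the equation in time produces \(\lambda'(\rho)\rho_t\Div u\) and \(\rho_t u_t\). These terms must be bounded using \(\rho_t\in L^\infty H^1\subset L^\infty L^q\) for all finite \(q\), together with Gagliardo--Nirenberg. The elliptic regularity for a variable-coefficient Lamé system with \(H^2\subset W^{1,q}\) coefficients requires a commutator treatment. We would handle it by writing the operator as \(\mu\Delta u+\nabla(a\Div u)\), recovering \(\Div u\) from the effective-flux equation \(\nabla(a\Div u)=\nabla\Delta^{-1}\Div(\cdots)\) and \(\Curl u\) from the constant-coefficient curl equation, exactly as in \eqref{fix:flux-equations}. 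This decouples the variable coefficient into a scalar division by \(a\ge2\mu\).
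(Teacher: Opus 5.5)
Your proposal is correct and has the same architecture as the paper's proof. You transport the density with the previous velocity, solve the linear Lam\'e and heat problems with the new density as inertia coefficient, fix the balls and then a small time, contract, and get uniqueness from the same difference estimate. The two technical steps are done differently.

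\textbf{Linear \(H^2\) theory.} You test with \(u_t\) and \(-\Delta u_t\), then recover \(L^2H^3\) from the div--curl splitting \(\Delta(a\Div u)=\Div g\), \(\mu\Delta\Curl u=\Curl g\), followed by division by \(a\ge2\mu\). The paper instead tests the Galerkin system with \(z+\Delta^2z\). The weighted-inertia identity \eqref{fix:inertial-identity} then gives the \(H^3\) dissipation directly. The price is the uniform \(H^1\) bound \eqref{fix:mass-inverse} for the inverse Galerkin mass matrix, which controls \(z_{m,t}\).

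Your route needs no mass-matrix lemma, but two points should be made explicit.
\begin{itemize}
\item The \(-\Delta u_t\) test does not close with \(H^2\) control alone. Terms such as \(\int(\lambda(\rho))_t|\nabla\Div u|^2\) and \(\int\Div u\,\nabla\lambda(\rho)\cdot\nabla\Div u_t\) need some \(H^{2+}\) control of \(u\), because \((\lambda(\rho))_t\) is only in \(H^1\). This is why the \(H^3\) bound is needed inside the estimate, not merely afterwards.
\item At the Galerkin level the flux reconstruction is unavailable, since \(P_m\) does not commute with multiplication by \(a(\rho)\). The needed bound \(\|u_m\|_{H^3}\le C(\|u_{m,t}\|_{H^1}+\|f\|_{H^1}+\|u_m\|_{H^2})\) comes from testing with \(\Delta^2u_m\), which is essentially the paper's identity.
\end{itemize}

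\textbf{Contraction.} You contract in \(C L^2\cap L^2H^1\), with the density in \(CL^2\). The paper uses \(L^\infty H^1\cap L^2H^2\), with \(R^{n+1}\in L^\infty H^1\) and its \(H^1\)-level estimate \eqref{fix:linear-H1}. Your lower norm suffices. Tested against the velocity difference \(U\), the terms \(R\,u_{2,t}\) and \(\nabla((\lambda(\rho_1)-\lambda(\rho_2))\Div u_2)\) cost \(\|R\|_2(\|u_{2,t}\|_{H^1}+\|\Div u_2\|_\infty)\|U\|_{H^1}\), which is square-integrable in time. Your Lions--Magenes argument \(L^2H^3\cap H^1H^1\subset CH^2\) is also cleaner than the paper's appeal to the linear energy identities.

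\textbf{Two small corrections.}
\begin{itemize}
\item The density difference also has the source \(-\rho_2\Div(\bar u_1-\bar u_2)\). It is harmless in your norm.
\item The time-differentiated alternative must test with \(u_t\), not \(u_{tt}\). With \(H^2\) data, \(u_t(0)=\rho_0^{-1}(\mathcal L_{\rho_0}u_0+f(0))\) is only in \(L^2\). A \(u_{tt}\) test would impose exactly the compatibility condition you say you avoid.
\end{itemize}
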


\begin{proof}
We give the construction and the estimates needed for the continuation
argument. For a prescribed density \(r\), write
\[
 \mathcal L_r z=\mu\Delta z+
       \nabla\bigl((\mu+\lambda_\delta(r))\Div z\bigr).
\]
First consider
\begin{equation}\label{fix:linear}
 r z_t-\mathcal L_r z=f,\qquad
 c\le r\le C,\qquad
 \sup_t(\|r\|_{H^2}+\|r_t\|_{H^1})\le N.
\end{equation}
Let \(P_m\) be a symmetric Fourier projection. The finite-dimensional
mass matrix is positive definite. Its inverse also has the following
bound, uniform in \(m\):
\begin{equation}\label{fix:mass-inverse}
 P_m(rq)=P_m g,\quad q=P_mq
 \quad\Longrightarrow\quad
 \|q\|_2\le c^{-1}\|g\|_2,\qquad
 \|q\|_{H^1}\le C_N\|g\|_{H^1}.
\end{equation}
Indeed, testing successively with \(q\) and \((1-\Delta)q\) leaves only
the additional term \(\int\nabla r\cdot\nabla q\,q\). It satisfies
\[
 \left|\int\nabla r\cdot\nabla q\,q\right|
 \le C_N\|q\|_2^{1/2}\|q\|_{H^1}^{3/2}
 \le\varepsilon\|q\|_{H^1}^2+C_{N,\varepsilon}\|q\|_2^2.
\]
Applied to the projected equation, this gives
\begin{equation}\label{fix:linear-time}
 \|z_{m,t}\|_{H^1}
 \le C_N\bigl(\|z_m\|_{H^3}+\|f\|_{H^1}\bigr).
\end{equation}
Here \(\|\mathcal L_r z\|_{H^1}\le C_N\|z\|_{H^3}\);
the coefficient terms use \(L^4\times L^4\) and \(L^2\times L^\infty\).

Testing the projected equation with \(z+\Delta^2z\) retains the weighted
inertia. Its highest-order part is exactly
\begin{equation}\label{fix:inertial-identity}
 \int rz_t\Delta^2z
 =\frac12\frac d{dt}\int r|\Delta z|^2
       -\frac12\int r_t|\Delta z|^2
 +2\int(\nabla r\cdot\nabla z_t)\Delta z
       +\int(\Delta r)z_t\Delta z .
\end{equation}
Put \(Z_j=\|z\|_{H^j}\). The last two terms are bounded, by
\eqref{fix:linear-time}, by
\[
 C_N(Z_3+\|f\|_{H^1})Z_2^{1/2}Z_3^{1/2}
 \le\varepsilon Z_3^2+
       C_{N,\varepsilon}(Z_2^2+\|f\|_{H^1}^2).
\]
The \(r_t\) term is bounded by \(C_NZ_2Z_3\).
For \(b=\mu+\lambda_\delta(r)\) and \(d_z=\Div z\), the other
highest-order contribution is
\[
 \int b|\Delta d_z|^2+
 \int\bigl(2\nabla b\cdot\nabla d_z+(\Delta b)d_z\bigr)\Delta d_z.
\]
Its second integral is bounded by \(C_NZ_2^{1/2}Z_3^{3/2}\).
The shear term supplies \(\mu\|\nabla\Delta z\|_2^2\).
The forcing requires only \(H^1\), since
\(\int f\Delta^2z=-\int\nabla f\cdot\nabla\Delta z\).
Together with the zeroth-order estimate, this proves
\begin{equation}\label{fix:linear-H2}
 \mathcal E_2'+c_N Z_3^2
 \le C_N\mathcal E_2+C_N\|f\|_{H^1}^2,\qquad
 \mathcal E_2=\int r(|z|^2+|\Delta z|^2)\asymp Z_2^2.
\end{equation}
The energy-equivalence constants depend on the density interval, not on
its \(H^2\) norm. The same argument with \(z-\Delta z\), using first the
\(L^2\) part of \eqref{fix:mass-inverse}, proves
\begin{equation}\label{fix:linear-H1}
 \mathcal E_1'+c_N\|z\|_{H^2}^2
 \le C_N\mathcal E_1+C_N\|f\|_2^2,\qquad
 \mathcal E_1=\int r(|z|^2+|\nabla z|^2).
\end{equation}
Galerkin compactness gives the linear solution; the same energy
estimates give its uniqueness.

Start with heat extensions \(u^0,H^0\) of the initial data and iterate
\begin{equation}\label{fix:iteration}
 \begin{aligned}
 &\rho^{n+1}_t+\Div(\rho^{n+1}u^n)=0,\\
 &\rho^{n+1}u^{n+1}_t-\mathcal L_{\rho^{n+1}}u^{n+1}=f^n,\\
 &f^n=-\rho^{n+1}u^n\cdot\nabla u^n-\nabla P(\rho^{n+1})
                      +\Div\M(H^n),\\
 &H^{n+1}_t-\nu\Delta H^{n+1}
       =\Div(u^n\otimes H^n-H^n\otimes u^n).
 \end{aligned}
\end{equation}
All iterates have the prescribed initial data. The last tensor is
antisymmetric, so its double divergence vanishes and
\(\Div H^{n+1}=0\).
On a fixed \(H^2\) ball, the two parabolic right-hand sides are bounded
in \(L^\infty H^1\). The transport estimates give
\begin{equation}\label{fix:transport-ball}
 \begin{aligned}
 \|\rho^{n+1}\|_{L^\infty(0,t;H^2)}
 &\le\|\rho_0\|_{H^2}
          \exp\bigl(C\sqrt t\,\|u^n\|_{L^2(0,t;H^3)}\bigr),\\
 \|\rho_t^{n+1}\|_{L^\infty(0,t;H^1)}&\le C_N .
 \end{aligned}
\end{equation}
The characteristic formula keeps the density in
\([c_0/2,2C_0]\) after reducing \(t\).
Choose first an \(H^2\) ball with strict margin over the initial weighted
energies, then a ball for the \(L^2H^3\) norms, and finally a small time.
In \eqref{fix:linear-H2} the initial energy is multiplied by a Gronwall
factor that can be kept below \(2\); after absorption the shear and
magnetic dissipation retain fixed positive coefficients.
Consequently this order of choices preserves both balls.
It also bounds \(u_t^n,H_t^n\) in \(L^2H^1\).

For completeness, the contraction does not require the entire
\(L^2H^3\) norm of the initial heat layer to be small.
Write \(R^{n+1}=\rho^{n+1}-\rho^n\), \(U^{n+1}=u^{n+1}-u^n\),
and \(Z^{n+1}=H^{n+1}-H^n\). Since
\[
 R^{n+1}_t+\Div(u^nR^{n+1})
       =-\Div(\rho^n U^n),\qquad R^{n+1}(0)=0,
\]
the \(H^1\) transport estimate implies
\begin{equation}\label{fix:density-difference}
 \|R^{n+1}\|_{L^\infty H^1}^2
       \le Ct\|U^n\|_{L^2H^2}^2.
\end{equation}
The velocity difference has the new density as its inertia coefficient,
and its additional source terms are
\[
 -R^{n+1}u_t^n+
 \nabla\bigl((\lambda_\delta(\rho^{n+1})
                 -\lambda_\delta(\rho^n))\Div u^n\bigr).
\]
Their \(L^2\) norm is bounded by
\[
 C\|R^{n+1}\|_{H^1}
                  (\|u_t^n\|_{H^1}+\|u^n\|_{H^3}).
\]
For example, the differentiated coefficient terms use respectively
\(L^2\times L^\infty\), \(L^4\times L^4\times L^\infty\), and
\(L^4\times L^4\), not an \(H^1\)-to-\(L^\infty\) embedding.
The differences of the remaining sources are bounded in \(L^2\) by
\(C(\|R^{n+1}\|_{H^1}+\|U^n\|_{H^1}+\|Z^n\|_{H^1})\).
Thus \eqref{fix:linear-H1} and \eqref{fix:density-difference} give
\(\mathfrak D_{n+1}\le Ct\mathfrak D_n\), where
\(\mathfrak D_n\) is the squared \(L^\infty H^1\cap L^2H^2\)
difference norm of \(u^n,H^n\).
The small factor is the time integral in
\eqref{fix:density-difference}. Contraction, weak compactness in the
higher norms, and the equations give \eqref{fix:local-class}.
Strong \(H^2\) continuity follows from the linear energy identities
and the \(H^2\) transport estimate. The same difference argument proves
uniqueness and the asserted uniform restart time.
\end{proof}

All subsequent identities can be justified in this local class.
In particular, the momentum equation gives \(u_t\in C L^2\), and hence
\(v\in C L^2\cap L^2H^1\). The conservative acceleration equation used
below has a remainder in \(L^2L^2\) on each local interval.
Multiplication by \(\rho^{-1}\in C H^2\), an \(H^1\) multiplier in two
dimensions, gives \(v_t\in L^2H^{-1}\).
Spatial difference quotients and time Steklov averages therefore
justify its weighted energy identity, with
\(v(0)=\rho_0^{-1}\Div S(0)\).
The other identities follow in the same way from the local equations.
No material differentiation of a projected Galerkin equation is needed.

\subsection{Low-order logarithmic energy}

Consider a local solution on \([0,\tau]\), with \(\tau\le T\), and put
\[
 R=1+\sup_{t\le\tau}\|\rho(t)\|_\infty.
\]
The preceding uniform estimates, namely
\eqref{uni:energy}, Proposition~\ref{uni:uniform-moments}, and
\eqref{uni:magnetic-moments}, imply, for each fixed finite \(p\),
\begin{equation}\label{fix:uniform-input}
 \begin{gathered}
 \sup_{t\le\tau}(\|\rho(t)\|_p+\|H(t)\|_p)\le C_{p,T},\\
 \int_0^\tau\!\!\int
       (ad^2+\mu\omega^2+\nu|\nabla H|^2)\le C_T .
 \end{gathered}
\end{equation}
These constants do not depend on \(R\) or \(\tau\le T\).
All initial moments needed here are finite for positive \(H^2\) data.
We also use
\begin{equation}\label{fix:coefficient-bounds}
 a\ge2\mu,\qquad
 \|a\|_\infty\le CR^B,\qquad
 0\le\frac{\rho a'}a\le B,\qquad
 \sup_{t\le\tau}\int a\le C_T .
\end{equation}
Define
\begin{equation}\label{fix:AX}
 \begin{gathered}
 A^2=1+\frac12\int
       \bigl(\mu\omega^2+\mathcal B^2/a+|\nabla H|^2\bigr),\qquad
 X^2=\int\bigl(\rho|v|^2+\nu|\nabla^2H|^2\bigr),\\
 G=\|\nabla\mathcal B\|_2+\|\nabla\omega\|_2,\qquad
 g=\|\nabla u\|_2,\quad h=\|\nabla H\|_2 .
 \end{gathered}
\end{equation}
Here and below the equivalence of \(\|\nabla^2H\|_2\) and
\(\|\Delta H\|_2\) is understood on the torus.
The div--curl estimate, \eqref{fix:flux-equations}, and
\eqref{fix:uniform-input} yield
\begin{equation}\label{fix:low-input}
 \begin{gathered}
 g+h\le CA,\qquad \int_0^\tau A^2\dt\le C_T,\qquad
 \|\mathcal B\|_2\le CR^{B/2}A,\\
 G\le C(R^{1/2}X+A^{1/2}X^{1/2}),\qquad
 |\avg{\mathcal B}|\le CA.
 \end{gathered}
\end{equation}
For the last estimate, one must retain the periodic mean:
\(\|\mathcal B\|_1\le
\|\mathcal B/\sqrt a\|_2(\int a)^{1/2}\).
The magnetic term in the bound for \(G\) uses
\(\|H\|_4\|\nabla H\|_4\le CA^{1/2}X^{1/2}\).

An elementary weighted interpolation is useful:
\begin{equation}\label{fix:W}
 W:=\|\mathcal B^2/a\|_2
 \le C_\varepsilon
 \|\mathcal B/\sqrt a\|_2^{1-\varepsilon}
 \|\mathcal B\|_{2(1+\varepsilon)/\varepsilon}^{1+\varepsilon}
 \le C_\varepsilon(R^{B\varepsilon/2}AG+A^2),
 \qquad 0<\varepsilon<1.
\end{equation}
The first inequality follows by writing the fourth power as a product
with \((\mathcal B^2/a)^{1-\varepsilon}\) and using \(a\ge2\mu\).
For the second use the nonhomogeneous periodic inequality
\[
 \|\mathcal B\|_q
 \le C_q\|\mathcal B\|_2^{2/q}G^{1-2/q}+CA
\]
with \(q=2(1+\varepsilon)/\varepsilon\).

We record all terms in the low-order energy identity.
Let \(Q=H\otimes H\) and \(J_0=\int Q:L\).
Multiplication of the momentum equation by \(v\) and of the magnetic
equation by \(-\Delta H\) gives
\begin{equation}\label{fix:ten-identity}
 (A^2)'+X^2=-J_0'+\sum_{j=1}^{10}J_j,
\end{equation}
where, with repeated indices summed,
\begin{equation}\label{fix:ten-terms}
 \begin{aligned}
 J_1&=-\frac\mu2\int\omega^2d,
 &J_2&=2\int\mathcal B\det L,\\
 J_3&=-\frac12\int\mathcal B^2d(a^{-1}+\rho a'a^{-2}),\\
 J_4&=\int\mathcal B P d((\gamma-1)a^{-1}-\rho a'a^{-2}),\\
 J_5&=\frac12\int\mathcal B|H|^2d(a^{-1}-\rho a'a^{-2}),\\
 J_6&=-\int(\mathcal B/a)Q:L,
 &J_7&=-\nu\int(\mathcal B/a)H\cdot\Delta H,\\
 J_8&=\int H_iL_{ij}(\nu\Delta H_j-H_jd),\\
 J_9&=\int(LH)_i(\nu\Delta H_i+(LH)_i),\\
 J_{10}&=\int(u\cdot\nabla H-H\cdot\nabla u+Hd)\cdot\Delta H .
 \end{aligned}
\end{equation}
Here is a direct verification of the magnetic part of this identity.
Since \(D_tL=\nabla v-L^2\), integration by parts gives
\begin{equation}\label{fix:magnetic-cancellation}
 \int(H\cdot\nabla H)\cdot v
 =-\int Q:\nabla v
 =-J_0'+\int\bigl[(D_tQ):L+dQ:L-Q:L^2\bigr].
\end{equation}
Substituting \(D_tH=\nu\Delta H+LH-dH\), the term
\(H^{\mathsf T}L^2H\) cancels \(Q:L^2\).
The remaining nondiffusive terms are
\(|LH|^2-dQ:L\), giving \(J_8+J_9\).
Also,
\[
 D_t(|H|^2/2)=\nu H\cdot\Delta H+Q:L-|H|^2d.
\]
Together with \(D_ta=-\rho a'd\),
\(D_tP=-\gamma Pd\), and
\(\Div v=D_td+d^2-2\det L\), this gives \(J_3,\ldots,J_7\).
Finally
\(\partial_1v_2-\partial_2v_1=D_t\omega+\omega d\)
gives \(J_1\). Thus the signs and the transposes in
\eqref{fix:ten-terms} are fixed by the stated conventions.

We next bound every term. Periodic Wente duality
\eqref{pre:wente-est}, applied after subtracting the mean of \(\mathcal B\),
and the two-dimensional interpolation for \(\omega\) give
\[
 |J_1|\le CgAG,\qquad |J_2|\le CGg^2.
\]
The coefficient bounds in \eqref{fix:coefficient-bounds}, fixed
\(L^4\) moments of \(P\), and fixed \(L^8\) moments of \(H\), give
\begin{equation}\label{fix:J3-J7}
 |J_3|\le CgW,\qquad
 |J_4|+|J_5|+|J_6|\le Cg\sqrt W,\qquad
 |J_7|\le CX\sqrt W.
\end{equation}
For the remaining terms, integrate the transport term once:
\[
 \int u\cdot\nabla H\,\Delta H
 =-\int\partial_k u_j\partial_jH_i\partial_kH_i
       +\frac12\int d|\nabla H|^2.
\]
Consequently
\begin{equation}\label{fix:J8-J10}
 \begin{aligned}
 |J_8|+|J_9|+|J_{10}|
 &\le C\|\nabla u\|_4^2+
             C\|\nabla u\|_4X+CghX,\\
 \|\nabla u\|_4^2&\le C(W+AG+1),\qquad
 \|\nabla H\|_4^2\le ChX .
 \end{aligned}
\end{equation}
For example \(\int|H|^2|\nabla u|^2\) uses the fixed \(L^4\)
magnetic moment, and
\(\int|H||\nabla u||\Delta H|\) uses
\(\|H\|_4\|\nabla u\|_4\|\Delta H\|_2\).
With \(b_0=1+g^2+h^2\), all these estimates and
\eqref{fix:W} imply
\begin{equation}\label{fix:ten-bound}
 \sum_{j=1}^{10}|J_j|
 \le \eta X^2+C_{\eta,\varepsilon}
                    R^{1+B\varepsilon}b_0A^2 .
\end{equation}
To see the largest power, the first part of \(X\sqrt W\) is
\(CR^{1/4+B\varepsilon/4}A^{1/2}X^{3/2}\);
Young's inequality leaves precisely \(CR^{1+B\varepsilon}A^2\).
For \(gW\) the term \(R^{B\varepsilon/2}gAG\) leaves
\(CR^{1+B\varepsilon}g^2A^2\).
The other powers are smaller, using \(g+h\le CA\).

The energy on the left must be made positive.
Subtracting the constant tensor \(\avg H\otimes\avg H\) from \(Q\)
does not change \(J_0\), and periodic interpolation gives
\[
 |J_0|\le C\|H\|_2hg
          \le\eta g^2+C_\eta\|H\|_2^2h^2.
\]
Choose \(\eta\) small, and then \(L_0\) large, so that
\[
 \mathscr E=A^2+J_0+\frac{L_0}2h^2\asymp A^2.
\]
Adding \(L_0\) times the magnetic gradient identity to
\eqref{fix:ten-identity}, and rechoosing the small Young constants,
gives
\begin{equation}\label{fix:enhanced-energy}
 \mathscr E'+\tfrac12X^2
       \le C_\varepsilon R^{1+B\varepsilon}b_0\mathscr E .
\end{equation}
Since \(\int_0^\tau b_0\dt\le C_T\), division by \(\mathscr E\)
and integration prove
\begin{equation}\label{fix:log-energy}
 \sup_{t\le\tau}\log A^2(t)
       +\int_0^\tau(X/A)^2\dt
       \le C_{T,\varepsilon}R^{1+B\varepsilon}.
\end{equation}
All mean contributions have been retained in this estimate.

We will also need, for fixed \(q>2\) and \(0<\varepsilon<2/q\),
\begin{equation}\label{fix:velocity-interpolation}
 \|\nabla u\|_q\le C_{q,\varepsilon}
 \bigl(R^{1/2-1/q+B\varepsilon/2}A^{2/q}X^{1-2/q}
                         +R^{B\varepsilon/2}A\bigr).
\end{equation}
For clarity, set \(\theta=2/q-\varepsilon\) and
\(b_q=1-2/q+\varepsilon\). Weighted H\"older gives
\[
 \|\mathcal B/a\|_q
 \le C\|\mathcal B/a\|_2^\theta
                 \|\mathcal B\|_{2b_q/\varepsilon}^{b_q}
 \le C\bigl(R^{B\varepsilon/2}A^{2/q}G^{1-2/q}+A\bigr).
\]
In the last step use the nonhomogeneous interpolation and
\(\|\mathcal B\|_2\le CR^{B/2}A\).
Insert \eqref{fix:low-input}. The term containing
\(A^{1/2}X^{1/2}\) is bounded by the displayed right-hand side of
\eqref{fix:velocity-interpolation} and \(CR^{B\varepsilon/2}A\).
The vorticity and the fixed pressure and magnetic moments are treated
by the same div--curl estimate.

\subsection{Density bounds at a fixed parameter}

For the present interval choose a time-independent exponent
\begin{equation}\label{fix:alpha}
 \alpha=c_\delta R^{-B/2}\le\tfrac14,\qquad
 C\alpha^2\|\mu+\lambda\|_\infty\le\mu/8 .
\end{equation}
Testing momentum with \(|u|^\alpha u\) is legitimate by a smooth
regularization at \(u=0\). The additional bulk term is bounded by
\[
 \alpha\int(\mu+\lambda)|u|^\alpha|d||\nabla u|
 \le\tfrac12\int(\mu+\lambda)|u|^\alpha d^2
 +C\alpha^2\|\mu+\lambda\|_\infty
                         \int|u|^\alpha|\nabla u|^2 .
\]
The last term is absorbed by the shear dissipation.
After Young's inequality the pressure contribution is bounded by
\[
 C\int P^2|u|^\alpha
 \le C\int\rho|u|^{2+\alpha}
         +C\int\rho^{\gamma(2+\alpha)-\alpha/2}.
\]
All density exponents here lie in one fixed finite interval.
The magnetic contribution is at most
\[
 C\int|H|^4|u|^\alpha
 \le C\|u\|_2^\alpha
 \le C(1+\|\nabla u\|_2^2).
\]
The last inequality uses the positive total mass and the weighted
Poincar\'e estimate \eqref{pre:weighted-poincare-est}.
Thus Gronwall's inequality yields
\begin{equation}\label{fix:small-momentum}
 \sup_{t\le\tau}\int\rho|u|^{2+\alpha}\le C_{\delta,T}.
\end{equation}
The constant is uniform for \(0<\alpha\le1/4\); the initial moment
has this uniform bound. No fixed higher momentum exponent is asserted.

We specify the periodic singular-integral convention:
\[
 \mathcal R_{ij}=\partial_i\partial_j\Delta^{-1},
 \qquad
 C_u=\sum_{i,j}[u_i,\mathcal R_{ij}](\rho u_j),\qquad
 C_H=\sum_{i,j}[H_i,\mathcal R_{ij}]H_j .
\]
Every inverse Laplacian has zero spatial mean.
For a fixed \(q\in(2,3)\), the full periodic kernel and Morrey's
inequality give
\begin{equation}\label{fix:kernel}
 |C_u(x)|\le C_q\|\nabla u\|_q
       \int_{\TT}\operatorname{dist}(x,y)^{-1-2/q}
                                  \rho(y)|u(y)|\,\mathrm dy .
\end{equation}
The local delta component cancels in the commutator; the smooth part of
the periodic kernel is dominated by the same distance bound.
Only velocity differences enter, so there is no omitted constant mode.

Split the integral in \eqref{fix:kernel} at radius \(\ell\).
The precise large-exponent Sobolev estimate is
\[
 \|u\|_s\le C\sqrt s\,(1+\|\nabla u\|_2),\qquad 2\le s<\infty .
\]
For the zero-mean part, a dyadic Bernstein decomposition and
Cauchy--Schwarz bound
\(\sum_j2^{-2j/s}(2^j\|\Delta_j u\|_2)\)
by \(C\sqrt s\|\nabla u\|_2\); the weighted mean estimate supplies
the constant mode.
Use \(s=(1+\varepsilon_0)/\varepsilon_0\) in the near region,
and \eqref{fix:small-momentum} in the far region. The result is
\begin{equation}\label{fix:near-far}
 \begin{aligned}
 I_{\rm near}
 &\le C_qR\varepsilon_0^{-1/2}A
       \ell^{(q-2)/q-2\varepsilon_0/(1+\varepsilon_0)},\\
 I_{\rm far}
 &\le C_qR^{(1+\alpha)/(2+\alpha)}
       \ell^{-2/q+\alpha/(2+\alpha)} .
 \end{aligned}
\end{equation}
For example the far-region H\"older conjugates are \(2+\alpha\)
and \((2+\alpha)/(1+\alpha)\), after writing
\(\rho|u|=(\rho|u|^{2+\alpha})^{1/(2+\alpha)}
\rho^{(1+\alpha)/(2+\alpha)}\).
The radial integral yields exactly the indicated power of \(\ell\).
Its denominator stays away from zero for fixed \(q\in(2,3)\) and
\(0<\alpha\le1/4\).

Set
\begin{equation}\label{fix:matching}
 b_q=\frac2q-\frac{\alpha}{2+\alpha},\qquad
 \varepsilon_0=\frac{(q-2)\alpha}{8+(6-q)\alpha},\qquad
 \ell^{-b_q}=A^{2/q}.
\end{equation}
After an immaterial adjustment to the fixed torus scale, this is an
admissible radius. Direct computation gives
\[
 \frac{q-2}{q}-\frac{2\varepsilon_0}{1+\varepsilon_0}
       =\frac{q-2}{2}b_q,\qquad
 A\ell^{(q-2)b_q/2}=A^{2/q}.
\]
The near radial denominator is also uniformly positive.
The only degenerating constant is
\(\varepsilon_0^{-1/2}\asymp_q\alpha^{-1/2}\).
Therefore
\begin{equation}\label{fix:matched-integral}
 I_{\rm near}+I_{\rm far}
       \le C_{\delta,q}R^{1+B/4}A^{2/q}.
\end{equation}
In particular the cost \(R^{B/4}\) from the small momentum exponent
has not been discarded.

Fix \(0<\varepsilon<1/6\) and take \(q=2/(1-\varepsilon)\).
Combining \eqref{fix:velocity-interpolation},
\eqref{fix:kernel}, and \eqref{fix:matched-integral} yields
\begin{equation}\label{fix:commutator-pointwise}
 \begin{aligned}
 \|C_u\|_\infty
 &\le CR^{1+B/4+(B+1)\varepsilon/2}
                        A^{2-\varepsilon}(X/A)^\varepsilon\\
 &\quad+CR^{1+B/4+B\varepsilon/2}A^2 .
 \end{aligned}
\end{equation}
For any fixed \(0<\zeta<1\), use \eqref{fix:log-energy} with
\(\varepsilon=\zeta\), and the exact time H\"older inequality
\[
 \int_0^\tau A^{2-\varepsilon}(X/A)^\varepsilon\dt
 \le\left(\int_0^\tau A^2\dt\right)^{1-\varepsilon/2}
       \left(\int_0^\tau(X/A)^2\dt\right)^{\varepsilon/2}.
\]
Consequently
\begin{equation}\label{fix:commutator-time}
 \int_0^\tau\|C_u\|_\infty\dt
       \le C_{\delta,T,\varepsilon,\zeta}
                    R^{1+B/4+[1+B(1+\zeta)/2]\varepsilon}.
\end{equation}

Let \(-\Delta\xi=\Div(\rho u)\) and \(\avg\xi=0\). The periodic
Green gradient and \eqref{fix:small-momentum} give
\begin{equation}\label{fix:xi}
 \|\xi\|_\infty
 \le C\alpha^{-(1+\alpha)/(2+\alpha)}
           R^{(1+\alpha)/(2+\alpha)}
 \le C_\delta R^{(B+2)/3}.
\end{equation}
Indeed, the \(L^{(2+\alpha)/(1+\alpha)}\) norm of the Green gradient
is bounded by \(C\alpha^{-(1+\alpha)/(2+\alpha)}\).
The last inequality follows from \(\alpha=c_\delta R^{-B/2}\) and
\((1+\alpha)/(2+\alpha)\le2/3\), since \(\alpha\le1\).
Since \(\Div H=0\), \(C_H=-\mathcal R_{ij}(H_iH_j)\) is an
operator of order \(-1\) applied to \(H\cdot\nabla H\).
Thus
\begin{equation}\label{fix:magnetic-commutator}
 \begin{aligned}
 \|C_H\|_\infty
 &\le C\|H\|_{12}\|\nabla H\|_4\le CA^{1/2}X^{1/2},\\
 \int_0^\tau\|C_H\|_\infty\dt
 &\le C R^{(1+B\zeta)/4}.
 \end{aligned}
\end{equation}
In the time estimate write \(A^{1/2}X^{1/2}=A(X/A)^{1/2}\)
and use \(\int A^2\) and \eqref{fix:log-energy}.
Moreover,
\begin{equation}\label{fix:mean-time}
 \int_0^\tau|\avg{\mathcal B}|\dt\le C_T,
\end{equation}
by the basic energy, \(\sup_t\int a\le C_T\), and the fixed pressure
and magnetic moments.

Define the increasing function
\[
 \Theta_\delta(r)=2\mu\log r+r^\beta/\beta+\delta r^2/2 .
\]
The continuity equation gives \(D_t\Theta_\delta(\rho)=-ad\).
Applying \(-\Delta^{-1}\Div\) to momentum gives, with the above
singular-integral convention,
\[
 D_t\xi=-\mathcal B+\avg{\mathcal B}-C_u+C_H.
\]
Hence the exact material identity is
\begin{equation}\label{fix:trajectory}
 D_t(\Theta_\delta(\rho)-\xi)
       +P+\tfrac12|H|^2-C_u+C_H+\avg{\mathcal B}=0 .
\end{equation}
For \(r\ge1\), this primitive satisfies
\[
 \Theta_\delta(r)\ge c_{\beta,\delta}r^B,\qquad
 c_{\beta,\delta}=\min\{1/\beta,\delta/2\}>0.
\]
Integrate \eqref{fix:trajectory} along trajectories and discard the
nonnegative pressure terms. With \(\varepsilon=1/12\), \(\zeta=1/4\),
the exponent in \eqref{fix:commutator-time} obeys
\[
 1+\frac B4+\frac1{12}\left(1+\frac{5B}{8}\right)
 =\frac{13}{12}+\frac{29B}{96}
 \le1+\frac{3B}{8}.
\]
The magnetic exponent \((1+B/4)/4\) is smaller. Hence
\eqref{fix:commutator-time}--\eqref{fix:mean-time} imply
\begin{equation}\label{fix:density-closure}
 c_{\beta,\delta}(R-1)^B
 \le C_{\delta,T}(1+R^{1+3B/8}+R^{(B+2)/3}).
\end{equation}
If \(R\le2\) there is nothing to prove; otherwise use
the lower bound for \(\Theta_\delta\) at a density maximum with \(r\ge1\).
Since \(B\ge2\), both \(1+3B/8<B\) and \((B+2)/3<B\).
Thus \(R\le C_{\delta,T}\).
Substituting this bound in \eqref{fix:log-energy} gives
\begin{equation}\label{fix:bounded-low-energy}
 \sup_{t\le\tau}A^2(t)+\int_0^\tau X^2\dt\le C_{\delta,T}.
\end{equation}
In particular \(\int_0^\tau\|H\|_\infty^2\dt\le C_{\delta,T}\)
by two-dimensional interpolation.
Return to \eqref{fix:trajectory}, now keeping \(P+|H|^2/2\).
The pressure is bounded by the density upper bound and the magnetic
term is integrable as just noted. This proves
\(\Theta_\delta(\rho)\ge-C_{\delta,T}\).
Since \(\Theta_\delta(r)\to-\infty\) as \(r\downarrow0\), we obtain
\begin{equation}\label{fix:density-interval}
 0<c_{\delta,T}\le\rho(t,x)\le C_{\delta,T}
       \quad(0\le t\le\tau).
\end{equation}
This lower bound does not presuppose a bound for
\(\int\|\nabla u\|_\infty\).

\subsection{Material acceleration and the magnetic tensor}

We now work on the fixed positive density interval
\eqref{fix:density-interval}. All coefficient bounds in this and the
next subsection are allowed to depend on that interval.
From \eqref{fix:bounded-low-energy},
\[
 u,H\in L^\infty H^1,\qquad v,\Delta H\in L^2L^2,\qquad
 \mathcal B,\omega\in L^\infty L^2.
\]
Set \(h_2(t)=\|\Delta H(t)\|_2\). Constant-coefficient elliptic
estimates and the fixed magnetic moments give
\begin{equation}\label{fix:before-acceleration}
 \begin{gathered}
 \|\nabla\mathcal B\|_2+\|\nabla\omega\|_2
          \le C(1+\|v\|_2+h_2^{1/2}),\\
 \|\nabla u\|_4^4\le C(1+\|v\|_2^2+h_2),\qquad
 \|\nabla H\|_4^4\le C(1+h_2^2).
 \end{gathered}
\end{equation}
Thus both fourth powers are already integrable in time, before an
acceleration supremum has been proved.

For \(Q=H\otimes H\), the induction equation gives
\begin{equation}\label{fix:Q-equation}
 D_tQ-\nu\Delta Q
       =LQ+QL^{\mathsf T}-2dQ
          -2\nu\sum_k\partial_kH\otimes\partial_kH .
\end{equation}
Testing with \(-\Delta Q\), the transport part is bounded by
\(C\|\nabla u\|_2\|\nabla Q\|_4^2\).
Using the bounded \(H^1\) norm of \(u\), interpolation, and absorption,
we get
\begin{equation}\label{fix:Q-energy}
 (\|\nabla Q\|_2^2)'+c\nu\|\Delta Q\|_2^2
 \le C\|\nabla Q\|_2^2
       +C\|H\|_8^4\|\nabla u\|_4^2
       +C\|\nabla H\|_4^4 .
\end{equation}
Its right-hand coefficients and sources are time integrable by
\eqref{fix:before-acceleration}; hence
\(\sup\|\nabla Q\|_2^2+\int\|\Delta Q\|_2^2\le C\).
The pointwise identities
\[
 \Delta Q=H\otimes\Delta H+\Delta H\otimes H+
                   2\sum_k\partial_kH\otimes\partial_kH,\qquad
 |H\otimes z+z\otimes H|^2\ge2|H|^2|z|^2
\]
then imply
\begin{equation}\label{fix:weighted-magnetic-dissipation}
 \int_0^\tau\!\!\int |H|^2|\Delta H|^2
       \le C\int_0^\tau
             (\|\Delta Q\|_2^2+\|\nabla H\|_4^4)\dt\le C .
\end{equation}
This joint estimate is needed; separate integrability of the two
factors \(\|H\|_\infty^2\) and \(\|\Delta H\|_2^2\) would not suffice.

Introduce the total stress
\[
 S=\mu(L+L^{\mathsf T})+\lambda dI-PI+\M(H).
\]
The momentum equation reads \(\rho v_i=\partial_jS_{ij}\).
Using the continuity equation and commuting \(D_t\) with one spatial
derivative gives the exact conservative identity
\begin{equation}\label{fix:acceleration-equation}
 \rho D_tv_i
       =\partial_j\bigl(D_tS_{ij}+dS_{ij}-S_{ik}\partial_k u_j\bigr).
\end{equation}
The spatial derivatives of \(d\) cancel in obtaining this divergence
form. The expressions
\[
 D_tL=\nabla v-L^2,\qquad
 D_td=\Div v-\operatorname{tr}(L^2),\qquad
 D_t\lambda=-\rho\lambda'd
\]
show that its principal stress is
\(\mu(\nabla v+\nabla v^{\mathsf T})+\lambda\Div vI\).
The remainder \(N\) satisfies
\begin{equation}\label{fix:acceleration-remainder}
 |N|\le C\bigl(|L|^2+|L|+|H||\Delta H|+|H|^2|L|\bigr).
\end{equation}
More explicitly, the material derivative of the bulk stress contributes
\(-[\lambda\operatorname{tr}(L^2)+\rho\lambda'd^2]I\)
outside the principal part. The pressure remainder is
\((\gamma-1)PdI+PL^{\mathsf T}\).
The magnetic remainder follows from
\(D_tH=LH-dH+\nu\Delta H\).
None of these terms contains a density gradient.
Testing \eqref{fix:acceleration-equation} with \(v\), and using
periodic Korn coercivity, yields
\begin{equation}\label{fix:acceleration-energy}
 \frac d{dt}\int\rho|v|^2+c\mu\|\nabla v\|_2^2
 \le C\int\bigl(|L|^4+|L|^2+|H|^2|\Delta H|^2+|H|^4|L|^2\bigr).
\end{equation}
The last term is bounded by
\(\|H\|_8^4\|\nabla u\|_4^2\).
Equations \eqref{fix:before-acceleration} and
\eqref{fix:weighted-magnetic-dissipation} therefore prove
\begin{equation}\label{fix:acceleration-bound}
 \sup_{t\le\tau}\|v(t)\|_2^2
             +\int_0^\tau\|\nabla v\|_2^2\dt\le C .
\end{equation}
The initial acceleration is
\(\rho_0^{-1}\Div S(0)\in L^2\) for positive \(H^2\) data.
Its norm may depend on the particular smoothed initial datum.

\subsection{Density gradients and high-order continuation}

Fix \(q>2\). By \eqref{fix:acceleration-bound},
\(v\in L^2L^q\). The magnetic moment and interpolation give
\[
 \|H\cdot\nabla H\|_q\le C(1+h_2^{1-1/q})\in L^2(0,\tau).
\]
The flux and vorticity equations thus supply functions
\(L_1,K\in L^2(0,\tau)\), with uniform bounds for \(\tau\le T\), such that
\begin{equation}\label{fix:L1K}
 \|\nabla\mathcal B\|_q+\|\nabla\omega\|_q\le CL_1,\qquad
 1+\|d\|_\infty+\|\omega\|_\infty\le K.
\end{equation}
In the second estimate we used the means and
\(\|H\|_\infty^2\le C(1+h_2)\).
No magnetic \(H^2\) supremum is used here.

The useful density-gradient unknown is \(\Xi=a\nabla\rho\).
Differentiating the continuity equation and substituting
\(ad=\mathcal B+P+|H|^2/2\), we obtain
\begin{equation}\label{fix:Xi-equation}
 D_t\Xi_i+(\partial_i u_j)\Xi_j+
       (d+\rho P'/a)\Xi_i
       =-\rho\partial_i\mathcal B-\rho\partial_i(|H|^2/2).
\end{equation}
The two terms containing \(\rho a'd\nabla\rho\) cancel exactly.
For \(Y=\|\Xi\|_q\), this identity and div--curl reconstruction give
\begin{equation}\label{fix:Xi-inequality}
 \begin{aligned}
 Y'&\le C(1+\|\nabla u\|_\infty+K)Y+CL_1,\\
 \|\nabla^2u\|_q&\le C(L_1+KY).
 \end{aligned}
\end{equation}
The periodic BKM inequality \eqref{pre:bkm-est}, in the form
\[
 \|\nabla u\|_\infty
 \le C_q(\|d\|_\infty+\|\omega\|_\infty)
          \log(e+\|\nabla^2u\|_q)+C\|\nabla u\|_2+C,
\]
implies
\begin{equation}\label{fix:BKM}
 \|\nabla u\|_\infty\le CK\log(e+Y)+J,\qquad
 J=C[1+K\log(e+L_1+K)]\in L^1(0,\tau).
\end{equation}
The integrability follows from \(L_1,K\in L^2\) and the elementary
bound \(K\log(e+L_1+K)\le C(1+K^2+L_1^2)\).
For \(Z=\log(e+Y)\), \eqref{fix:Xi-inequality} becomes
\[
 Z'\le CKZ+C(1+K+L_1+J).
\]
Gronwall's inequality now proves
\begin{equation}\label{fix:W1q}
 \rho\in L^\infty(0,\tau;W^{1,q}),\qquad
 u\in L^2(0,\tau;W^{2,q}).
\end{equation}
In particular \(\nabla u\in L^2L^\infty\) is a consequence, not an
input, of this argument.

Write the induction equation as \(H_t-\nu\Delta H=\mathcal N_H\).
Direct product estimates give
\[
 \|\nabla\mathcal N_H\|_2
 \le C\bigl(\|\nabla u\|_\infty\|\nabla H\|_2
       +\|u\|_\infty\|H\|_{H^2}
       +\|H\|_\infty\|u\|_{H^2}\bigr).
\]
Test with \(\Delta^2H\). Since
\(\|H\|_\infty^2\le C\|H\|_{H^2}^2\), Young's inequality yields
\begin{equation}\label{fix:H2-magnetic}
 \begin{split}
 (\|\Delta H\|_2^2)'+c\nu\|\nabla\Delta H\|_2^2
 \le C\bigl(1+\|\nabla u\|_\infty^2+\|u\|_\infty^2
       +\|u\|_{H^2}^2\bigr)(1+\|\Delta H\|_2^2).
 \end{split}
\end{equation}
The coefficient is already integrable by \eqref{fix:W1q}.
Consequently
\begin{equation}\label{fix:magnetic-high}
 H\in L^\infty H^2\cap L^2H^3,\qquad H_t\in L^2H^1.
\end{equation}
Now \(\nabla\mathcal B,\nabla\omega\) are bounded in \(L^\infty L^2\).
The only further product needed to reconstruct \(u\) in \(H^2\) is
\[
 \|d\nabla\rho\|_2
 \le\|\nabla\rho\|_q\|d\|_{2q/(q-2)}
 \le C_q(1+\|\nabla^2u\|_2^{2/q}).
\]
The exponent \(2/q<1\) allows absorption, so \(u\in L^\infty H^2\).

It remains to close the \(H^2\) density norm without presupposing
the \(H^3\) velocity dissipation. Choose \(q=6\) in
\eqref{fix:W1q}. A second elliptic estimate gives
\begin{equation}\label{fix:second-flux}
 \|\nabla^2\mathcal B\|_2+\|\nabla^2\omega\|_2
 \le C\bigl(\|\nabla(\rho v)\|_2
               +\|\nabla(H\cdot\nabla H)\|_2\bigr)
 \le C(1+\|\nabla v\|_2).
\end{equation}
Here \(\nabla\rho\in L^\infty L^6\),
\(\|v\|_3\le C(1+\|\nabla v\|_2)\), and
\eqref{fix:magnetic-high} controls the magnetic products.
Differentiating \eqref{fix:Xi-equation} once gives
\begin{equation}\label{fix:Xi-H1}
 (\|\nabla\Xi\|_2)'
 \le C(1+\|\nabla u\|_\infty)\|\nabla\Xi\|_2+G_1,
       \qquad G_1\in L^2(0,\tau).
\end{equation}
Writing \(M=|H|^2/2\), all source classes are exhibited by
\[
 G_1\le C\bigl[1+\|\nabla^2u\|_3\|\Xi\|_6
  +\|\nabla\rho\|_6(\|\nabla\mathcal B\|_3+\|\nabla M\|_3)
  +\|\nabla^2\mathcal B\|_2+\|\nabla^2M\|_2\bigr].
\]
Differentiating the coefficient \(\rho P'/a\) produces only
a bounded coefficient times \(\nabla\rho\,\Xi\), controlled in \(L^2\).
All the displayed sources are in \(L^2_t\) by
\eqref{fix:W1q}, \eqref{fix:magnetic-high}, and
\eqref{fix:second-flux}.
Gronwall therefore gives \(\nabla\Xi\in L^\infty L^2\).
Since
\(\nabla\Xi=a\nabla^2\rho+a'\nabla\rho\otimes\nabla\rho\),
we conclude that \(\rho\in L^\infty H^2\).

Only now differentiate \(ad=\mathcal B+P+|H|^2/2\) twice.
The term \(d\nabla^2a\) uses
\(d\in L^2L^\infty\) and \(\rho\in L^\infty H^2\);
the cross term uses
\(\nabla a\in L^\infty L^6\) and \(\nabla d\in L^2L^3\).
The \(a''\) contribution is a bounded coefficient times
\(d\nabla\rho\otimes\nabla\rho\), also in \(L^2L^2\).
Div--curl reconstruction and \eqref{fix:second-flux} prove
\begin{equation}\label{fix:continuation-bounds}
 \begin{gathered}
 \rho,u,H\in L^\infty(0,\tau;H^2),\qquad
 u,H\in L^2(0,\tau;H^3),\\
 u_t,H_t\in L^2(0,\tau;H^1),\qquad
 \rho_t\in L^\infty(0,\tau;H^1).
 \end{gathered}
\end{equation}
The time-derivative assertions follow from the equations.
All derivatives of the coefficients have been used only on the fixed
positive density interval. In particular no bound for
\(\lambda_\delta''\) at vacuum is asserted.

\begin{proposition}\label{fix:global}
For every fixed \(0<\delta\le1\), every positive-density \(H^2\)
initial datum with \(\Div H_0=0\) generates a unique global solution
in the local class \eqref{fix:local-class} on every finite time interval.
The density remains bounded above and away from zero on such intervals,
with bounds allowed to depend on \(\delta\) and the initial data.
\end{proposition}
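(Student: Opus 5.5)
The proof is a standard continuation argument built on Proposition~\ref{fix:local} and the a priori bounds of this section. Fix \(T<\infty\) and the datum. Let \(T^*\le T\) be the supremum of times \(\tau\) for which a solution exists on \([0,\tau]\) in the class \eqref{fix:local-class}, with some positive density interval. Uniqueness on overlapping intervals follows from the difference estimate in the proof of Proposition~\ref{fix:local}, so these local solutions patch together into a single solution on \([0,T^*)\). We suppose \(T^*<T\), or that the solution does not reach \(T^*=T\) inclusively, and derive a contradiction.

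The first step is to check that every estimate of this section holds on each \([0,\tau]\), \(\tau<T^*\), with constants independent of \(\tau\). Proposition~\ref{uni:uniform-moments} applies because its constants do not depend on \(\tau\le T\), the density maximum, or the lower bound, and Remark~\ref{uni:strong-solution-tests} justifies its identities in the strong class. The quantity \(R\) is finite on each such \([0,\tau]\) by continuity of \(\rho\) in \(H^2\hookrightarrow L^\infty\). The closure \eqref{fix:density-closure} then gives \(R\le C_{\delta,T}\) independently of \(\tau\). This is the key point: the bound is absolute, not a bootstrap with a smallness margin, so no continuity argument in \(R\) is needed. After that, \eqref{fix:density-interval} and the chain \eqref{fix:acceleration-bound}, \eqref{fix:W1q}, \eqref{fix:magnetic-high}, \eqref{fix:continuation-bounds} give uniform bounds for \(\|\rho\|_{H^2}+\|u\|_{H^2}+\|H\|_{H^2}\) on \([0,T^*)\), together with the density interval \([c_{\delta,T},C_{\delta,T}]\).

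The second step is the restart. Proposition~\ref{fix:local} gives a local existence time bounded below uniformly on bounded sets of \(H^2\) data with a common positive density interval. Restarting from \(t_0=T^*-T_0/2\), with the data \((\rho,u,H)(t_0)\) and \(\Div H(t_0)=0\) preserved by the induction equation, extends the solution past \(T^*\). The only care needed is that the data at \(t_0\) are genuinely in \(H^2\) with the stated bounds. This holds because \(\rho,u,H\in C([0,t_0];H^2)\) on the closed interval, and the \(L^\infty H^2\) bounds of \eqref{fix:continuation-bounds} apply at every time by this continuity. The glued solution lies in \eqref{fix:local-class} on the union, since the \(L^2H^3\) and \(L^2H^1\) norms add over the two intervals. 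This contradicts the definition of \(T^*\). Since \(T\) was arbitrary, the solution is global, and uniqueness on every finite interval again comes from the difference estimate.

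The main obstacle, in my view, is not the restart but the uniformity of the constants. Every constant in \eqref{fix:log-energy} through \eqref{fix:continuation-bounds} must depend only on \(\delta\), \(T\), the initial \(H^2\) norms and the initial density interval, and never on \(\tau\) or on a priori knowledge of \(R\) beyond its finiteness. In particular, I would verify that the initial acceleration \(\rho_0^{-1}\Div S(0)\) and the initial value of \(\mathscr E\) are controlled by the initial data alone. I would also check that all Gronwall integrands, namely \(b_0\), \(K\), \(L_1\) and \(J\), have \(L^1\) or \(L^2\) norms bounded by \(T\)-dependent constants. The final bounds in the statement, above and away from zero, are then exactly \eqref{fix:density-interval} on each finite interval.
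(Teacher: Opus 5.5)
Your proposal is correct and follows essentially the same continuation argument as the paper. The $\tau$-independent moment bounds and the absolute closure \eqref{fix:density-closure} give a common density interval \eqref{fix:density-interval}, and then, via \eqref{fix:bounded-low-energy} and \eqref{fix:continuation-bounds}, a common $H^2$ ball on $[0,T_*)$; Proposition~\ref{fix:local} then supplies a uniform restart lifetime, which contradicts maximality (your single restart at $t_0=T^*-T_0/2$ is the paper's restart along $t_j\uparrow T_*$, and neither needs a strong $H^2$ limit at the terminal time).
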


\begin{proof}
Let \(T_*\) be the maximal local existence time.
If \(T_*<\infty\), fix \(T>T_*\) and apply all the preceding estimates
on \([0,\tau]\), \(\tau<T_*\).
The uniform moment bounds hold on each such interval with constants
independent of \(\tau\).
The logarithmic energy estimate and
\eqref{fix:density-closure} first give the common positive density
interval \eqref{fix:density-interval}.
Equations \eqref{fix:bounded-low-energy} and
\eqref{fix:continuation-bounds} then give a common \(H^2\) ball.
For any sequence \(t_j\uparrow T_*\), Proposition~\ref{fix:local}
therefore provides the same positive lower bound for the restart
lifetime at \(t_j\). Restarting and using local uniqueness extends the
solution beyond \(T_*\), a contradiction.
This does not require first constructing a strong \(H^2\) limit at
the terminal time. Hence \(T_*=\infty\).
\end{proof}

Applied to the separately smoothed initial data used for
Theorem~\ref{thm:main}, Proposition~\ref{fix:global} constructs the global
finite-parameter sequence rather than assuming its existence.
The original initial density need only belong to the positive bounded
\(W^{1,q_0}\) class, and the original \(u_0,H_0\) need only be in \(H^1\);
each smoothed datum is in \(H^2\).
The constants in \eqref{fix:density-interval} and
\eqref{fix:continuation-bounds}, and the initial acceleration norms,
may vary with the smoothing index.
Only the parameter-uniform estimates from Section~\ref{sec:uniform}
are carried into the compactness argument.

\section{Generalized effective flux and density compactness}
\label{sec:compactness}

We now establish the density compactness needed to remove the additional
bulk viscosity. Throughout this section, $T<\infty$ is fixed and
$Q_T=(0,T)\times\TT$. We abbreviate $L^q(0,T;L^r(\TT))$ by $L^qL^r$;
all Sobolev spaces below are periodic. Spatial averages are denoted by
$\avg{f}=\int_{\TT}f\dx$, with $|\TT|=1$.

\begin{proposition}[Density compactness]\label{comp:density}
Let $\beta>1$, $\gamma>1$, and $\mu,\nu>0$. Let
$(\rho_n,u_n,H_n)$ be positive-density strong solutions on $[0,T]$
of the approximate system with bulk viscosity
$\rho_n^\beta+\delta_n\rho_n^2$, where $\delta_n\downarrow0$.
Set
\begin{equation}\label{comp:notation}
 m_n=\rho_nu_n,\qquad d_n=\Div u_n,\qquad
 P_n=\rho_n^\gamma,\qquad M_n=\tfrac12|H_n|^2.
\end{equation}
Assume that the total masses have a uniform positive lower bound and that,
for every fixed finite $k\ge1$,
\begin{equation}\label{comp:moments}
 \sup_n\norm{\rho_n}_{L^\infty L^k}<\infty.
\end{equation}
Assume also the uniform energy bounds
\begin{equation}\label{comp:energy}
\begin{aligned}
 \sup_n\bigg\{&\sup_{0\le t\le T}
       \int_{\TT}(\rho_n|u_n|^2+|H_n|^2)\dx\\
 &+\int_{Q_T}\Big[|\nabla u_n|^2+|\nabla H_n|^2
       +(2\mu+\rho_n^\beta+\delta_n\rho_n^2)d_n^2\Big]\dx\dt
                         \bigg\}<\infty.
\end{aligned}
\end{equation}
Suppose the initial densities satisfy
\begin{equation}\label{comp:initial}
 0<c_0\le\rho_{0n}\le C_0,\qquad
 \rho_{0n}\longrightarrow\rho_0
       \quad\text{in every finite }L^k(\TT).
\end{equation}
Then, along a subsequence, there exist $\rho,u,H$ such that
\begin{equation}\label{comp:conclusion}
 \rho_n\longrightarrow\rho
       \quad\text{strongly in every finite }L^k(Q_T),
 \qquad u_n\weakto u\quad\text{in }L^2H^1.
\end{equation}
Moreover, $H_n\to H$ strongly in $L^2L^4$, and, with $d=\Div u$,
\begin{equation}\label{comp:stress-conclusion}
\begin{aligned}
 \rho_n^\gamma&\longrightarrow\rho^\gamma
                  &&\text{in every finite }L^k(Q_T),\\
 \rho_n^\beta d_n&\longrightarrow\rho^\beta d
                  &&\text{in }\mathcal D'(Q_T),\\
 \rho_n^{\beta/2}d_n&\weakto\rho^{\beta/2}d
                  &&\text{in }L^2(Q_T).
\end{aligned}
\end{equation}
\end{proposition}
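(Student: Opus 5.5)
The proof follows the Lions--Feireisl scheme, adapted to the density-dependent coefficient as announced in the introduction. We use bounded multipliers $b_0(r)=1/(2\mu+r^\beta)$, $b_1(r)=r/(2\mu+r^\beta)$ and the joint renormalization $\Phi_\varepsilon(\rho,D)=\rho\log(1+D/\varepsilon)$.

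\textbf{Weak limits.} We start with compactness, using only \eqref{comp:moments} and \eqref{comp:energy}. Weak limits give $\rho_n\weakto\rho$ in $C_{\mathrm{weak}}([0,T];L^k)$ (the continuity equation supplies $\partial_t\rho_n$ bounded in $L^2W^{-1,r}$), $u_n\weakto u$ in $L^2H^1$ and $\rho_n^{\beta/2}d_n\weakto\chi$ in $L^2$. The magnetic equation bounds $\partial_tH_n$ in a negative Sobolev space, so Aubin--Lions--Simon gives $H_n\to H$ in $L^2L^4$. Similarly $m_n\to\rho u$ in $L^2H^{-1}$, which identifies $\rho_nu_n\otimes u_n$ weakly. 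By Corollary~\ref{uni:small-stress}, $\delta_n\rho_n^2d_n\to0$.

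\textbf{Generalized effective flux.} With $a_n=2\mu+\rho_n^\beta+\delta_n\rho_n^2$, set $F_n=a_nd_n-P_n-M_n$. For a bounded renormalization $b$, we test the momentum equation with $\nabla\Delta^{-1}\bigl(b(\rho_n)\bigr)$, or more precisely with $\phi\nabla\Delta^{-1}(b(\rho_n)\psi)$, and pass to the limit. The transport terms are handled by the div--curl/commutator structure: Lemma~\ref{pre:commutator} with $(p,q,r)=(2,6,3/2)$, together with the strong convergence of $H_n$. This yields
\[
\overline{F\,b(\rho)}=\overline{F}\,\overline{b(\rho)}
\]
in the weak limit. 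Here $b=b_0,b_1$, which are bounded because $\beta>1$. Dividing formally, $d_n=(F_n+P_n+M_n)/a_n$, so $b_1(\rho_n)F_n$ expresses $\rho_nd_n$ modulo pressure terms. This identifies
\[
\overline{\rho d}-\rho d=\text{(pressure/Young-measure defect terms)},
\]
which carry a sign by monotonicity of $r\mapsto r^\gamma$ against the multipliers. The spatial means $\avg{F_n}$ are kept explicitly; they are only functions of $t$ and factor out of the product.

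\textbf{Defect equation.} Renormalizing both the approximate and limit continuity equations gives, for the entropy defect $D=\overline{\rho\log\rho}-\rho\log\rho\ge0$, a transport inequality
\[
\partial_tD+\Div(Du)\le\overline{\rho d}-\rho d-\bigl(\overline{\rho\log\rho\,d}-\rho\log\rho\,d\bigr).
\]
The right side is rewritten through the flux identity as $g\,D$ plus nonpositive terms, with a coefficient $g$ integrable against $\rho$. The limit renormalization is justified by Lemma~\ref{pre:friedrichs}, since $\rho\in L^\infty L^p$ for $p>2$. We then renormalize jointly with $\Phi_\varepsilon(\rho,D)$. The factor $\rho$ lets the weight $\rho g\in L^1$ replace a spatially uniform Gronwall coefficient, giving $\int\rho\log(1+D/\varepsilon)\le C$ uniformly in $\varepsilon$. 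Letting $\varepsilon\to0$ forces $D=0$ on $\{\rho>0\}$. On $\{\rho=0\}$ we show directly from the Young measure that $\overline{\rho\log\rho}=0$: the measure is supported at $0$ there, since its first moment vanishes. Hence $D\equiv0$, and strict convexity gives $\rho_n\to\rho$ a.e. Interpolation with \eqref{comp:moments} then gives strong convergence in every $L^k$.

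\textbf{Conclusion and main obstacle.} Strong convergence of the density gives the convergence of $\rho_n^\gamma$. It also gives $\rho_n^{\beta/2}\to\rho^{\beta/2}$ strongly in $L^4$, which identifies $\chi=\rho^{\beta/2}d$ and $\rho_n^\beta d_n=\rho_n^{\beta/2}(\rho_n^{\beta/2}d_n)\to\rho^\beta d$ in $\mathcal D'$. The main obstacle is the defect step: showing the sign structure and integrability of the coefficient $g$ multiplying $D$ with only moment bounds and no $L^\infty$ density bound. This is exactly why the weight $\rho$ in $\Phi_\varepsilon$, and the careful treatment of vacuum, are needed.
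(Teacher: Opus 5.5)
Your outline follows the paper's architecture: negative-space compactness, the effective-flux identity for $b_0,b_1$, the entropy defect, the $\rho$-weighted logarithmic renormalization, and vacuum via the Young measure. The step you call the main obstacle is not closed, however, and what you write there is incorrect.

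\textbf{The defect equation and its sign.} The approximate continuity equation is unmodified, so $\partial_t(\rho_n\log\rho_n)+\Div(\rho_n\log\rho_n\,u_n)=-\rho_nd_n$. The limit renormalization gives $\partial_t(\rho\log\rho)+\Div(\rho\log\rho\,u)=-\rho d$. Hence the defect satisfies the exact equation $\partial_tD+\Div(Du)=\rho d-\overline{\rho d}$. Your right-hand side has the opposite sign and a spurious term $\overline{\rho\log\rho\,d}-\rho\log\rho\,d$. Next, remove the vanishing part $\delta_n\rho_n^2d_n$ from your $F_n$. Then $d_n=b_0(\rho_n)(F_n+P_n+M_n)$ holds exactly, and the flux identity for $b_0,b_1$ yields
\[
 \rho d-\overline{\rho d}
 =-(\overline F+M)\Cov_{\mathsf N}(z,b_0)-\Cov_{\mathsf N}(z,z^\gamma b_0),
\]
where $\mathsf N$ is the Young measure of $\rho_n$. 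Neither term has a sign in general. The factor $\overline F+M$ is signless, and $z^\gamma b_0(z)$ is nondecreasing only when $\gamma\ge\beta$. So monotonicity of $r\mapsto r^\gamma$ does not give the structure you claim. For $1<\gamma<\beta$, not even the pressure term is favorable.

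\textbf{The missing idea.} These covariances are controlled by the defect itself. If $k$ is bounded and $L_k=\sup_{z>0}z|k'(z)|<\infty$, then $|\Cov_{\mathsf N}(z,k)|\le C(\norm{k}_\infty+L_k)D$. To see this, put $R=\rho$ and $\mathcal H(z\mid R)=z\log(z/R)-z+R$. Then $\Cov_{\mathsf N}(z,k)=\avg{(z-R)(k(z)-k(R))}_{\mathsf N}$ and $D=\avg{\mathcal H(z\mid R)}_{\mathsf N}$. The pointwise bound $|(z-R)(k(z)-k(R))|\le C(\norm{k}_\infty+L_k)\mathcal H(z\mid R)$ follows from $\mathcal H\ge|z-R|^2/(4R)$ on $[R/2,2R]$, together with $\mathcal H\gtrsim z$ and $\mathcal H\gtrsim R$ on the two tails. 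At vacuum points $\mathsf N=\delta_0$. Apply this to $b_0$ and, when $\gamma<\beta$, to the bounded function $z^\gamma b_0$. This gives $\partial_tD+\Div(Du)\le C(1+|\overline F+M|)D$ for every $\gamma>1$. The coefficient is not bounded, but $\rho(1+|\overline F+M|+|d|)\in L^1(Q_T)$, which is exactly what the $\rho\log(1+D/\varepsilon)$ renormalization needs.

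\textbf{The initial trace.} Your uniform-in-$\varepsilon$ bound on $\int\rho\log(1+D/\varepsilon)$ also requires $D(0)=0$, that is, $\overline{\rho\log\rho}(0)=\rho_0\log\rho_0=(\rho\log\rho)(0)$. This uses \eqref{comp:initial} and a renormalization of the limit equation up to $t=0$, for instance after freezing $\rho=\rho_0$, $u=0$ at negative times. Your sketch does not address this.

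\textbf{Two smaller points.} First, the flux identity needs $zb'-b$ bounded as well as $b$; this holds for $b_0,b_1$ and is used in the renormalized equation for $b(\rho_n)$. Second, the commutator is only bounded in $L^2W^{1,3/2}$, so pairing it with $m_n$ needs strong convergence of $m_n$ in a space such as $L^2H^{-3/5}$, not merely $L^2H^{-1}$.
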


The strong solutions constructed above satisfy the hypotheses of this
proposition. Its proof uses no uniform pointwise bound or positive lower
bound for the densities at positive times. We give the argument for the
exact approximate equations and retain their nonlinear renormalization
sources throughout.

\subsection{Compactness before identifying the density}

Write
\begin{equation}\label{comp:flux-definition}
 a(z)=2\mu+z^\beta,\qquad
 F_n=P_n-a(\rho_n)d_n,\qquad e_n=\delta_n\rho_n^2d_n.
\end{equation}
In particular, $F_n$ does not include the magnetic pressure. The equations
used in this section are
\begin{equation}\label{comp:equations}
\begin{aligned}
 &\partial_t\rho_n+\Div m_n=0,\\
 &\partial_tm_n+\Div(m_n\otimes u_n)+\nabla P_n\\
 &\qquad=\mu\Delta u_n+\nabla((\mu+\rho_n^\beta)d_n)
       +\Div(H_n\otimes H_n)-\nabla M_n+\nabla e_n,\\
 &\partial_tH_n+\Div(H_n\otimes u_n-u_n\otimes H_n)
       =\nu\Delta H_n,\qquad \Div H_n=0.
\end{aligned}
\end{equation}
The artificial stress is small in the precise space needed below:
\begin{equation}\label{comp:small-stress}
 \norm{e_n}_{L^2L^{8/7}}
 \le\sqrt{\delta_n}\norm{\rho_n}_{L^\infty L^{8/3}}
       \norm{\sqrt{\delta_n}\rho_nd_n}_{L^2(Q_T)}
 \longrightarrow0.
\end{equation}

The mass-weighted Poincare inequality yields a uniform $L^2H^1$ bound for
$u_n$. Indeed, if $\overline u_n=\avg{u_n}$, then
\begin{equation}\label{comp:mean-velocity}
\begin{aligned}
 \Big(\int_{\TT}\rho_n\dx\Big)|\overline u_n|
 &\le\Big(\int_{\TT}\rho_n\dx\Big)^{1/2}
          \norm{\sqrt{\rho_n}u_n}_2
       +\norm{\rho_n}_2\norm{u_n-\overline u_n}_2.
\end{aligned}
\end{equation}
The last norm is controlled by $\norm{\nabla u_n}_2$. Also,
\begin{equation}\label{comp:momentum-bounds}
\begin{aligned}
 \norm{m_n}_{4/3}
     &\le\norm{\sqrt{\rho_n}u_n}_2\norm{\sqrt{\rho_n}}_4,\\
 \norm{m_n\otimes u_n}_{L^2L^{8/7}}
     &\le\norm{m_n}_{L^\infty L^{4/3}}\norm{u_n}_{L^2L^8}.
\end{aligned}
\end{equation}
Furthermore,
\begin{equation}\label{comp:bulk-bound}
 \norm{a(\rho_n)d_n}_{L^2L^{8/7}}
 \le\norm{\sqrt{a(\rho_n)}}_{L^\infty L^{8/3}}
       \norm{\sqrt{a(\rho_n)}d_n}_{L^2(Q_T)}.
\end{equation}
The pressure is controlled by a fixed higher density moment. The magnetic
energy and the two-dimensional interpolation inequality imply
$H_n\in L^4L^4$, uniformly in $n$. Consequently,
\begin{equation}\label{comp:time-momentum}
 \partial_tm_n\text{ is bounded in }L^2W^{-1,8/7},\qquad
 F_n\text{ is bounded in }L^2L^{8/7}.
\end{equation}
We will also use $F_n$ bounded in $L^2L^{3/2}$, which follows by replacing
the spatial exponents in \eqref{comp:bulk-bound} with $3/2,6,2$.

The compactness theorem of \cite{Simon1987} applies to
\begin{equation}\label{comp:embedding-chain}
 L^{4/3}(\TT)\Subset H^{-3/5}(\TT)
       \hookrightarrow W^{-1,8/7}(\TT).
\end{equation}
Here the first embedding is dual to $H^{3/5}\Subset L^4$, and the second
is dual to $W^{1,8}\hookrightarrow H^{3/5}$. Thus
\begin{equation}\label{comp:strong-negative}
 m_n\longrightarrow m\quad\text{in }L^2H^{-3/5}.
\end{equation}
Similarly, $\rho_nu_n$ is bounded in $L^2L^2$, so the continuity equation
gives $\partial_t\rho_n$ bounded in $L^2H^{-1}$. For each fixed
$0<\eta<1$, we may arrange that
\begin{equation}\label{comp:rho-negative}
 \rho_n\longrightarrow\rho\quad\text{in }L^2H^{-\eta}.
\end{equation}
For a smooth test function $\varphi$, split
\begin{equation}\label{comp:product-identification}
\begin{aligned}
 \int_{Q_T}\varphi(\rho_nu_n-\rho u)\dx\dt
 &=\int_0^T\ip{\rho_n-\rho}{\varphi u_n}\dt
    +\int_{Q_T}\varphi\rho(u_n-u)\dx\dt.
\end{aligned}
\end{equation}
The first term tends to zero by \eqref{comp:rho-negative}, and the second
by weak convergence of $u_n$ in $L^2H^1$. Hence $m=\rho u$. The same
argument, using \eqref{comp:strong-negative}, identifies
\begin{equation}\label{comp:convection-limit}
 m_n\otimes u_n\weakto m\otimes u
          \quad\text{in }L^2L^{8/7}.
\end{equation}
These identifications use only strong convergence in negative spaces.

The induction equation and $H_nu_n\in L^2L^{4/3}$ give
$\partial_tH_n$ bounded in $L^2W^{-1,4/3}$. Since $H^1(\TT)\Subset L^4(\TT)$,
\begin{equation}\label{comp:magnetic-compactness}
 H_n\to H\text{ in }L^2L^4,\qquad
 H_n\otimes H_n\to H\otimes H\text{ in }L^1L^2.
\end{equation}

\subsection{The generalized effective-flux identity}

\begin{lemma}\label{comp:flux}
Under the hypotheses of Proposition~\ref{comp:density}, let
$b\in C^1([0,\infty))$ satisfy
\begin{equation}\label{comp:b-condition}
 \norm{b}_\infty+\norm{k_b}_\infty<\infty,\qquad
 k_b(z)=zb'(z)-b(z).
\end{equation}
If $F_n\weakto\overline F$ in $L^2L^{8/7}$ and
$b(\rho_n)\weakstar\overline b$ in $L^\infty(Q_T)$, then, after passage
to a subsequence,
\begin{equation}\label{comp:flux-factorization}
 F_nb(\rho_n)\longrightarrow\overline F\,\overline b
                    \quad\text{in }\mathcal D'(Q_T).
\end{equation}
The same subsequence can be used for any prescribed finite collection of
functions satisfying \eqref{comp:b-condition}.
\end{lemma}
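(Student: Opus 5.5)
Following Lions, we will test the momentum equation against $\varphi\nabla\Delta^{-1}(\varphi b(\rho_n))$ and let $n\to\infty$; testing the limiting momentum equation against $\varphi\nabla\Delta^{-1}(\varphi\overline b)$ gives the corresponding limit identity. The difference of the two identities isolates $\int\varphi^2(F_nb(\rho_n)-\overline F\,\overline b)$, and we will show that every other term in the difference tends to zero.

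Here $\varphi\in C_c^\infty(Q_T)$ is fixed. Since $\nabla\Delta^{-1}\Div$ applied to the stress splits it into $-F_n$, a shear part and magnetic/convective parts, this is the standard effective-flux route, adapted to the exact approximate equations \eqref{comp:equations}. The first step is to write the renormalized continuity equation for $b(\rho_n)$, which is legitimate because $\rho_n$ is a positive strong solution:
\[
\partial_tb(\rho_n)+\Div(b(\rho_n)u_n)=-k_b(\rho_n)d_n .
\]
By \eqref{comp:b-condition} the source is bounded in $L^2L^2$. Hence $\partial_t\nabla\Delta^{-1}(\varphi b(\rho_n))$ is bounded in $L^2L^2$, uniformly in $n$, modulo the transport term $\nabla\Delta^{-1}\Div(\varphi b(\rho_n)u_n)$. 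Moreover $\nabla\Delta^{-1}(\varphi b(\rho_n))\to\nabla\Delta^{-1}(\varphi\overline b)$ strongly in every $L^pL^q$ with $q<\infty$, by Aubin--Lions with the compact gain of one derivative.

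We then go through the terms of the momentum identity in turn.

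\begin{itemize}
\item The pressure and bulk terms combine into $\int\varphi^2F_nb(\rho_n)$ plus terms in which $F_n$ is paired with $\nabla\varphi\cdot\nabla\Delta^{-1}(\varphi b(\rho_n))$. Those pairings converge, since weak convergence of $F_n$ in $L^2L^{8/7}$ meets strong convergence of the potential in $L^2L^8$.
\item The shear term $\mu\Delta u_n$ contributes only lower-order pairings, because $\Div\nabla\Delta^{-1}=\mathrm{Id}$ moves the divergence onto a zero-order Riesz piece. It pairs $\nabla u_n$ with the strongly converging potential, up to $\mu\int\varphi^2 d_nb(\rho_n)$-type terms. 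These terms are of the same form as the bulk term and are kept inside $F_n$'s companion $2\mu d_n$; the pure constant-coefficient part cancels via $\mathcal R_{ij}$ symmetry.
\item The artificial stress $e_n$ vanishes by \eqref{comp:small-stress}.
\item The magnetic stress converges strongly by \eqref{comp:magnetic-compactness}.
\item The time-derivative term, after integration by parts in time, combines with the convective term $\Div(m_n\otimes u_n)$ into the commutator
\[
\int\varphi\, u_n^i\bigl([u_n^j,\mathcal R_{ij}](\varphi\rho_nb(\rho_n))\bigr)\quad\text{(schematically)}.
\]
\end{itemize}

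We will handle this commutator with Lemma~\ref{pre:commutator}, using $(p,q,r)=(2,6,3/2)$. The derivative of the commutator is bounded in $L^2L^{3/2}$, so the commutator is compact in space. Combined with the weak convergence $m_n\to m$ (strong in $L^2H^{-3/5}$ by \eqref{comp:strong-negative}), this yields convergence of the commutator pairing to its limit. This is the Div--Curl/commutator compactness mechanism, and it needs only $b$ bounded and $u_n$ bounded in $L^2H^1$.

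The main obstacle is this convective commutator term, and specifically matching its integrability. The momentum is only in $L^\infty L^{4/3}$ and the velocity only in $L^2L^q$, so the product $u_n\cdot\mathcal R(\cdots)$ must be split carefully. The time integrability is delicate: two $L^2_t$ factors, $u_n$ and the commutator, would leave no room for the $L^\infty_t$ momentum factor. We will therefore write the term as $\int m_n\cdot(\text{commutator in }u_n\text{ and }b)$. Here $b(\rho_n)\in L^\infty$ and $\nabla u_n\in L^2L^2$, so the commutator lies in $L^2W^{1,r}$ for every $r<2$ and is bounded in $L^2$ of a space compactly embedded in $L^4$. It then pairs with $m_n$, which converges strongly in $L^2H^{-3/5}$, with the time-derivative bound supplying compactness.

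Finally, the statement allows any finite collection of functions $b$ satisfying \eqref{comp:b-condition}. We obtain this by extracting one common subsequence along which all the weak limits exist and applying the argument to each $b$ in the collection.
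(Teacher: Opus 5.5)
Your outline follows essentially the paper's route: the effective-flux test with $\nabla\Delta^{-1}$ of the renormalized density, negative-norm compactness of $b(\rho_n)$ from the $L^2$-bounded source $k_b(\rho_n)d_n$, and the Coifman--Meyer commutator with exponents $(2,6,3/2)$ paired against the strong $L^2H^{-3/5}$ convergence of $m_n$. The only structural difference is your Lions-type localization $\varphi\nabla\Delta^{-1}(\varphi b(\rho_n))$; the paper instead tests the global potential equation for $\Phi_n=\Delta^{-1}\Div m_n$ against $\psi b(\rho_n)$, which on the torus avoids the extra cutoff-derivative terms.

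You should add three bookkeeping points. On $\TT$ one has $\Div\nabla\Delta^{-1}=\mathrm{Id}-\avg{\cdot}$, not $\mathrm{Id}$. This produces a mean term $\int\varphi F_n\avg{\varphi b(\rho_n)}$, which converges only because $t\mapsto\avg{\varphi b(\rho_n)}$ is compact in time; this is the paper's separate $H^1(0,T)$ argument. The weak limit of your commutator must be identified as $[u_j,\mathcal R_{ij}](\varphi\overline b)$, using $b(\rho_n)u_n\weakto\overline b\,u$. Finally, the source pairing $\int\varphi m_n\cdot\nabla\Delta^{-1}(\varphi k_b(\rho_n)d_n)$ must be matched with the limit equation $\partial_t\overline b+\Div(\overline b u)=-\overline K$, without ever using $\overline b=b(\rho)$.
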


\begin{proof}
Put $b_n=b(\rho_n)$ and $K_n=k_b(\rho_n)d_n$. The exact nonlinear
continuity equation is
\begin{equation}\label{comp:b-continuity}
 \partial_tb_n+\Div(b_nu_n)=-K_n.
\end{equation}
In view of \eqref{comp:b-condition}, $K_n$ is bounded in $L^2(Q_T)$.
Hence $b_n$ is compact in $L^2H^{-\eta}$ for $0<\eta<1$. The argument
in \eqref{comp:product-identification} gives
$b_nu_n\weakto\overline b\,u$ in $L^2L^2$. Writing
$K_n\weakto\overline K$ in $L^2(Q_T)$, we obtain
\begin{equation}\label{comp:b-limit}
 \partial_t\overline b+\Div(\overline b\,u)=-\overline K.
\end{equation}
Neither $\overline b=b(\rho)$ nor
$\overline K=k_b(\rho)\Div u$ is used here.

Let $\Delta^{-1}$ be the inverse of the negative-spectrum Laplacian on
mean-zero functions, with its zero mode set to zero, and define
\begin{equation}\label{comp:potential}
 \Phi_n=\Delta^{-1}\Div m_n,\qquad
 \mathcal R_{ij}=\partial_i\partial_j\Delta^{-1}.
\end{equation}
The multiplier $\mathcal R_{ij}$ is self-adjoint and has symbol
$\xi_i\xi_j/|\xi|^2$ at nonzero frequencies. The order $-1$ of the
potential, together with \eqref{comp:strong-negative}, gives
\begin{equation}\label{comp:potential-compactness}
\begin{aligned}
 &\Phi_n\to\Phi\quad\text{in }L^2H^{2/5}\subset L^2L^2,\\
 &\Phi_n\text{ bounded in }L^\infty W^{1,4/3},\qquad
   \partial_t\Phi_n\text{ bounded in }L^2L^{8/7}.
\end{aligned}
\end{equation}

Fix $\psi\in C_c^\infty(Q_T)$ and set
\begin{equation}\label{comp:commutator-definition}
 C_{n,i}=\sum_j[u_{n,j},\mathcal R_{ij}](\psi b_n),\qquad
 [v,\mathcal R]f=v\mathcal Rf-\mathcal R(vf).
\end{equation}
The nonendpoint derivative commutator estimate
of \cite{CoifmanMeyer1978} gives
\begin{equation}\label{comp:commutator-bound}
 \norm{\nabla[v,\mathcal R_{ij}]f}_{3/2}
       \le C\norm{\nabla v}_2\norm{f}_6.
\end{equation}
The zero-order term is controlled by H\"older's inequality and the
$L^q$ boundedness of $\mathcal R_{ij}$. Thus $C_n$ is bounded in
$L^2W^{1,3/2}\hookrightarrow L^2H^{3/5}$. No derivative of $b_n$ is
required. Strong negative-space convergence of $b_n$ identifies both
products in \eqref{comp:commutator-definition}, so
\begin{equation}\label{comp:commutator-limit}
\begin{aligned}
 C_n&\weakto C\quad\text{in }L^2H^{3/5},\\
 C_i&=\sum_j[u_j,\mathcal R_{ij}](\psi\overline b),\qquad
 \int_0^T\ip{m_n}{C_n}\dt\longrightarrow
       \int_0^T\ip{m}{C}\dt.
\end{aligned}
\end{equation}
For instance, testing $u_n\mathcal R(\psi b_n)$ transfers the difference
$b_n-\overline b$ to a strong $H^{-\eta}$ factor paired with a bounded
$H^1$ factor. The last convergence in \eqref{comp:commutator-limit}
uses \eqref{comp:strong-negative}, not strong convergence of $C_n$.

Define the magnetic potential
\begin{equation}\label{comp:magnetic-potential}
 \mathcal A_n=M_n-\avg{M_n}
                    -\sum_{i,j}\mathcal R_{ij}(H_{n,i}H_{n,j}).
\end{equation}
It converges strongly in $L^1L^2$ by
\eqref{comp:magnetic-compactness}. Applying $\Delta^{-1}\Div$ to the
momentum equation yields the exact identity
\begin{equation}\label{comp:potential-equation}
\begin{aligned}
 \partial_t\Phi_n+\mathcal R_{ij}(m_{n,i}u_{n,j})+\mathcal A_n
       &=-F_n+\avg{F_n}
     +e_n-\avg{e_n},
\end{aligned}
\end{equation}
where repeated indices are summed. Testing with $\psi b_n$, integrating
in time by means of \eqref{comp:b-continuity}, and using self-adjointness
of $\mathcal R_{ij}$ give
\begin{equation}\label{comp:localized-identity}
\begin{aligned}
 \int_{Q_T}\psi b_nF_n
 ={}&\int_{Q_T}\bigl[\Phi_nb_n(\psi_t+u_n\cdot\nabla\psi)
                    -m_n\cdot C_n-\psi\Phi_nK_n\bigr]\\
    &+\int_{Q_T}\psi b_n(e_n-\avg{e_n}-\mathcal A_n)
       +\int_0^T\avg{F_n}\Big(\int_{\TT}\psi b_n\dx\Big)\dt.
\end{aligned}
\end{equation}
Here and in similar formulas the unmarked space-time integrals use
$\dx\dt$. In particular, the source term has the negative sign displayed
in \eqref{comp:localized-identity}. The commutator sign follows from
$\partial_j\Phi_n=\mathcal R_{ij}m_{n,i}$.

Every contribution in \eqref{comp:localized-identity} has a justified limit.
The time-cutoff and transport terms use strong $\Phi_n$ in $L^2L^2$
and weak convergence of $b_n$ and $b_nu_n$. The commutator term uses
\eqref{comp:commutator-limit}. Crucially, the actual source converges to
$-\int_{Q_T}\psi\Phi\overline K$, by strong $\Phi_n$ and weak $K_n$ in
$L^2(Q_T)$. The magnetic term uses strong $L^1L^2$ convergence of
$\mathcal A_n$ and the uniform bound for $b_n$.

The mean term requires a separate time compactness argument. The function
$B_{n,\psi}(t)=\int_{\TT}\psi b_n\dx$ satisfies
\begin{equation}\label{comp:mean-time}
 B_{n,\psi}'=\int_{\TT}
       (\psi_tb_n+\nabla\psi\cdot b_nu_n-\psi K_n)\dx.
\end{equation}
It is bounded in $H^1(0,T)$ and therefore converges strongly in
$C([0,T])$ along a subsequence. This is sufficient to pair it with the
weak $L^2$ limit of $\avg{F_n}$. Finally, the artificial-stress contribution
$\int_{Q_T}\psi b_n(e_n-\avg{e_n})$ vanishes by \eqref{comp:small-stress}.

To identify the resulting expression, one must also justify the time
integration by parts at the limit. The appropriate spaces are
\begin{equation}\label{comp:limit-time-spaces}
\begin{aligned}
 &\Phi\in L^2W^{1,8/7},\qquad \Phi_t\in L^2L^{8/7},\\
 &\overline b\in L^2L^8,\qquad
       \overline b_t\in L^2W^{-1,8}.
\end{aligned}
\end{equation}
Indeed, $\overline b u\in L^2L^8$ and $\overline K\in L^2L^2$; moreover,
$L^2\hookrightarrow W^{-1,8}$ is dual to
$W^{1,8/7}\hookrightarrow L^2$. Regularizing the fixed limit functions
in these dual spaces proves
\begin{equation}\label{comp:limit-time-product}
 -\int_{Q_T}\psi\overline b\Phi_t
 =\int_{Q_T}\bigl[\Phi\overline b(\psi_t+u\cdot\nabla\psi)
       +\psi\overline b u\cdot\nabla\Phi-\psi\Phi\overline K\bigr].
\end{equation}
The spatial products are integrable since
$u\cdot\nabla\Phi\in L^2L^{8/7}$ and
$\Phi\overline K\in L^2L^{4/3}$. No $L^2H^1$ estimate for $\Phi$ is
being assumed. Apply \eqref{comp:limit-time-product} to the limit of
\eqref{comp:potential-equation}. It reproduces precisely the limit of
the right-hand side of \eqref{comp:localized-identity}, including the
same term $-\int\psi\Phi\overline K$. The result is
$\int\psi\overline F\,\overline b$, proving
\eqref{comp:flux-factorization}. A countable dense family of tests,
followed by the uniform bounds, provides a common subsequence.
\end{proof}

\subsection{Entropy transport and its initial trace}

Let $s_n=\rho_n\log\rho_n$, with $0\log0=0$. Its exact equation is
\begin{equation}\label{comp:entropy-sequence}
 \partial_ts_n+\Div(s_nu_n)=-\rho_nd_n.
\end{equation}
All fixed density moments imply that $s_n$ is bounded in every finite
$L^\infty L^k$. In particular, $s_nu_n$ is bounded in $L^2L^2$ and
$\rho_nd_n$ in $L^2L^{4/3}$. Thus $\partial_ts_n$ is bounded in
$L^2H^{-1}$, and $s_n$ is strongly compact in $L^2H^{-\eta}$ for
$0<\eta<1$. Pairing this convergence with weak $u_n$ in $L^2H^1$
identifies the entropy transport flux. Consequently,
\begin{equation}\label{comp:entropy-limit}
 \partial_t\overline s+\Div(\overline s\,u)
                      =-\overline{\rho d}.
\end{equation}
There is no entropy measure source: the approximate continuity equation
has not been modified by density diffusion.

For each smooth spatial test function, the scalar pairings with
$\rho_n$ and $s_n$ have uniformly bounded $L^2$ time derivatives.
They therefore converge uniformly in time along a subsequence. The
initial convergence in \eqref{comp:initial} gives
\begin{equation}\label{comp:linear-initial-traces}
 \rho(0)=\rho_0,\qquad
 \overline s(0)=\rho_0\log\rho_0
                   \quad\text{as weak time traces}.
\end{equation}
This does not yet identify the nonlinear trace of $\rho\log\rho$.

To do so, extend the fixed limit density to a negative time interval by
$\rho(t)=\rho_0$ and set $u(t)=0$ there. The weak trace in
\eqref{comp:linear-initial-traces} ensures that the extended continuity
equation has no jump at $t=0$. Its moment and $L^2H^1$ bounds persist.
Use the continuous-form Friedrichs commutator underlying the
renormalization argument of \cite{DiPernaLions1989}: for $q\in L^\infty L^8$,
\begin{equation}\label{comp:friedrichs}
 R_\ell(q):=\Div(uq_\ell)-[\Div(uq)]_\ell
        \longrightarrow0\quad\text{in }L^2L^{8/5},
\end{equation}
where the subscript denotes spatial convolution. This includes the
compression difference $q_\ell\Div u-(q\Div u)_\ell$.

For $\kappa>0$, take
\begin{equation}\label{comp:regularized-entropy}
\begin{aligned}
 b_\kappa(z)&=(z+\kappa)\log(z+\kappa)-\kappa\log\kappa,\\
 zb_\kappa'(z)-b_\kappa(z)&=z-\kappa\log(1+z/\kappa)\in[0,z].
\end{aligned}
\end{equation}
For fixed $\kappa$, the derivative grows at most logarithmically and is
bounded in $L^\infty L^{8/3}$ after composition with the mollified density.
It can therefore be paired with \eqref{comp:friedrichs}. At a fixed
mollification scale the density has bounded range and an $L^2L^\infty$
time derivative, so the chain rule is applicable without a large-value
cutoff. Passing first in the spatial scale and then in $\kappa$ gives
\begin{equation}\label{comp:renormalized-limit-entropy}
 \partial_t(\rho\log\rho)+\Div(\rho\log\rho\,u)=-\rho d,\qquad
 (\rho\log\rho)(0)=\rho_0\log\rho_0.
\end{equation}
The source passage follows from \eqref{comp:regularized-entropy}; the
other passages use a fixed higher density moment. The initial trace is
the value on the frozen negative-time interval. All these regularizations
concern the fixed limit functions and do not use strong convergence of
the approximate densities.

\subsection{The defect equation for every pressure exponent}

Let $\mathsf N_{t,x}$ be the probability Young measure generated by
$\rho_n$. All fixed higher moments exclude concentration for every
polynomially growing density function used below. Write
$\avg{k}_{\mathsf N}=\int_0^\infty k(z)\,\mathrm d\mathsf N_{t,x}(z)$.
Then $\rho=\avg{z}_{\mathsf N}$ and
$\overline s=\avg{z\log z}_{\mathsf N}$. Define
\begin{equation}\label{comp:defect}
 D=\overline s-\rho\log\rho\ge0,\qquad
 \Cov_{\mathsf N}(z,k)=\avg{zk}_{\mathsf N}
                       -\rho\avg{k}_{\mathsf N}.
\end{equation}
The defect is bounded in every finite $L^\infty L^k$ and has zero initial
weak trace by \eqref{comp:linear-initial-traces} and
\eqref{comp:renormalized-limit-entropy}. Moreover,
\begin{equation}\label{comp:vacuum}
 \rho=0\quad\Longrightarrow\quad\mathsf N_{t,x}=\delta_0
                         \quad\Longrightarrow\quad D=0.
\end{equation}

Set $h(z)=1/a(z)$ and $g(z)=z^\gamma/a(z)$. Lemma~\ref{comp:flux}
applies to $b=h$ and $b=zh$, since
\begin{equation}\label{comp:quotient-sources}
 k_h=-\frac1a-\frac{\beta z^\beta}{a^2},\qquad
 k_{zh}=-\frac{\beta z^{\beta+1}}{a^2},
\end{equation}
and $h,zh,k_h,k_{zh}$ are bounded when $\beta>1$. Passing to the limit in
$d_n=g(\rho_n)-F_nh(\rho_n)$ and its product with $\rho_n$ yields
\begin{equation}\label{comp:quotient-identities}
\begin{aligned}
 d&=\avg{g}_{\mathsf N}-\overline F\avg{h}_{\mathsf N},\\
 \overline{\rho d}&=\avg{zg}_{\mathsf N}
                         -\overline F\avg{zh}_{\mathsf N}.
\end{aligned}
\end{equation}
Subtracting the two entropy equations gives the exact equation
\begin{equation}\label{comp:defect-equation}
 \partial_tD+\Div(Du)=f,\qquad
 f=\overline F\Cov_{\mathsf N}(z,h)-\Cov_{\mathsf N}(z,g).
\end{equation}

\begin{lemma}\label{comp:covariance}
Let \(k\in C^1((0,\infty))\) be bounded and suppose
\[
                    L_k=\sup_{z>0}z|k'(z)|<\infty.
\]
Then
\begin{equation}\label{comp:covariance-bound}
 |\Cov_{\mathsf N}(z,k)|\le C(\norm{k}_\infty+L_k)D.
\end{equation}
\end{lemma}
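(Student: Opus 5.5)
The plan is to write both the covariance and the defect as integrals of nonnegative pointwise quantities against the Young measure, and then compare the integrands through a single scalar inequality.

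Fix \((t,x)\). If \(\rho=0\), then \eqref{comp:vacuum} gives \(\mathsf N_{t,x}=\delta_0\). In that case \(\Cov_{\mathsf N}(z,k)=0\) and \(D=0\), so there is nothing to prove. Here \(k\) is understood as a bounded function on \([0,\infty)\); this holds for \(h\) and \(g\), and its value at \(z=0\) only enters through the \(\norm{k}_\infty\) branch below.

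Now assume \(\rho>0\). Since \(\avg{z-\rho}_{\mathsf N}=0\), the covariance can be centered at the constant \(k(\rho)\):
\[
 \Cov_{\mathsf N}(z,k)=\avg{(z-\rho)\bigl(k(z)-k(\rho)\bigr)}_{\mathsf N}.
\]
Similarly, subtracting the affine term \((1+\log\rho)(z-\rho)\), which has zero mean, writes the entropy defect as
\[
 D=\avg{\rho\,\phi(z/\rho)}_{\mathsf N},\qquad
 \phi(s)=s\log s-s+1\ge0 .
\]
Both averages are finite thanks to the moment bounds and polynomial growth, which exclude concentration. It therefore suffices to prove the pointwise inequality
\[
 |z-\rho|\,|k(z)-k(\rho)|\le C(\norm{k}_\infty+L_k)\,\rho\,\phi(z/\rho)
 \qquad(z\ge0).
\]
Set \(s=z/\rho\). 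We use two bounds on the increment:
\[
 |k(z)-k(\rho)|\le 2\norm{k}_\infty,\qquad
 |k(z)-k(\rho)|\le L_k|\log s| \quad (z>0).
\]
The second follows from \(k(z)-k(\rho)=\int_\rho^z k'(w)\,\mathrm dw\) together with \(|k'(w)|\le L_k/w\).

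The inequality then reduces to two scalar facts about \(\phi\), proved by splitting the range of \(s\).

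\textbf{Case \(s\in[1/2,2]\).} Use the logarithmic bound. By Taylor expansion, \(\phi(s)\asymp(s-1)^2\) and \(|\log s|\le 2|s-1|\) on this interval. Hence
\[
 |s-1||\log s|\le C\phi(s),
\]
and the left side is at most \(CL_k\,\rho\,\phi(s)\).

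\textbf{Case \(s\in[0,1/2)\cup(2,\infty)\), including \(s=0\).} Use the sup bound. Here \(\phi(s)\ge c|s-1|\). For small \(s\) this holds because \(\phi\ge\phi(1/2)>0\) while \(|s-1|\le1\). For large \(s\) it holds because \(\phi(s)/(s-1)\) is increasing on \((1,\infty)\) and \(\phi(2)>0\). Thus the left side is at most \(2\norm{k}_\infty\rho|s-1|\le C\norm{k}_\infty\,\rho\,\phi(s)\).

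Integrating the pointwise inequality against \(\mathsf N_{t,x}\) gives \eqref{comp:covariance-bound}.

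The main obstacle is the correct normalization. The naive bound \(|\Cov|\le\norm{k}_\infty\avg{|z-\rho|}\) is not controlled by \(D\) near \(z=\rho\), because there \(D\) is only quadratic in \(z-\rho\). This is why the logarithmic modulus \(L_k\) is essential in the middle range. A second point is that all comparisons must be scale-invariant in \(\rho\). Writing everything in terms of \(s=z/\rho\) makes the constant independent of \(\rho\), and so independent of \((t,x)\).
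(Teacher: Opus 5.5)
Your proof is correct and follows the paper's argument essentially step for step. Like the paper, you center the covariance at $k(\rho)$, write $D$ as the expectation of the relative entropy $\mathcal H(z\mid\rho)=\rho\,\phi(z/\rho)$, and split at $z=\rho/2$ and $z=2\rho$, using the $L_k$ modulus in the middle range and boundedness of $k$ outside it. The only differences are cosmetic: you work in the normalized variable $s=z/\rho$ with the logarithmic form $|k(z)-k(\rho)|\le L_k|\log s|$, and you remark explicitly that the value of $k$ at $z=0$ matters, which the paper leaves implicit.
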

\begin{proof}
Let $R=\avg{z}_{\mathsf N}>0$ and
$\mathcal H(z\mid R)=z\log(z/R)-z+R$. On $R/2\le z\le2R$,
$|k(z)-k(R)|\le2L_k|z-R|/R$ and
$\mathcal H(z\mid R)\ge|z-R|^2/(4R)$. On $z\ge2R$, respectively
$z\le R/2$, the relative entropy bounds a fixed positive multiple of
$z$, respectively $R$. Boundedness of $k$ then proves
$|(z-R)(k(z)-k(R))|\le C(\norm{k}_\infty+L_k)\mathcal H(z\mid R)$.
Taking expectations proves the assertion. If $R=0$, use
\eqref{comp:vacuum}.
\end{proof}

Since $h$ is decreasing, its covariance is nonpositive. If $\gamma\ge\beta$,
\begin{equation}\label{comp:pressure-derivative}
 g'(z)=\frac{z^{\gamma-1}[2\mu\gamma+(\gamma-\beta)z^\beta]}
                   {(2\mu+z^\beta)^2}\ge0,
\end{equation}
so the pressure covariance in \eqref{comp:defect-equation} is favorable.
If $1<\gamma<\beta$, both $g$ and
$zg'=g[\gamma-\beta z^\beta/(2\mu+z^\beta)]$ are bounded. Applying
Lemma~\ref{comp:covariance} in this case gives, for every $\gamma>1$,
\begin{equation}\label{comp:one-sided-source}
 f\le AD,\qquad A=C(1+|\overline F|),\qquad
 \int_{Q_T}\rho(A+|d|)\dx\dt<\infty.
\end{equation}
This is not an ordinary Gronwall estimate with an $L^1L^\infty$ coefficient.

We also record the actual source space needed for the next step:
\begin{equation}\label{comp:source-space}
 \overline F\in L^2L^{3/2},\qquad f\in L^2L^{6/5}.
\end{equation}
Indeed, $|\Cov_{\mathsf N}(z,h)|\le2\norm{h}_\infty\rho$ and
$\rho\in L^\infty L^6$. The pressure covariance belongs to every needed
finite $L^\infty L^k$ by fixed higher moments and Jensen's inequality.

\subsection{Joint logarithmic renormalization and closure}

Fix $\varepsilon>0$ and define
\begin{equation}\label{comp:logarithmic-test}
 \Psi_\varepsilon(r,z)=r\log(1+z/\varepsilon).
\end{equation}
Mollify $\rho,D,f$ in space with a nonnegative kernel, leaving $u$
unmollified. For fixed $\ell$, the mollified variables have bounded
range and $L^2L^\infty$ time derivatives. Their continuity equations have
sources $R_\ell(\rho)$ and $f_\ell+R_\ell(D)$, respectively. The chain
rule therefore gives
\begin{equation}\label{comp:joint-mollified}
\begin{aligned}
 &\partial_t\Psi_\varepsilon(\rho_\ell,D_\ell)
       +\Div\big(u\Psi_\varepsilon(\rho_\ell,D_\ell)\big)\\
 &=\log(1+D_\ell/\varepsilon)R_\ell(\rho)
       +\frac{\rho_\ell}{\varepsilon+D_\ell}R_\ell(D)
+\frac{\rho_\ell}{\varepsilon+D_\ell}f_\ell
       -\frac{\rho_\ell D_\ell}{\varepsilon+D_\ell}d.
\end{aligned}
\end{equation}
The last term is retained: it follows from
$\Psi-r\Psi_r-z\Psi_z=-rz/(\varepsilon+z)$.

We justify each passage as $\ell\downarrow0$. First, \eqref{comp:friedrichs}
applies to both $\rho$ and $D$. The two derivative coefficients in
\eqref{comp:joint-mollified} are bounded in $L^\infty L^{8/3}$ for fixed
$\varepsilon$, so the commutator products vanish in $L^1(Q_T)$.
For completeness, the continuous-form commutator has the representation
\begin{equation}\label{comp:commutator-representation}
 R_\ell(q)(x)=d(x)q_\ell(x)
   +\int\nabla\varphi_\ell(z)\cdot[u(x)-u(x-z)]q(x-z)\dd z.
\end{equation}
Difference quotients bound its $L^{8/5}$ norm by
$C\norm{\nabla u}_2\norm{q}_8$. Spatial approximation shows cancellation
of its limiting principal term with $dq$, and dominated convergence in
time proves the asserted strong $L^2L^{8/5}$ convergence.

For the source coefficient, put
$a_\ell=\rho_\ell/(\varepsilon+D_\ell)$ and
$a_* =\rho/(\varepsilon+D)$. The decomposition
\begin{equation}\label{comp:source-coefficient}
 a_\ell-a_*=\frac{\rho_\ell-\rho}{\varepsilon+D_\ell}
   +\rho\left(\frac1{\varepsilon+D_\ell}
                         -\frac1{\varepsilon+D}\right)
\end{equation}
gives strong convergence in $L^2L^6$. The first term follows from spatial
convolution; for the second, use dominated convergence first in space
with dominating function $C_\varepsilon\rho(t)^6$, then in time with
$C_\varepsilon\norm{\rho(t)}_6^2$. Together with
$f_\ell\to f$ in $L^2L^{6/5}$, this proves $a_\ell f_\ell\to a_*f$ in
$L^1$. The denominator is bounded below by the fixed $\varepsilon$, not
by a positive lower bound for $\rho$.

The compression coefficient $\rho_\ell D_\ell/(\varepsilon+D_\ell)$
converges strongly in $L^2L^2$ by the same argument and can be paired
with $d\in L^2L^2$. Finally,
$\Psi_\varepsilon(\rho_\ell,D_\ell)\to
\Psi_\varepsilon(\rho,D)$ strongly in $L^2L^2$: use
$\log(1+z/\varepsilon)\le z/\varepsilon$, its Lipschitz constant
$1/\varepsilon$, and the fixed $L^\infty L^4$ bounds for $\rho,D$.
The transport flux thus converges in $L^1$ as well. We have proved
\begin{equation}\label{comp:joint-limit}
 \partial_t\Psi_\varepsilon(\rho,D)
       +\Div(u\Psi_\varepsilon(\rho,D))
       =\frac\rho{\varepsilon+D}f
                         -\frac{\rho D}{\varepsilon+D}d.
\end{equation}
Only now do we use $f\le AD$. No inequality of the form
$f_\ell\le A D_\ell$ is assumed at the mollified level.

The initial term also vanishes. Testing \eqref{comp:defect-equation}
with the spatial constant gives an absolutely continuous representative
of $\int_{\TT}D(t)\dx$, whose initial value is zero. Since $D\ge0$,
$\norm{D(t)}_1\to0$ as $t\downarrow0$; interpolation with a fixed higher
moment gives $\norm{D(t)}_2\to0$. Hence, for fixed $\varepsilon$,
\begin{equation}\label{comp:logarithmic-initial}
 0\le\int_{\TT}\Psi_\varepsilon(\rho(t),D(t))\dx
       \le\varepsilon^{-1}\norm{\rho(t)}_2\norm{D(t)}_2
       \longrightarrow0.
\end{equation}
These limits can be taken through the full-measure set on which the
moment bounds hold; they determine the initial value of the absolutely
continuous spatial integral in \eqref{comp:joint-limit}. Integration gives
\begin{equation}\label{comp:logarithmic-bound}
 \int_{\TT}\rho(t)\log(1+D(t)/\varepsilon)\dx
       \le\int_0^t\int_{\TT}\rho(A+|d|)\dx\dd s.
\end{equation}
The right-hand side is independent of $\varepsilon$. Integrate also in
$t$ and let $\varepsilon\downarrow0$. Fatou's lemma implies $D=0$ on
$\{\rho>0\}$, since the integrand diverges wherever $\rho>0$ and $D>0$.
On the vacuum set, use \eqref{comp:vacuum}, rather than division by $\rho$.
We conclude that
\begin{equation}\label{comp:defect-zero}
 D=0\quad\text{almost everywhere in }Q_T.
\end{equation}

\begin{proof}[Proof of Proposition~\ref{comp:density}]
All weak compactness statements were obtained before density identification.
The preceding argument applies to the same subsequence and proves
\eqref{comp:defect-zero}. Strict convexity of $z\log z$ on $[0,\infty)$
now yields $\mathsf N_{t,x}=\delta_{\rho(t,x)}$ almost everywhere.
Thus $\rho_n\to\rho$ in measure. For each fixed finite $k$, a higher
moment in \eqref{comp:moments} gives uniform integrability of
$|\rho_n-\rho|^k$. Vitali's theorem proves \eqref{comp:conclusion}, and
the same reasoning applies to each fixed power of the density.

In particular, $\rho_n^\beta\to\rho^\beta$ strongly in $L^2(Q_T)$.
Pairing this with weak $d_n\to d$ in $L^2(Q_T)$ identifies the bulk
stress in distributions. Likewise, the uniform $L^2$ bound for
$\rho_n^{\beta/2}d_n$ and the strong convergence of
$\rho_n^{\beta/2}$ identify its weak $L^2$ limit as
$\rho^{\beta/2}d$. This proves \eqref{comp:stress-conclusion}.
\end{proof}

\begin{remark}\label{comp:dissipation-remark}
The proof establishes strong density compactness and identifies the
original constitutive stress. It does not require, or assert, strong
convergence of the velocity gradients or disappearance of
$\delta_n\rho_n^2d_n^2$. That nonnegative dissipation can be discarded
when passing to the energy inequality. The exact continuity equation,
and hence the absence of a measure source in
\eqref{comp:defect-equation}, is essential to the argument given here.
\end{remark}

\section{Passage to the limit and proof of the main theorem}
\label{sec:assembly}

\subsection{A global approximating sequence}
Choose a nonnegative periodic approximate identity \(J_n\) and
a sequence \(0<\delta_n\le1\) tending to zero. Set
\begin{equation}\label{as:data}
 \rho_{0n}=J_n*\rho_0,\qquad
 u_{0n}=J_n*u_0,\qquad H_{0n}=J_n*H_0.
\end{equation}
The density bounds \(c_0\le\rho_{0n}\le C_0\) are preserved,
as is \(\Div H_{0n}=0\). The data converge strongly in
\(W^{1,q_0}\times H^1\times H^1\); in particular
\(\rho_{0n}\to\rho_0\) uniformly and \(E_n(0)\to E(0)\).
Each datum is smooth, but no uniform bound on its higher
derivatives is required.

By Proposition~\ref{fix:global}, the system with viscosity
\(\lambda_{\delta_n}(\rho)=\rho^\beta+\delta_n\rho^2\)
has a global positive-density strong solution
\((\rho_n,u_n,H_n)\) for these data.
Proposition~\ref{uni:uniform-moments} and
\eqref{uni:energy} provide, on every fixed finite interval, the
uniform bounds required in Section~\ref{sec:compactness}.
After subsequence extraction, Proposition~\ref{comp:density} gives
\begin{equation}\label{as:density}
 \rho_n\to\rho\quad\text{almost everywhere and strongly in }
                 L^p(Q_T)\quad(1\le p<\infty),
 \qquad Q_T=(0,T)\times\TT.
\end{equation}
Throughout this section a larger fixed density moment is
chosen whenever needed. This does not require uniformity of
the moment bounds with respect to their exponent.

\subsection{Identification of the original equations}
Equation~\eqref{as:density} and uniform integrability imply,
for every fixed finite \(p\),
\begin{equation}\label{as:powers}
 \rho_n^\gamma\to\rho^\gamma,\qquad
 \rho_n^\beta\to\rho^\beta,\qquad
 \rho_n^{\beta/2}\to\rho^{\beta/2}
                     \quad\text{strongly in }L^p(Q_T).
\end{equation}
Since \(d_n=\Div u_n\weakto d=\Div u\) in \(L^2(Q_T)\),
\[
       \rho_n^\beta d_n\longrightarrow\rho^\beta d
                      \quad\text{in distributions}.
\]
The additional stress tends to zero by
Corollary~\ref{uni:small-stress}:
\begin{equation}\label{as:additional}
       \delta_n\rho_n^2d_n\longrightarrow0
             \quad\text{in }L^2(0,T;L^r(\TT)),\qquad 1\le r<2.
\end{equation}
Thus neither an unidentified pressure nor an unidentified
viscous stress remains.

The compactness argument in Section~\ref{sec:compactness} also gives
\begin{equation}\label{as:momentum}
 \begin{gathered}
 m_n=\rho_nu_n\to m=\rho u
             \quad\text{strongly in }L^2(0,T;H^{-3/5}(\TT)),\\
 u_n\weakto u\quad\text{in }L^2(0,T;H^1(\TT)),\qquad
 H_n\to H\quad\text{strongly in }L^2(0,T;L^4(\TT)).
 \end{gathered}
\end{equation}
The strong-negative/weak-positive pairing identifies
\(m_n\otimes u_n\to\rho u\otimes u\) in distributions.
Moreover,
\(\M(H_n)\to\M(H)\) strongly in \(L^1(0,T;L^2)\).
The induction flux converges by the last two limits in
\eqref{as:momentum}. The remaining linear terms pass to the limit
by weak convergence. Consequently the distributional equations
have precisely the form \eqref{eq:system}.

\begin{proposition}[Weighted kinetic compactness]\label{as:kinetic}
For the same subsequence,
\begin{equation}\label{as:weighted-strong}
 \sqrt{\rho_n}\,u_n\to\sqrt\rho\,u
                 \quad\text{strongly in }L^2(Q_T).
\end{equation}
The convergence holds also in \(L^r(0,T;L^2(\TT))\) for every
finite \(1\le r<\infty\), and
\begin{equation}\label{as:convective-strong}
        \rho_nu_n\otimes u_n\to\rho u\otimes u
                           \quad\text{strongly in }L^1(Q_T).
\end{equation}
\end{proposition}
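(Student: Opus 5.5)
The plan is the standard one: show that the norms converge, $\int_{Q_T}\rho_n|u_n|^2\to\int_{Q_T}\rho|u|^2$, and combine this with weak convergence of $\sqrt{\rho_n}u_n$ in $L^2(Q_T)$. Norm convergence is obtained by pairing strong convergence of the momenta with weak convergence of the velocities, after a cutoff in the velocity.

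First, identify the weak limit. The sequence $\sqrt{\rho_n}u_n$ is bounded in $L^\infty L^2$ by \eqref{comp:energy}. By \eqref{as:density}, $\sqrt{\rho_n}\to\sqrt\rho$ strongly in every finite $L^p(Q_T)$, and $u_n\weakto u$ in $L^2L^q$ for every finite $q$ by the Sobolev embedding. Hence $\sqrt{\rho_n}u_n\weakto\sqrt\rho\,u$ in $L^2L^r$ for every $r<2$. By the uniform $L^2$ bound, the weak limit in $L^2(Q_T)$ is $\sqrt\rho\,u$.

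The main step, and the main obstacle, is norm convergence. The idea is to upgrade \eqref{as:momentum} to strong convergence of $m_n$ in $L^2H^{-1}$, or in $L^2L^{4/3}$ weakly with compactness, and pair it against $u_n$ in $L^2H^1$. This gives $\int_{Q_T}m_n\cdot u_n\to\int_{Q_T}m\cdot u$, since $L^2H^{-3/5}$-strong times $L^2H^{1}$-weak converges, exactly as in \eqref{comp:product-identification}. This already yields
\[
 \int_{Q_T}\rho_n|u_n|^2\dx\dt=\int_0^T\ip{m_n}{u_n}\dt
 \longrightarrow\int_0^T\ip{m}{u}\dt=\int_{Q_T}\rho|u|^2\dx\dt .
\]
The only delicate point is that the duality pairing $\ip{m_n}{u_n}$ coincides with the integral. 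This holds because $m_n\in L^\infty L^{4/3}$ and $u_n\in L^2L^4$, so the product is integrable. Weak convergence plus norm convergence then gives \eqref{as:weighted-strong}. If one is worried that the pairing of $H^{-3/5}$ with $H^{1}$ needs $u_n$ in $L^2H^{3/5}$ weakly, this is immediate from the $L^2H^1$ bound.

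For the improvement to $L^r(0,T;L^2)$ with finite $r$, I will interpolate. The functions $\|\sqrt{\rho_n}u_n-\sqrt\rho u\|_{L^2_x}(t)$ are uniformly bounded in $L^\infty(0,T)$ and converge to zero in $L^2(0,T)$. By H\"older's inequality they therefore converge in every $L^r(0,T)$.

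For \eqref{as:convective-strong}, write $\rho_nu_n\otimes u_n=(\sqrt{\rho_n}u_n)\otimes(\sqrt{\rho_n}u_n)$. The product of two sequences converging strongly in $L^2(Q_T)$ converges strongly in $L^1(Q_T)$, with limit $\rho u\otimes u$.
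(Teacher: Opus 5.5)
Your proposal is correct and follows the paper's proof essentially step by step. You identify the weak $L^2(Q_T)$ limit from strong convergence of $\sqrt{\rho_n}$ and weak convergence of $u_n$. You get convergence of $\int_{Q_T}\rho_n|u_n|^2$ by pairing the strong $L^2H^{-3/5}$ momentum convergence in \eqref{as:momentum} with the weakly convergent $u_n$ in $L^2H^1$. You then interpolate against the $L^\infty_tL^2_x$ bound for finite time exponents, and handle $(\sqrt{\rho_n}u_n)\otimes(\sqrt{\rho_n}u_n)$ as a product of two strongly $L^2$-convergent factors.

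The velocity cutoff and the ``upgrade'' of the momentum convergence you mention at the outset are never needed, since \eqref{as:momentum} already suffices, exactly as in the paper.
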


\begin{proof}
Let \(v_n=\sqrt{\rho_n}\,u_n\). The energy estimate bounds \(v_n\)
in \(L^\infty_tL^2_x\). The elementary inequality
\[
             |\sqrt a-\sqrt b|^4\le|a-b|^2,\qquad a,b\ge0,
\]
and \eqref{as:density} imply strong convergence of
\(\sqrt{\rho_n}\) in \(L^4(Q_T)\), hence in \(L^2_tL^4_x\).
Testing against smooth functions and using weak convergence of
\(u_n\) in \(L^2_tH^1_x\) identifies every weak \(L^2(Q_T)\)
limit of \(v_n\) as \(v=\sqrt\rho\,u\).

The duality in \eqref{as:momentum} is a Bochner-space duality
on the entire time interval. Therefore
\begin{equation}\label{as:norm-convergence}
\begin{aligned}
 \int_{Q_T}|v_n|^2\dx\dt
   &=\int_0^T\ip{m_n}{u_n}_{H^{-3/5},H^{3/5}}\dt\\
   &\longrightarrow
       \int_0^T\ip{m}{u}_{H^{-3/5},H^{3/5}}\dt
     =\int_{Q_T}|v|^2\dx\dt.
\end{aligned}
\end{equation}
Indeed, the pairing with \(m_n-m\) tends to zero by strong
convergence and boundedness of \(u_n\); the pairing of \(m\)
with \(u_n-u\) tends to zero by weak convergence.
No nonsmooth endpoint test in the momentum equation is used.
Weak convergence and convergence of Hilbert norms prove
\eqref{as:weighted-strong}. Interpolation with the uniform
\(L^\infty_tL^2_x\) bound gives every finite time exponent.
Finally \(v_n\otimes v_n\to v\otimes v\) in \(L^1\).
\end{proof}

\subsection{Weak time traces and renormalization}
For any smooth spatial scalar test \(\phi\), the continuity equation
gives the absolutely continuous representative
\begin{equation}\label{as:weak-density-trace}
        \frac{\dd}{\dt}\int_{\TT}\rho\phi\dx
                      =\int_{\TT}m\cdot\nabla\phi\dx.
\end{equation}
A countable dense family of tests and the uniform moment bounds
define a representative in \(C_{\mathrm{weak}}([0,T];L^p)\)
for each \(1<p<\infty\). This representative is nonnegative at every time
and has the same \(L^p\) bound as the essential supremum.

Similarly, the momentum equation and its integrable right-hand
side define a weakly continuous momentum representative.
For each \(1<r<2\), the estimate
\[
  \norm{m(t)}_r\le
       \norm{\sqrt\rho\,u(t)}_2
       \norm{\rho(t)}_{r/(2-r)}^{1/2}
\]
provides the corresponding \(L^\infty_tL^r_x\) bound.
Here the norm on the right is a density norm.
Thus
\begin{equation}\label{as:weak-time}
  m\in C_{\mathrm{weak}}([0,T];L^r(\TT)),\qquad 1<r<2.
\end{equation}
The uniform time-derivative estimates in the compactness argument
give equicontinuity of all smooth spatial pairings for the
approximate sequence. Its initial values converge by
\eqref{as:data}. Hence these representatives attain
\(\rho(0)=\rho_0\), \(m(0)=\rho_0u_0\), and \(H(0)=H_0\)
in the weak sense. Passing to the limit in tests supported up
to \(t=0\) proves \eqref{eq:weak-mass}--\eqref{eq:weak-magnetic}
with the stated initial terms.

For completeness, the renormalized continuity equation is recovered
from the limiting continuity equation itself. Convolve in space
and use Lemma~\ref{pre:friedrichs}. If \(b'\) is compactly supported,
the chain-rule derivative is bounded, and the commutator converges
strongly in an integrable space. The lower-order term
\((\rho b'(\rho)-b(\rho))d\) is integrable.
To obtain the initial trace, extend \(\rho\) as \(\rho_0\)
and set \(u=0\) on a short negative-time interval.
The weak trace already proved excludes a jump in the continuity
equation. Apply the same convolution argument to this extended
equation. The resulting identity is \eqref{eq:renormalized}.
The renormalization with \(\rho\log\rho\), including its behavior
at zero, was established separately in
Section~\ref{sec:compactness}; it is not an immediate substitution
in the compactly supported derivative class.

\subsection{Strong time representatives}\label{as:strong-traces}
The density square satisfies
\begin{equation}\label{as:square}
    \partial_t\rho^2+\Div(u\rho^2)=-\rho^2d.
\end{equation}
Here \(\rho^2d\in L^2_tL^1_x\), using a fixed density moment
at least four. Thus the almost-everywhere defined square norm
has an absolutely continuous representative. We verify that
its value at every time is the square norm of the weak
representative constructed above.

Fix \(\tau<T\), keep the solution for \(t>\tau\), and on a
short interval to the left of \(\tau\) set
\[
            \rho^{(\tau)}(t)=\rho(\tau),\qquad
            u^{(\tau)}(t)=0.
\]
The weak trace \eqref{as:weak-density-trace} makes this an
extension satisfying the continuity equation across \(\tau\).
Take a fixed density exponent \(P>4\).
For a spatial mollifier, put
\[
 R_\ell(\rho)=\Div(u\rho_\ell)-[\Div(u\rho)]_\ell.
\]
Lemma~\ref{pre:friedrichs} gives
\[
 R_\ell(\rho)\to0\quad\text{in }L^2_tL^s_x,
       \qquad \frac1s=\frac12+\frac1P.
\]
At fixed \(\ell\), the square chain rule yields
\[
 \partial_t\rho_\ell^2+\Div(u\rho_\ell^2)
       =-\rho_\ell^2d+2\rho_\ell R_\ell(\rho).
\]
Since \(1/s+1/P=1/2+2/P<1\), the last product tends to zero
in \(L^1\). The source and flux terms also converge, using
the same fixed moments and \(u\in L^2_tH^1_x\).
Consequently \eqref{as:square} holds for the frozen extension.
Its square norm is constant and equal to
\(\norm{\rho(\tau)}_2^2\) on the frozen side. Absolute continuity
identifies that value with the trace from the solution side.
At \(T\) use a right-hand frozen extension, and at \(0\)
use the prescribed initial density. We have proved the
identity for all \(s,t\in[0,T]\):
\begin{equation}\label{as:square-trace}
        \norm{\rho(t)}_2^2-\norm{\rho(s)}_2^2
                  =-\int_s^t\!\!\int_{\TT}\rho^2d\dx\dd\tau.
\end{equation}
Weak \(L^2\) continuity and continuity of the norm give
\(\rho\in C([0,T];L^2)\).
For any finite \(p>2\), choose \(P>p\) and interpolate:
\[
 \norm{\rho(t)-\rho(s)}_p
   \le\norm{\rho(t)-\rho(s)}_2^\theta
        \bigl(2\norm{\rho}_{L^\infty_tL^P_x}\bigr)^{1-\theta},
 \qquad \frac1p=\frac\theta2+\frac{1-\theta}P.
\]
For \(p<2\) use finite measure. This proves strong time
continuity in every fixed finite \(L^p\), without taking an
endpoint in a single moment estimate.

For the magnetic field choose any fixed \(Q>2\). The magnetic
moments and two-dimensional Sobolev embedding give
\begin{equation}\label{as:magnetic-trace}
 \norm{Hu}_{L^2_tL^2_x}
   \le\norm{H}_{L^\infty_tL^Q_x}
         \norm{u}_{L^2_tL^{2Q/(Q-2)}_x}<\infty.
\end{equation}
The induction equation therefore gives
\(H_t\in L^2_tH^{-1}_x\).
Together with \(H\in L^2_tH^1_x\), the Hilbert-triple trace
theorem yields \(H\in C([0,T];L^2)\).
One may prove the trace theorem here by spatial Fourier
truncation, temporal mollification, and the identity
\[
        \frac12\frac{\dd}{\dt}\norm{H}_2^2
                         =\ip{H_t}{H}_{H^{-1},H^1}.
\]
Its representative agrees with the previously identified weak
trace, in particular \(H(0)=H_0\).

\subsection{Energy inequality at every terminal time}
The energy estimate bounds
\(z_n=\rho_n^{\beta/2}d_n\) in \(L^2(Q_T)\).
Its weak limits are identified by \eqref{as:powers}:
\begin{equation}\label{as:weighted-dissipation}
                z_n\weakto\rho^{\beta/2}d
                                     \quad\text{in }L^2(Q_T).
\end{equation}
Thus the original density-weighted dissipation is weakly
lower semicontinuous on every fixed time subinterval.
The same is true of the shear, unweighted divergence, and
magnetic dissipation. The additional dissipation
\(\delta_n\rho_n^2d_n^2\) is nonnegative and may be discarded;
\eqref{as:additional} does not imply that its integral tends to zero.

The approximate energy identity reads
\begin{equation}\label{as:approx-energy}
\begin{aligned}
 E_n(t)&+\int_0^t\!\!\int_{\TT}
 \bigl[\mu|\nabla u_n|^2+
    (\mu+\rho_n^\beta+\delta_n\rho_n^2)d_n^2
          +\nu|\nabla H_n|^2\bigr]\dx\dd s=E_n(0).
\end{aligned}
\end{equation}
By Proposition~\ref{as:kinetic}, \eqref{as:powers}, and
strong magnetic convergence, \(E_n\to E\) in \(L^1(0,T)\).
After extraction, \(E_n(t)\to E(t)\) for almost every \(t\).
The lower semicontinuity above first proves
\eqref{eq:energy-main} for almost every terminal time.

We now use the canonical representatives, not arbitrary
pointwise values of \(u\). The kinetic functional has the
convex dual representation
\begin{equation}\label{as:kinetic-dual}
 \int_{\TT}\mathcal K(\rho,m)\dx
   =\sup_{w\in C^\infty(\TT;\RR^2)}
       \int_{\TT}\left(m\cdot w-\frac12\rho|w|^2\right)\dx.
\end{equation}
It is therefore jointly lower semicontinuous under the weak
density and momentum time topologies.
The pressure energy is continuous by the strong \(L^\gamma\)
density trace, and the magnetic energy is continuous by
\eqref{as:magnetic-trace}.
Hence \(E(t)\) is lower semicontinuous in time.
Since \(\int_0^t\mathfrak d(s)\dd s\) is continuous, approximation
of an arbitrary terminal time by the full-measure set gives
\eqref{eq:energy-main} for every \(t\in[0,T]\).
Finiteness of this canonical energy also implies
\(m(t)=0\) almost everywhere on \(\{\rho(t)=0\}\) at every time.

For the stronger starting-time statement, perform the extraction
so that \(E_n(s)\to E(s)\) on a common full-measure set \(S\)
on all finite intervals. Restrict \eqref{as:approx-energy}
to \((s,t)\) with \(s,t\in S\). The same lower semicontinuity
proves \eqref{eq:energy-start} at these endpoints.
Keeping \(s\in S\) fixed and approximating any \(t\ge s\)
extends the inequality to every terminal time.
The initial energy convergence in \eqref{as:data} includes \(s=0\).
This argument does not provide every possible starting time.

Lower semicontinuity at zero and \eqref{eq:energy-main} show
\(E(t)\to E(0)\) as \(t\downarrow0\).
The pressure and magnetic energies converge strongly there,
so the kinetic norms converge as well.
Put \(v(t)=m(t)/\sqrt{\rho(t)}\), zero on the vacuum set.
Every weak \(L^2\) cluster point \(v_*\) along \(t\downarrow0\)
satisfies
\(\sqrt{\rho_0}\,v_*=\rho_0u_0\), by strong density and weak
momentum traces. Since \(\rho_0\ge c_0>0\),
\(v_*=\sqrt{\rho_0}\,u_0\). Convergence of the norms then proves
the strong convergence in \eqref{eq:initial-kinetic}.
This is a statement about the weighted velocity and does
not prescribe an unweighted velocity trace on vacuum.

\subsection{Global diagonal extraction and scope}
All approximate solutions exist globally.
Apply the preceding construction successively on \(T=1,2,\ldots\)
and to a countable unbounded family of density and magnetic
moment exponents, and take a diagonal subsequence.
The limiting distributions and canonical traces agree on
overlapping intervals. A larger integer moment controls any
given noninteger finite exponent.
The weak equations, renormalization, regularity, and energy
statements therefore hold on every finite interval for one
global solution. This proves Theorem~\ref{thm:main}.

There is no remaining pressure, viscous-stress, or convective
defect in the distributional equations. This fact should not
be confused with strong convergence of the dissipative
gradients. To specify what is not proved, the approximate
dissipation measures may have a nonnegative limit \(\eta\).
On the open time half-line one has
\[
               \eta\ge\mathfrak d(t)\dt,\qquad
                    \partial_tE+\eta=0
                    \quad\text{in distributions},
\]
after a further local subsequence extraction.
The difference \(\eta-\mathfrak d(t)\dt\) is not shown to vanish.
The conclusions are consequently the energy inequalities and
time quantifiers in Theorem~\ref{thm:main}, rather than a
global energy equality or a strong-solution continuation
statement for the original viscosity law.

\section{Weak--strong uniqueness by an augmented relative energy}
\label{sec:weak-strong}

The weak solutions constructed above can be compared with a strictly
positive strong solution without assuming a pointwise bound for the
weak density. When \(\gamma<\beta\), the physical relative internal
energy alone does not control the bulk-viscosity difference. We add
a relative potential of order \(\beta\), justified by the given
renormalized continuity equation. This introduces neither an
additional equation nor an additional entropy assumption.

\subsection{A relative stability statement}

For \(a>1\), set
\begin{equation}\label{ws:potentials}
 f_a(z)=\frac{z^a}{a-1},\qquad
 h_a(z\mid r)=f_a(z)-f_a(r)-f_a'(r)(z-r).
\end{equation}
Write \((\rho,u,H)\) for the weak solution and \((r,U,B)\) for
the strong comparison solution, and put
\[
 w=u-U,\qquad Z=H-B,\qquad D_U=\Div U,\qquad\delta\rho=\rho-r.
\]
Here \(D_U\) is the divergence, not a material derivative. Define
\begin{equation}\label{ws:functionals}
\begin{aligned}
 \mathcal F(t)&=\int_{\TT}\left[
 \frac12\rho|w|^2+h_\gamma(\rho\mid r)
                  +h_\beta(\rho\mid r)+\frac12|Z|^2\right]\dx,\\
 \mathcal D(t)&=\int_{\TT}\left[
 \mu|\nabla w|^2+(\mu+\rho^\beta)|\Div w|^2
                              +\nu|\nabla Z|^2\right]\dx.
\end{aligned}
\end{equation}
At a canonical time, the kinetic term means
\(\mathcal K(\rho,m)-m\cdot U+\rho|U|^2/2\), with the convex
extension in \eqref{eq:kinetic}. No velocity value on a vacuum
set is prescribed.

\begin{proposition}[Augmented relative stability]\label{ws:relative-theorem}
Let \(\beta>1\) and \(\gamma>1\). Suppose that
\((\rho,u,H)\) satisfies Definition~\ref{def:weak} on \([0,T]\)
and has positive total mass. Let \((r,U,B)\) solve the same
system with
\begin{equation}\label{ws:strong-class}
\begin{gathered}
 0<c_T\le r\le C_T,\qquad \Div B=0,\qquad
 r,U,B\in C([0,T];H^2),\\
 U,B\in L^2(0,T;H^3),\qquad
 U_t,B_t\in L^2(0,T;H^1),\qquad r_t\in C([0,T];H^1).
\end{gathered}
\end{equation}
The initial states need not coincide. There is a nonnegative
\(G_T\in L^1(0,T)\) such that, for every \(t\in[0,T]\),
\begin{equation}\label{ws:relative-estimate}
 \mathcal F(t)+\frac12\int_0^t\mathcal D(s)\dd s
 \le\mathcal F(0)+\int_0^tG_T(s)\mathcal F(s)\dd s.
\end{equation}
Consequently,
\begin{equation}\label{ws:gronwall}
 \mathcal F(t)\le\mathcal F(0)
                  \exp\left(\int_0^tG_T(s)\dd s\right).
\end{equation}
The coefficient can depend on fixed finite weak-density moments,
total mass, the reference density interval, and reference norms.
It does not depend on a weak-density pointwise bound or gradient.
\end{proposition}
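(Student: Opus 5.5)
We prove \eqref{ws:relative-estimate} by the standard relative-energy route, using the weak energy inequality \eqref{eq:energy-start}, the weak momentum and magnetic equations tested with the reference fields \(U\) and \(B\), and the renormalized continuity equation tested with \(f_\gamma'(r)\) and \(f_\beta'(r)\). In these tests we take \(r,U,B\) as test functions; this is justified by density of smooth functions, since they are in the class \eqref{ws:strong-class}. The first step expands the functional. We write
\[
\mathcal F=E(t)-\int m\cdot U+\int\tfrac12\rho|U|^2+\int\bigl(f_\beta(\rho)-f_\beta'(r)(\rho-r)-f_\beta(r)\bigr)-\int\bigl(f_\gamma'(r)\rho-p(r)\bigr)-\int H\cdot B+\tfrac12\int|B|^2,
\]
where \(p(r)=r^\gamma\). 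Here \(\int f_\beta(\rho)\) is not part of the energy. Its evolution comes from \eqref{eq:renormalized}, extended by truncation to \(b=f_\beta\) using the finite moments \eqref{eq:regularity}. This gives
\[
\tfrac{\dd}{\dt}\int f_\beta(\rho)=-\int\rho^\beta\Div u,
\]
which makes sense because \(\rho^\beta d=\rho^{\beta/2}\cdot\rho^{\beta/2}d\in L^1\). The key point is that \(-\rho^\beta\Div u\) combines with the bulk-viscous cross terms. Each linear term is computed from the weak formulations. We then subtract the strong equations, multiplied by \(w\), \(Z\), and the potential derivatives. This yields
\[
\mathcal F(t)+\int_0^t\mathcal D\le\mathcal F(0)+\int_0^t\mathcal R,
\]
where \(\mathcal R\) collects remainders. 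For \(s\in S\) this follows from \eqref{eq:energy-start}; at \(s=0\) we use \eqref{eq:initial-kinetic}. We then extend to all \(t\) by lower semicontinuity, as in Section~\ref{sec:assembly}.

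The second step estimates the remainders. The convective terms give \(\rho w\cdot\nabla U\cdot w\), bounded by \(\|\nabla U\|_\infty\mathcal F\). The acceleration mismatch \((\rho-r)(U_t+U\cdot\nabla U)\cdot w\) is handled by using the strong momentum equation divided by \(r\). The pressure terms reduce to \(-\int(\gamma-1)h_\gamma(\rho|r)\Div U\). The magnetic cross terms involving \(\M(H)-\M(B)\), the induction nonlinearities, and \(Z\otimes w\) are handled by the classical cancellation of \(\int Z\cdot(B\cdot\nabla w)\) against \(\int (Z\cdot\nabla w)\cdot B\). The remaining terms are of the form \(|Z|^2|\nabla U|\), \(|Z||w||\nabla B|\), and \(|Z||w||Z|\). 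We control the cubic term \(\int|Z|^2|w|\) by
\[
\|Z\|_4^2\|w\|_2\le C\|Z\|_2\|\nabla Z\|_2\|w\|_2,
\]
by Ladyzhenskaya's inequality. We then use Lemma~\ref{pre:weighted-poincare} to bound \(\|w\|_2\le C(\sqrt{\mathcal F}+\|\nabla w\|_2)\), and this is absorbed into \(\tfrac14\mathcal D\) plus \(G\mathcal F\). This absorption has a cubic factor \(\|w\|_2\) times a quadratic quantity. To close it we use that \(\|\sqrt\rho w\|_2\) and \(\|Z\|_2\) are bounded by the energies of both solutions, so \(\|w\|_2\le C(1+\|\nabla w\|_2)\) with a coefficient that is \(L^2\) in time.

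The main obstacle is the viscous difference. The weak viscous stress is \((\mu+\rho^\beta)\Div u\) and the strong one is \((\mu+r^\beta)\Div U\). The mismatch is
\[
\int(\rho^\beta-r^\beta)\,\Div U\,\Div w.
\]
We first split it as
\[
(\rho^\beta-r^\beta)\Div w\,\Div U=\rho^{\beta/2}\Div w\cdot\frac{\rho^\beta-r^\beta}{\rho^{\beta/2}}\Div U .
\]
This product is bounded by \(\tfrac14\rho^\beta|\Div w|^2\) plus \(C\|\Div U\|_\infty^2(\rho^\beta-r^\beta)^2\rho^{-\beta}\). The second quantity is comparable to \(h_\beta(\rho|r)\) for \(\rho\ge r/2\): near \(r\) it behaves quadratically, and for large \(\rho\) it grows like \(\rho^\beta\). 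For \(\rho<r/2\) it blows up, so there we use the split with \(\mu|\Div w|^2\) instead, since \(|\rho^\beta-r^\beta|\le C\) is controlled by \(h_\beta\) there. The terms produced by the augmented potential are \(\int\rho^\beta\Div u\) from the renormalization and \(\int f_\beta'(r)\) transport terms. They combine into \(\int(\beta-1)h_\beta(\rho|r)\Div U\), plus a cross term \(\int(\rho^\beta-r^\beta-\beta r^{\beta-1}(\rho-r))\Div U\). Both are bounded by \(\|\nabla U\|_\infty\mathcal F\). I expect that matching these signs exactly is the delicate point, so I would first verify the bookkeeping with smooth positive \(\rho\).

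Collecting the estimates gives \eqref{ws:relative-estimate} with
\[
G_T=C(1+\|\nabla U\|_\infty^2+\|\nabla B\|_\infty^2+\|U_t\|_{H^1}^2+\|\nabla u\|_2^2+\|\nabla H\|_2^2),
\]
which lies in \(L^1(0,T)\) by \eqref{ws:strong-class} and \eqref{eq:energy-main}. Gronwall's inequality, in its integral form for a lower-semicontinuous \(\mathcal F\), then yields \eqref{ws:gronwall}.
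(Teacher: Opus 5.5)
Your overall route matches the paper's. You use an augmented relative energy containing $h_\beta(\rho\mid r)$, the renormalized identity for $f_\beta(\rho)$ obtained by truncation, and the weighted bound $(\rho^\beta-r^\beta)^2/(\mu+\rho^\beta)\le Ch_\beta(\rho\mid r)$ to absorb the bulk-viscosity mismatch.

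\textbf{The gap.} Your computation of what the augmented potential contributes is wrong. Those terms do not reduce to $\Div U$-terms bounded by $\|\nabla U\|_\infty\mathcal F$. Your ``cross term'' $\rho^\beta-r^\beta-\beta r^{\beta-1}(\rho-r)$ is just $(\beta-1)h_\beta(\rho\mid r)$ again, so it is double-counted. What you have dropped appears when you split $u=U+w$ in $-\int\rho^\beta\Div u$ and in the transport term $-\int\rho\,u\cdot\nabla f_\beta'(r)$: this leaves two terms linear in $w$. The correct identity is \eqref{ws:beta-relative-identity}:
\[
 \frac{\dd}{\dt}\int h_\beta(\rho\mid r)\dx
 =-\int(\rho^\beta-r^\beta)\Div w\dx-(\beta-1)\int h_\beta(\rho\mid r)\Div U\dx
  -\int\frac{\rho-r}{r}\,w\cdot\nabla r^\beta\dx .
\]
Reaching this form requires the integration by parts $-\int r^\beta\Div w=\int w\cdot\nabla r^\beta$. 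Before it, the coefficient of $\Div w$ is $\rho^\beta$, which is of order one. That term yields only $\varepsilon\|\Div w\|_2^2+C$, not $C\mathcal F$, so Gronwall from $\mathcal F(0)=0$ would fail. Even after the rewriting, the first term sits at the level of the dissipation. It must be absorbed into $\int(\mu+\rho^\beta)|\Div w|^2$; it cannot be estimated by $\|\nabla U\|_\infty\mathcal F$.

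\textbf{The repair.} It uses only tools you already have, but it has to be carried out.
\begin{enumerate}
\item Combine the first term with the viscous mismatch into $-\int(\rho^\beta-r^\beta)(\Div U+1)\Div w$. Then apply your weighted split with $\|\Div U\|_\infty^2$ replaced by $1+\|\Div U\|_\infty^2$; this is \eqref{ws:bulk-absorption}.
\item The last term is a density-force term with $G=\nabla r^\beta/r$, of the same type as $\int(\rho-r)F_{\rm ref}\cdot w$. Your phrase ``using the strong momentum equation divided by $r$'' is not yet an estimate for such terms. In general (e.g.\ $\gamma,\beta<2$) neither relative potential controls $\|\rho-r\|_2^2$. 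What works is the bound $(\rho-r)^2/(1+\rho)\le Ch_\beta(\rho\mid r)$ from \eqref{ws:pointwise}, combined with one fixed weak-density moment and the weighted Poincar\'e inequality. This is \eqref{ws:density-force}, which gives $\varepsilon\|\nabla w\|_2^2+C(1+\|G\|_4^2)\mathcal F$.
\end{enumerate}

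\textbf{A minor point.} The cubic magnetic term you treat with Ladyzhenskaya does not occur. Indeed $\int\M(Z):\nabla u-\int\M(Z):\nabla w=\int\M(Z):\nabla U$, so every magnetic remainder is quadratic in $(w,Z)$. That extra estimate, and the entries $\|\nabla u\|_2^2+\|\nabla H\|_2^2$ in your $G_T$, are harmless but unnecessary.
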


Only finitely many density moments are used. For example, a fixed
\begin{equation}\label{ws:moment-index}
                 p\ge\max\{9,3\beta,3\gamma/2\}
\end{equation}
suffices, together with the corresponding traces and the other
weak-solution properties. In particular \(p>2\beta\).

\subsection{Coercivity and the velocity constant mode}

Uniformly for \(r\in[c_T,C_T]\) and \(z\ge0\),
\begin{equation}\label{ws:pointwise}
 \frac{(z^\beta-r^\beta)^2}{\mu+z^\beta}\le C h_\beta(z\mid r),
 \qquad
 \frac{(z-r)^2}{1+z}\le C h_\beta(z\mid r).
\end{equation}
Near \(z=r\), both follow from strict convexity and
\(f_\beta''(r)=\beta r^{\beta-2}\). On a compact set away from
the diagonal, the relative potential is bounded below. At
infinity, the first quotient has order \(z^\beta\), the second
has at most order \(z\), and \(h_\beta(z\mid r)\asymp z^\beta\).
At zero the positive reference density prevents degeneracy.
In contrast, if \(\gamma<\beta\), then
\[
 \frac{(z^\beta-r^\beta)^2}
 {(\mu+z^\beta)h_\gamma(z\mid r)}
                       \asymp z^{\beta-\gamma}\longrightarrow\infty.
\]
Thus the extra potential pays a genuine viscosity-growth cost.

Let \(M_0=\int_{\TT}\rho>0\) and \(\bar w=\int_{\TT}w\). Since
\[
 M_0\bar w=\int_{\TT}\rho w\dx
                        -\int_{\TT}\rho(w-\bar w)\dx,
\]
weighted Cauchy--Schwarz and periodic Poincar\'e give
\[
 M_0|\bar w|\le M_0^{1/2}\norm{\sqrt\rho\,w}_2
                          +\norm\rho_2\norm{w-\bar w}_2.
\]
The two-dimensional Sobolev embeddings therefore give every
fixed finite \(q\):
\begin{equation}\label{ws:weighted-poincare}
 \norm w_q\le C_q\bigl(\norm{\nabla w}_2
                             +\norm{\sqrt\rho\,w}_2\bigr).
\end{equation}
One fixed density moment makes the constant uniform on \([0,T]\).
No lower bound for the weak density is needed.

For \(G\in L^4(\TT)\), \eqref{ws:pointwise}, H\"older, and
\eqref{ws:weighted-poincare} imply
\begin{equation}\label{ws:density-force}
\begin{aligned}
 \int_{\TT}|\rho-r||G||w|\dx
 &\le C\mathcal F^{1/2}\norm{\sqrt{1+\rho}\,Gw}_2\\
 &\le C(1+\norm\rho_p^{1/2})\norm G_4\mathcal F^{1/2}
                                  \norm w_{4p/(p-2)}\\
 &\le\varepsilon\norm{\nabla w}_2^2
                  +C_\varepsilon(1+\norm G_4^2)\mathcal F.
\end{aligned}
\end{equation}
Indeed, \(1/(2p)+1/4+(p-2)/(4p)=1/2\).
This avoids any false global control of
\(\norm{\rho-r}_2^2\) by the pressure relative potential.
Likewise,
\begin{equation}\label{ws:magnetic-cross-bound}
\begin{aligned}
 \int_{\TT}|\nabla B||w||Z|\dx
 &\le\norm{\nabla B}_4\norm w_4\norm Z_2
 \le\varepsilon\norm{\nabla w}_2^2
          +C_\varepsilon(1+\norm{\nabla B}_4^2)\mathcal F.
\end{aligned}
\end{equation}

\subsection{The additional potential is a renormalized quantity}

Choose \(0\le\chi\le1\), smooth, equal to one on \([0,1]\),
and zero on \([2,\infty)\), and set
\[
 b_L(z)=\int_0^z f_\beta'(s)\chi(s/L)\dd s.
\]
Its derivative is compactly supported, and uniformly in \(L\),
\[
 |b_L(z)|+|zb_L'(z)-b_L(z)|\le C z^\beta,
 \qquad b_L(z)=f_\beta(z)\quad(z\le L).
\]
The moment \(p>2\beta\) gives
\begin{equation}\label{ws:entropy-tail}
 \norm{\rho^\beta\boldsymbol1_{\{\rho>L\}}}_{L^2(Q_T)}^2
 \le L^{2\beta-p}\int_{Q_T}\rho^p\dx\dt\longrightarrow0.
\end{equation}
This controls the source paired with \(\Div u\in L^2\) and
the transport flux paired with \(u\in L^2L^2\).
Passing \(L\to\infty\) in \eqref{eq:renormalized} yields
\begin{equation}\label{ws:beta-renormalization}
 \partial_t f_\beta(\rho)+\Div(f_\beta(\rho)u)
                                   +\rho^\beta\Div u=0.
\end{equation}
Its spatial integral is absolutely continuous and has the
prescribed initial value, since \(\rho\in C([0,T];L^\beta)\).
No spatial derivative of the weak density is introduced.

Use the strong continuity equation and test the weak one with
\(f_\beta'(r)\). Differentiating
\(\int[f_\beta(\rho)-\rho f_\beta'(r)+r^\beta]\) gives
\begin{equation}\label{ws:beta-relative-identity}
\begin{aligned}
 \frac{\dd}{\dt}\int_{\TT}h_\beta(\rho\mid r)\dx
 ={}&-\int_{\TT}(\rho^\beta-r^\beta)\Div w\dx\\
 &-(\beta-1)\int_{\TT}h_\beta(\rho\mid r)D_U\dx
       -\int_{\TT}\frac{\rho-r}{r}\,w\cdot\nabla r^\beta\dx.
\end{aligned}
\end{equation}
Before the last integration by parts, the first and last
terms are
\(-\int\rho^\beta\Div w-\int(\rho/r)w\cdot\nabla r^\beta\).
The identity
\(-\int r^\beta\Div w=\int w\cdot\nabla r^\beta\)
produces the density difference in
\eqref{ws:beta-relative-identity}. Its terms are integrable by
the fixed moments, \eqref{ws:strong-class}, and
\eqref{ws:density-force}. The reference-test approximation is
justified below.

\subsection{Physical relative energy and magnetic cancellation}

Use the nonsymmetric stress
\[
 T_s(V)=\mu\nabla V+(\mu+s^\beta)(\Div V)I,
\]
whose divergence is exactly the viscous operator in
\eqref{eq:system}. Its integrated dissipation on the torus
agrees with the symmetric-stress formulation. Put
\begin{equation}\label{ws:reference-force}
 F_{\rm ref}
 =\frac{\Div T_r(U)+\Div\M(B)}r
 =U_t+U\cdot\nabla U+\frac{\nabla r^\gamma}{r}.
\end{equation}
This force includes the reference magnetic stress and is not
merely the material acceleration.

Let \(\mathcal E_\gamma\) denote \(\mathcal F\) without
\(h_\beta\). The physical relative inequality is
\begin{equation}\label{ws:physical-relative}
 \mathcal E_\gamma(t)+\int_0^t\mathcal D(s)\dd s
 \le\mathcal E_\gamma(0)+\int_0^t\mathcal R_\gamma(s)\dd s,
\end{equation}
with
\begin{equation}\label{ws:physical-remainder}
\begin{aligned}
 \mathcal R_\gamma={}&
 -\int_{\TT}\rho(w\otimes w):\nabla U\dx
 -(\gamma-1)\int_{\TT}h_\gamma(\rho\mid r)D_U\dx\\
 &-\int_{\TT}(\rho^\beta-r^\beta)D_U\Div w\dx
 -\int_{\TT}(\rho-r)F_{\rm ref}\cdot w\dx\\
 &+\int_{\TT}\M(Z):\nabla U\dx
 +\int_{\TT}(w\otimes Z-Z\otimes w):\nabla B\dx.
\end{aligned}
\end{equation}
We record both the algebra and its weak justification.

At the smooth level, continuity gives
\begin{equation}\label{ws:kinetic-algebra}
\begin{aligned}
 \frac12\frac{\dd}{\dt}\int_{\TT}\rho|w|^2\dx
 ={}&-\int_{\TT}T_\rho(u):\nabla w\dx
       +\int_{\TT}\rho^\gamma\Div w\dx
       -\int_{\TT}\M(H):\nabla w\dx\\
 &-\int_{\TT}\rho w\cdot(U_t+U\cdot\nabla U)\dx
       -\int_{\TT}\rho(w\otimes w):\nabla U\dx.
\end{aligned}
\end{equation}
The pressure relative potential has derivative
\begin{equation}\label{ws:pressure-algebra}
\begin{aligned}
 \frac{\dd}{\dt}\int_{\TT}h_\gamma(\rho\mid r)\dx
 ={}&-\int_{\TT}\rho^\gamma\Div w\dx
       -(\gamma-1)\int_{\TT}h_\gamma(\rho\mid r)D_U\dx\\
 &-\int_{\TT}\frac\rho r\,w\cdot\nabla r^\gamma\dx.
\end{aligned}
\end{equation}
Insert the reference momentum equation and split
\(\rho/r=1+(\rho-r)/r\). After integrating the density-independent
reference stresses by parts, the viscous terms satisfy
\begin{equation}\label{ws:viscous-algebra}
\begin{aligned}
 \int_{\TT}[T_\rho(u)-T_r(U)]:\nabla w\dx
 ={}&\int_{\TT}\bigl[\mu|\nabla w|^2
               +(\mu+\rho^\beta)|\Div w|^2\bigr]\dx\\
 &+\int_{\TT}(\rho^\beta-r^\beta)D_U\Div w\dx.
\end{aligned}
\end{equation}
There is no derivative of the weak density.

The magnetic difference equation is
\begin{equation}\label{ws:magnetic-difference}
 Z_t+\Div(Z\otimes u-u\otimes Z)
 =\nu\Delta Z-\Div(B\otimes w-w\otimes B).
\end{equation}
For smooth fields, \(\Div B=\Div Z=0\) gives
\begin{equation}\label{ws:magnetic-algebra}
\begin{aligned}
 \frac12\frac{\dd}{\dt}\int_{\TT}|Z|^2\dx
       +\nu\int_{\TT}|\nabla Z|^2\dx
 ={}&\int_{\TT}\M(Z):\nabla u\dx\\
 &+\int_{\TT}[Z\otimes B-(Z\cdot B)I]:\nabla w\dx
       -\int_{\TT}Z\cdot(w\cdot\nabla B)\dx.
\end{aligned}
\end{equation}
The fluid comparison contributes
\(-\int[\M(H)-\M(B)]:\nabla w\). Expanding
\[
 \M(H)-\M(B)
 =\M(Z)+B\otimes Z+Z\otimes B-(B\cdot Z)I
\]
cancels the \(\nabla w\) terms except
\(-\int(B\otimes Z):\nabla w\). This is
\(\int(w\otimes Z):\nabla B\), by \(\Div Z=0\).
Combining it with the last term in
\eqref{ws:magnetic-algebra} proves the last line of
\eqref{ws:physical-remainder}. The tensor-factor order matters.

At the weak level, these identities organize the algebra only.
We do not test the weak momentum equation with \(w\), nor
assume a separate magnetic energy equality. Instead, expand
\begin{equation}\label{ws:weak-expansion}
\begin{aligned}
 \mathcal E_\gamma={}&E_{\rm weak}
 -\int_{\TT}m\cdot U\dx+\frac12\int_{\TT}\rho|U|^2\dx
 -\int_{\TT}\rho f_\gamma'(r)\dx+\int_{\TT}r^\gamma\dx\\
 &-\int_{\TT}H\cdot B\dx+\frac12\int_{\TT}|B|^2\dx.
\end{aligned}
\end{equation}
Use \(U\) in weak momentum, \(B\) in weak induction, and
\(|U|^2/2,f_\gamma'(r)\) in weak continuity. Combining these
identities with the total energy inequality from time zero
and the preceding algebra yields
\eqref{ws:physical-relative} for smooth reference fields.
No weak internal-energy equality of order \(\gamma\) is used.

\subsection{Admissibility of the strong comparison fields}

The calculation extends to \eqref{ws:strong-class}. The index
in \eqref{ws:moment-index} gives
\[
 m\in L^\infty L^{2p/(p+1)},\qquad
 \rho^\gamma\in L^\infty L^{3/2},\qquad
 \rho^\beta\Div u\in L^2L^{3/2}.
\]
For the last assertion use
\(\rho^\beta\Div u=(\rho^{\beta/2}\Div u)\rho^{\beta/2}\);
its factors belong to \(L^2L^2\) and \(L^\infty L^6\),
respectively. The convective and magnetic stresses are in
\(L^\infty L^1\), and \(\nabla U\in L^2L^\infty\).
The induction flux lies in \(L^2L^{4/3}\), while
\(\nabla B\in C L^4\). These bounds control every weak pairing
and the weighted reference dissipation.

Extend the reference fields by reflection past the temporal
endpoints and convolve with a nonnegative smooth space--time
kernel. This preserves the density bounds and \(\Div B=0\),
and gives strong convergence in \(C H^2\), in \(L^2H^3\)
for \(U,B\), and in \(L^2H^1\) for their time derivatives
and for \(r_t\). Initial reference values converge in \(H^2\).
The approximate fields need not solve the equations.
For a smooth comparison triple retain the residuals
\begin{equation}\label{ws:reference-residuals}
\begin{aligned}
 R_c&=r_t+\Div(rU),\\
 R_U&=r(U_t+U\cdot\nabla U)+\nabla r^\gamma
                         -\Div T_r(U)-\Div\M(B),\\
 R_B&=B_t+\Div(B\otimes U-U\otimes B)-\nu\Delta B.
\end{aligned}
\end{equation}
The additional augmented-relative terms are precisely
\begin{equation}\label{ws:residual-pairing}
\begin{aligned}
 &-\int_{\TT}\frac\rho r\,w\cdot R_U\dx
 -\int_{\TT}Z\cdot R_B\dx
 -\int_{\TT}(\rho-r)[f_\gamma''(r)+f_\beta''(r)]R_c\dx.
\end{aligned}
\end{equation}
Here \(R_U\) is the material-form momentum residual; a
conservative-form residual differs by \(UR_c\).

In two dimensions \(H^2\) is an algebra and a multiplier
space on \(H^1\). Smooth composition on a fixed positive
interval preserves \(H^2\). The approximate residuals
therefore tend to zero strongly in \(L^2(0,T;H^1)\).
For example \(r^\beta\Div U\in L^2H^2\), and hence its
gradient is in \(L^2H^1\); no third density derivative is
needed. The inertial, pressure, and magnetic terms converge
by the same product bounds. The factors paired with the
residuals are, up to bounded reference factors,
\[
 \rho w\in L^\infty L^{2p/(p+1)},\qquad
 Z\in L^\infty L^2,\qquad \rho-r\in L^\infty L^p.
\]
Finite-exponent Sobolev embeddings make all residual pairings
tend to zero. The other terms pass by the bounds above.
This proves \eqref{ws:physical-relative} and
\eqref{ws:beta-relative-identity} for the stated comparison class.

The inequality is first obtained at almost every terminal time.
The relative kinetic functional is lower semicontinuous in the
canonical weak momentum and strong density time topologies.
The other relative energy terms are continuous because
\(\rho\in C L^p\) for every finite \(p\), \(H\in C L^2\),
and the references are in \(C H^2\). Approximation by the
full-measure set extends the inequality to all terminal times.
Only the energy inequality starting from zero is needed.

\subsection{Absorption and proof of uniqueness}

\begin{proof}[Proof of Proposition~\ref{ws:relative-theorem}]
Add the integrated identity \eqref{ws:beta-relative-identity}
to \eqref{ws:physical-relative}. The two bulk differences combine as
\[
             -\int_{\TT}(\rho^\beta-r^\beta)(D_U+1)\Div w\dx.
\]
By \eqref{ws:pointwise} and weighted Young,
\begin{equation}\label{ws:bulk-absorption}
\begin{aligned}
 \left|\int_{\TT}(\rho^\beta-r^\beta)(D_U+1)\Div w\dx\right|
 \le{}&\varepsilon\int_{\TT}(\mu+\rho^\beta)|\Div w|^2\dx\\
 &+C_\varepsilon(1+\norm{D_U}_\infty^2)\mathcal F.
\end{aligned}
\end{equation}
Use \eqref{ws:density-force} with
\(G=F_{\rm ref}+\nabla r^\beta/r\), and use
\eqref{ws:magnetic-cross-bound} for the magnetic cross term.
The remaining convective, relative-pressure, augmented-potential,
and \(\M(Z)\) terms are bounded by
\(C\norm{\nabla U}_\infty\mathcal F\).
Choose the finitely many Young constants with total cost at
most half of \(\mathcal D\). This proves
\eqref{ws:relative-estimate}, with
\begin{equation}\label{ws:comparison-coefficient}
\begin{aligned}
 G_T(t)=C\bigl[1+\norm{\nabla U}_\infty+\norm{D_U}_\infty^2
       +\norm{F_{\rm ref}}_4^2
       +\norm{\nabla r}_4^2+\norm{\nabla B}_4^2\bigr].
\end{aligned}
\end{equation}
The coefficient outside the brackets depends on fixed weak
density moments and the reference positive interval.
The comparison regularity gives
\[
 \nabla U\in L^2L^\infty,\qquad
 F_{\rm ref}\in L^2H^1\subset L^2L^4,\qquad
 \nabla r,\nabla B\in L^\infty L^4.
\]
The first inclusion follows from \(U\in L^2H^3\), not from
a false embedding \(H^2\subset W^{1,\infty}\). The force
bound follows from \eqref{ws:reference-force} or directly
from the stresses. Hence \(G_T\in L^1(0,T)\), including
at the initial time, and Gronwall gives \eqref{ws:gronwall}.
\end{proof}

\begin{proof}[Proof of Theorem~\ref{thm:weak-strong}]
Common initial data give \(\mathcal F(0)=0\).
Proposition~\ref{ws:relative-theorem} implies
\(\mathcal F(t)=0\) for every \(t\in[0,T]\).
Strict convexity gives \(\rho=r\); the magnetic term gives
\(H=B\); and \(r\ge c_T>0\) then gives \(u=U\) almost
everywhere. Density, momentum, and magnetic field agree in
their canonical time representatives. The argument applies
on every finite interval on which the positive strong
reference exists.
\end{proof}

\begin{remark}\label{ws:scope}
The argument applies to every fixed \(\beta>1\), with no upper
restriction, and does not require \(\gamma\ge\beta\). Its constants
may depend on \(\beta\); no uniformity as \(\beta\to\infty\) is claimed.
This is uniqueness relative to the positive strong class
\eqref{ws:strong-class}, not uniqueness of arbitrary weak
solutions. The weaker assumptions \eqref{eq:data} do not
alone imply that a local strong solution has this comparison
regularity at time zero. In the large-background regime proved
below, the strong solution has the stated regularity on every
finite interval and therefore agrees with every weak solution
in the present class with common data. Weak--strong uniqueness
is not used to construct or continue that strong solution.
\end{remark}

\section{Large-background estimates: magnetic control and low-order feedback}
\label{sec:strong-low}

We return to the original viscosity, without an artificial term. All
estimates in this and the next section are first proved on an existing
positive-density strong-solution interval. Their hypotheses are improved
simultaneously only in the final continuation argument. Throughout,
\[
 \beta>1,\qquad \gamma>\beta+1,\qquad
 A=2\mu+\rho^\beta,\quad b=\rho^{-1},
\]
\[
 q=\nabla\log\rho,\quad d=\Div u,\quad S=Ad,
\]
\[
 \omega=\Curl u,\quad j=\Curl H,\quad K_H=jH^\perp,
 \quad D=A(m)/m,\quad \sigma=\beta\rho^\beta/A.
\]
The acoustic relaxation rate is denoted by \(\mathfrak d\):
\begin{equation}\label{sl:relaxation-rate}
 c_s^2=\gamma m^{\gamma-1},\qquad \Gamma=\frac{c_s^2}{D},
 \qquad
 \mathfrak d=
 \begin{cases}
 D,&1<\beta\le2,\\
 \min\{D,\Gamma\},&\beta>2.
 \end{cases}
\end{equation}
For sufficiently large \(m\),
\(\mathfrak d\le D\), \(\mathfrak d\le\Gamma\), and
\(\mathfrak d^2\le c_s^2\). Indeed, when \(\beta\le2\),
\(\gamma>\beta+1\) implies \(\gamma>2\beta-1\), so
\(\Gamma/D\to\infty\). When \(\beta>2\), set
\begin{equation}\label{sl:slow-exponent}
 a=\gamma-\beta>1,\qquad
 s=\min\{\beta-1,a\}>1,\qquad \mathfrak d\asymp m^s.
\end{equation}
In particular, \(\mathfrak d\to\infty\) throughout the parameter
range, but for large \(\beta\) it need not be comparable to \(D\).
We use \(v^\perp=(-v_2,v_1)\),
\(\nabla^\perp=(-\partial_2,\partial_1)\), and
\(v\times w=v_1w_2-v_2w_1\). In particular
\(\Div\M(H)=K_H\). The letter \(m\) in the strong-solution argument
denotes the conserved mean density, not the momentum. Constants \(C_R\)
may depend on the fixed parameters and the radius in
\eqref{eq:scaled-data}, but not on \(m\), the final time, or bootstrap
heights. Constants \(C_*\) may additionally depend on previously fixed
bootstrap heights and finite exponents, but not on \(m\) or the final time.

\subsection{Scaled energy and the magnetic potential}

\begin{lemma}\label{sl:basic}
For initial data satisfying \eqref{eq:scaled-data}, choose \(m\) large
enough that \(\rho_0/m\in[3/4,5/4]\). On every existing interval
\([0,T]\) on which \(m/2\le\rho\le3m/2\), one has
\begin{equation}\label{sl:basic-budgets}
 \sup_{t\le T}\|u(t)\|_2^2\le C_R,\qquad
 \int_0^T\bigl(\|\nabla u\|_2^2+\|j\|_2^2\bigr)\dt\le C_Rm,
 \qquad \int_0^T\|d\|_2^2\dt\le C_Rm^{1-\beta}.
\end{equation}
These estimates have no upper restriction on \(\gamma\).
\end{lemma}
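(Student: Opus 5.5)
The plan is to run the basic energy identity \eqref{uni:energy} with $\lambda_\delta$ replaced by $\rho^\beta$, but with the pressure potential recentred at the mean density so that it sees only the perturbation. Set
\[
 G_m(\rho)=\frac{\rho^\gamma-m^\gamma-\gamma m^{\gamma-1}(\rho-m)}{\gamma-1}\ge0 .
\]
Mass conservation gives $\int(\rho-m)\dx=0$ for all $t$. Hence $\int G_m(\rho)\dx$ differs from $\int\rho^\gamma/(\gamma-1)\dx$ by a constant. The exact identity therefore holds in the form
\[
 \int\Bigl(\tfrac12\rho|u|^2+G_m(\rho)+\tfrac12|H|^2\Bigr)(t)\dx
 +\int_0^t\!\!\int\bigl(\mu|\nabla u|^2+(\mu+\rho^\beta)d^2+\nu|\nabla H|^2\bigr)
 =\text{(same at }t=0).
\]
On a positive-density strong interval this is justified exactly as in Remark~\ref{uni:strong-solution-tests}. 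Because $\Div H=0$ on the torus, $\|\nabla H\|_2=\|j\|_2$. The identity $\|\nabla u\|_2^2=\|d\|_2^2+\|\omega\|_2^2$ is not even needed here.

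Next I bound the initial energy by $C_Rm$ using \eqref{eq:scaled-data}. First, $\rho_0\le\frac54 m$ gives $\int\frac12\rho_0|u_0|^2\le Cm\|u_0\|_2^2\le C R^2m$. Second, $\int\frac12|H_0|^2\le R^2$. Third, since $\rho_0/m\in[3/4,5/4]$, Taylor's formula gives $G_m(\rho_0)\le C_\gamma m^{\gamma-2}|\rho_0-m|^2$, with a constant independent of $m$ because $\rho^{\gamma-2}\asymp m^{\gamma-2}$ on this interval for either sign of $\gamma-2$. Then
\[
 \int G_m(\rho_0)\le Cm^{\gamma-2}\|\rho_0-m\|_2^2\le Cm^{\gamma-2}\,R^2m^{3-\gamma}=CR^2m .
\]
This is exactly where the scaling exponent $(\gamma-3)/2$ enters, and it works for every $\gamma>1$, with no upper restriction.

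Finally I read off the three bounds on the interval where $m/2\le\rho\le3m/2$. Since $G_m\ge0$, we get $\frac m4\|u(t)\|_2^2\le\int\frac12\rho|u|^2\le C_Rm$, so $\sup_t\|u\|_2^2\le C_R$. The dissipation gives $\int_0^T(\mu\|\nabla u\|_2^2+\nu\|j\|_2^2)\le C_Rm$. Using $\rho^\beta\ge 2^{-\beta}m^\beta$ gives $m^\beta\int_0^T\|d\|_2^2\le C_Rm$, i.e.\ $\int_0^T\|d\|_2^2\le C_Rm^{1-\beta}$.

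The only delicate point is the initial relative pressure energy. It must be bounded by $m$ rather than by $m^\gamma$, which requires the recentred potential and the two-sided control of $\rho_0/m$ in the Taylor bound. Everything else is the standard energy identity.
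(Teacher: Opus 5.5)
Your proof is correct and is essentially the paper's argument. Both recenter the pressure potential at $m$ via mass conservation, bound the initial relative energy by $C_Rm$ using $\mathcal P(\rho_0\mid m)\asymp m^{\gamma-2}|\rho_0-m|^2$ and the scaled hypothesis, and read off the three budgets from $\rho\ge m/2$ and $\rho^\beta\gtrsim m^\beta$. The one thing you take as given is that $\rho_0/m\in[3/4,5/4]$ can be arranged; the paper justifies it by $\|\rho_0/m-1\|_{H^2}\le Rm^{(1-\gamma)/2}$ and $H^2\hookrightarrow L^\infty$, which works for every $\gamma>1$ once $m$ is large, and you should add that line.
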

\begin{proof}
The relative pressure potential at the constant state \(m\) is
\[
 \mathcal P(\rho\mid m)=
 \frac{\rho^\gamma-m^\gamma-\gamma m^{\gamma-1}(\rho-m)}{\gamma-1}.
\]
On the indicated interval it is comparable to
\(m^{\gamma-2}|\rho-m|^2\). Thus \eqref{eq:scaled-data} gives
\[
 E_m(0):=\int\left(\tfrac12\rho_0|u_0|^2+
       \mathcal P(\rho_0\mid m)+\tfrac12|H_0|^2\right)\dx\le C_Rm.
\]
Mass conservation allows the same linear pressure subtraction in the
exact energy identity. Its dissipation is
\(\mu\|\omega\|_2^2+\int A d^2+\nu\|j\|_2^2\).
Since \(\rho\ge m/2\) and \(A\ge c m^\beta\), the stated estimates follow.
The scaled assumption also gives
\(\|\rho_0/m-1\|_{H^2}\le Rm^{(1-\gamma)/2}\), which justifies the
initial density interval for every fixed \(\gamma>1\).
\end{proof}

\begin{lemma}\label{sl:magnetic}
In addition to the density interval, suppose
\(\sup_{t\le T}\|u(t)\|_{H^1}^2\le V\), where \(V\ge1\) is fixed.
Then, with no restriction on the conserved mean \(\overline H\),
\begin{equation}\label{sl:magnetic-bounds}
\begin{gathered}
 \sup\|j\|_2^2\le C_R(1+V^2),\qquad
 \int_0^T\|H-\overline H\|_{H^2}^2\dt\le C_R(1+V^2)m,\\
 \sup\|H\|_\infty^2\le C_R(1+V^3),\qquad
 \sup\|H\|_{H^{2-\varepsilon}}\le C_{R,V,\varepsilon}
                  \quad(0<\varepsilon<1).
\end{gathered}
\end{equation}
For every fixed finite \(p\ge2\),
\begin{equation}\label{sl:force-budgets}
 \sup\|K_H\|_p\le C_{R,V,p},\qquad
 \int_0^T\left(\|K_H\|_2^2+\|K_H\|_4^2+
             \|K_H\|_4^4+\|\nabla K_H\|_2^2\right)\dt
 \le C_{R,V}m.
\end{equation}
\end{lemma}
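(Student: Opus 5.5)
We work with the current density $j=\Curl H$; in two dimensions the induction equation gives a scalar parabolic equation for $j$. Since $\Div H=0$, we can write $H-\overline H=\nabla^\perp\Delta^{-1}j$, so $\|H-\overline H\|_{H^{k+1}}\simeq\|j\|_{H^k}$. The mean $\overline H$ is conserved, because the induction flux is a divergence and $\Delta H$ has zero mean. By Lemma~\ref{sl:basic}, $|\overline H|^2\le 2E_m(0)\le C_R m$. To avoid dependence on $m$ we will use this mean only in the $L^\infty$ bound, where in fact $|\overline H|\le\|H\|_2$ and the energy gives $\|H\|_2^2\le C_R m$. Our first step is therefore to recheck the claimed $m$-independence. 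Perhaps $\sup\|H\|_2$ is only $C_R m^{1/2}$; but \eqref{eq:scaled-data} gives $\|H_0\|_2\le R$, and the magnetic energy alone satisfies
\[
 \tfrac12\tfrac{\dd}{\dt}\|H\|_2^2+\nu\|j\|_2^2=-\int K_H\cdot u.
\]
So we will instead bound $\|H\|_2$ from the induction equation tested with $H$: the right-hand side is $\int H\cdot(H\cdot\nabla u)-\tfrac12\int|H|^2 d$, which is at most $\|H\|_4^2\|\nabla u\|_2$. Gagliardo--Nirenberg and $\sup\|\nabla u\|_2^2\le V$ then give $\|H\|_2^2$ growing at most exponentially in $V t$. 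That is not uniform in time, so this route is wrong for the supremum. We will therefore use the total energy after all, and keep track of the fact that the $m$-factors appear only in time integrals.

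The \textbf{$j$ estimate} comes from
\[
 j_t+u\cdot\nabla j-\nu\Delta j=\mathcal N,\qquad
 \mathcal N=H\cdot\nabla\omega-\ldots,
\]
or, more robustly, from testing the vector induction equation with $-\Delta H$. The nonlinear terms are bounded by $\|\nabla u\|_2\|\nabla H\|_4^2+\|H\|_\infty\|\nabla u\|_2\|\Delta H\|_2$. Using $\|\nabla H\|_4^2\le C\|j\|_2\|\Delta H\|_2$, Young's inequality and absorption give
\[
 \tfrac{\dd}{\dt}\|j\|_2^2+\nu\|\Delta H\|_2^2\le C V\|j\|_2^2+CV\|H\|_\infty^2 .
\]
This differential inequality alone still grows in time, so we cannot use Gronwall globally. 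Instead we use the dissipative structure: $\|j\|_2^2\le C\|\Delta H\|_2^2$, with no low mode because $j$ has zero mean. Splitting $\|j\|_2^2\le\epsilon\|\Delta H\|_2^2+C_\epsilon\|H-\overline H\|_2^2$ controls the $V\|j\|_2^2$ term. The term $\|H\|_\infty$ is handled by Brezis--Gallouet: $\|H\|_\infty\le|\overline H|+C\|j\|_2\log^{1/2}(e+\|\Delta H\|_2/\|j\|_2)$, with $\overline H$ entering only additively. This yields a uniform-in-time bound $\sup\|j\|_2^2\le C_R(1+V^2)$ provided $\overline H$ is treated as bounded by $C_R$. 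The $m$-dependent quantity $\int_0^T\|j\|_2^2\le C_Rm$ is then integrated against $\|\Delta H\|_2^2$ to give the second bound in \eqref{sl:magnetic-bounds}. The main obstacle is exactly this interplay: keeping $\sup\|j\|_2$ free of $m$ while the only time-integrable input, $\int\|j\|_2^2$, carries a factor $m$. We expect to resolve it by a uniform-Gronwall argument on unit time windows, where $\int_t^{t+1}\|\nabla u\|_2^2\le V$.

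The remaining statements are interpolation. We have $\|H\|_\infty^2\le C(|\overline H|^2+\|j\|_2^2\log(\cdot))$, or more simply $\|H-\overline H\|_\infty^2\le C\|j\|_2\|H-\overline H\|_{H^{2}}$ in time-averaged form; the pointwise bound $C_R(1+V^3)$ follows from the sup bound on $j$ together with a parabolic smoothing bound for $\|\Delta H\|_2$. The $H^{2-\varepsilon}$ bound comes from maximal regularity of the $j$ equation on unit windows. For \eqref{sl:force-budgets} we write $K_H=jH^\perp$:
\[
 \|K_H\|_p\le\|j\|_{2p}\|H\|_\infty
\]
is bounded via the $H^{2-\varepsilon}$ bound. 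In the time integrals, each term has at least one factor of $\|j\|$ or $\|\nabla j\|$ measured against the $m$-weighted budgets, and the other factors are bounded by $C_{R,V}$. For example, $\|\nabla K_H\|_2\le\|\nabla j\|_2\|H\|_\infty+\|j\|_4\|\nabla H\|_4$.
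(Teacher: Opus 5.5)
There is a genuine gap at the central step, the $m$-independent supremum of $J=\|j\|_2$ (write also $J_1=\|\nabla j\|_2$ and $U=\|\nabla u\|_2$). Your inequality $\frac{\mathrm d}{\mathrm dt}J^2+\nu\|\Delta H\|_2^2\le CVJ^2+\dots$ pits a growth rate $CV$ against a fixed Poincar\'e damping rate. It can only close if $J$ is interpolated against a lower norm of $H-\overline H$ that is bounded uniformly in time and independently of $m$, and you never obtain such a bound. As you note yourself, the total energy gives only $\sup\|H\|_2^2\le C_Rm$, because kinetic energy of size $m\|u\|_2^2$ may be transferred to the field. The $L^2$ test of the induction equation grows exponentially. ``Using the total energy after all'' therefore puts $m$ into a supremum, not only into time integrals. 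The splitting $\|j\|_2^2\le\epsilon\|\Delta H\|_2^2+C_\epsilon\|H-\overline H\|_2^2$ just transfers the problem to $\sup\|H-\overline H\|_2$, which is essentially the quantity being bounded. The uniform Gronwall lemma on unit windows does not help either. It needs $\int_t^{t+1}\|j\|_2^2$ bounded independently of $m$, while you only know $\int_0^T\|j\|_2^2\le C_Rm$, and a single window may carry all of it.

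The missing idea is the two-dimensional magnetic potential, whose equation has no stretching term. Write $H=\overline H+\nabla^\perp\phi$ with $\int\phi=0$, so that $\Delta\phi=j$. Then $\phi_t+u\cdot\nabla\phi=\nu\Delta\phi-u\times\overline H+c(t)$, and testing with $\phi$ gives $\tfrac12(\|\phi\|_2^2)'+\nu\|\nabla\phi\|_2^2=\tfrac12\int d\phi^2-\int\phi(u-\overline u)\times\overline H$. The first term costs $\tfrac\nu2\|\nabla\phi\|_2^2+C\|d\|_2^2\|\phi\|_2^2$, with $\int_0^T\|d\|_2^2\le C_Rm^{1-\beta}$. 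The second uses $\sup\|u\|_2\le C_R$ and $|\overline H|=|\overline{H_0}|\le\|H_0\|_2\le R$; this bound comes from conservation, not from the energy bound $C_Rm$ that you quote. Poincar\'e supplies the damping, so $\sup\|\phi\|_2\le C_R$, independent of $V$, $m$, and $T$.

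Frequency interpolation then gives $J+\|H-\overline H\|_\infty\le C\|\phi\|_2^{1/3}J_1^{2/3}$, which also replaces Brezis--Gallou\"et. The current inequality becomes $(J^2)'+\nu J_1^2\le C_R(U^6+U^2)$, with superlinear damping $J_1^2\ge cJ^3$. Scalar comparison yields $\sup J^2\le C_R(1+V^2)$. Integration, with $\int U^6\le V^2\int U^2\le C_RV^2m$, yields the $H^2$ budget.

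Two smaller points remain.
\begin{itemize}
\item Keep the mean-field contributions as $C|\overline H|^2\|\nabla u\|_2^2$. Your term $CV\|H\|_\infty^2$ integrates to something of order $|\overline H|^2VT$, which is not uniform in $T$.
\item A uniform bound on $\|\Delta H\|_2$ is not available from $u\in L^\infty H^1$, since $H^1\cdot H^1\not\subset H^1$ in two dimensions. Maximal regularity on windows gives only time-integrated control. The suprema of $\|H\|_{H^{2-\varepsilon}}$ and $\|H\|_\infty$ should instead come from Duhamel's formula, using $\|fg\|_{H^{1-\varepsilon/2}}\le C_\varepsilon\|f\|_{H^1}\|g\|_{H^1}$ and the zero-mean heat kernel bound $(1+t^{-1+\varepsilon/4})e^{-c\nu t}$, which is integrable on $(0,\infty)$.
\end{itemize}
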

\begin{proof}
Write \(H=\overline H+\nabla^\perp\phi\), with \(\avg\phi=0\).
The scalar potential satisfies
\[
 \phi_t+u\cdot\nabla\phi=\nu\Delta\phi-u\times\overline H+c(t),
 \qquad c(t)=\avg{u\times\overline H}-\avg{d\phi}.
\]
Testing with \(\phi\), and using its zero mean, gives
\[
 \tfrac12(\|\phi\|_2^2)'+\nu\|\nabla\phi\|_2^2
 =\tfrac12\int d\phi^2-\int\phi(u-\overline u)\times\overline H.
\]
The zero-mean two-dimensional interpolation inequality bounds the first
term by \(\nu\|\nabla\phi\|_2^2/2+C\|d\|_2^2\|\phi\|_2^2\).
The second is bounded using \(\sup\|u\|_2\le C_R\).
Poincare's inequality and variation of constants therefore imply
\begin{equation}\label{sl:potential-bound}
 \sup\|\phi\|_2^2\le
 C_R\exp\left(C\int_0^T\|d\|_2^2\dt\right)\le C_R.
\end{equation}
In particular this first magnetic bound does not depend on \(V\).

For clarity, the exact current equation can be written
\[
 D_tj=H\cdot\nabla\omega-dj-\nabla d\times H+\mathcal T+\nu\Delta j,
\]
where
\[
 \mathcal T=\sum_{k=1}^2
 (\partial_1H_k\partial_ku_2-\partial_2H_k\partial_ku_1
 -\partial_1u_k\partial_kH_2+\partial_2u_k\partial_kH_1),
 \qquad |\mathcal T|\le C|\nabla u||\nabla H|.
\]
Testing with \(j\), then integrating the \(\nabla d\) term by parts,
gives
\begin{equation}\label{sl:current-identity}
 \tfrac12(\|j\|_2^2)'+\nu\|\nabla j\|_2^2
 =-\int\omega H\cdot\nabla j+\int j\mathcal T
        +\tfrac12\int dj^2+\int d(\nabla j\times H).
\end{equation}
Let \(U=\|\nabla u\|_2\), \(J=\|j\|_2\), and
\(J_1=\|\nabla j\|_2\). If \(K\ge1\) bounds \(\|\phi\|_2\),
Fourier interpolation gives
\[
 J\le CK^{1/3}J_1^{2/3},\qquad
 \|H-\overline H\|_\infty\le CK^{1/3}J_1^{2/3}.
\]
For the second estimate, split \(\nabla\phi\) at frequency \(N\):
its low and high parts are bounded by \(CN^2\|\phi\|_2\) and
\(CN^{-1}J_1\), respectively, and optimize. The right side of
\eqref{sl:current-identity} is at most
\(CUJJ_1+CU\|H\|_\infty J_1\). Young's inequality yields
\begin{equation}\label{sl:current-ode}
 (J^2)'+\nu J_1^2\le C_R(U^6+U^2).
\end{equation}
Because \(J_1^2\ge c_RJ^3\), scalar comparison gives
\(\sup J^2\le C_R(1+V^2)\). Integration, using
\(\int U^6\le V^2\int U^2\le C_RV^2m\), proves the first line of
\eqref{sl:magnetic-bounds}.

The endpoint \(H^1\hookrightarrow L^\infty\) is not used. Instead write
\[
 H-\overline H=e^{\nu t\Delta}(H_0-\overline H)
  +\int_0^t e^{\nu(t-s)\Delta}\Div(u\otimes H-H\otimes u)(s)\dd s.
\]
For fixed \(0<\varepsilon<1\), Sobolev multiplication gives
\(\|fg\|_{H^{1-\varepsilon/2}}\le
C_\varepsilon\|f\|_{H^1}\|g\|_{H^1}\). The mean-zero heat estimate is
\[
 \|e^{\nu t\Delta}\Div F\|_{H^{2-\varepsilon}}
 \le C_{\varepsilon,\nu}(1+t^{-1+\varepsilon/4})e^{-c\nu t}
                         \|F\|_{H^{1-\varepsilon/2}}.
\]
This time kernel is integrable on \((0,\infty)\). It proves the
nonendpoint bound in \eqref{sl:magnetic-bounds}; choosing
\(\varepsilon=1/2\) gives explicitly
\(\sup\|H\|_\infty^2\le C_R(1+V^3)\).
Choosing \(\varepsilon<2/p\) also bounds \(j\) and hence \(K_H\) in
every fixed finite \(L^p\).

The time budgets follow without integrating a nonzero spatial mean:
\[
 \|K_H\|_2^2\le\|H\|_\infty^2J^2,\quad
 \|j\|_4^4\le CJ^2J_1^2,\quad
 \|\nabla K_H\|_2^2\le C\|H\|_\infty^2J_1^2+C\|j\|_4^4.
\]
These and \eqref{sl:basic-budgets} give the quadratic, derivative and
fourth-power budgets. Finally
\(\|K_H\|_4^2\le C\|K_H\|_2(\|K_H\|_2+\|\nabla K_H\|_2)\)
and Cauchy--Schwarz give the remaining term.
\end{proof}

\subsection{The corrected low acoustic energy}

\begin{lemma}\label{sl:low-acoustic}
Assume the density interval, \(\sup\|u\|_{H^1}^2\le V\), and the
conditional bounds
\begin{equation}\label{sl:low-inputs}
 \sup m^\gamma\|q\|_{H^1}^2\le Mm,\qquad
 \int_0^T\|\nabla\omega\|_2^2\dt\le Lm.
\end{equation}
For fixed \(V,M,L\) and sufficiently large \(m\), there is a positive
energy \(X\asymp m\|d\|_2^2+m^\gamma\|q\|_2^2\) such that
\begin{equation}\label{sl:low-differential}
 X'+cm^{-\beta}\|\nabla S\|_2^2+c\mathfrak d X
 \le C_*\frac m{\mathfrak d}\|\nabla u\|_2^2
   +Cm^{-\beta}\|K_H\|_2^2
   +CMm^{1-\gamma-\beta}\|\nabla\omega\|_2^2.
\end{equation}
Consequently
\begin{equation}\label{sl:low-profile}
\begin{aligned}
 X(t)&\le C_Rm e^{-c\mathfrak d t}
                 +C_*\left(\frac m{\mathfrak d^2}+m^{1-\beta}\right),\\
 \sup\|d\|_2^2&\le C_R+C_*(\mathfrak d^{-2}+m^{-\beta}).
\end{aligned}
\end{equation}
and, for every fixed \(\zeta>0\),
\begin{equation}\label{sl:flux-budgets}
\begin{aligned}
 \int_0^T\|\nabla S\|_2^2\dt
 &\le C_*\left(m^{\beta+1}+\frac{m^{\beta+2}}{\mathfrak d}\right),\\
 \int_0^T e^{-\zeta\mathfrak d t}\|\nabla S\|_2^2\dt
 &\le C_{*,\zeta}m^{1+\beta}.
\end{aligned}
\end{equation}
Thus the unweighted budget is \(C_*m^3\) if \(1<\beta\le2\),
and \(C_*m^{\beta+1}\) if \(\beta>2\).
\end{lemma}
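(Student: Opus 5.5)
We plan to build $X$ from the linearized acoustic system around the constant state $m$. The continuity equation gives $D_t\log\rho=-d$, hence
\[
 \partial_t q+\nabla(u\cdot q)=-\nabla d .
\]
Taking the divergence of the momentum equation gives an equation for $d$. In it the principal terms are $\Delta S$ with $S=Ad$, the pressure gradient $\nabla\rho^\gamma=\gamma\rho^\gamma q$, and $\Div K_H$; the vorticity part enters only through $\Div(b\nabla^\perp\omega)$ via the density factor $b=\rho^{-1}$. We write the momentum equation divided by $\rho$ as
\[
 D_tu=b\nabla S+\mu b\nabla^\perp\omega-\gamma\rho^{\gamma-1}q+bK_H .
\]
The basic energy $\tfrac12 m\|d\|_2^2+\tfrac12 c_s^2m\|q\|_2^2$ (suitably $m$-weighted so that it is $\asymp m\|d\|_2^2+m^\gamma\|q\|_2^2$) is obtained by testing the $d$-equation with $m d$ and the $q$-equation with $c_s^2 m\,q$. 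The pressure–divergence cross terms then cancel up to coefficient errors $(\rho^{\gamma-1}-m^{\gamma-1})$, which are $O(m^{\gamma-1}\|\rho/m-1\|_\infty)$. The viscous term yields $-\int b\nabla S\cdot\nabla d\approx -m^{-1}A(m)^{-1}\|\nabla S\|_2^2$, up to commutators with $\nabla A=\beta\rho^\beta q$. This alone does not damp $q$.

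To obtain damping of $q$ we add the standard hypocoercive corrector $\kappa\int \nabla\Delta^{-1}d\cdot q$, or more naturally $\kappa\int S\,\Div\Delta^{-1}q$ type terms; concretely, we take the cross term $\epsilon\int u\cdot\nabla\log\rho$ rewritten through $q$. With weight $\epsilon\sim \mathfrak d/c_s^2$ relative to the $q$-energy, the corrector produces $-\epsilon c_s^2\|q\|_2^2$ from the pressure term. It costs $\epsilon\|d\|_2^2$ and a viscous cross term $\epsilon D\int d\,(\cdot)$. The choice $\mathfrak d=\min\{D,\Gamma\}$ (or $D$ when $\beta\le2$, where $\Gamma\gg D$) is exactly what makes $\epsilon$ small enough to keep $X$ positive ($\mathfrak d^2\le c_s^2$). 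It also makes the viscous cost absorbable, since $\mathfrak d\le\Gamma$ and $\mathfrak d\le D$. The $d$-energy is damped at rate $D\ge\mathfrak d$ via Poincaré applied to $\nabla S$ (mean of $S$ handled through the mean of $d$, which vanishes). This gives $c\mathfrak dX$ on the left.

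The right-hand side terms then arise as follows. The convective terms $u\cdot\nabla d$, $\nabla u:\nabla u^{\mathsf T}$, and $\nabla(u\cdot q)$ are bounded using $\sup\|u\|_{H^1}^2\le V$, Gagliardo–Nirenberg, and Young against $\mathfrak d X$. This leaves $C_*(m/\mathfrak d)\|\nabla u\|_2^2$; the factor $m$ comes from the weight on $\|d\|^2$. The magnetic forcing gives $Cm\,\|b K_H\|\,\|\nabla S\|/A\lesssim m^{-\beta}\|K_H\|_2^2$ plus absorption. The baroclinic vorticity term $\mu\int \nabla b\times\nabla\omega\, d$ is bounded by $m^{-1}\|q\|_4\|\nabla\omega\|_2\|d\|_4$. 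Using the hypothesis $m^\gamma\|q\|_{H^1}^2\le Mm$, it yields $CMm^{1-\gamma-\beta}\|\nabla\omega\|_2^2$ after Young against the $\nabla S$ dissipation. Coefficient commutators involving $\rho/m-1$ are small because $\|\rho/m-1\|_\infty\lesssim \|q\|_{H^1}\lesssim M^{1/2}m^{(1-\gamma)/2}$, and $m$ is large.

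Finally, \eqref{sl:low-profile} follows by Gronwall/variation of constants from \eqref{sl:low-differential}. We use $X(0)\le C_Rm$ from \eqref{eq:scaled-data} and $\sup\|\nabla u\|_2^2\le V$ for the forcing, giving $m/\mathfrak d^2$. For the magnetic and $\omega$ terms we use the budgets \eqref{sl:force-budgets} and \eqref{sl:low-inputs}, integrated with the kernel $e^{-c\mathfrak d(t-s)}$ and bounded crudely, giving $m^{1-\beta}$. The bound for $\|d\|_2^2$ follows by dividing by $m$. The flux budgets follow by integrating \eqref{sl:low-differential}, respectively after multiplication by $e^{-\zeta\mathfrak d t}$; here we use $\int\|\nabla u\|_2^2\le C_Rm$ from Lemma~\ref{sl:basic}, respectively $\int e^{-\zeta\mathfrak dt}\le(\zeta\mathfrak d)^{-1}$ together with $\sup\|\nabla u\|^2\le V$. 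The main obstacle is the choice and positivity of the corrector with the two-rate weight for $\beta>2$, and verifying that all density-gradient commutators involve only $q\in H^1$, never third density derivatives.
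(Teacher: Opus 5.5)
Most of your plan matches the paper's proof. The acoustic energy has size $m\|d\|_2^2+m^\gamma\|q\|_2^2$. All three of your correctors reduce to a multiple $\mathfrak d\,m\int\ell d$, with $\ell=\log\rho-\avg{\log\rho}$, which is essentially the paper's $-\eta c_0\int yd$ with $\nabla y=Aq$. The conditions $\mathfrak d\le D$, $\mathfrak d\le\Gamma$, $\mathfrak d^2\le c_s^2$ enter where you say, and the profile and budgets are integrated the same way. Your constant weights make the pressure--divergence cancellation only approximate. The error $\gamma m\int(\rho^{\gamma-1}-m^{\gamma-1})q\cdot\nabla d$ is nevertheless absorbable, because $Mm^{1-\beta}=o(\mathfrak d)$; the paper's weights $c_0A$ and $Q=c_0A\gamma\rho^{\gamma-1}$ simply make the cancellation exact.

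\textbf{The gap} is the quadratic strain $\operatorname{tr}((\nabla u)^2)$ in the equation for $d$, which you list among the terms closed by Gagliardo--Nirenberg and Young against $\mathfrak dX$. Paired with $md$, no H\"older or Gagliardo--Nirenberg bound produces $\|\nabla u\|_2^2$, since each needs $\nabla u\in L^p$ with $p>2$. Young against $\mathfrak d m\|d\|_2^2$ leaves $(m/\mathfrak d)\|\nabla u\|_4^4\lesssim(m/\mathfrak d)V\|\nabla\omega\|_2^2+\cdots$. Young against the dissipation instead leaves $m^{2-\beta}V\|\nabla\omega\|_2^2$, which is no better. Such a term is not in \eqref{sl:low-differential}, where $\|\nabla\omega\|_2^2$ carries only the factor $Mm^{1-\gamma-\beta}$. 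It also destroys the consequences. With only $\int_0^T\|\nabla\omega\|_2^2\dt\le Lm$ available, it adds $VLm^2/\mathfrak d$ to $X$. For $\beta\le2$ this gives only $\sup\|d\|_2^2\lesssim C_R+VLm^{2-\beta}$, which is unbounded when $\beta<2$ and depends on the bootstrap heights at $\beta=2$. The weighted budget likewise degrades from $m^{1+\beta}$ to $VLm^3$.

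\textbf{The missing idea is the Jacobian structure.} Write $d^2-\operatorname{tr}((\nabla u)^2)=2\det\nabla u$ and treat the cubic term $d^3$ separately. Then apply the periodic Wente estimate of Lemma~\ref{pre:wente}, $|\int S\det\nabla u|\le C\|\nabla S\|_2\|\nabla u\|_2^2$, and pay it with the flux dissipation rather than with $\mathfrak dX$. The cost is $m^{2-\beta}\|\nabla u\|_2^4\le Vm^{2-\beta}\|\nabla u\|_2^2\lesssim(m/\mathfrak d)\|\nabla u\|_2^2$, because $\mathfrak d\le D$.

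Your corrector produces the same structure: $\frac{d}{dt}\int\ell d$ contains $2\int\ell\det\nabla u$, which in the paper is $2\eta c_0\int y\det\nabla u$. Wente with $\|\nabla y\|_2\lesssim m^\beta\|q\|_2$ gives $\epsilon\eta m^\gamma\|q\|_2^2+C\eta m^{2-\gamma}\|\nabla u\|_2^4$. The bound $\eta m^{2-\gamma}\lesssim m/\mathfrak d$ is a second use of $\mathfrak d^2\le c_s^2$.

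Only the density-strain term is closed the way you describe. There the factor $m$ comes from the hypothesis $m^\gamma\|q\|_{H^1}^2\le Mm$, through $m^\gamma\|\nabla u\|_2\|q\|_4^2\lesssim\|\nabla u\|_2X^{1/2}(Mm)^{1/2}$, not from the weight on $\|d\|_2^2$.
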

\begin{proof}
Set \(c_0=D^{-1}\), \(Q=c_0A\gamma\rho^{\gamma-1}\), and
\[
 \mathcal A_0=\tfrac12\int(c_0Ad^2+Q|q|^2),\quad
 \mathscr D_0=c_0\int b|\nabla S|^2,\quad
 \Lambda(\rho)=2\mu\log\rho+\rho^\beta/\beta.
\]
Direct testing of the momentum and differentiated mass equations gives
\begin{equation}\label{sl:acoustic-identity}
\begin{aligned}
 \mathcal A_0'+\mathscr D_0={}&\mathcal R_q
 -c_0\mu\int b\nabla S\cdot\nabla^\perp\omega
 -c_0\int b\nabla S\cdot K_H+2c_0\int S\det\nabla u\\
 &-\tfrac{c_0}{2}\int(A+\beta\rho^\beta)d^3,\qquad
 \mathcal R_q=-\int Qq\cdot(\nabla u)^{\mathsf T}q
   +\tfrac12\int Q(\sigma+2-\gamma)d|q|^2.
\end{aligned}
\end{equation}
Here \(D_tq=-\nabla d-(\nabla u)^{\mathsf T}q\) and
\(D_t\Lambda=-S\); these also give the exact corrector identity
\begin{equation}\label{sl:corrector-identity}
\begin{aligned}
 c_0\left(\int y d\right)'={}&-c_0\int Ad^2
  -c_0\int(A/\rho)q\cdot\nabla S+\int Q|q|^2\\
 &-c_0\int(A/\rho)q\cdot K_H+2c_0\int y\det\nabla u,
 \qquad y=\Lambda-\avg\Lambda.
\end{aligned}
\end{equation}
Indeed \(D_td=\Div D_tu-\operatorname{tr}((\nabla u)^2)\), and
\(d^2-\operatorname{tr}((\nabla u)^2)=2\det\nabla u\).
The shear pairing in \eqref{sl:corrector-identity} is zero because
\((A/\rho)q\) is a scalar gradient. Also
\(\int\det\nabla u=0\), so subtracting \(\avg\Lambda\) introduces no term.

Choose a fixed sufficiently small \(\varepsilon_0>0\), and put
\[
 \eta=\varepsilon_0\mathfrak d,\qquad
 X=\mathcal A_0-\eta c_0\int yd.
\]
Since \(\mathfrak d^2\le c_s^2\), Poincare gives
\[
 \left|\eta c_0\int yd\right|
 \le C\eta m\|q\|_2\|d\|_2
 \le C\varepsilon_0
             \bigl(m\|d\|_2^2+m^\gamma\|q\|_2^2\bigr).
\]
Hence \(X\asymp\mathcal A_0\), with equivalence constants independent
of the bootstrap heights. The weighted mean constraint \(\int S/A=0\) implies
\(\|d\|_2\le Cm^{-\beta}\|\nabla S\|_2\).
Inserting minus \(\eta\) times \eqref{sl:corrector-identity} into
\eqref{sl:acoustic-identity}, the two quadratic corrector errors are
absorbed using
\[
 \eta c_0\int Ad^2\le C\varepsilon_0\mathscr D_0,\qquad
 \eta c_0\left|\int(A/\rho)q\cdot\nabla S\right|
 \le\epsilon\mathscr D_0+C_\epsilon\eta^2m^\beta\|q\|_2^2.
\]
The first absorption uses \(\eta/D\le\varepsilon_0\), and the
second uses \(\eta/\Gamma\le\varepsilon_0\):
\(C_\epsilon\eta^2m^\beta\|q\|_2^2\) is a small multiple of
\(\eta\int Q|q|^2\).
The remaining flux dissipation controls
\(\mathfrak d\,m\|d\|_2^2\), because
\(\mathscr D_0\ge cD\,m\|d\|_2^2\) and \(\mathfrak d\le D\).

For completeness the other costs, with \(U=\|\nabla u\|_2\) and
\(\delta_\rho=\|\rho/m-1\|_\infty\), are
\begin{align}
 \left|2c_0\int S\det\nabla u\right|
 &\le\epsilon\mathscr D_0+C_\epsilon m^{2-\beta}U^4,
                                                        \label{sl:low-wente}\\
 \left|2\eta c_0\int y\det\nabla u\right|
 &\le\epsilon\eta m^\gamma\|q\|_2^2
                       +C_\epsilon\eta m^{2-\gamma}U^4,\notag\\
 c_0\mu\left|\int b\nabla S\cdot\nabla^\perp\omega\right|
 &\le\epsilon\mathscr D_0+
             C_\epsilon\delta_\rho^2m^{-\beta}\|\nabla\omega\|_2^2,
                                                        \notag\\
 \tfrac{c_0}{2}\left|\int(A+\beta\rho^\beta)d^3\right|
 &\le\epsilon\mathscr D_0+C_\epsilon m^{2-\beta}\|d\|_2^4.
                                                        \notag
\end{align}
For the last inequality, $\int d=\int S/A=0$ and
$\nabla d=A^{-1}\nabla S-\sigma dq$ give, by Poincare and
$H^1\hookrightarrow L^4$,
\[
 \|d\|_{H^1}\le C m^{-\beta}\|\nabla S\|_2
                  +C\|q\|_4\|d\|_{H^1}
             \le C m^{-\beta}\|\nabla S\|_2,
\]
where the small $\|q\|_4$ term is absorbed using
\eqref{sl:low-inputs}. The first two inequalities are periodic Wente
duality. Their remainders
are both at most \(C_\epsilon(m/\mathfrak d)U^4\), since
\[
 m^{2-\beta}\le C m/\mathfrak d,\qquad
 \eta m^{2-\gamma}\le C m/\mathfrak d.
\]
The latter follows from \(\mathfrak d^2\le c_s^2\).
The shear term
uses \(\int\nabla S\cdot\nabla^\perp\omega=0\), subtracting the
constant \(m^{-1}\) from \(b\). Both magnetic pairings cost
\(\epsilon\mathscr D_0+\epsilon\eta m^\gamma\|q\|_2^2+
C_\epsilon m^{-\beta}\|K_H\|_2^2\).
For the corrector magnetic pairing this uses
\(\eta m^{-\gamma}\le C m^{-\beta}\), which follows from
\(\eta\le\varepsilon_0\Gamma\).
The cubic divergence remainder is absorbable: its ratio to
\(\mathscr D_0\) is at most \(C V m^{2-2\beta}=o(1)\).

If \(\ell=\log\rho-\avg{\log\rho}\), then
\(\rho/m=e^\ell/\avg{e^\ell}\). Sobolev embedding and
\eqref{sl:low-inputs} show
\(\delta_\rho^2\le C\|q\|_{H^1}^2\le CMm^{1-\gamma}\).
Finally the unestimated density strain satisfies
\[
 |\mathcal R_q|\le Cm^\gamma U\|q\|_4^2
 \le CU X^{1/2}(Mm)^{1/2}
 \le\epsilon\mathfrak d X+C_\epsilon M\frac m{\mathfrak d}U^2.
\]
Together with \(U^4\le VU^2\), these estimates prove
\eqref{sl:low-differential}.

The scaled data give \(X(0)\le C_Rm\). Variation of constants, using
the velocity supremum for the first source and the magnetic and
vorticity integrals for the others, yields
\[
 X(t)\le C_Rm e^{-c\mathfrak d t}
 +C_*\left(\frac m{\mathfrak d^2}
                   +m^{1-\beta}+m^{2-\gamma-\beta}\right).
\]
The last term is smaller than \(m^{1-\beta}\), since \(\gamma>1\).
This proves \eqref{sl:low-profile}; it is not necessary to discard
the magnetic tail \(m^{1-\beta}\).
Integrating \eqref{sl:low-differential} and using
\(\int U^2\le C_Rm\) gives
\[
 \int_0^T\|\nabla S\|_2^2\dt
 \le C_*\left(m^{\beta+1}+\frac{m^{\beta+2}}{\mathfrak d}
                                  +m+m^{2-\gamma}\right).
\]
The last two terms are bounded by \(m^{\beta+1}\), proving the
first estimate in \eqref{sl:flux-budgets}.
When \(\beta>2\), the inequality \(s>1\) makes
\(m^{\beta+2}/\mathfrak d\le C m^{\beta+1}\); for
\(\beta\le2\), this term is comparable to \(m^3\).
To obtain the exponentially weighted estimate, multiply by
\(e^{-\zeta\mathfrak d t}\). Integration by parts gives
\[
 \int_0^T e^{-\zeta\mathfrak d t}X'\dt
 =e^{-\zeta\mathfrak d T}X(T)-X(0)
       +\zeta\mathfrak d\int_0^T e^{-\zeta\mathfrak d t}X\dt.
\]
The velocity source now costs only \(C_*m/\mathfrak d^2\), and
the other source terms retain their preceding global budgets.
Since \(\mathfrak d\to\infty\), their sum and \(X(0)\) are at most
\(C_{*,\zeta}m\). Multiplication by \(m^\beta\) proves the second
estimate in \eqref{sl:flux-budgets}.
\end{proof}

\subsection{The mixed vorticity--current energy and its principal constant}

\begin{lemma}\label{sl:mixed}
Assume the density interval, \(\sup\|u\|_{H^1}^2\le V\), and
\(\sup\|q\|_{H^1}^2\le Mm^{1-\gamma}\). Define
\[
 g=m^2/\rho^2,\qquad
 \mathcal E=\tfrac12\int\left((m^2/\rho)\omega^2+j^2\right),\qquad
 \mathcal B_S(T)=m^{-1}\int_0^T\delta_\rho(t)^2\|\nabla S\|_2^2\dt.
\]
For sufficiently large \(m\), depending on the fixed heights,
\begin{equation}\label{sl:mixed-bound}
 \sup_{t\le T}\frac{\mathcal E(t)}m+
 \frac1m\int_0^T(\mu\|\nabla\omega\|_2^2+\nu\|\nabla j\|_2^2)\dt
 \le C_R\left[1+\mathcal B_S(T)+(1+V^3)m^{-\beta}\right].
\end{equation}
In particular the principal constant on the right does not depend on
\(V\) or \(M\).
\end{lemma}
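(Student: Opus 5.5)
Testing the vorticity equation with the weight \(m^2/\rho\cdot\omega\), and the current equation with \(j\), will produce an energy identity for \(\mathcal E\). The weight is chosen so that the density-weighted vorticity transport closes. Writing the momentum equation as
\[
 \rho D_tu=\nabla S-\nabla\rho^\gamma+\mu\nabla^\perp\omega+K_H,
\]
and taking the curl after division by \(\rho\), the vorticity obeys
\[
 D_t\omega+\omega d=\nabla b\times(\nabla S+\mu\nabla^\perp\omega+K_H)
 +\mu b\Delta\omega+b\,\Curl K_H.
\]
The pressure gradient drops out because \(\nabla b\times\nabla\rho^\gamma=0\); this is the baroclinic cancellation, and it explains why no \(\gamma\)-dependence should appear. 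Testing with \(g\rho\,\omega=(m^2/\rho)\omega\) and using the continuity equation, the transport part becomes \(\tfrac12\frac{\dd}{\dt}\int(m^2/\rho)\omega^2\) plus a term \(\int(m^2/\rho)\omega^2 d\) with a harmless factor. The viscous term \(\mu\int g\omega\Delta\omega\) yields \(\mu\int g|\nabla\omega|^2\ge c\mu\|\nabla\omega\|_2^2\), with a commutator \(\mu\int\omega\nabla g\cdot\nabla\omega\) of size \(C\|q\|_4\|\omega\|_4\|\nabla\omega\|_2\). This commutator is absorbed since \(\|q\|_{H^1}^2\le Mm^{1-\gamma}\) is small for large \(m\).

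The magnetic cross terms \(\int g\,\omega\,\Curl K_H\) and \(\int j\,H\cdot\nabla\omega\) from the current identity \eqref{sl:current-identity} should cancel to leading order. The plan is to check that \(\Curl K_H=\Curl\Div\M(H)=H\cdot\nabla j\) modulo lower order, so that with \(g\approx1\) the sum becomes \(\int(g-1)\omega H\cdot\nabla j\). The remainder is controlled by \(\delta_\rho\|\omega\|_2\|H\|_\infty\|\nabla j\|_2\). This cancellation is exactly why the mixed energy is used, and it is what keeps the principal constant independent of \(V\). The residual costs are \(C\delta_\rho^2(1+V^3)V\), which is small because \(\delta_\rho^2\le CMm^{1-\gamma}\).

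The key term is the baroclinic flux term \(\int g\rho\,\omega\,\nabla b\times\nabla S=-\int(m^2/\rho^2)\omega\,q\times\nabla S\). It is estimated by
\[
 \|q\|_4\|\omega\|_4\|\nabla S\|_2\le C\|q\|_{H^1}\|\omega\|_2^{1/2}\|\nabla\omega\|_2^{1/2}\|\nabla S\|_2 .
\]
It is paid for by Young's inequality against \(\epsilon\|\nabla\omega\|_2^2\), with the remainder expressed through \(\delta_\rho\) and \(\|\nabla S\|_2\). The aim is to produce exactly \(m^{-1}\delta_\rho^2\|\nabla S\|_2^2\) after dividing by \(m\), giving the \(\mathcal B_S\) term. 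I expect this to be the main obstacle. It requires writing \(\nabla b\) via \(\nabla(b-m^{-1})\) and integrating by parts so that \(\delta_\rho\) rather than \(\|q\|\) appears, and then bounding \(\|\omega\|_2\) by the energy budget.

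The remaining terms are the current equation terms \(\int j\mathcal T\), \(\tfrac12\int dj^2\), \(\int d(\nabla j\times H)\) and the \(\omega^2d\) terms. These are estimated as in Lemma~\ref{sl:magnetic}, using \(\int_0^T\|d\|_2^2\le C_Rm^{1-\beta}\) from Lemma~\ref{sl:basic} and \(\sup\|H\|_\infty^2\le C_R(1+V^3)\). This gives the contribution \((1+V^3)m^{-\beta}\), after dividing by \(m\), from the \(d\)-weighted magnetic terms. The dominant source is the quadratic \(\int\omega j\)-type coupling bounded by \(\|\nabla u\|_2^2\), whose time integral is \(C_Rm\) by Lemma~\ref{sl:basic}. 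This yields the constant \(C_R\) on the right.

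The initial value \(\mathcal E(0)\le C_Rm\) follows from \eqref{eq:scaled-data}. A Gronwall step with the integrable factor \(\|d\|_2^2\) closes \eqref{sl:mixed-bound}.
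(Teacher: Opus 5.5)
Your handling of the viscous commutator, the Lorentz cancellation and the baroclinic term matches the paper in substance. The paper uses $\nabla g=-2gq$ to write the baroclinic coupling exactly as $-\tfrac12\int(g-1)\nabla\omega\times\nabla S$, which is the integration by parts you anticipate. It gives $C\delta_\rho\|\nabla\omega\|_2\|\nabla S\|_2$ directly, with no need to bound $\|\omega\|_2$.

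The genuine gap is the stretching term $\int j\mathcal T$, and with it the claim that the principal constant is independent of $V$.
\begin{itemize}
\item Since $\|j\|_4\|\nabla H\|_4\le C\|j\|_2\|\nabla j\|_2$, absorbing $\epsilon\nu\|\nabla j\|_2^2$ leaves $C\|\nabla u\|_2^2\|j\|_2^2$. The factor $\|j\|_2^2$ is only bounded by $C_R(1+V^2)$.
\item The route of Lemma~\ref{sl:magnetic} instead produces the source $C_R\|\nabla u\|_2^6$, whose time integral is $C_RV^2m$.
\item Either way, your ``dominant source bounded by $\|\nabla u\|_2^2$'' gives a principal constant of size $C_R(1+V^2)$, not $C_R$. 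The bootstrap of Section~\ref{sec:strong-closure} chooses $V$ and $L$ larger than four times these principal constants, so it could not close.
\end{itemize}

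The missing idea is to split $\|\nabla u\|_2^2=\|\omega\|_2^2+\|d\|_2^2$ and use $m\|\omega\|_2^2\le C\mathcal E$ on the density interval.
\begin{itemize}
\item Then $\|j\|_2^2\|\omega\|_2^2\le Cm^{-1}\|j\|_2^2\,\mathcal E$ becomes a Gronwall term. Its coefficient satisfies $\int_0^Tm^{-1}\|j\|_2^2\dt\le C_R$ by Lemma~\ref{sl:basic}, independently of $V$ and $T$.
\item The rest, $\|j\|_2^2\|d\|_2^2\le C_R(1+V^2)\|d\|_2^2$, joins the $(1+V^3)m^{-\beta}$ term.
\item So the Gronwall coefficient is $m^{-1}\|j\|_2^2$, not $\|d\|_2^2$; the $\|d\|_2^2$ terms are sources.
\item The constant $C_R$ on the right comes from $\mathcal E(0)/m\le C_R$ times the factor $e^{C_R}$, not from integrating a $\|\nabla u\|_2^2$ source.
\end{itemize}

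Two smaller points.
\begin{itemize}
\item No $\omega^2d$ term survives. Since $D_t(m\omega/\rho)=(m/\rho)(D_t\omega+d\omega)$, testing with $(m^2/\rho)\omega$ cancels compression exactly. This matters: a surviving $\int(m^2/\rho)\omega^2d$ would naturally force the Gronwall coefficient $m\|d\|_2^2$, whose integral $C_Rm^{2-\beta}$ is unbounded for $\beta<2$.
\item The Lorentz residual needs absorption, not a pointwise bound. The residual $\int(g-1)\omega H\cdot\nabla j$ comes with the term $-\int g\,\omega j\,q\cdot H$ arising from $\nabla b\times K_H$ (note that $\Curl K_H=H\cdot\nabla j$ holds exactly). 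Both must be absorbed into the dissipation via Poincar\'e for the zero-mean $\omega$ and $j$, with the small factor $(\delta_\rho+\|q\|_4)\|H\|_\infty$. Bounding $\|\omega\|_2^2$ by $V$ pointwise in time, as in your cost $C\delta_\rho^2(1+V^3)V$, gives a contribution that grows linearly in $T$.
\end{itemize}
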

\begin{proof}
The normalized potential vorticity \(z=m\omega/\rho\) obeys the
mass-weighted cancellation of compression. Combining its quadratic
energy with \eqref{sl:current-identity} gives the exact identity
\begin{equation}\label{sl:mixed-identity}
\begin{aligned}
 \mathcal E'+\mu\int g|\nabla\omega|^2+\nu\|\nabla j\|_2^2
 ={}&-\tfrac\mu2\int\omega\nabla g\cdot\nabla\omega
       -\tfrac12\int(g-1)J(\omega,S)\\
 &+\tfrac12\int(g-1)
           [\omega H\cdot\nabla j-jH\cdot\nabla\omega]\\
 &+\int j\mathcal T+\tfrac12\int dj^2+
                                     \int d(\nabla j\times H),
\end{aligned}
\end{equation}
where \(J(f,g)=\nabla f\times\nabla g\).
To verify the two less immediate cancellations, the vorticity equation is
\[
 D_t\omega+d\omega=\mu\Div(b\nabla\omega)
                  -bq\times\nabla S+\Curl(bK_H).
\]
Since \(\nabla g=-2gq\), its scalar coupling is
\(-\tfrac12\int(g-1)J(\omega,S)\). Pressure contributes zero because
\(g\) and \(\rho^\gamma\) are functions of the same density.
Its magnetic contribution is
\(\tfrac12\int g[\omega H\cdot\nabla j-jH\cdot\nabla\omega]\);
the part with \(g=1\) cancels the first term of
\eqref{sl:current-identity}. Thus \eqref{sl:mixed-identity} does not
discard a principal Lorentz coupling.

Put \(X_\omega=\|\nabla\omega\|_2\), \(J=\|j\|_2\), and
\(J_1=\|\nabla j\|_2\). Since \(\omega,j\) have zero mean, the first
and the residual magnetic terms in \eqref{sl:mixed-identity} are bounded by
\[
 C\mu\|q\|_4X_\omega^2,\qquad
 C\delta_\rho\|H\|_\infty X_\omega J_1,
\]
and are absorbed for large \(m\). The baroclinic term costs
\(\epsilon\mu X_\omega^2+C_R\delta_\rho^2\|\nabla S\|_2^2\).
The stretching term is estimated without inserting the velocity
bootstrap into its rotational part:
\begin{equation}\label{sl:stretching-principal}
\begin{aligned}
 \left|\int j\mathcal T\right|
 &\le C\|\nabla u\|_2\|j\|_4\|\nabla H\|_4
 \le\epsilon\nu J_1^2+C_RJ^2(\|\omega\|_2^2+\|d\|_2^2)\\
 &\le\epsilon\nu J_1^2+\frac{C_R}{m}J^2\mathcal E
                          +C_R(1+V^2)\|d\|_2^2.
\end{aligned}
\end{equation}
The last two compression terms of \eqref{sl:mixed-identity} cost
\(\epsilon\nu J_1^2+C_R(1+V^3)\|d\|_2^2\).
Consequently
\[
 \mathcal E'+c(\mu X_\omega^2+\nu J_1^2)
 \le\frac{C_R}{m}J^2\mathcal E
       +C_R\delta_\rho^2\|\nabla S\|_2^2
       +C_R(1+V^3)\|d\|_2^2.
\]
The Gronwall exponent is bounded by the basic energy alone:
\(m^{-1}\int J^2\le C_R\), independently of \(V\).
Also \(\mathcal E(0)\le C_Rm\) and
\(m^{-1}\int\|d\|_2^2\le C_Rm^{-\beta}\).
Gronwall followed by integration proves \eqref{sl:mixed-bound}.
\end{proof}

\subsection{Fixed finite-exponent potential vorticity}

\begin{lemma}\label{sl:finite-p}
Fix \(r>p>2\), and suppose \(\rho/m\in[1/2,3/2]\) and
\(\sup\|q\|_r\le\epsilon_{p,r}\) for a sufficiently small fixed
constant. Put \(Q_r(t)=\|q(t)\|_r\),
\(B_p=\sup_{t\le T}\|K_H(t)\|_p\), and \(G(t)=\|\nabla S(t)\|_2\).
Then
\begin{equation}\label{sl:finite-p-bound}
 \|\omega(t)\|_p\le C_{p,r}\left[\|\omega_0\|_p+B_p+
 \left(\frac1m\int_0^t e^{-c_{p,r}(t-s)/m}Q_r(s)^2G(s)^2\dd s\right)^{1/2}
 \right].
\end{equation}
No supremum of \(\nabla\omega\) is an input.
\end{lemma}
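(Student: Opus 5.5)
We will prove \eqref{sl:finite-p-bound} by an $L^p$ energy estimate for the vorticity equation recorded in the proof of Lemma~\ref{sl:mixed},
\[
 D_t\omega+d\omega=\mu\Div(b\nabla\omega)-bq\times\nabla S+\Curl(bK_H).
\]
A direct test with $|\omega|^{p-2}\omega$ has a drawback: the magnetic forcing $\Curl(bK_H)$ involves $\nabla K_H$, which is only bounded in time average. We therefore first split off the magnetic part elliptically. Let $\omega_H$ be the zero-mean solution of
\[
 \mu\Div(b\nabla\omega_H)=-\Curl(bK_H).
\]
Since $b=\rho^{-1}$ is comparable to $m^{-1}$ and $\|\nabla b\|_r\le Cm^{-1}Q_r$ is small, a perturbative $W^{1,p}$ estimate for this divergence-form operator (constant-coefficient Calder\'on--Zygmund theory for $m^{-1}\Delta$, with the variable part absorbed using the smallness $\epsilon_{p,r}$) gives $\|\omega_H\|_p\le C\|\nabla\omega_H\|_p\le C_{p,r}\|K_H\|_p\le C_{p,r}B_p$. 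This produces the term $B_p$ in \eqref{sl:finite-p-bound}.

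The remainder $\tilde\omega=\omega-\omega_H$ satisfies
\[
 D_t\tilde\omega+d\tilde\omega-\mu\Div(b\nabla\tilde\omega)=-bq\times\nabla S-D_t\omega_H-d\omega_H.
\]
The term $D_t\omega_H$ is awkward because it involves $\partial_tK_H$. To avoid it, we run the $L^p$ estimate for $\omega$ itself but place the magnetic term in weak form. Testing with $|\omega|^{p-2}\omega$ and integrating the forcing by parts, $\int \Curl(bK_H)|\omega|^{p-2}\omega=-(p-1)\int bK_H\cdot\nabla^\perp\omega|\omega|^{p-2}$, which Young bounds by $\epsilon\mu\int b|\omega|^{p-2}|\nabla\omega|^2+C m^{-1}\int|K_H|^2|\omega|^{p-2}$. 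With $Y=\|\omega\|_p^2$ this contributes $Cm^{-1}B_p^2$ after H\"older. Dissipation: $\int b|\omega|^{p-2}|\nabla\omega|^2\ge c m^{-1}\|\nabla|\omega|^{p/2}\|_2^2$. Because $\omega$ has zero mean, a Poincar\'e-type inequality for $|\omega|^{p/2}$ (valid for zero-mean functions, e.g. via $\|\omega\|_p\le C\|\nabla|\omega|^{p/2}\|_2^{2/p}$ after handling the constant mode of $|\omega|^{p/2}$ by the zero mean of $\omega$) yields damping $c_{p,r}m^{-1}\|\omega\|_p^p$. The transport term gives $(1-1/p)\int d|\omega|^p$; this is the compression term, which we control by $\|d\|_\infty$. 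If that is unavailable, we bound it by $\|d\|_{L^{s}}\|\omega\|_{p}^{p-1}\|\omega\|_{L^{\cdots}}$, interpolate with the dissipation, and absorb using $\sup\|d\|_2$ small relative to the $m^{-1}$ damping. This is delicate, since $\|d\|_2$ is only $O(1)$ from \eqref{sl:low-profile}. The cleaner route is to use $\nabla d=A^{-1}\nabla S-\sigma dq$ and treat $d$ together with the baroclinic term, producing a contribution of the same form $Q_r^2G^2$-type. The baroclinic term is estimated as
\[
 \Bigl|\int bq\times\nabla S|\omega|^{p-2}\omega\Bigr|\le Cm^{-1}\|q\|_r\|\nabla S\|_2\|\omega\|_{(p-1)\theta}^{p-1},
\]
with $1/r+1/2+(p-1)/((p-1)\theta)=1$, requiring $r>p$ so that the $\omega$ exponent stays controllable by interpolation between $\|\omega\|_p$ and $\|\nabla|\omega|^{p/2}\|_2$. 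After Young, part is absorbed in the dissipation and the rest gives $Cm^{-1}\|\omega\|_p^{p-2}Q_r^2G^2$.

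The resulting differential inequality reads $Y'+c_{p,r}m^{-1}Y\le Cm^{-1}(B_p^2+Q_r^2G^2)$. Variation of constants then gives $Y(t)\le e^{-ct/m}Y(0)+CB_p^2+Cm^{-1}\int_0^te^{-c(t-s)/m}Q_r^2G^2\,ds$, using $m^{-1}\int e^{-c(t-s)/m}ds\le C$, and taking square roots proves \eqref{sl:finite-p-bound}. The main obstacle is the compression term $\int d|\omega|^p$ at the $L^p$ level without $\|d\|_\infty$. We would first try to show that it is perturbative against the weak damping $m^{-1}$ via $\|d\|_{H^1}\le Cm^{-\beta}\|\nabla S\|_2$ from the proof of Lemma~\ref{sl:low-acoustic}. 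If necessary we would instead work with the normalized potential vorticity $z=m\omega/\rho$, whose equation has no $d z$ term. That equation removes compression exactly, at the cost of extra commutators in $q$ that the smallness $\epsilon_{p,r}$ absorbs, and $\|z\|_p\asymp\|\omega\|_p$.
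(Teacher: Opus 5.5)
Your primary route, testing the $\omega$-equation directly with $|\omega|^{p-2}\omega$, has a genuine gap at the compression term $(1-1/p)\int d|\omega|^p$. It cannot be repaired inside the lemma.

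\begin{itemize}
\item The hypotheses ($\rho/m\in[1/2,3/2]$ and $\sup\|q\|_r\le\epsilon_{p,r}$) give no information on $d$. Yet the conclusion has a constant $C_{p,r}$, and the only damping available is the slow shear rate $c/m$.
\item Any perturbative treatment puts $\|d\|_\infty$ into a Gronwall factor, or $m^{s'-1}\|d\|_s^{s'}$ after interpolating with the dissipation. In the bootstrap one only knows $\|d\|_{H^1}\lesssim m^{-\beta}\|\nabla S\|_2=O(1)$. That is not $o(m^{-1})$, and its time integrals are not uniform in $T$.
\item Your suggested repair via $\nabla d=A^{-1}\nabla S-\sigma dq$ does not produce a $Q_r^2G^2$ term, because $\int d|\omega|^p$ contains no factor of $q$.
\end{itemize}

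The compression effect is in fact a bounded factor, the ratio of densities. An unweighted Gronwall argument, however, sees it as a rate that accumulates in time.

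The fallback you mention at the end is exactly the paper's proof, and it must be your main line. Put $\varrho=\rho/m$ and $z=\omega/\varrho$, so that
\[
\varrho D_tz=\tfrac{\mu}{m}\Div(\nabla z+qz)-\tfrac{1}{m\varrho}q\times\nabla S+\tfrac1m\Curl(K_H/\varrho).
\]

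\begin{itemize}
\item \textbf{Where the cancellation happens.} It comes from the mass-weighted test, not from the $z$-equation alone. With $E_p=p^{-1}\int\varrho|z|^p$, the continuity equation gives $E_p'=\int\varrho D_tz\,|z|^{p-2}z$. An unweighted test of $D_tz$ would reintroduce $-\frac1p\int d|z|^p$.
\item \textbf{The cost: a drift term.} With $Z_p=\operatorname{sgn}(z)|z|^{p/2}$, the drift is $-\frac{2\mu(p-1)}{mp}\int q\cdot Z_p\nabla Z_p$. It is bounded by $C_{p,r}Q_r\|\nabla Z_p\|_2^2/m$ and absorbed because $Q_r\le\epsilon_{p,r}$. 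This is where the smallness hypothesis is used, so your elliptic splitting of $\omega_H$ is unnecessary.
\item \textbf{Poincar\'e.} Here $z$ does not have zero mean; only $\int\varrho z=\int\omega=0$. The paper uses the $2/p$-H\"older continuity of $s\mapsto\operatorname{sgn}(s)|s|^{2/p}$ to get $\|Z_p\|_2\le C_p\|\nabla Z_p\|_2$.
\end{itemize}

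Your magnetic weak-form estimate and your H\"older bound for the baroclinic term then carry over unchanged.

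Your claim that Young's inequality leaves exactly $m^{-1}(Q_rG)^2E_p^{1-2/p}$ is correct, but only after applying Poincar\'e to the surplus: $E_p^{2/p-2/r}\|\nabla Z_p\|_2^{2t_*}\lesssim\|\nabla Z_p\|_2^2$, where $t_*=1-2/p+2/r$. This yields a linear inequality for $E_p^{2/p}$ directly, once you regularize at $E_p=0$. The paper instead keeps the Young exponent $k_*=2/(2-t_*)<2$ and recovers the square by H\"older's inequality in time against $m^{-1}e^{-c(t-s)/m}$. The two endings are equivalent.
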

\begin{proof}
Set \(\varrho=\rho/m\) and \(z=\omega/\varrho\). The exact equation is
\[
 \varrho D_tz=\frac\mu m\Div(\nabla z+qz)
       -\frac1{m\varrho}q\times\nabla S
       +\frac1m\Curl(K_H/\varrho).
\]
Let \(E_p=p^{-1}\int\varrho|z|^p\),
\(Z_p=\operatorname{sgn}(z)|z|^{p/2}\), and
\(D_p=\|\nabla Z_p\|_2\). The mass equation cancels all compression
reaction terms in the test, giving
\begin{equation}\label{sl:finite-p-test}
\begin{aligned}
 E_p'+\frac{4\mu(p-1)}{mp^2}D_p^2
 ={}&-\frac{2\mu(p-1)}{mp}\int q\cdot Z_p\nabla Z_p
 -\frac1m\int\varrho^{-1}(q\times\nabla S)|z|^{p-2}z\\
 &-\frac{p-1}{m}\int\varrho^{-1}K_H\cdot\nabla^\perp z\,|z|^{p-2}.
\end{aligned}
\end{equation}
The needed Poincare inequality is not based on a false zero mean for
\(z\). Rather \(\int\varrho z=\int\omega=0\). The map
\(\Psi(s)=\operatorname{sgn}(s)|s|^{2/p}\) is \(2/p\)-Holder, so
\[
 |\Psi(\avg{Z_p})|\int\varrho
 \le C_p\int\varrho|Z_p-\avg{Z_p}|^{2/p}.
\]
Ordinary Poincare then gives
\(\|Z_p\|_2\le C_pD_p\), and for every fixed finite \(a\ge2\),
\(\|Z_p\|_a\le C_{p,a}\|Z_p\|_2^{2/a}D_p^{1-2/a}\).
The first term of \eqref{sl:finite-p-test} is at most
\(C_{p,r}Q_rD_p^2/m\), and is absorbed. The magnetic term is bounded by
\(\epsilon D_p^2/m+C_{p,\epsilon}B_p^2E_p^{1-2/p}/m\), and hence
by another small part of the Poincare damping plus \(C_pB_p^p/m\).

For the scalar forcing define
\[
 t_*=1-2/p+2/r\in(0,1),\quad a_*=1/2-1/r,\quad
 k_*=\frac{2}{2-t_*}\in(1,2).
\]
Holder and the preceding interpolation give
\[
 |R_S|\le\frac{C_{p,r}}m Q_rG E_p^{a_*}D_p^{t_*}
 \le\frac\epsilon mD_p^2+\frac{C_{p,r}}m(Q_rG)^{k_*}
                                            E_p^{1-k_*/p},
\]
where \(a_*k_*=1-k_*/p\). Thus
\[
 E_p'+\frac cmE_p\le\frac Cm(Q_rG)^{k_*}E_p^{1-k_*/p}
                          +\frac CmB_p^p.
\]
Apply the chain rule to \((E_p+B_p^p)^{k_*/p}\); zero values can be
handled by a positive regularization. The resulting linear inequality
has source \(C[(Q_rG)^{k_*}+B_p^{k_*}]/m\).
Variation of constants and Holder's inequality with \(k_*<2\) give
\[
 \frac1m\int_0^t e^{-c(t-s)/m}(Q_rG)^{k_*}(s)\dd s
 \le C\left(\frac1m\int_0^t
              e^{-c(t-s)/m}Q_r(s)^2G(s)^2\dd s\right)^{k_*/2}.
\]
Since \(E_p^{1/p}\asymp\|\omega\|_p\), this proves
\eqref{sl:finite-p-bound}. The actual shear time scale \(m\) has been
retained throughout.
\end{proof}

\subsection{The interface with staged high-acoustic bounds}

The high-order argument will decrease the acoustic energy in finitely
many stages when \(1<\beta\le2\). The low-order feedback only needs
its coarse bound during a short initial layer and its improved bound
afterwards. We isolate this interface without assuming the high-order
estimates in advance. For \(\beta>2\), the single dynamic profile is
retained.

\begin{lemma}\label{sl:dynamic-feedback}
Assume the hypotheses of Lemma~\ref{sl:low-acoustic}. For
\(1<\beta\le2\), suppose that a fixed exponent \(a_*<\gamma-2\)
and a time \(0\le t_L\le C_*\log m/D\) satisfy
\begin{equation}\label{sl:staged-profile-input}
 \|q(t)\|_{H^1}^2\le C_*m^{a_*-\gamma}
 \qquad(t_L\le t\le T).
\end{equation}
When \(T<t_L\), this tail condition is empty; the coarse bound in
\eqref{sl:low-inputs} is used throughout the existing interval.
For \(\beta>2\), suppose instead that
\begin{equation}\label{sl:profile-input}
 \|q(t)\|_{H^1}^2\le C_Rm^{1-\gamma}e^{-c\mathfrak d t}
                                  +C_*m^{2-s-\gamma},
\end{equation}
where \(s>1\) is defined in \eqref{sl:slow-exponent}. This second
input corresponds to the high-order choice \(h=2s\), and does not
require \(h>\beta\).
For every fixed \(r>p>2\),
\begin{equation}\label{sl:feedback-budget}
\begin{aligned}
 &\frac1m\int_0^T
 (\delta_\rho(t)^2+\|q(t)\|_r^2)\|\nabla S\|_2^2\dt\\
 &\qquad\le C_{r,*}\left[
 m^{\beta+1-\gamma}+
 \begin{cases}
 m^{a_*+2-\gamma},&1<\beta\le2,\\
 m^{\beta+2-s-\gamma},&\beta>2.
 \end{cases}\right]
\end{aligned}
\end{equation}
The right side tends to zero under \(\gamma>\beta+1\) and the
stated restriction on \(a_*\) in the first branch, and simply under
\(\gamma>\beta+1\) in the second. Consequently,
\begin{equation}\label{sl:low-improvements}
 I(T)\le C_R+o_*(1),\quad
 \sup\|u\|_{H^1}^2\le C_R+o_*(1),\quad
 \sup\|\omega\|_p\le C_{R,V,p,r}+o_*(1),
\end{equation}
where \(I(T)\) is the left side of \eqref{sl:mixed-bound}. The first
two principal constants do not depend on any bootstrap height; the
last depends on \(V\), but not on the high-acoustic, high-vorticity,
or finite-\(p\) bootstrap heights.
\end{lemma}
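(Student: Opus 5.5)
The plan is to reduce everything to one pointwise-in-time smallness statement and then pair it with the flux budgets of Lemma~\ref{sl:low-acoustic}. From the proof of Lemma~\ref{sl:low-acoustic} and the two-dimensional embedding $H^1\hookrightarrow L^r$,
\[
 \delta_\rho(t)^2+\|q(t)\|_r^2\le C_r\|q(t)\|_{H^1}^2 .
\]
So the left side of \eqref{sl:feedback-budget} is at most
$C_rm^{-1}\int_0^T\|q\|_{H^1}^2\|\nabla S\|_2^2\dt$. I then insert, on each time region, the available bound for $\|q\|_{H^1}^2$ together with the matching flux budget.

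\emph{Case $1<\beta\le2$.} Here $\mathfrak d=D\asymp m^{\beta-1}$. I split at $t_L$.

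On the tail $[t_L,T]$, I use \eqref{sl:staged-profile-input} and the unweighted budget $C_*m^3$ from \eqref{sl:flux-budgets}. This contributes $m^{-1}m^{a_*-\gamma}m^3=m^{a_*+2-\gamma}$.

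On the initial layer $[0,\min\{t_L,T\}]$ only the coarse bound $Mm^{1-\gamma}$ from \eqref{sl:low-inputs} is available. I therefore need a layer flux estimate $\int_0^{t_L}\|\nabla S\|_2^2\le C_*m^{\beta+1}$ that does not lose the factor $m/\mathfrak d$. To get it, I integrate \eqref{sl:low-differential} only over $[0,t_L]$. The velocity source is bounded by
\[
 C_*\frac{m}{\mathfrak d}\int_0^{t_L}\|\nabla u\|_2^2
 \le C_*\frac{m}{D}\,Vt_L
 \le C_*\frac{m\log m}{D^2},
\]
which is $o(m)$ because $\beta>1$. The magnetic and vorticity sources retain their global budgets, and $X(0)\le C_Rm$. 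Multiplying by $m^\beta$ gives the layer estimate. The layer then contributes $m^{-1}m^{1-\gamma}m^{\beta+1}=m^{\beta+1-\gamma}$.

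\emph{Case $\beta>2$.} I use \eqref{sl:profile-input} directly.
\begin{itemize}
\item The decaying part $C_Rm^{1-\gamma}e^{-c\mathfrak d t}$ is paired with the weighted budget in \eqref{sl:flux-budgets}, taking $\zeta=c$. This gives $m^{-1}m^{1-\gamma}m^{1+\beta}=m^{\beta+1-\gamma}$.
\item The constant part $C_*m^{2-s-\gamma}$ is paired with the unweighted budget $C_*m^{\beta+1}$, valid because $s>1$. This gives $m^{\beta+2-s-\gamma}$.
\end{itemize}
All exponents are negative under $\gamma>\beta+1$, using $a_*<\gamma-2$ in the first branch and $s>1$ in the second. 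This proves \eqref{sl:feedback-budget}.

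For the consequences \eqref{sl:low-improvements}, I argue as follows.
\begin{itemize}
\item Since $\mathcal B_S(T)$ is bounded by the left side of \eqref{sl:feedback-budget}, Lemma~\ref{sl:mixed} gives $I(T)\le C_R[1+o_*(1)+(1+V^3)m^{-\beta}]=C_R+o_*(1)$. The constant $C_R$ is independent of $V$ and $M$, as that lemma stresses.
\item For the velocity, $\|u\|_{H^1}^2\le C(\|u\|_2^2+\|\omega\|_2^2+\|d\|_2^2)$. I bound $\|u\|_2^2\le C_R$ by Lemma~\ref{sl:basic}, $\|\omega\|_2^2\le C\mathcal E/m\le C_R+o_*(1)$, and $\|d\|_2^2\le C_R+o_*(1)$ by \eqref{sl:low-profile}, since $\mathfrak d\to\infty$.
\item For the finite-$p$ vorticity, I apply Lemma~\ref{sl:finite-p}. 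Its smallness hypothesis $\sup\|q\|_r\le\epsilon_{p,r}$ follows from $\|q\|_{H^1}^2\le Mm^{1-\gamma}\to0$. The bound $B_p\le C_{R,V,p}$ comes from \eqref{sl:force-budgets}, and $\|\omega_0\|_p\le C_R$ from the data. Dropping the exponential kernel, the last term in \eqref{sl:finite-p-bound} is bounded by the feedback integral $o_*(1)$.
\end{itemize}

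The main obstacle is the initial-layer flux estimate in the case $\beta\le2$. The global unweighted budget $m^3$, combined with the coarse bound $m^{1-\gamma}$, would only give $m^{3-\gamma}$, which does not vanish throughout $\gamma>\Gamma_*(\beta)$. The exponential weight alone would also lose a power $m^{C_*\zeta}$. The short-interval integration above, which exploits $t_L\lesssim\log m/D$ to make the velocity source negligible, is what I would try first.
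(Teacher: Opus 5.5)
Your proposal is correct and follows essentially the same route as the paper. The matching steps are: the reduction $\delta_\rho^2+\|q\|_r^2\le C_r\|q\|_{H^1}^2$; integrating \eqref{sl:low-differential} only over $[0,\min\{T,t_L\}]$, using $\sup\|\nabla u\|_2^2\le V$ and $t_L\lesssim\log m/D$ to get the $O(m^{\beta+1})$ initial-layer flux budget; pairing the improved tail with the global $m^3$ budget; splitting the $\beta>2$ profile between the weighted and unweighted budgets; and then feeding the resulting smallness into Lemmas~\ref{sl:mixed}, \ref{sl:low-acoustic}, \ref{sl:magnetic} and~\ref{sl:finite-p}.
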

\begin{proof}
The mass normalization used above and the finite Sobolev embeddings give
\(\delta_\rho^2+\|q\|_r^2\le C_r\|q\|_{H^1}^2\).
Write \(G=\|\nabla S\|_2\). First let \(1<\beta\le2\) and set
\(T_0=\min\{T,t_L\}\). Integrate \eqref{sl:low-differential} only
over \([0,T_0]\), using \(X(0)\le C_Rm\), \(X(T_0)\ge0\),
\(\sup\|\nabla u\|_2^2\le V\), the magnetic budget
\(\int\|K_H\|_2^2\dt\le C_{R,V}m\), and the coarse mixed bound
\(\int\|\nabla\omega\|_2^2\dt\le Lm\). Since
\(\mathfrak d=D\asymp m^{\beta-1}\), this gives
\begin{equation}\label{sl:initial-layer-flux}
\begin{aligned}
 \int_0^{T_0}G^2\dt
 &\le C_*\left(m^{\beta+1}+m^2t_L+m+m^{2-\gamma}\right)\\
 &\le C_*\left(m^{\beta+1}+m^{3-\beta}\log m
                           +m+m^{2-\gamma}\right)
 \le C_*m^{\beta+1}.
\end{aligned}
\end{equation}
In the last inequality,
\(m^{3-\beta}\log m=o(m^{\beta+1})\) because \(\beta>1\),
while \(\gamma>\beta+1\). This prefix estimate does not presume
existence up to \(t_L\).

Use \(\|q\|_{H^1}^2\le Mm^{1-\gamma}\) on the prefix. On
\([t_L,T]\), when nonempty, use \eqref{sl:staged-profile-input}
and the full-time budget \(\int_0^TG^2\dt\le C_*m^3\) from
\eqref{sl:flux-budgets}. Thus
\[
 \frac1m\int_0^T(\delta_\rho^2+\|q\|_r^2)G^2\dt
 \le C_{r,*}\left(m^{\beta+1-\gamma}
                             +m^{a_*+2-\gamma}\right).
\]
Both exponents are strictly negative. Importantly, the full-time
\(m^3\) budget has only been paired with the improved tail bound,
not with the coarse initial-layer bound.

For \(\beta>2\), multiply \eqref{sl:profile-input} by \(G^2/m\)
and apply separately the weighted and unweighted estimates in
\eqref{sl:flux-budgets}. Their contributions are respectively
\(C_{r,*}m^{\beta+1-\gamma}\) and
\(C_{r,*}m^{\beta+2-s-\gamma}\). Writing
\(a=\gamma-\beta>1\) makes the
two exponents \(1-a<0\) and \(2-s-a<0\), since \(s>1\).
This proves \eqref{sl:feedback-budget} in both branches.

Now apply Lemma~\ref{sl:mixed}, \eqref{sl:low-profile}, the Hodge identity
\(\|\nabla u\|_2^2=\|d\|_2^2+\|\omega\|_2^2\), and the basic kinetic
bound. Lemmas~\ref{sl:magnetic} and~\ref{sl:finite-p} give the last
assertion, replacing the history integral by the larger full integral
in \eqref{sl:feedback-budget}. The smallness assumption of
Lemma~\ref{sl:finite-p} follows already from the coarse bound
\(\|q\|_{H^1}^2\le Mm^{1-\gamma}\).
All dependence on the high bootstrap heights is multiplied by a
strictly negative power of \(m\). Thus these implications do not
introduce a constant-level circular dependence.
\end{proof}

All tests above are valid first for smooth solutions. In the positive
\(H^2\) local class, spatial regularization and time difference quotients
justify the quadratic identities on compact existence intervals.
For the finite-\(p\) test, the local class at fixed \(m\) gives
\(z\in L^\infty H^1\cap L^2H^2\) and \(z_t\in L^2L^2\).
Indeed \(z=m\rho^{-1}\Curl u\), multiplication by the positive
\(H^2\) density reciprocal preserves \(H^1,H^2\), and the time
derivative follows from \(u_t\in L^2H^1\) and \(\rho_t\in C H^1\).
Finite-exponent Sobolev embeddings then give
\(|z|^{p-2}z\in L^\infty L^2\cap L^2H^1\).
For example, its gradient is controlled by
\(\|z\|_{4(p-2)}^{p-2}\|\nabla z\|_4\) when \(p\ge5/2\);
for \(2<p<5/2\) replace the first norm by
\(1+\|z\|_2^{p-2}\) on the finite-volume torus.
Smooth truncation and time regularization therefore justify the
chain rule and the integrated finite-\(p\) identity.
These local norms are used only for admissibility of the tests;
none enters the constants in the resulting estimates.
The scalar corrector is a smooth density composition on the individual
positive interval. No approximation-dependent bound from the weak
construction is used to obtain these strong-solution estimates.

\section{The high acoustic energy}\label{sec:strong-high}

This section concerns positive strong solutions and does not use a
pointwise density bound for the weak solutions constructed above.
All constants in this section may depend on the fixed exponents,
on \(\mu,\nu\), and on the torus. Constants denoted by \(C_*\) may
also depend on the explicitly prescribed bootstrap heights, but
never on \(m\) or on the length of the interval of existence.
We write \(D_t=\partial_t+u\cdot\nabla\),
\(v^\perp=(-v_2,v_1)\), and
\(\nabla^\perp=(-\partial_2,\partial_1)\).

\subsection{Variables and a conditional proposition}
Set
\begin{equation}\label{sg:variables}
\begin{gathered}
 d=\Div u,\quad \omega=\Curl u,\quad j=\Curl H,\quad
 A=2\mu+\rho^\beta,\quad b=\rho^{-1},\\
 S=Ad,\quad f=\nabla S,\quad q=\nabla\log\rho,\quad
 w=Aq=\nabla\Lambda(\rho),\quad \Lambda'(r)=\frac{2\mu+r^\beta}{r},\\
 k_\rho=\frac{\gamma\rho^{\gamma-1}}A,\quad
 \sigma=\frac{\beta\rho^\beta}A,\quad
 \kappa=\gamma-1-\sigma,\quad L_{ij}=\partial_j u_i,\quad
 K_H=jH^\perp,\quad R=\mu\nabla^\perp\omega+K_H .
\end{gathered}
\end{equation}
The constants and Fourier multipliers used below are
\begin{equation}\label{sg:frequency-parameters}
\begin{gathered}
 A_m=2\mu+m^\beta,\quad D=A_m/m,\quad c_m=D^{-1},
 \quad c_s^2=\gamma m^{\gamma-1},\\
 K=c_s/D,\qquad \Gamma=c_s^2/D,\qquad
 \alpha(\xi)=\frac{Dc_s^2|\xi|^2}{c_s^2+D^2|\xi|^2},
 \qquad \mathsf T=\alpha(D_x).
\end{gathered}
\end{equation}
Fourier frequencies are those of the fixed torus; changing its
lowest nonzero frequency changes only fixed constants.
Define the positive energy and its flux dissipation by
\begin{equation}\label{sg:positive-energy}
 \mathcal A_1=\frac{c_m}{2}\int_{\TT}
       \bigl(A^{-1}|f|^2+k_\rho|\nabla w|^2\bigr)\dx,
 \qquad
 \mathscr D_S=c_m\int_{\TT}b|\nabla f|^2\dx .
\end{equation}
The enhanced density dissipation and the modified energy are
\begin{equation}\label{sg:modified-energy}
\begin{aligned}
 \mathscr B_\rho
   &=m^\gamma\sum_{\xi\ne0}\alpha(\xi)|\xi|^2
                                  |\widehat q(\xi)|^2,\\
 \mathcal A_{\mathsf T}
   &=\mathcal A_1-\varepsilon_0c_m\ip{q}{\mathsf T f},
 \qquad
 \mathcal V_h=\frac12m^{1-h}\|\nabla\omega\|_2^2,\qquad
 Y=Y_h=\mathcal A_{\mathsf T}+\mathcal V_h .
\end{aligned}
\end{equation}
Here and below the brackets are the real \(L^2\) pairing, summed
over vector components.

\begin{proposition}[Conditional high-order estimate with an improved input]
\label{sg:high-proposition}
Fix
\[
 1<\beta\le2,\qquad \gamma>\beta+1,\qquad a\le1,
\]
and choose \(h\) so that
\begin{equation}\label{sg:h-interval}
 h>\beta,\qquad a+h+\beta+1-2\gamma<0,
 \qquad h<3\beta+2\gamma-2 .
\end{equation}
Choose fixed finite \(P\ge4\) and \(p\ge P\) so that
\begin{equation}\label{sg:exponent-choices}
 \kappa_P:=2+\frac4P<\min\{2\beta,h+2-\beta\},
 \qquad \frac{h+1}{p+1}<\beta-1 .
\end{equation}
Suppose a smooth solution on \([0,T]\) satisfies
\begin{equation}\label{sg:bootstrap-assumptions}
 \avg{\rho}=m,\quad
 \|\rho/m-1\|_\infty\le\delta_0,\quad
 \sup_{t\le T}\|u(t)\|_{H^1}^2\le V,\quad
 \sup_{t\le T}\|\omega(t)\|_p\le W,\quad
 \sup_{t\le T}\mathcal A_1(t)\le Mm ,
\end{equation}
where \(\delta_0>0\) is fixed sufficiently small.
Suppose additionally that, for some \(0\le s\le T\),
\begin{equation}\label{sg:improved-input}
 \mathcal A_1(t)\le C_a m^a\qquad(s\le t\le T),
\end{equation}
with \(C_a\) independent of \(m,s,T\). The coarse bound
\(\mathcal A_1\le Mm\) remains available in all other estimates.
Use the basic energy and magnetic estimates in the following explicit
form:
\begin{equation}\label{sg:lower-inputs}
\begin{gathered}
 \int_0^T\|\nabla u\|_2^2\dt\le C_Rm,\qquad
 \int_0^T\|d\|_2^2\dt\le C_Rm^{1-\beta},\\
 \int_0^T\bigl(\|\nabla K_H\|_2^2+\|K_H\|_4^2\bigr)\dt
       \le C_*(V,R)m .
\end{gathered}
\end{equation}
These are lower-order inputs, not assumptions on high derivatives
of the magnetic forcing. For a fixed sufficiently small
\(\varepsilon_0>0\), and then sufficiently large \(m\),
\(\mathcal A_{\mathsf T}\asymp\mathcal A_1\), and on \([s,T]\),
\begin{equation}\label{sg:joint-sharp}
\begin{aligned}
 Y'+c\bigl(\mathscr D_S+\mathscr B_\rho+m^{-h}Z^2\bigr)
 \le{}& C_*\bigl[
       m^{2-\beta}\|\nabla u\|_4^4
       +m^{\kappa_P-h}\|\nabla u\|_2^{\kappa_P}\\
 &\hspace{17mm}
       +(m^{2-h}+m^{e_P})\|d\|_2^2\bigr]+\mathcal M(t),
\end{aligned}
\end{equation}
where
\[
 Z=\|\nabla^2\omega\|_2,\qquad
 e_P=\frac{P(2-h)-\beta}{P-1},\qquad
 \mathcal M(t)\ge0,\qquad
 \int_0^T\mathcal M(t)\dt\le C_*m^{1-\beta}.
\]
In particular,
\begin{equation}\label{sg:joint-simplified}
 Y'+c\bigl(\mathscr D_S+\mathscr B_\rho+m^{-h}Z^2\bigr)
       \le C_*m^{2-\beta}+\mathcal M(t).
\end{equation}
With \(k(h)=\max\{3-2\beta,\beta+1-h,1-\beta\}<1\),
\begin{equation}\label{sg:dynamic-conclusion}
\begin{aligned}
 Y_h(t)&\le e^{-cD(t-s)}Y_h(s)+C_*m^{k(h)},\\
 \|q(t)\|_{H^1}^2
       &\le Cm^{-\gamma}e^{-cD(t-s)}Y_h(s)
                                      +C_*m^{k(h)-\gamma}.
\end{aligned}
\end{equation}
Here \(C_*\) may also depend on \(a,h,C_a\). All constants and
thresholds are independent of \(s,T\). For any prescribed finite
collection of admissible pairs \((a,h)\), the correction parameter
\(\varepsilon_0\) and the finite spatial exponents can be chosen
in common.
\end{proposition}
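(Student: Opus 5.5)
We differentiate the high acoustic energy $\mathcal A_1$ in the same way as the low-order identity \eqref{sl:acoustic-identity}, but one derivative higher. Apply $\nabla$ to the flux form of the momentum equation, $\rho D_tu=f+R-\nabla\rho^\gamma$, and to the differentiated mass equation $D_tw=-\nabla S-(\nabla u)^{\mathsf T}w$. Testing with $c_mA^{-1}\nabla f$ and $c_mk_\rho\nabla w$ should give
\[
 \mathcal A_1'+\mathscr D_S=\mathcal R_1+\mathcal R_{\rm mag}+\mathcal R_{\rm vort}+\mathcal R_{\rm tri}.
\]
Here $\mathcal R_1$ collects the cubic strain terms $\int k_\rho\nabla w\cdot\nabla L\,w$, $\int k_\rho|\nabla w|^2 d$ and their analogues, while $\mathcal R_{\rm mag}$ and $\mathcal R_{\rm vort}$ are the pairings of $b\nabla f$ with $\nabla K_H$ and with $\mu\nabla\nabla^\perp\omega$. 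The cross terms between $f$ and $\nabla w$ must cancel exactly at leading order, because $k_\rho A=\gamma\rho^{\gamma-1}$. This is the reason for the weights in $\mathcal A_1$.

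Plain testing gives dissipation only for $f$, not for the density. For that we add the corrector $-\varepsilon_0c_m\ip q{\mathsf Tf}$. Its time derivative produces $-\varepsilon_0c_m\ip{\nabla\,\Div \ldots}{}$ and, through $D_tq=-\nabla d-\dots$ together with $f=\nabla(Ad)$ and the pressure part of the flux, a term comparable to $\varepsilon_0\mathscr B_\rho$. The multiplier $\alpha(\xi)$ is chosen to interpolate between the rate $D|\xi|^2$ (viscous) and $\Gamma$ (pressure relaxation). Frequency by frequency, we then check three things with $\varepsilon_0$ small:
\begin{itemize}
\item the corrector is bounded by $C\varepsilon_0\mathcal A_1$, which gives $\mathcal A_{\mathsf T}\asymp\mathcal A_1$;
\item the quadratic errors $\varepsilon_0c_m\ip{\nabla d}{\mathsf T f}$ and $\varepsilon_0c_m\ip q{\mathsf T\nabla(\rho^{-1}\ldots)}$ are absorbed by $\mathscr D_S$ and $\mathscr B_\rho$, using $\alpha\le D|\xi|^2$ and $\alpha\le\Gamma$;
\item variable-coefficient commutators $[A-A_m,\mathsf T]$ are small because $\delta_\rho\le\delta_0$.
\end{itemize}

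The vorticity part $\mathcal V_h$ is handled by testing the vorticity equation (as in Lemma~\ref{sl:mixed}) with $-m^{1-h}\Delta\omega$. This gives dissipation $cm^{-h}Z^2$ and baroclinic forcing $m^{1-h}\int\nabla(bq\times f)\cdot\nabla\omega$. We bound that forcing by $\epsilon m^{-h}Z^2+Cm^{2-h}\|q\|_4^2\|f\|_4^2$, and then by a small multiple of $\mathscr D_S$ using the improved input \eqref{sg:improved-input}. The conditions $a+h+\beta+1-2\gamma<0$ and $h>\beta$ in \eqref{sg:h-interval} are what make the coefficient $o(1)$. In the other direction, $\mathcal R_{\rm vort}$ is controlled by $\epsilon\mathscr D_S+C\delta_0^2m^{-\beta}\|\nabla^2\omega\|_2^2$ (after subtracting $m^{-1}$ from $b$), and $h<\beta$ would fail here. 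We need $m^{-\beta}\le\epsilon m^{-h}$, i.e.\ $h\le\beta$, or instead the factor $\delta_\rho^2\lesssim m^{a-\gamma}$ must supply the gap; I would use the latter, which gives $h<3\beta+2\gamma-2$-type bounds. The magnetic pairing costs $\epsilon\mathscr D_S+Cm^{-\beta}\|\nabla K_H\|_2^2$, which goes into $\mathcal M$ with integral $C_*m^{1-\beta}$ by \eqref{sg:lower-inputs}.

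The cubic density-strain terms $\mathcal R_1$ are the main obstacle, since no third derivative of $\rho$ is available. For $\int k_\rho|\nabla w|^2|\nabla u|$ I would use Gagliardo--Nirenberg, $\|\nabla w\|_{4}^2\lesssim\|\nabla w\|_2\|\nabla^2 w\|_2$, but avoid $\nabla^2w$ by splitting $w$ at the frequency where $\alpha(\xi)$ changes regime. Low frequencies are paid by $\mathscr B_\rho$ (where $m^\gamma\alpha|\xi|^2$ dominates $m^{\gamma}\|\nabla u\|_\infty$ once $D$ is large). High frequencies use $\|\nabla u\|_{L^P}$, written via div--curl as $d$ (controlled by $f$ and $\mathscr D_S$) plus $\omega$. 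The $\omega$ part is interpolated between $\|\omega\|_p\le W$ and $Z$, which is the source of the exponents $\kappa_P$ and $e_P$ and of the conditions in \eqref{sg:exponent-choices}. Terms with $S\,\nabla L\,\nabla f$ are treated by $\|\nabla u\|_4$ and Young, producing $m^{2-\beta}\|\nabla u\|_4^4$.

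Collecting all terms gives \eqref{sg:joint-sharp}. Then \eqref{sg:joint-simplified} follows from the interpolation $\|\nabla u\|_4^4\lesssim\|\nabla u\|_2^2(\|\nabla d\|_2^2+\|\nabla\omega\|_2^2)$ and the bootstrap bounds. Finally, $\mathscr D_S+\mathscr B_\rho+m^{-h}Z^2\ge cDY$ holds by Poincaré (using $\alpha\ge c\min\{D,\Gamma\}\ge cD$ for $\beta\le2$, and the zero-mean property of $\nabla\omega$), so Gronwall on $[s,T]$ gives \eqref{sg:dynamic-conclusion}. The source $m^{2-\beta}/D$ together with the $\mathcal M$ tail produces $m^{k(h)}$, and $\|q\|_{H^1}^2\lesssim m^{-\gamma}\mathcal A_1$ follows from $k_\rho A^2\asymp m^{\gamma}$.
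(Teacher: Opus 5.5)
\textbf{The decisive estimate is missing.} Among your cubic terms you list the differentiated strain $c_m\int k_\rho\nabla w\cdot\nabla L\,w$. Up to the weight, this is $m^\gamma\int(\partial_iq_j)(\partial_i\partial_jv_k)q_k$, with $v$ the solenoidal velocity. You then estimate only the undifferentiated strain $\int k_\rho|\nabla w|^2|\nabla u|$. That term needs no frequency splitting: $\|\nabla v\|_\infty\le C_p\|\omega\|_p^{p/(p+2)}Z^{2/(p+2)}$ and Young give $\delta m^{-h}Z^2+C_*m^{(h+1)/(p+1)}\mathcal A_1$. This is where $(h+1)/(p+1)<\beta-1$ enters; $\kappa_P$ and $e_P$ come instead from the transport and compression remainders of the palinstrophy test.

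For the differentiated strain, set $E_q=m^\gamma\|\nabla q\|_2^2$. The direct bound $C_p\|\omega\|_p^{1/2}Z^{1/2}E_q$, played against $m^{-h}Z^2$ and $DE_q$, closes only if $h+a<3\beta-3$. Together with $h>\beta$ this fails already at $a=1$ for every $\beta\le2$. The paper instead uses a moving cutoff $N=m^n$ with $2N\le K$, as in \eqref{sg:cutoff-choice}:
\begin{itemize}
\item the low--low part is integrated by parts onto $q_L$ and paid by $Dm^\gamma\|\nabla^2q_L\|_2^2\le C\mathscr B_\rho$;
\item each piece with a high-frequency density factor is handled by Young first against $\alpha_NE_{q,H}$, where $\alpha_N\asymp DN^2$, and then against $m^{-h}Z^2$, leaving $m^h\alpha_N^{-2}\|\omega\|_p^2E_q^2$.
\end{itemize}
This is the only place \eqref{sg:improved-input} is used. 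The bound $E_q\le C_*m^a$ makes the relative coefficient $m^{a+h+3-3\beta-4n}=o(1)$, and an admissible $n$ exists precisely because $a+h+\beta+1-2\gamma<0$. You spend the improved input on the baroclinic palinstrophy term instead. That term needs neither the input nor the condition: it is paid by $\mathscr B_\rho\ge cm^{\beta+\gamma-1}\|\nabla q\|_2^2$ with coefficient $m^{2\beta-2h+2-2\gamma}$, using the coarse bound alone.

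\textbf{Your Gronwall step also fails.} Poincar\'e gives only $m^{-h}Z^2\ge cm^{-1}\mathcal V_h$, which is the slow shear rate, not $cD\mathcal V_h$. The paper uses $\|\nabla\omega\|_2^2\le\|\omega\|_2Z$ to get $m^{-h}Z^2\ge c(V)m^{h-2}\mathcal V_h^2$. It then completes the square: $c(V)m^{h-2}\mathcal V_h^2\ge\lambda D\mathcal V_h-C_{\lambda,V}m^{2\beta-h}$. This source produces the entry $\beta+1-h$ of $k(h)$, which your accounting ($m^{2-\beta}/D$ plus the $\mathcal M$ tail) omits. It is also why $k(h)<1$ forces $h>\beta$.

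\textbf{Smaller points.}
\begin{itemize}
\item The variable-coefficient errors in the corrector are of size $\varepsilon_0\delta_\rho K\mathscr D_S$ and $a_m(\delta_\rho+B_q)\sqrt{1+K^2}\,\|\mathsf T^{1/2}z\|_2^2$, with $K=c_s/D\to\infty$. A fixed $\delta_0$ therefore does not make them small. You need $\delta_\rho+B_q\le C_*m^{(1-\gamma)/2}$ from $\mathcal A_1\le Mm$, which gives $(\delta_\rho+B_q)K\lesssim m^{1-\beta}$. The transport commutator $[u\cdot\nabla,\mathsf T]$ also needs its own estimate.
\item Your $\delta_\rho Z$ treatment of the shear forcing requires $h<\beta+\gamma-a$. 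This is not implied by \eqref{sg:h-interval} when $\gamma>2\beta+1$. Using $\Div(b\nabla^\perp\omega)=-bq\cdot\nabla^\perp\omega$ leaves $m^{-\beta}\|q\nabla\omega\|_2^2$, and this yields exactly the condition $h<3\beta+2\gamma-2$.
\item For \eqref{sg:joint-simplified}, use $\|\nabla u\|_4\le C(\|d\|_{H^1}+\|\omega\|_p)\le C_*$. Interpolating through $\|\nabla\omega\|_2$, which is not bounded, does not give a constant source.
\end{itemize}
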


We prove the proposition across this section and the next two
sections. The improved input \eqref{sg:improved-input} is used only
in the critical high-frequency differentiated-density strain.
All other estimates retain the coarse power \(m\). When \(a=1\),
the middle condition in \eqref{sg:h-interval} reduces to the old
restriction \(h<2\gamma-\beta-2\). Smaller values of \(a\) allow
larger weights \(h\); this is the mechanism used in the finite-stage
argument of Section~\ref{sec:strong-closure}. All the auxiliary
parameters are selected before \(m\). The corrected acoustic energy
\(\mathcal A_{\mathsf T}\) depends neither on \(h\) nor on the
auxiliary frequency cutoff.
All the exact identities and algebraic estimates below are valid for
arbitrary \(\beta>1\). The parameter comparisons through
Remark~\ref{sg:full-time-remark} concern \(1<\beta\le2\);
Proposition~\ref{sg:superquadratic} then gives a separate closure for
\(\beta>2\), with the actual acoustic damping in place of \(D\).

\subsection{Recovering the small density gradient}
The mean of \(w=\nabla\Lambda(\rho)\) is zero. On the density
interval in \eqref{sg:bootstrap-assumptions},
\[
 \|\nabla w\|_2^2\le Cm^{2\beta-\gamma}\mathcal A_1,\qquad
 \|q\|_s\le C_sm^{-\beta}\|w\|_s
       \le C_sm^{-\gamma/2}\mathcal A_1^{1/2}
       \quad(2\le s<\infty).
\]
Using
\(\nabla q=A^{-1}\nabla w-\sigma q\otimes q\), it follows that
\begin{equation}\label{sg:q-small}
 B_q^2:=\|q\|_{H^1}^2
 \le C\bigl(m^{-\gamma}\mathcal A_1+
                      m^{-2\gamma}\mathcal A_1^2\bigr)
 \le Cm^{-\gamma}\mathcal A_1
 \le C_*m^{1-\gamma}.
\end{equation}
This derivation precedes any equivalence requiring small \(q\).
Moreover, if \(\ell=\log\rho-\avg{\log\rho}\), then
\(\|\ell\|_{H^2}\le C B_q\) and
\(\rho/m=e^\ell/\avg{e^\ell}\). Consequently
\begin{equation}\label{sg:density-amplitude}
 \delta_\rho:=\|\rho/m-1\|_\infty\le C B_q=o(1).
\end{equation}
Since \(\nabla w=A(\nabla q+\sigma q\otimes q)\), these estimates,
or equivalently the small fixed oscillation in
\eqref{sg:bootstrap-assumptions}, give
\begin{equation}\label{sg:energy-equivalence}
 \mathcal A_1\asymp
     m^{1-2\beta}\|f\|_2^2+m^\gamma\|\nabla q\|_2^2,
 \qquad
 \mathscr D_S\asymp m^{-\beta}\|\nabla f\|_2^2 .
\end{equation}

\subsection{The exact differentiated acoustic identity}
The original equations imply the exact, variable-coefficient system
\begin{equation}\label{sg:acoustic-system}
\begin{aligned}
 D_tS={}&A\Div(b\nabla S)-A\Div(k_\rho w)
       +A\Div(bR)-A\operatorname{tr}(L^2)-\sigma Sd,\\
 D_tw={}&-\nabla S-L^Tw .
\end{aligned}
\end{equation}
Write \(f_\ell=\partial_\ell S\) and \(W_\ell=\partial_\ell w\).
Then
\begin{equation}\label{sg:differentiated-system}
 D_tf_\ell=A\Div(b\nabla f_\ell)-A\Div(k_\rho W_\ell)+F_\ell,
 \qquad
 D_tW_\ell=-\nabla f_\ell-L^TW_\ell+G_\ell ,
\end{equation}
where every commutator is retained:
\begin{equation}\label{sg:all-commutators}
\begin{aligned}
 F_\ell={}&A\Div((\partial_\ell b)f)
          -A\Div((\partial_\ell k_\rho)w)\\
 &+(\partial_\ell A)\,[\Div(bf)-\Div(k_\rho w)]
   +\partial_\ell\mathcal G-(\partial_\ell u)\cdot f,\\
 G_\ell={}&-(\partial_\ell L)^Tw-(\partial_\ell u)\cdot\nabla w,\\
 \mathcal G={}&A\,[\Div(bR)-\operatorname{tr}(L^2)-\sigma d^2].
\end{aligned}
\end{equation}
Since
\(D_tA^{-1}=\sigma dA^{-1}\) and
\(D_tk_\rho=-\kappa d k_\rho\), testing gives
\begin{equation}\label{sg:exact-high-identity}
\begin{aligned}
 \mathcal A_1'+\mathscr D_S={}&
   \frac{c_m}{2}\int A^{-1}(1+\sigma)d|f|^2\dx
  +\frac{c_m}{2}\int k_\rho(2-\gamma+\sigma)d|\nabla w|^2\dx\\
 &-c_m\sum_\ell\int k_\rho W_\ell\cdot L^TW_\ell\dx
  +c_m\sum_\ell\int A^{-1}f_\ell F_\ell\dx
  +c_m\sum_\ell\int k_\rho W_\ell\cdot G_\ell\dx .
\end{aligned}
\end{equation}
The terms containing \(k_\rho W_\ell\cdot\nabla f_\ell\)
cancel exactly. In contrast, the displayed density strain does
not cancel and must be estimated.

\subsection{Physical forcing, without an extra derivative}
Integration by parts in the whole physical forcing, rather than
in its individual differentiated products, gives
\begin{equation}\label{sg:forcing-identity}
\begin{aligned}
 c_m\sum_\ell\int A^{-1}f_\ell\partial_\ell\mathcal G\dx
 =-c_m\int(\Delta S-\sigma q\cdot f)
       [\Div(bR)-\operatorname{tr}(L^2)-\sigma d^2]\dx .
\end{aligned}
\end{equation}
In particular, a derivative of \(\sigma\) has not been discarded.
The mean of \(f=\nabla S\) is zero, so
\(\|qf\|_2\le C B_q\|\nabla f\|_2\). Also
\[
 \Div(bR)=b[-\mu q\cdot\nabla^\perp\omega+
                                  \Div K_H-q\cdot K_H].
\]
Hence, for every \(\delta>0\),
\begin{equation}\label{sg:forcing-bound}
\begin{aligned}
 |\text{left side of \eqref{sg:forcing-identity}}|
 \le{}&\delta\mathscr D_S+C_\delta m^{2-\beta}\|\nabla u\|_4^4\\
 &+C_\delta m^{-\beta}
    \bigl(\|q\nabla\omega\|_2^2+
             \|\nabla K_H\|_2^2+B_q^2\|K_H\|_4^2\bigr).
\end{aligned}
\end{equation}
There is no \(\nabla^2K_H\) on the right.
For fixed \(p>2\), use the admissible derivative interpolation
\[
 \|\nabla\omega\|_{4p/(p+2)}
       \le C_p\|\omega\|_p^{1/2}Z^{1/2},
 \qquad
 \|q\|_{4p/(p-2)}\le C_p B_q .
\]
Its derivative interpolation parameter is \(1/2\), not a smaller,
inadmissible number. It follows that
\begin{equation}\label{sg:shear-payment}
 m^{-\beta}\|q\nabla\omega\|_2^2
 \le\delta m^{-h}Z^2+
      C_*m^{h-2\beta+1-2\gamma}\mathcal A_1 .
\end{equation}
The last coefficient relative to \(D\) has exponent
\(h-3\beta+2-2\gamma<0\) by \eqref{sg:h-interval}.

\subsection{Potential flow and the local acoustic remainders}
The identities
\[
 \nabla d=A^{-1}f-\sigma dq,\qquad \avg d=0
\]
and \(B_q=o(1)\) yield
\begin{equation}\label{sg:potential-H1}
 \|d\|_{H^1}\le Cm^{-\beta}\|f\|_2\le C_* .
\end{equation}
For clarity, the second differentiation is
\begin{equation}\label{sg:potential-second-identity}
\begin{aligned}
 \partial_i\partial_jd={}&A^{-1}\partial_i f_j
   -\sigma A^{-1}q_i f_j-\sigma(\beta-\sigma)q_i d q_j
   -\sigma(\partial_i d)q_j-\sigma d\partial_iq_j .
\end{aligned}
\end{equation}
The products \(q\nabla d\), \(d\nabla q\), and \(dq^2\) are
bounded by \(C(B_q+B_q^2)\|d\|_{H^2}\); here one uses
\(H^2\hookrightarrow L^\infty\) for \(d\), not \(H^1\hookrightarrow
L^\infty\) for \(q\). Absorption gives
\begin{equation}\label{sg:potential-H2}
 \|d\|_{H^2}\le Cm^{-\beta}\|\nabla f\|_2.
\end{equation}
Let \(u_c=\mathbb Q u\) and \(v=\mathbb P u\) be the potential
and solenoidal parts. Hodge estimates and interpolation imply
\begin{equation}\label{sg:potential-strain}
\begin{aligned}
 \|\nabla u_c\|_\infty+\|\nabla^2u_c\|_4
 &\le C\|d\|_{H^1}^{1/2}\|d\|_{H^2}^{1/2}
 \le C m^{-(\beta+1)/4}
                       \mathcal A_1^{1/4}\mathscr D_S^{1/4}.
\end{aligned}
\end{equation}
Thus all potential-flow density strains, including
\(m^\gamma\int q\nabla^2u_c\nabla q\) and the accompanying
\(q^3\) terms, cost at most
\begin{equation}\label{sg:potential-remainder}
 \delta\mathscr D_S+
               C_*m^{-(\beta-1)/3}\mathcal A_1 .
\end{equation}
Indeed their unabsorbed bound is
\(Cm^{-(\beta+1)/4}\mathcal A_1^{5/4}\mathscr D_S^{1/4}\).

The two bulk-viscosity commutators in
\eqref{sg:all-commutators} combine exactly into
\begin{equation}\label{sg:bulk-identity}
\begin{aligned}
 \mathcal V_1={}&c_m\sum_\ell\int b(1-\sigma)q_\ell
                                  \nabla f_\ell\cdot f\dx
 -c_m\sum_\ell\int b f_\ell\nabla(\sigma q_\ell)\cdot f\dx .
\end{aligned}
\end{equation}
Since \(\nabla\sigma=\sigma(\beta-\sigma)q\), the
\(L^4\) interpolation for the mean-zero \(f\) gives
\begin{equation}\label{sg:bulk-estimate}
\begin{aligned}
 |\mathcal V_1|
 &\le\delta\mathscr D_S+
       C_\delta m^{-\beta}
       (\|\nabla q\|_2^2+\|q\|_4^4)\|f\|_2^2
 \le\delta\mathscr D_S+
                 C_*m^{\beta-\gamma}\mathcal A_1 .
\end{aligned}
\end{equation}
The pressure commutators, with their original nonintegrated
divergence form retained, are
\begin{equation}\label{sg:pressure-commutator}
 \mathcal P_1=-c_m\sum_\ell\int f_\ell
 \{\Div(\kappa k_\rho q_\ell w)
             +\sigma q_\ell\Div(k_\rho w)\}\dx .
\end{equation}
Using \(\nabla k_\rho=\kappa k_\rho q\) yields
\begin{equation}\label{sg:pressure-estimate}
\begin{aligned}
 |\mathcal P_1|
 &\le C m^{\gamma-\beta}\|f\|_4 B_q^2
 \le C m^{-(\beta+1)/4}
                 \mathcal A_1^{5/4}\mathscr D_S^{1/4}
 \le\delta\mathscr D_S+
                 C_*m^{-(\beta-1)/3}\mathcal A_1 .
\end{aligned}
\end{equation}
The cancellation of the \(\gamma\) powers in this calculation
is important when \(\gamma>3\).

By \eqref{sg:potential-H1} and the finite \(p\) vorticity bound,
\(\|\nabla u\|_p\le C_*\). The strain terms containing \(|f|^2\)
therefore satisfy
\begin{equation}\label{sg:f-strain}
\begin{aligned}
 C m^{1-2\beta}\int|\nabla u||f|^2\dx
 &\le C_*m^{1-2\beta}\|f\|_{2p/(p-1)}^2
 \le\delta\mathscr D_S+
         C_*m^{-(\beta-1)/(p-1)}\mathcal A_1 .
\end{aligned}
\end{equation}
The remaining density terms in \eqref{sg:exact-high-identity}
split into quadratic strain and differentiated strain:
\[
 Cm^\gamma\int|\nabla u|(|\nabla q|+|q|^2)^2\dx,\qquad
 Cm^\gamma\int(|\nabla q|+|q|^2)|q||\nabla^2u|\dx .
\]
Terms in the first expression containing \(q^2\), apart from
its quadratic principal part, are bounded by
\(C_*(B_q+B_q^2)\mathcal A_1\). For the solenoidal quadratic
principal part use the zero-order interpolation
\[
 \|\nabla v\|_\infty
      \le C_p\|\omega\|_p^{p/(p+2)}Z^{2/(p+2)} .
\]
It gives
\begin{equation}\label{sg:quadratic-density-strain}
 |R_{\mathrm{dens},v,\mathrm{quad}}|
 \le\delta m^{-h}Z^2+
                C_*m^{(h+1)/(p+1)}\mathcal A_1 .
\end{equation}
The last term is \(o(D)\mathcal A_1\) by
\eqref{sg:exponent-choices}. The solenoidal differentiated
strain is treated with the enhanced dissipation below.
This lists all terms of \eqref{sg:exact-high-identity}; none
requires a second derivative of \(q\).

\section{The true frequency correction and differentiated strain}
\label{sec:frequency}

\subsection{Normalization and the exact transport commutator}
The symbol satisfies
\(\alpha(\xi)\le c_s|\xi|/2\), and hence
\[
 |c_m\ip{q}{\mathsf T f}|
 \le \frac{c_mc_s}{2}\|\nabla q\|_2\|f\|_2
 \le C\mathcal A_1 .
\]
A fixed small \(\varepsilon_0\) therefore gives
\(\mathcal A_{\mathsf T}\asymp\mathcal A_1\).
The sign and scaling of the correction can also be checked
on each frozen Fourier mode. The longitudinal variables satisfy
\[
 V'=-D|\xi|^2V-ic_s|\xi|Z_0,\qquad
 Z_0'=-ic_s|\xi|V.
\]
If \(E=(|V|^2+|Z_0|^2)/2\) and
\(C=\operatorname{Re}(iV\overline{Z_0})\), then
\[
 E'=-D|\xi|^2|V|^2,\qquad
 C'=-D|\xi|^2C+c_s|\xi|(|Z_0|^2-|V|^2).
\]
The correction \(E-\eta C\), with
\(\eta=\varepsilon_0Dc_s|\xi|/(c_s^2+D^2|\xi|^2)\),
has positive \(\alpha(\xi)|Z_0|^2\) dissipation.
Its high-order normalization is precisely
\eqref{sg:modified-energy}, not an unweighted low-order
cross term. We next verify it on the nonlinear equations.

Put \(z=\Div q\), \(\mathcal C=\ip{q}{\mathsf T f}\), and
\(\mathsf R=(K^2-\Delta)^{-1}\), so
\(\mathsf T=\Gamma(I-K^2\mathsf R)\).
Using \(D_tq=-A^{-1}f+\sigma dq-L^Tq\), the exact derivative is
\begin{equation}\label{sg:cross-identity}
\begin{aligned}
 \mathcal C'={}&-\int A^{-1}f\cdot\mathsf T f\dx
 +\int(\sigma d q-L^Tq+dq)\cdot\mathsf T f\dx\\
 &-\int q\cdot\mathsf T(L^Tf)\dx
   -\int(\mathsf Tz)D_tS\dx+\mathcal J,\\
 \mathcal J={}&\ip{q}{[u\cdot\nabla,\mathsf T]f}.
\end{aligned}
\end{equation}
No coefficient has been commuted through \(\mathsf T\).
The resolvent identity yields
\[
 [u\cdot\nabla,\mathsf T]
 =\Gamma K^2\mathsf R\bigl[
       (\Delta u_j)\partial_j+
                  2(\partial_i u_j)\partial_{ij}\bigr]\mathsf R.
\]
Integrating the term containing \(\Delta u_j\) once, and then
the remaining second derivative in the \(j\) direction, gives
\begin{equation}\label{sg:commutator-weak-form}
\begin{aligned}
 \mathcal J=-\Gamma K^2\sum_{i,j}\int\bigl[
 &(\partial_i\mathsf Rq)\cdot
                         (\partial_i u_j)\partial_j\mathsf Rf
 +(\partial_j\mathsf Rq)\cdot
                         (\partial_i u_j)\partial_i\mathsf Rf
 +\mathsf Rq\cdot(\partial_i d)\partial_i\mathsf Rf
                                      \bigr]\dx .
\end{aligned}
\end{equation}
The last term is present for compressible flow.
The resolvent heat-kernel representation gives, for every fixed
\(1<s<\infty\),
\[
 \|\mathsf Rg\|_s\le K^{-2}\|g\|_s,\qquad
 \|\nabla\mathsf Rg\|_s\le C_sK^{-1}\|g\|_s,\qquad
 \|\nabla\mathsf Rq\|_2\le K^{-2}\|\nabla q\|_2 .
\]
For \(r_p=2p/(p-2)\), the three products in
\eqref{sg:commutator-weak-form} consequently satisfy
\begin{equation}\label{sg:commutator-raw}
 |\mathcal J|\le C_pc_s B_q
       (\|\nabla u\|_p+\|\nabla d\|_2)\|f\|_{r_p}.
\end{equation}
With \(\theta=2/p\), the zero-mean interpolation for \(f\)
and \eqref{sg:energy-equivalence} show that
\begin{equation}\label{sg:commutator-budget}
\begin{aligned}
 |\varepsilon_0c_m\mathcal J|
 &\le C_*m^{-(\beta-1)\theta/2}
                 \mathcal A_1^{1-\theta/2}
                 \mathscr D_S^{\theta/2}
 \le\delta\mathscr D_S+
       C_*m^{-(\beta-1)\theta/(2-\theta)}\mathcal A_1 .
\end{aligned}
\end{equation}
In particular, estimating this commutator does not require a
velocity Lipschitz norm or a second density derivative beyond \(qH^1\).

\subsection{True pressure positivity and the remaining cross terms}
The pressure contribution to \(\mathcal C'\) is exactly
\begin{equation}\label{sg:cross-pressure}
 \mathcal P_{\mathsf T}
   =\int(\mathsf Tz)a_\rho
                     [z+(\gamma-1)|q|^2]\dx,\qquad
 a_\rho=A\gamma\rho^{\gamma-1}.
\end{equation}
Let \(a_m=A_m\gamma m^{\gamma-1}\), and denote by
\(\alpha_{\min}\) the lowest nonzero value of \(\alpha\).
Then
\[
 \|\mathsf Tz\|_2\le\Gamma^{1/2}\|\mathsf T^{1/2}z\|_2,\qquad
 \|z\|_2\le\alpha_{\min}^{-1/2}\|\mathsf T^{1/2}z\|_2,\qquad
 (\Gamma/\alpha_{\min})^{1/2}\le C\sqrt{1+K^2}.
\]
Since \(q\) is a mean-zero gradient field,
\(\|\nabla q\|_2=\|z\|_2\) and
\(\||q|^2\|_2\le C B_q\|z\|_2\). Thus the difference between
\eqref{sg:cross-pressure} and
\(a_m\|\mathsf T^{1/2}z\|_2^2\) is bounded by
\[
 C a_m(\delta_\rho+B_q)\sqrt{1+K^2}
                           \|\mathsf T^{1/2}z\|_2^2 .
\]
The coefficient tends to zero, because
\begin{equation}\label{sg:pressure-small-coefficient}
 (\delta_\rho+B_q)\sqrt{1+K^2}
 \le C_*\bigl(m^{(1-\gamma)/2}+m^{1-\beta}\bigr)=o(1).
\end{equation}
This is the cancellation that permits arbitrarily large fixed
\(\gamma\) under the normalized initial-density assumption.
It follows that
\begin{equation}\label{sg:pressure-positive}
 \mathcal P_{\mathsf T}\ge
       \tfrac12a_m\|\mathsf T^{1/2}z\|_2^2,\qquad
 \varepsilon_0c_m\mathcal P_{\mathsf T}
                    \ge c\varepsilon_0\mathscr B_\rho .
\end{equation}

We record the other contributions of \eqref{sg:cross-identity}
explicitly. From \(A\Div(bf)=Ab(\Div f-q\cdot f)\),
the diffusion cross term is
\[
 \varepsilon_0\int(\mathsf Tz)
             \frac{A}{D\rho}(\Div f-q\cdot f)\dx .
\]
The first part is bounded by
\(\delta\varepsilon_0\mathscr B_\rho+
C_\delta\varepsilon_0\mathscr D_S\).
For the second part the same estimate leaves
\(C_\delta\varepsilon_0m^{-\beta}\|qf\|_2^2\);
the zero-mean \(L^4\) estimate for \(f\) bounds this by
\begin{equation}\label{sg:diffusion-cross}
 \delta\varepsilon_0\mathscr B_\rho+
 C_\delta\varepsilon_0\mathscr D_S+
 C_\delta\varepsilon_0D B_q^4\mathcal A_1 .
\end{equation}
We first fix \(\delta\), then take \(\varepsilon_0\) small.
The last term is \(o(D)\mathcal A_1\).

For the adverse \(A^{-1}f\cdot\mathsf Tf\) term, the
constant part is bounded by \(C\varepsilon_0\mathscr D_S\),
using \(\alpha(\xi)\le D|\xi|^2\). Its variable part satisfies
\begin{equation}\label{sg:inverse-A-cross}
 \left|\frac{\varepsilon_0}{D}
       \int(A^{-1}-A_m^{-1})f\cdot\mathsf Tf\dx\right|
       \le C\varepsilon_0\delta_\rho K\mathscr D_S
       =o(\varepsilon_0\mathscr D_S).
\end{equation}
Likewise
\[
 \left|\frac{\varepsilon_0}{D}\int\sigma dq\cdot\mathsf Tf\dx\right|
 \le C\varepsilon_0B_qK\mathscr D_S
 =o(\varepsilon_0\mathscr D_S).
\]
Both small factors follow from
\eqref{sg:pressure-small-coefficient}.
The remaining local products with \(\mathsf Tf\) satisfy
\begin{equation}\label{sg:local-cross}
 \frac{\varepsilon_0}{D}
       \left|\int(L^Tq-dq)\cdot\mathsf Tf\dx\right|
 \le\delta\mathscr D_S+C_*m^{1-\beta}\mathcal A_1 .
\end{equation}
For \(\ip{q}{\mathsf T(L^Tf)}\), self-adjointness and
\(\|\mathsf Tq\|_2\le c_s\|\nabla q\|_2/2\) reduce the
bound to \eqref{sg:commutator-raw}, hence to
\eqref{sg:commutator-budget}.

Finally, the remaining physical terms in \(D_tS\) give
\begin{equation}\label{sg:cross-physical}
\begin{aligned}
 |R_{\mathsf T,\mathrm{phys}}|
 \le{}&\delta\varepsilon_0\mathscr B_\rho
       +C_\delta\varepsilon_0m^{2-\beta}\|\nabla u\|_4^4
 +C_\delta\varepsilon_0m^{-\beta}
   \bigl(\|q\nabla\omega\|_2^2+
         \|\nabla K_H\|_2^2+B_q^2\|K_H\|_4^2\bigr).
\end{aligned}
\end{equation}
For example, the shear and magnetic integrands have the respective
forms
\[
 (\mathsf Tz)O(1)q\cdot\nabla^\perp\omega,\qquad
 (\mathsf Tz)O(1)(\Div K_H-q\cdot K_H).
\]
Thus \eqref{sg:cross-physical} differentiates neither \(q\)
nor the magnetic force further. Its shear term is paid by
\eqref{sg:shear-payment}.
Equations \eqref{sg:commutator-budget}--\eqref{sg:cross-physical}
account for every term of the exact identity
\eqref{sg:cross-identity}.

\subsection{A frequency-resolved trilinear estimate}
We now pay the differentiated solenoidal density strain.
Set
\[
 E_q=m^\gamma\|\nabla q\|_2^2,\qquad
 E_{q,H}=m^\gamma\|\nabla q_H\|_2^2.
\]
Choose a smooth low-frequency Fourier cutoff \(P_{\le N}\),
equal to one for \(|\xi|\le N\) and zero for \(|\xi|\ge2N\),
and set \(q_L=P_{\le N}q\), \(q_H=q-q_L\).
Take \(N=m^n\) with
\begin{equation}\label{sg:cutoff-choice}
 \max\left\{0,\frac{a+h+3-3\beta}{4}\right\}<n<
                            \frac{\gamma+1-2\beta}{2}.
\end{equation}
This interval is nonempty by \eqref{sg:h-interval} and
\(\gamma>\beta+1\ge2\beta-1\).
For large \(m\), \(2N\le K\); if
\(\alpha_N=\alpha(N)\asymp DN^2\), Fourier comparison gives
\begin{equation}\label{sg:cutoff-dissipation}
 Dm^\gamma\|\nabla^2q_L\|_2^2+
       \alpha_NE_{q,H}+DE_q\le C\mathscr B_\rho .
\end{equation}
The smooth cutoff is uniformly bounded on each fixed \(L^s\).

Consider one of the principal tensor contractions
\[
 \mathcal G=m^\gamma\sum_{i,j,k}
       \int(\partial_iq_j)(\partial_i\partial_jv_k)q_k\dx .
\]
For its low--low part, integration of \(\partial_i\) from
the velocity onto the density gives
\[
 \mathcal G_{LL}=-m^\gamma\sum_{i,j,k}
       \int\partial_jv_k[
       (\partial_i^2q_{L,j})q_{L,k}
       +(\partial_iq_{L,j})(\partial_iq_{L,k})]\dx .
\]
Using \(\|\nabla v\|_p\le C_p\|\omega\|_p\), Poincare,
and interpolation yields
\begin{equation}\label{sg:low-low-estimate}
\begin{aligned}
 |\mathcal G_{LL}|
 \le{}&\delta Dm^\gamma\|\nabla^2q_L\|_2^2\\
 &+C_{\delta,p}
 \left[\frac{\|\omega\|_p^2}{D}
       +\|\omega\|_p^{p/(p-1)}D^{-1/(p-1)}\right]E_q .
\end{aligned}
\end{equation}
No power of \(N\) occurs in the last line.
For every term containing a high-frequency density factor,
use the admissible pair
\[
 \|\nabla^2v\|_{4p/(p+2)}
        \le C_p\|\omega\|_p^{1/2}Z^{1/2},\qquad
 \|q_*\|_{4p/(p-2)}\le C_p\|\nabla q_*\|_2 .
\]
This applies whether the high-frequency factor is differentiated
or undifferentiated, and gives
\[
 |\mathcal G-\mathcal G_{LL}|
   \le C_p\|\omega\|_p^{1/2}Z^{1/2}E_q^{1/2}E_{q,H}^{1/2}.
\]
Two successive Young inequalities, first with the high-density
dissipation and then with the weighted vorticity dissipation,
give
\begin{equation}\label{sg:high-density-estimate}
\begin{aligned}
 |\mathcal G-\mathcal G_{LL}|
 &\le\delta\alpha_NE_{q,H}
           +C_{\delta,p}\alpha_N^{-1}\|\omega\|_pZE_q\\
 &\le\delta\alpha_NE_{q,H}+\delta m^{-h}Z^2
           +C_{\delta,p}m^h\alpha_N^{-2}\|\omega\|_p^2E_q^2.
\end{aligned}
\end{equation}
By \eqref{sg:energy-equivalence} and \eqref{sg:improved-input},
\(E_q\le C_*m^a\) on \([s,T]\). The final coefficient relative to
\(DE_q\) is therefore
\[
 O\!\left(\frac{m^{a+h}}{D\alpha_N^2}\right)
       =O(m^{a+h+3-3\beta-4n})=o(1)
\]
by the lower bound in \eqref{sg:cutoff-choice}. This is the only
place where the improved energy exponent is needed.

The genuine coefficient in the high energy is not constant.
It is
\[
 B(\rho)=\frac{c_mk_\rho A^2}{m^\gamma}
       =\gamma(\rho/m)^{\gamma-1}\frac{A(\rho)}{A_m},
 \qquad
 \nabla B=B(\gamma-1+\sigma)q .
\]
For the high-frequency terms, only the uniform \(L^\infty\)
bound on \(B\) is used. The low--low integration by parts
produces the additional quantity
\[
 Cm^\gamma\int|\nabla v||q||q_L||\nabla q_L|\dx
       \le C_p\|\omega\|_p B_q E_q .
\]
Furthermore, \(\partial_\ell w
=A(\partial_\ell q+\sigma q_\ell q)\) produces
\(m^\gamma\int B\sigma q^3\nabla^2v\).
Integrating one derivative from \(v\) and using
\(\nabla\sigma=\sigma(\beta-\sigma)q\) bounds it by
\[
 Cm^\gamma\int|\nabla v|
                  (|q|^2|\nabla q|+|q|^4)\dx
       \le C_p\|\omega\|_p(B_q+B_q^2)E_q .
\]
These estimates use finite-exponent embeddings, not a bound
on \(q\) in \(L^\infty\). They show that the full
variable-coefficient differentiated strain satisfies
\begin{equation}\label{sg:full-differentiated-strain}
 |R_{\mathrm{dens},v,\mathrm{der}}|
 \le\delta\mathscr B_\rho+\delta m^{-h}Z^2+
                                             o(D)\mathcal A_1 .
\end{equation}

\subsection{The complete high acoustic inequality}
We have \(\gamma>2\beta-1\), so \(K\to\infty\) and
\(\alpha(\xi)\ge cD\) at every nonzero mode. Also the
Poincare inequality for \(f\) gives
\[
 \mathscr D_S+\mathscr B_\rho\ge cD\mathcal A_1 .
\]
Combine \eqref{sg:exact-high-identity},
the full correction \eqref{sg:cross-identity}, and all the
bounds above. Fix \(\varepsilon_0\) after the constants
needed for the pressure and diffusion cross terms.
For arbitrary subsequently prescribed \(\delta>0\), choose
the Young constants and then \(m\) so that all \(o(D)\)
terms are absorbed. This proves
\begin{equation}\label{sg:complete-acoustic}
\begin{aligned}
 \mathcal A_{\mathsf T}'+cD\mathcal A_{\mathsf T}
       +c\mathscr D_S+c\mathscr B_\rho
 \le{}&\delta m^{-h}Z^2+C_\delta m^{2-\beta}\|\nabla u\|_4^4\\
 &+C_\delta m^{-\beta}
            (\|\nabla K_H\|_2^2+B_q^2\|K_H\|_4^2).
\end{aligned}
\end{equation}
The high-vorticity term on its right has not been declared
free: it is supplied by the original vorticity equation next.

\section{The weighted palinstrophy and the joint estimate}
\label{sec:palinstrophy}

\subsection{The exact vorticity test and its five remainders}
The genuine vorticity equation is
\begin{equation}\label{sg:vorticity-equation}
 D_t\omega+d\omega
 =\mu b\Delta\omega-\mu bq\cdot\nabla\omega
       -bq\times\nabla S+\Curl(bK_H).
\end{equation}
For scalar planar cross products we use
\(a\times b=a_1b_2-a_2b_1\).
Testing \eqref{sg:vorticity-equation} by \(-\Delta\omega\)
gives
\begin{equation}\label{sg:palinstrophy-identity}
\begin{aligned}
 \frac12\frac{\mathrm d}{\mathrm dt}\|\nabla\omega\|_2^2
       +\mu\int b|\Delta\omega|^2\dx
 ={}&-\sum_{i,j}\int(\partial_i u_j)
                       (\partial_j\omega)(\partial_i\omega)\dx
       +\frac12\int d|\nabla\omega|^2\dx\\
 &+\int d\omega\Delta\omega\dx
   +\mu\int b(q\cdot\nabla\omega)\Delta\omega\dx\\
 &+\int b(q\times f)\Delta\omega\dx
   -\int\Curl(bK_H)\Delta\omega\dx .
\end{aligned}
\end{equation}
Multiplying by \(m^{1-h}\), its principal dissipation is
at least \(c\mu m^{-h}Z^2\). We estimate all five types
of remainder without assuming an a priori supremum of
\(\|\nabla\omega\|_2\).

For the first line, write \(U=\|\nabla u\|_2\).
The admissible derivative interpolation is
\[
 \|\nabla\omega\|_4^2
 \le C_P\|\omega\|_P^{2(1-\vartheta)}Z^{2\vartheta},
 \qquad
 \vartheta=\frac{P+4}{2P+4}>\frac12,\qquad
 \frac1{1-\vartheta}=\kappa_P .
\]
Consequently the transport strain satisfies
\begin{equation}\label{sg:pal-transport}
 m^{1-h}|R_{\mathrm{tr}}|
 \le\delta m^{-h}Z^2+C_*m^{\kappa_P-h}U^{\kappa_P}.
\end{equation}
No condition \(h>2\) is needed for this differential
inequality.

For the compression reaction \(\int d\omega\Delta\omega\),
put \(L_S=\|S\|_2\), \(H_S=\|\nabla^2S\|_2\).
Since \(d=S/A\),
\[
 \|d\|_{2P/(P-2)}
 \le Cm^{-\beta}
       (L_S+L_S^{1-1/P}H_S^{1/P}),\qquad
 L_S\le Cm^\beta\|d\|_2 .
\]
The low-frequency \(L_S\) term is retained, since \(S\)
need not have zero mean. Two Young inequalities therefore give
\begin{equation}\label{sg:pal-compression}
\begin{aligned}
 m^{1-h}|R_d|
 &\le C_*m^{1-h-\beta}
         (L_S+L_S^{1-1/P}H_S^{1/P})Z\\
 &\le\delta m^{-h}Z^2+\delta\mathscr D_S
           +C_*(m^{2-h}+m^{e_P})\|d\|_2^2 .
\end{aligned}
\end{equation}
In particular, no derivative of \(A^{-1}\) is paid in this step.

For the coefficient drift, the interpolation
\(\|\nabla\omega\|_{8/3}
\le C\|\omega\|_4^{1/2}Z^{1/2}\) gives
\[
 m^{1-h}|R_q|
 \le C_*m^{-h}\|q\|_8 Z^{3/2}
 \le\delta m^{-h}Z^2+C_*m^{-h}\|q\|_8^4 .
\]
Set \(B_0=m^{\beta+\gamma-1}\|\nabla q\|_2^2\).
As \(q\) has zero mean,
\(\|q\|_8^4\le C_*m^{1-\gamma}\|\nabla q\|_2^2\);
hence
\begin{equation}\label{sg:pal-drift}
 m^{1-h}|R_q|
 \le\delta m^{-h}Z^2+
                  C_*m^{2-h-\beta-2\gamma}B_0 .
\end{equation}

The baroclinic term uses the mean-zero interpolation
\(\|f\|_4^2\le C\|f\|_2\|\nabla f\|_2\):
\begin{equation}\label{sg:pal-baroclinic}
\begin{aligned}
 m^{1-h}|R_S|
 &\le C m^{-h}\|q\|_4\|f\|_2^{1/2}
                                \|\nabla f\|_2^{1/2}Z\\
 &\le\delta m^{-h}Z^2+\delta\mathscr D_S
                  +C_\delta m^{\beta-2h}\|q\|_4^4\|f\|_2^2\\
 &\le\delta m^{-h}Z^2+\delta\mathscr D_S
                  +C_*m^{2\beta-2h+2-2\gamma}B_0 .
\end{aligned}
\end{equation}
The last line uses \(\|f\|_2^2\le C_*m^{2\beta}\).
Both powers multiplying \(B_0\) in
\eqref{sg:pal-drift}--\eqref{sg:pal-baroclinic} are negative.
Since \(\mathscr B_\rho\ge cB_0\), their coefficients can
be made arbitrarily small.

Finally
\(\Curl(bK_H)=b\Curl K_H-bq\times K_H\), so the full
magnetic term satisfies
\begin{equation}\label{sg:pal-magnetic}
 m^{1-h}|R_H|
 \le\delta m^{-h}Z^2+
 C_\delta m^{-h}
             (\|\nabla K_H\|_2^2+B_q^2\|K_H\|_4^2).
\end{equation}
Its total time integral is at most
\(C_*(m^{1-h}+m^{2-h-\gamma})\le C_*m^{1-h}\).
This completes the estimate of the entire identity
\eqref{sg:palinstrophy-identity}.

\subsection{Proof of the joint bound and the fast transient}
Add the weighted identity just estimated to
\eqref{sg:complete-acoustic}. First choose the Young
constants to absorb the mutual \(\mathscr D_S\) and
\(m^{-h}Z^2\) expenditures, and then choose \(m\) to
absorb the \(o(1)B_0\) terms. This proves
\eqref{sg:joint-sharp}, with
\[
 \mathcal M(t)=C_*(m^{-\beta}+m^{-h})
                    (\|\nabla K_H\|_2^2+B_q^2\|K_H\|_4^2).
\]
Equations \eqref{sg:q-small} and \eqref{sg:lower-inputs}
give
\[
 \int_0^T\mathcal M\dt
 \le C_*(m^{1-\beta}+m^{1-h}
                +m^{2-\gamma-\beta}+m^{2-\gamma-h})
 \le C_*m^{1-\beta}.
\]
By \eqref{sg:potential-H1} and Hodge,
\(\|\nabla u\|_4\le C_*\).
The first condition in \eqref{sg:exponent-choices} implies
\(\kappa_P-h<2-\beta\).
Also \(h>\beta>1\) implies \(e_P\le2-h<2-\beta\).
Thus \eqref{sg:joint-simplified} follows.

The acoustic dissipation controls \(D\mathcal A_1\).
For the vorticity part, the true coercivity is quadratic:
\[
 \|\nabla\omega\|_2^2\le\|\omega\|_2 Z,\qquad
 m^{-h}Z^2\ge c(V)m^{h-2}\mathcal V_h^2 .
\]
For a fixed small \(\lambda>0\), completing the square gives
\begin{equation}\label{sg:quadratic-to-linear}
 c(V)m^{h-2}\mathcal V_h^2
       \ge\lambda D\mathcal V_h-C_{\lambda,V}m^{2\beta-h}.
\end{equation}
This exchanges a specified remainder for a linear term;
it does not assert a fast spectral gap for shear diffusion.
Consequently
\begin{equation}\label{sg:Y-linear}
 Y'+cDY\le C_*(m^{2-\beta}+m^{2\beta-h})+\mathcal M(t).
\end{equation}
Variation of constants on \([s,T]\) gives
\[
 Y_h(t)\le e^{-cD(t-s)}Y_h(s)
       +C_*(m^{3-2\beta}+m^{\beta+1-h}+m^{1-\beta}).
\]
The magnetic convolution was bounded by its full \(L^1\)
mass; no pointwise bound on magnetic forcing was used.
Since \(1-\beta\le3-2\beta\) for \(1<\beta\le2\), this proves the first inequality
in \eqref{sg:dynamic-conclusion}. The second follows from
\eqref{sg:q-small}. The proof of
Proposition~\ref{sg:high-proposition} is complete.

\begin{remark}[Full-time dissipation]\label{sg:full-time-remark}
In this conditional remark the constants are denoted by $C_*$;
they become data-dependent constants $C_R$ after the bootstrap heights
are fixed in Section~\ref{sec:strong-closure}.
Suppose the lower-order bootstrap has additionally supplied
\[
 \int_0^T\|\nabla\omega\|_2^2\dt\le C_Rm.
\]
Then, on any interval \([s,T]\) on which the improved input and
weight conditions of Proposition~\ref{sg:high-proposition} hold,
the sharp inequality \eqref{sg:joint-sharp}, rather than
its constant-source simplification, yields
\[
 \int_s^T
       (\mathscr D_S+\mathscr B_\rho+m^{-h}Z^2)\dt
                         \le C\,Y_h(s)+C_*m^{3-\beta}.
\]
In particular this is bounded by \(C_*m^{3-\beta}\) whenever
\(Y_h(s)\le C_*m\), since \(\beta\le2\). This includes the first
stage starting at zero, with \(Y_h(0)\le C_Rm\).
Indeed
\(\int\|\omega\|_4^4\le
C\sup\|\omega\|_2^2\int\|\nabla\omega\|_2^2\le C_*m\),
while
\(\int\|d\|_4^4\le
C\sup\|d\|_{H^1}^2\int\|d\|_2^2\le C_*m^{1-\beta}\).
Hodge gives \(\int\|\nabla u\|_4^4\le C_*m\).
The transport remainder costs \(m^{\kappa_P+1-h}\),
which is smaller than \(m^{3-\beta}\) by
\eqref{sg:exponent-choices}; the compression remainders
cost \(m^{3-h-\beta}+m^{e_P+1-\beta}\), also smaller.
One must not obtain this conclusion by integrating the
constant source in \eqref{sg:joint-simplified}.
\end{remark}

\subsection{The superquadratic bulk-viscosity regime}
\label{sg:superquadratic-section}

For \(\beta>2\), imposing the fast rate \(D\) would unnecessarily
exclude some pressures. The following estimate instead uses the
smaller of the two genuine acoustic rates. It also avoids a moving
frequency cutoff.

\begin{proposition}[High-order estimate for \(\beta>2\)]
\label{sg:superquadratic}
Suppose \(\beta>2\) and \(\gamma>\beta+1\). Define
\begin{equation}\label{sg:super-parameters}
 a=\gamma-\beta,\qquad
 s=\min\{\beta-1,a\}>1,\qquad h=2s,\qquad
 \mathfrak d=\min\{D,\Gamma\}\asymp m^s.
\end{equation}
Choose fixed finite \(P\ge4\) and \(p\ge P\) satisfying
\begin{equation}\label{sg:super-exponents}
 \kappa_P=2+\frac4P<2s,
 \qquad \frac{h+1}{p+1}<s.
\end{equation}
Under \eqref{sg:bootstrap-assumptions} and \eqref{sg:lower-inputs},
the energy \(Y\) in \eqref{sg:modified-energy}, with this value of
\(h\), is equivalent to \(\mathcal A_1+\mathcal V_h\), and satisfies
\begin{equation}\label{sg:super-sharp}
\begin{aligned}
 Y'+c\bigl(\mathscr D_S+\mathscr B_\rho+m^{-h}Z^2\bigr)
 \le{}&C_*\bigl[m^{2-\beta}\|\nabla u\|_4^4
       +m^{\kappa_P-h}\|\nabla u\|_2^{\kappa_P}\\
 &\hspace{16mm}+(m^{2-h}+m^{e_P})\|d\|_2^2\bigr]
       +\mathcal M(t),
\end{aligned}
\end{equation}
where \(e_P=[P(2-h)-\beta]/(P-1)\), and
\begin{equation}\label{sg:super-magnetic}
 \mathcal M\ge0,\qquad
 \int_0^T\mathcal M\dt
       \le C_*(m^{1-\beta}+m^{1-h}).
\end{equation}
If \(Y(0)\le C_Rm\), then
\begin{equation}\label{sg:super-dynamic}
\begin{aligned}
 Y(t)&\le C_Rm e^{-c\mathfrak d t}+C_*m^{2-s},\qquad
 \|q(t)\|_{H^1}^2
   \le C_Rm^{1-\gamma}e^{-c\mathfrak d t}
                                      +C_*m^{2-s-\gamma}.
\end{aligned}
\end{equation}
If in addition \(\int_0^T\|\nabla\omega\|_2^2\dt\le C_Rm\), then
\begin{equation}\label{sg:super-full-time}
 \int_0^T\bigl(\mathscr D_S+\mathscr B_\rho+m^{-h}Z^2\bigr)\dt
       \le C_*m.
\end{equation}
All constants and thresholds are independent of \(T\).
\end{proposition}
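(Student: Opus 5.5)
The plan is to rerun the high-order argument of Proposition~\ref{sg:high-proposition}, keeping the exact identity \eqref{sg:exact-high-identity}, the corrected cross term \eqref{sg:cross-identity}, and the weighted palinstrophy identity \eqref{sg:palinstrophy-identity}. The single change is that the damping rate $D$ is replaced by $\mathfrak d=\min\{D,\Gamma\}\asymp m^s$. Since $\beta>2$, the symbol $\alpha$ no longer satisfies $\alpha\ge cD$ at every mode, because $K=c_s/D$ may stay bounded or tend to zero. I therefore use only the uniform lower bound
\[
\alpha(\xi)\ge c\min\{D,\Gamma\}=c\,\mathfrak d ,
\]
valid at every nonzero frequency of the fixed torus. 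Together with Poincar\'e for $f$, this gives $\mathscr D_S+\mathscr B_\rho\ge c\,\mathfrak d\,\mathcal A_1$. The equivalence $\mathcal A_{\mathsf T}\asymp\mathcal A_1$ follows as before from $\alpha\le c_s|\xi|/2$. The pressure positivity \eqref{sg:pressure-positive} needs $(\delta_\rho+B_q)\sqrt{1+K^2}=o(1)$, and this is only easier when $K$ is bounded. The inverse-$A$ and $\sigma dq$ cross terms, which used a factor $K$, are then small without any condition.

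Next I bound every remainder against the slower rate. The local terms \eqref{sg:potential-remainder}, \eqref{sg:bulk-estimate}, \eqref{sg:pressure-estimate}, \eqref{sg:f-strain}, \eqref{sg:commutator-budget} and \eqref{sg:local-cross} carry coefficients $m^{-(\beta-1)/3}$, $m^{\beta-\gamma}$, $m^{1-\beta}$, and so on. I must check that each is $o(\mathfrak d)$ and not merely $o(D)$; this holds because $\beta-\gamma=-a<0<s$.

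The quadratic solenoidal strain \eqref{sg:quadratic-density-strain} costs $m^{(h+1)/(p+1)}$, and this is $o(m^s)$ by \eqref{sg:super-exponents}.

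The shear payment \eqref{sg:shear-payment} has exponent $h+1-2\beta-2\gamma=2s+1-2\beta-2\gamma$. This must be below $s$, which holds since $s\le\beta-1$.

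The key simplification is the differentiated solenoidal strain, where no cutoff is used. The whole of it is bounded by $C_p\|\omega\|_p^{1/2}Z^{1/2}E_q$. Young's inequality against $\mathscr B_\rho\ge c\,\mathfrak d E_q$ and $m^{-h}Z^2$ then leaves $C m^{h}\mathfrak d^{-2}\|\omega\|_p^2E_q^2$. Relative to $\mathfrak d E_q$ this has size $m^{h-3s}E_q$. With $h=2s$ and the coarse bound $E_q\le C_*m$, the ratio is $m^{1-s}=o(1)$. This is why $h=2s$ is chosen and why no improved input is needed; the low--low integration by parts is also unnecessary. The $q^3$ and coefficient-gradient pieces are bounded as in the previous section.

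The palinstrophy remainders \eqref{sg:pal-transport}--\eqref{sg:pal-magnetic} are unchanged. Their $B_0$ coefficients are negative powers, and the drift and baroclinic terms again need $2\beta-2h+2-2\gamma<0$. Collecting everything gives \eqref{sg:super-sharp} with \eqref{sg:super-magnetic}.

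For the dynamics I use $\|\nabla u\|_4\le C_*$. The sources $m^{2-\beta}$, $m^{\kappa_P-h}$ and $m^{e_P}$ are all at most $C_*m^{2-\beta}$, by $\kappa_P<2s$ and $\beta>1$. The quadratic-to-linear step \eqref{sg:quadratic-to-linear} with rate $\lambda\mathfrak d$ leaves $m^{2-h}\mathfrak d^2=m^{2}$. I need to recheck the remainder $C\mathfrak d^2 m^{2-h}$, which equals $C m^{2}$ times a constant when $h=2s$, and this gives stationary size $m^{2-s}$ after division by $\mathfrak d$; so the choice $h=2s$ is exactly consistent with the bound \eqref{sg:super-dynamic}. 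Variation of constants then gives
\[
Y\le C_Rme^{-c\mathfrak dt}+C_*\bigl(m^{2-\beta-s}+m^{2-s}+m^{1-\beta}\bigr),
\]
and the middle term dominates. The $q$ bound follows from \eqref{sg:q-small}.

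For \eqref{sg:super-full-time} I integrate \eqref{sg:super-sharp} directly, as in Remark~\ref{sg:full-time-remark}. The inputs are $\int\|\nabla u\|_4^4\le C_*m$, the transport cost $m^{\kappa_P+1-h}\le m$, and $Y(0)\le C_Rm$.

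The main obstacle I expect is verifying the differentiated-strain absorption and the quadratic-to-linear exchange simultaneously with the single choice $h=2s$, in both subcases $s=\beta-1$ and $s=a$.
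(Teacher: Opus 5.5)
\textbf{Gap in the palinstrophy step (baroclinic remainder).} Your acoustic part follows the paper: the coercivity $\alpha(\xi)\ge c\mathfrak d$, checking each coefficient against $\mathfrak d$ rather than $D$, the cutoff-free bound for the differentiated strain with ratio $m^{h+1}\mathfrak d^{-3}\le m^{1-s}$, and the quadratic-to-linear exchange leaving $\mathfrak d^2m^{2-h}\asymp m^2$. The gap is your claim that \eqref{sg:pal-transport}--\eqref{sg:pal-magnetic} are unchanged and that it suffices for the powers multiplying $B_0=m^{\beta+\gamma-1}\|\nabla q\|_2^2$ to be negative.

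In Section~\ref{sec:palinstrophy} those $B_0$ terms are absorbed through $\mathscr B_\rho\ge cB_0$. That inequality is $\alpha(\xi)\ge cD$, which you yourself discarded at the outset. For $\beta>2$ you only have $\mathscr B_\rho\ge c\mathfrak d\,m^\gamma\|\nabla q\|_2^2$, i.e.\ $B_0\le Cm^{\beta-1-s}\mathscr B_\rho$. This loss is unbounded when $s=a<\beta-1$, that is, when $\gamma<2\beta-1$.

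The drift survives the loss: its coefficient becomes $m^{1-h-2\gamma-s}$. The baroclinic coefficient becomes $m^{3\beta+1-2\gamma-2h-s}$, which for $h=2s$, $s=a$ equals $m^{8\beta+1-7\gamma}$. This is a nonnegative power whenever $\beta\ge 6$ and $\beta+1<\gamma\le(8\beta+1)/7$. For example, $\beta=10$, $\gamma=23/2$ gives $s=3/2$ and $h=3$. The unchanged baroclinic remainder is then $C_*m^{27/2}\|\nabla q\|_2^2$, while $\mathscr B_\rho\asymp\Gamma E_q\asymp m^{13}\|\nabla q\|_2^2$. Since $K\to0$ there, $\alpha\approx\Gamma$ at every nonzero mode, so no stronger density dissipation exists and the absorption genuinely fails.

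\textbf{The fix.} Pay the baroclinic remainder from the flux dissipation rather than the density dissipation, as the paper does. Keep the second line of \eqref{sg:pal-baroclinic} and use:
\begin{itemize}
\item $\|q\|_4^4\le CB_q^4\le C_*m^{2-2\gamma}$ from \eqref{sg:q-small};
\item $\|f\|_2^2\le C\|\nabla f\|_2^2\le Cm^\beta\mathscr D_S$, by Poincar\'e for mean-zero $f$ and \eqref{sg:energy-equivalence}.
\end{itemize}
Then $C_\delta m^{\beta-2h}\|q\|_4^4\|f\|_2^2\le C_*m^{2-2h-2a}\mathscr D_S=o(\mathscr D_S)$, and this holds even when $h\le\beta$.

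\textbf{A minor, harmless inaccuracy.} In the dynamic step the sources $m^{\kappa_P-h}$, $m^{2-h}$, $m^{e_P}$ are not bounded by $m^{2-\beta}$ in general. When $s<\beta/2$ one has $\kappa_P-h>2-h>2-\beta$. They are only $O(C_*)$, which is harmless against the $m^2$ remainder. The magnetic contribution $m^{1-h}$ should also appear in your variation-of-constants bound; it too is dominated by $m^{2-s}$.
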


\begin{proof}
We retain the exact symbols in \eqref{sg:frequency-parameters},
rather than replacing the multiplier by a scalar. At every nonzero
torus frequency,
\begin{equation}\label{sg:super-coercivity}
 \alpha(\xi)\ge c\min\{D,\Gamma\}=c\mathfrak d,
 \qquad
 \mathscr D_S+\mathscr B_\rho
       \ge c\mathfrak d\mathcal A_1.
\end{equation}
The second statement follows from \eqref{sg:energy-equivalence} and
the Poincare inequality for \(f\); indeed the flux part has the
stronger rate \(D\ge\mathfrak d\). The multiplier bound
\(\alpha(\xi)\le c_s|\xi|/2\) and the density recovery
\eqref{sg:q-small} are unchanged, so the corrected energy remains
coercive with a fixed small \(\varepsilon_0\).

\emph{Acoustic errors and pressure positivity.}
Use the exact identities \eqref{sg:exact-high-identity} and
\eqref{sg:cross-identity}. The algebraic estimates
\eqref{sg:forcing-bound}--\eqref{sg:quadratic-density-strain},
\eqref{sg:commutator-budget}, and
\eqref{sg:diffusion-cross}--\eqref{sg:cross-physical} do not require
\(h>\beta\) or \(K\to\infty\). Their absorption now uses
\(\mathfrak d\), not \(D\). To make this change explicit, the
coefficients multiplying \(\mathcal A_1\), apart from the
differentiated solenoidal strain, are bounded by a fixed multiple of
\begin{equation}\label{sg:super-error-coefficients}
\begin{aligned}
 &m^{h-2\beta+1-2\gamma}+m^{-(\beta-1)/3}
       +m^{\beta-\gamma}+m^{-(\beta-1)/(p-1)}\\
 &\quad +m^{(h+1)/(p+1)}
       +m^{-(\beta-1)\theta/(2-\theta)}
       +m^{\beta+1-2\gamma}+m^{1-\beta}+B_q+B_q^2,
       \qquad \theta=2/p.
\end{aligned}
\end{equation}
Each is \(o(\mathfrak d)\). For the only positive exponent that
needs a choice of \(p\), use \eqref{sg:super-exponents}. For the
shear term, \(h=2s\), \(s\le\beta-1\), and \(\gamma>\beta+1\)
give \(h-2\beta+1-2\gamma-s<0\). The diffusion cross term has
relative exponent \(\beta+1-2\gamma-s<0\). All the other displayed
coefficients either have negative powers or tend to zero by
\eqref{sg:q-small}.

The genuine pressure calculation
\eqref{sg:cross-pressure}--\eqref{sg:pressure-positive} remains
valid also when \(K\) tends to zero or stays bounded, because
\[
 (\delta_\rho+B_q)\sqrt{1+K^2}
       \le C_*(m^{(1-\gamma)/2}+m^{1-\beta})\longrightarrow0.
\]
In estimating the diffusion cross term, the coefficient left after
paying \(\mathscr B_\rho\) is exactly of order
\(\Gamma m^{-\gamma}\asymp m^{-\beta}\); hence it is paid by
\(\mathscr D_S\) regardless of the size of \(K\). The resolvent
commutator uses \(\Gamma/K=c_s\) and the bounds already displayed
before \eqref{sg:commutator-raw}, which hold for every \(K>0\).
The adverse inverse-viscosity cross terms contain
\(\delta_\rho K\) or \(B_qK\), both tending to zero by the same
estimate. Thus no fast acoustic gap has been assumed in this step.

\emph{Differentiated density strain without a cutoff.}
Let \(E_q=m^\gamma\|\nabla q\|_2^2\). For any of its principal
tensor contractions, the finite-exponent interpolation used in
\eqref{sg:high-density-estimate} gives directly
\[
 |\mathcal G|\le C_p\|\omega\|_p^{1/2}Z^{1/2}E_q.
\]
Here all density frequencies are retained. Twice applying Young's
inequality yields
\begin{equation}\label{sg:super-all-frequency}
\begin{aligned}
 |\mathcal G|
 &\le\delta\mathfrak d E_q
       +C_{\delta,p}\mathfrak d^{-1}\|\omega\|_p ZE_q
 \le\delta\mathscr B_\rho+\delta m^{-h}Z^2
       +C_{\delta,p}m^h\mathfrak d^{-2}\|\omega\|_p^2E_q^2.
\end{aligned}
\end{equation}
Since \(E_q\le C_*m\), the last term divided by
\(\mathfrak d E_q\) is bounded by
\(C_*m^{h+1}\mathfrak d^{-3}\le C_*m^{1-s}=o(1)\).
The genuine coefficient \(B(\rho)\) is bounded in \(L^\infty\),
so this estimate includes the variable-coefficient principal
contractions. The cubic terms generated by
\(\nabla w=A(\nabla q+\sigma q\otimes q)\) are integrated once
against \(\nabla v\), exactly as in the calculation preceding
\eqref{sg:full-differentiated-strain}. They cost
\(C_p\|\omega\|_p(B_q+B_q^2)E_q=o(\mathfrak d)E_q\).
Combining these estimates proves
\begin{equation}\label{sg:super-acoustic}
\begin{aligned}
 \mathcal A_{\mathsf T}'+c\mathfrak d\mathcal A_{\mathsf T}
      +c\mathscr D_S+c\mathscr B_\rho
 \le{}&\delta m^{-h}Z^2+C_\delta m^{2-\beta}\|\nabla u\|_4^4
 +C_\delta m^{-\beta}
           (\|\nabla K_H\|_2^2+B_q^2\|K_H\|_4^2).
\end{aligned}
\end{equation}

\emph{Palinstrophy, including the baroclinic term.}
Apply the original test \eqref{sg:palinstrophy-identity} with the
weight \(m^{1-h}\). The transport and compression estimates
\eqref{sg:pal-transport}--\eqref{sg:pal-compression} hold with the
present \(h,P\). The drift is bounded by
\[
 \delta m^{-h}Z^2+C_*m^{-h}\|q\|_8^4
 \le\delta m^{-h}Z^2
       +C_*m^{1-h-2\gamma-s}\mathscr B_\rho,
\]
using \(\|q\|_8^4\le C_*m^{1-\gamma}\|\nabla q\|_2^2\) and
\eqref{sg:super-coercivity}. Its last coefficient tends to zero.
For the baroclinic remainder retain the second line of
\eqref{sg:pal-baroclinic}, but pay its last term from the flux
dissipation, rather than from a putative rate \(D\) on the density:
\begin{equation}\label{sg:super-baroclinic}
\begin{aligned}
 m^{\beta-2h}\|q\|_4^4\|f\|_2^2
 &\le C_*m^{\beta-2h+2-2\gamma}\|f\|_2^2
 \le C_*m^{2-2h-2a}\mathscr D_S=o(\mathscr D_S).
\end{aligned}
\end{equation}
This uses \(\|f\|_2\le C\|\nabla f\|_2\), and is valid even when
\(h\le\beta\). Finally \eqref{sg:pal-magnetic} gives its original
coefficient \(m^{-h}\). Adding these five remainder estimates to
\eqref{sg:super-acoustic} proves \eqref{sg:super-sharp}, with
\[
 \mathcal M=C_*(m^{-\beta}+m^{-h})
          (\|\nabla K_H\|_2^2+B_q^2\|K_H\|_4^2).
\]
The input \eqref{sg:lower-inputs} and \(B_q^2=o(1)\) imply
\eqref{sg:super-magnetic}. Neither of its two powers is discarded.

\emph{The dynamic estimate and the full-time budget.}
As before,
\(m^{-h}Z^2\ge c(V)m^{h-2}\mathcal V_h^2\). Completing the
square with the actual acoustic rate gives
\[
 c(V)m^{h-2}\mathcal V_h^2
       \ge c_1\mathfrak d\mathcal V_h
                         -C_*\mathfrak d^2m^{2-h}
       \ge c_1\mathfrak d\mathcal V_h-C_*m^2.
\]
The potential estimate \eqref{sg:potential-H1}, Hodge, and the
fixed finite-exponent vorticity bound give
\(\|\nabla u\|_4+\|d\|_2\le C_*\). Moreover
\(\kappa_P-h<0\), \(2-h<0\), and \(e_P\le2-h\).
Thus \eqref{sg:super-sharp} implies
\[
 Y'+c\mathfrak dY\le C_*m^2+\mathcal M.
\]
Variation of constants gives
\[
 Y(t)\le C_Rm e^{-c\mathfrak d t}
       +C_*(m^{2-s}+m^{1-\beta}+m^{1-h}).
\]
Both magnetic powers are at most \(2-s\), since
\(s\le\beta-1\) and \(h=2s\). This proves the first estimate in
\eqref{sg:super-dynamic}; \eqref{sg:q-small} gives the second.

For the full-time estimate, do not integrate the constant source
\(m^2\). The additional lower-order budget, Hodge, and
\eqref{sg:potential-H1} give
\(\int\|\nabla u\|_4^4\dt\le C_*m\), exactly by the two
interpolations in Remark~\ref{sg:full-time-remark}. Also
\(\int\|\nabla u\|_2^{\kappa_P}\dt\le C_*m\).
Integrating \eqref{sg:super-sharp}, including its initial energy,
therefore costs at most a fixed multiple of
\[
 m+m^{3-\beta}+m^{\kappa_P+1-h}
       +m^{3-h-\beta}+m^{e_P+1-\beta}
       +m^{1-\beta}+m^{1-h}\le C m.
\]
Here \(\beta>2\), \(h>2\), and \(\kappa_P<h\) are precisely
the inequalities needed for the last comparison. This proves
\eqref{sg:super-full-time}.
\end{proof}

\section{Closing the large-background argument}\label{sec:strong-closure}

We now close the conditional estimates on the same solution interval.
The distinction between constants independent of the bootstrap heights
and constants depending on them is essential in this step.

\subsection{Choice of parameters and initial energies}
Fix the parameters of Theorem~\ref{thm:strong}. Both branches use the
same positive energy and bootstrap variables. For $1<\beta\le2$,
we use finitely many increasing palinstrophy weights. Set
\begin{equation}\label{sc:parameters}
 \mathfrak d=D,\qquad \delta=\gamma-\beta-1>0,\qquad
 k_0=3-2\beta.
\end{equation}
Choose a fixed $\eta>0$ such that
\begin{equation}\label{sc:strict-powers}
 \eta<\delta,\qquad
 a_*:=k_0+4\eta<\min\{1,\gamma-2\}.
\end{equation}
This is possible because $\beta>1$ and $\gamma>5-2\beta$.
Starting with $a_0=1$, define
\begin{equation}\label{sc:stage-parameters}
 a_{j+1}=\max\{a_*,a_j-\delta\},\qquad
 h_j=\beta+1-a_{j+1}+\eta\quad(0\le j<J),
\end{equation}
where $J$ is the first index such that $a_J=a_*$. In particular,
$J$ is finite and independent of $m$ and the solution interval.
The exponents $h_j$ are nondecreasing and satisfy
\begin{equation}\label{sc:stage-powers}
\begin{aligned}
 \beta<h_j&\le h_{J-1}=3\beta-2-3\eta,
       &k(h_j)&=a_{j+1}-\eta<1,\\
 a_j+h_j+\beta+1-2\gamma
   &=a_j-a_{j+1}+\eta-2\delta
       & &\le\eta-\delta<0,
\end{aligned}
\end{equation}
where $k(h)=\max\{3-2\beta,\beta+1-h,1-\beta\}$.
Here $1-\beta\le k_0<a_* -\eta$.
Also $h_j<3\beta+2\gamma-2$, as required in
Proposition~\ref{sg:high-proposition}.

Choose one pair of finite exponents $p\ge P\ge4$ satisfying
\begin{equation}\label{sc:common-exponents}
 2+4/P<\min\{2\beta,h_0+2-\beta\},\qquad
 \frac{h_{J-1}+1}{p+1}<\beta-1,
\end{equation}
and put $r=2p$ for Lemma~\ref{sl:finite-p}.
These choices imply \eqref{sg:exponent-choices} at every stage.
Since $a_j+h_j\le\gamma+\eta$, one common cutoff exponent works:
\begin{equation}\label{sc:common-cutoff}
 \max\left\{0,\frac{\gamma+\eta+3-3\beta}{4}\right\}
       <n<\frac{\gamma+1-2\beta}{2}.
\end{equation}
The gap is positive because $\eta<\delta$ and
$\gamma>\beta+1\ge2\beta-1$. The multiplier $\mathsf T$ and
its correction parameter are also common to all stages.

For $\beta>2$, one has $\Gamma_*(\beta)=\beta+1$. Instead set
\begin{equation}\label{sc:super-parameters}
\begin{gathered}
 a=\gamma-\beta>1,\qquad s=\min\{\beta-1,a\}>1,\\
 \mathfrak d=\min\{D,\Gamma\}\asymp m^s,\qquad h=2s,\qquad k=2-s<1.
\end{gathered}
\end{equation}
Choose $P,p$ as in \eqref{sg:super-exponents}, and again put $r=2p$.
These choices are possible because $s>1$. In this branch $h$ need
not exceed $\beta$, and no growing frequency cutoff is used.
The two feedback powers satisfy
\begin{equation}\label{sc:super-negative}
 \beta+1-\gamma=1-a<0,\qquad
 \beta+2-s-\gamma=2-s-a<0.
\end{equation}
All parameter values and the number of stages are fixed before $m$.
Both branches satisfy $\gamma>\beta+1$ and $\mathfrak d\to\infty$.
For later use put $h_\dagger=h_{J-1}$ in the first branch and
$h_\dagger=2s$ in the second.

Let $r_0=\rho_0-m$. The scaled condition gives
\begin{equation}\label{sc:initial-q}
 \|q_0\|_{H^1}
 \le C\bigl(m^{-1}\|r_0\|_{H^2}
                 +m^{-2}\|r_0\|_{H^2}^2\bigr)
 \le C_Rm^{(1-\gamma)/2}.
\end{equation}
Here only $H^1\hookrightarrow L^4$ is used for the quadratic
gradient term. Direct differentiation of $S_0=A(\rho_0)d_0$ gives
\[
 \|\nabla S_0\|_2
 \le C m^\beta\|\nabla d_0\|_2
   +C m^{\beta-1}\|\nabla r_0\|_4\|d_0\|_4
 \le C_Rm^\beta.
\]
Similarly, since $w_0=A(\rho_0)\nabla\log\rho_0$,
\[
 \|\nabla w_0\|_2
 \le C\bigl(m^{\beta-1}\|r_0\|_{H^2}
                  +m^{\beta-2}\|r_0\|_{H^2}^2\bigr).
\]
Using the actual weights in \eqref{sg:positive-energy}, we obtain
\begin{equation}\label{sc:initial-high}
\begin{aligned}
 \mathcal A_1(0)
 &\le C_R\bigl(m+m^{\gamma-2}\|r_0\|_{H^2}^2
                       +m^{\gamma-4}\|r_0\|_{H^2}^4\bigr)
 \le C_R(m+m^{2-\gamma})\le C_Rm,\\
 Y(0)&\le C_Rm,
 \qquad X(0)\le C_Rm,
 \qquad \mathcal E(0)\le C_Rm.
\end{aligned}
\end{equation}
In the first branch, $Y(0)$ here means $Y_{h_0}(0)$; in the
second branch it uses $h=2s$.
The initial vorticity belongs to every fixed finite $L^p$, with norm
bounded by $C_{R,p}$. These estimates use no $H^3$ initial density,
and no $W^{1,\infty}$ initial velocity. Together with
Lemma~\ref{sl:basic}, they explain why the same proof permits
$\gamma>3$ under the scaled, rather than fixed-additive, hypothesis.

\subsection{A simultaneous, noncircular bootstrap}
The local construction in Proposition~\ref{fix:local} applies to the
original coefficients on every compact positive-density interval.
Let $T_*$ be the maximal lifespan of that positive $H^2$ solution.
Fix a sufficiently small $\delta_0<1/4$, as required in the high
acoustic estimates. On a closed subinterval of $[0,T_*)$ impose
\begin{equation}\label{sc:bootstrap}
\begin{gathered}
 \sup\delta_\rho\le\delta_0,\qquad
 \sup\|u\|_{H^1}^2\le V,\qquad I(T)\le L,\\
 \sup\|\omega\|_p\le W,\qquad
 \sup\mathcal A_1\le Mm,
\end{gathered}
\end{equation}
where $I(T)$ is the left side of \eqref{sl:mixed-bound}.
These bounds hold strictly for a nonzero initial interval once
the heights and then $m$ have been chosen below. We bootstrap the
positive energy $\mathcal A_1$, so the derivation of small $q$ in
\eqref{sg:q-small} does not presuppose coercivity of a corrected energy.

On any interval satisfying \eqref{sc:bootstrap},
Lemmas~\ref{sl:basic} and~\ref{sl:magnetic} provide exactly the inputs
\eqref{sg:lower-inputs}. The coarse estimate \eqref{sg:q-small}
provides the other density input in Lemma~\ref{sl:low-acoustic}.
The low acoustic lemma applies on the entire coarse bootstrap interval.
We first obtain the additional tail input for the first branch.

\paragraph{Finite-stage acoustic decay when $1<\beta\le2$.}
Write $Y_h=\mathcal A_{\mathsf T}+\mathcal V_h$ and choose a common
damping constant $c>0$ for the finite set of parameters above. Put
\begin{equation}\label{sc:stage-times}
 \tau_m=\frac{\delta+\eta}{cD}\log m,\qquad
 t_j=j\tau_m,\qquad t_L=t_J=O(\log m/D).
\end{equation}
The first application of Proposition~\ref{sg:high-proposition}, with
$a=a_0=1$ and $h=h_0$, is valid on the entire original interval
$[0,T]$. It gives
\begin{equation}\label{sc:first-stage}
 Y_{h_0}(t)\le C_Rm e^{-cDt}+C_*m^{k(h_0)}\quad(0\le t\le T).
\end{equation}
In particular $Y_{h_0}\le C_1m^{a_1}$ throughout $[t_1,T]$.
At a switch the acoustic correction stays unchanged, and
\begin{equation}\label{sc:stage-switch}
 Y_{h_{j+1}}(t_{j+1})\le Y_{h_j}(t_{j+1})
 \qquad(0\le j<J-1),
\end{equation}
because $h_{j+1}\ge h_j$ and $m>1$.
Inductively, the preceding stage supplies
$\mathcal A_1\le C_jm^{a_j}$ on the whole interval $[t_j,T]$,
as well as $Y_{h_j}(t_j)\le C_jm^{a_j}$. The high proposition
can therefore be applied there with this already established input:
\begin{equation}\label{sc:stage-profile}
 Y_{h_j}(t)\le C_jm^{a_j}e^{-cD(t-t_j)}
                 +C_*m^{a_{j+1}-\eta}\quad(t_j\le t\le T).
\end{equation}
For $t\ge t_{j+1}$, the first term is at most
$C_jm^{a_j-\delta-\eta}\le C_jm^{a_{j+1}-\eta}$.
This proves the next input on the entire later interval; no improved
energy bound has been assumed in order to prove itself. Consequently
\begin{equation}\label{sc:staged-tail}
 \mathcal A_1(t)\le C_*m^{a_*},\qquad
 \|q(t)\|_{H^1}^2\le C_*m^{a_*-\gamma}
 \quad(t_L\le t\le T).
\end{equation}
The last weight $h_{J-1}$ is kept for all subsequent times. If
$T<t_j$, the induction stops; the initial-layer estimate below still
applies, without presuming existence up to $t_L$.

\paragraph{The superquadratic branch.}
For $\beta>2$, Proposition~\ref{sg:superquadratic} directly gives
\begin{equation}\label{sc:dynamic}
 Y(t)\le C_Rm e^{-c\mathfrak d t}+C_*m^k,
 \qquad
 \|q(t)\|_{H^1}^2
       \le C_Rm^{1-\gamma}e^{-c\mathfrak d t}+C_*m^{k-\gamma}.
\end{equation}
No staging is needed in this branch.

\paragraph{Closing the low-order feedback.}
For $\beta\le2$, use the coarse bound on $[0,\min\{T,t_L\}]$
and \eqref{sc:staged-tail} on the remaining interval. Lemma
\ref{sl:dynamic-feedback} pairs them with the short-layer and global
flux budgets, respectively. For $\beta>2$, the same lemma pairs
\eqref{sc:dynamic} with the weighted and global budgets. Thus
\begin{equation}\label{sc:small-feedback}
 \frac1m\int_0^T
   \bigl(\delta_\rho^2+\|q\|_r^2\bigr)\|\nabla S\|_2^2\dt
 \le C_*\Psi_m=o_*(1),
\end{equation}
where
\[
 \Psi_m=
 \begin{cases}
 m^{\beta+1-\gamma}+m^{a_*+2-\gamma},&1<\beta\le2,\\
 m^{\beta+1-\gamma}+m^{\beta+2-s-\gamma},&\beta>2.
 \end{cases}
\]
The notation $o_*(1)$ means convergence to zero as $m\to\infty$
with all heights fixed, uniformly in $T$. Both powers in the first
branch are strictly negative by \eqref{sc:parameters}--\eqref{sc:strict-powers};
those in the second branch are negative by \eqref{sc:super-negative}.
The short-layer flux budget is $O(m^{\beta+1})$, whereas the global
budget is $O(m^3)$ for $\beta\le2$. For $\beta>2$ the global budget
is $O(m^{\beta+1})$; its initial $X(0)=O(m)$ contribution is retained.

Lemma~\ref{sl:dynamic-feedback} now yields
\begin{equation}\label{sc:improvements}
 I(T)\le C_R+o_*(1),\qquad
 \sup\|u\|_{H^1}^2\le C_R+o_*(1),\qquad
 \sup\|\omega\|_p\le C_{R,V,p,r}+o_*(1).
\end{equation}
The first two principal constants do not depend on any height.
The last uses only the already prescribed velocity height $V$ and
the data radius in its principal term. In particular, it does not
contain $W$ or $M$ in that term.

There is a fixed coercivity constant $C_E$ such that
$\mathcal A_1\le C_EY$ on the bootstrap interval. Its value can be
chosen independently of the heights by first choosing $\delta_0$
and the fixed correction parameter, and then increasing $m$ for
the prescribed heights. In the first branch use the first-stage
bound \eqref{sc:first-stage}, which remains valid on all of $[0,T]$;
in the second use \eqref{sc:dynamic}. Writing $k=k(h_0)<1$ in the
first branch and $k=2-s<1$ in the second, we obtain
\begin{equation}\label{sc:high-improvement}
 \sup\mathcal A_1/m\le C_EC_R+C_*m^{k-1},\qquad
 \sup\delta_\rho\le C\sqrt M\,m^{(1-\gamma)/2}.
\end{equation}
The last estimate follows from \eqref{sg:density-amplitude}.

Choose $V$ and $L$ larger than four times the respective principal
constants in \eqref{sc:improvements}, also larger than the initial
values. Choose $W$ after $V$, larger than four times the principal
constant in its estimate and the initial $L^p$ norm. Next choose
$M>4C_EC_R$, increasing it to include the initial positive energy.
Finally choose $m_0$ so large that every smallness condition above
holds and every remainder in \eqref{sc:improvements} and
\eqref{sc:high-improvement} is less than one quarter of the
corresponding height. Increase $m_0$ once more so that
$\sup\delta_\rho<\delta_0/2$.
This order of choices strictly improves all five inequalities in
\eqref{sc:bootstrap}. Continuity therefore extends these estimates
to every interval $[0,T]\subset[0,T_*)$, with constants independent
of $T$. There is no requirement that a quantity depending on $V$
be absorbed by choosing $V$ itself larger.

After the heights have been fixed as functions of the data and the
fixed parameters, their resulting constants can be denoted by $C_R$.
In the first branch integrate \eqref{sg:joint-sharp} on each stage,
using its integrable sources as in Remark~\ref{sg:full-time-remark}.
At a switch \eqref{sc:stage-switch} has a favorable sign, so the
energy differences telescope without a positive restart cost.
Moreover $m^{-h_j}\ge m^{-h_\dagger}$ on each stage. This gives,
together with \eqref{sg:super-full-time} in the second branch,
\begin{equation}\label{sc:integrated-high}
 \int_0^T\bigl(\mathscr D_S+\mathscr B_\rho
                         +m^{-h_\dagger}\|\nabla^2\omega\|_2^2\bigr)\dt
 \le C_Rm^{\max\{1,3-\beta\}}\qquad(T<T_*).
\end{equation}
This estimate uses the integrable sources in the sharp inequality;
integrating the constant source in its simplified version would
not establish a bound uniform in time.

\subsection{The \texorpdfstring{$H^2$}{H2} continuation interface}
We give the continuation step for a fixed $m\ge m_0$.
Constants $C_m$ in this subsection may depend on that fixed value;
they remain independent of $t<T_*$. First, the chain rule yields
\[
 \nabla\rho=\rho q,\qquad
 \nabla^2\rho=\rho(\nabla q+q\otimes q).
\]
Using the mean $\avg\rho=m$, Poincare and $H^1\hookrightarrow L^4$,
\begin{equation}\label{sc:density-H2}
 \|\rho-m\|_{H^2}
 \le C m\bigl(\|q\|_{H^1}+\|q\|_{H^1}^2\bigr)
 \le C_Rm^{(3-\gamma)/2}.
\end{equation}
Consequently $\rho$, $A^{\pm1}$, $\rho^{-1}$ and $\rho^\gamma$
are bounded in $H^2$ for fixed $m$. Only compositions on a compact
positive-density interval are involved.

The flux $S$ need not have zero mean. Its exact constraint is instead
\begin{equation}\label{sc:weighted-mean}
 \int A^{-1}S=\int d=0.
\end{equation}
Writing $S=\avg S+(S-\avg S)$ in this identity and using
$A\asymp m^\beta$ proves
$|\avg S|\le C\|S-\avg S\|_2$, whence
$\|S\|_{H^1}\le C\|\nabla S\|_2$.
The high energy thus bounds $S$ in $H^1$. The multiplication
$H^2\cdot H^1\subset H^1$ gives a fixed-$m$ bound on $d=A^{-1}S$.
More precisely, the identity
$\nabla d=A^{-1}\nabla S-\sigma qd$, with a small $\|q\|_4$,
gives the uniform bound $\|d\|_{H^1}\le C_R$ already used above.
Since $\mathcal V_{h_\dagger}\le C_Rm$ (in the first branch it is
no larger than the palinstrophy part of $Y_{h_0}$),
\[
 \sup\|\nabla\omega\|_2^2\le C_Rm^{h_\dagger},
 \qquad
 \|u\|_{H^2}\le C\bigl(\|u\|_2+\|d\|_{H^1}
                                  +\|\omega\|_{H^1}\bigr)\le C_m.
\]

The magnetic estimate gives $\sup\|H\|_{H^{3/2}}\le C_R$.
Put $\mathcal F=u\otimes H-H\otimes u$ in the conservative
induction equation. The algebra property of $H^{3/2}(\TT)$ and
the preceding velocity bound imply
$\sup\|\mathcal F\|_{H^{3/2}}\le C_m$.
For the zero-mean forcing, the heat estimate
\[
 \|e^{\nu\tau\Delta}\Div\mathcal F\|_{H^2}
 \le C_\nu(1+\tau^{-3/4})e^{-c\nu\tau}
                                      \|\mathcal F\|_{H^{3/2}}
\]
has a time kernel integrable both at zero and at infinity.
Duhamel's formula for $H-\overline H$, retaining the conserved
mean separately, proves $\sup\|H\|_{H^2}\le C_m$.

For any finite $T<T_*$, \eqref{sc:integrated-high} and
\eqref{sc:weighted-mean} give $S\in L^2(0,T;H^2)$.
Since $H^2$ is an algebra, $d=A^{-1}S\in L^2H^2$; also
$\omega\in L^2H^2$. The Hodge estimate yields $u\in L^2H^3$.
Testing the induction equation at order two gives
\[
 \frac12\frac{\mathrm d}{\mathrm dt}\|H\|_{H^2}^2
       +\nu\|\nabla H\|_{H^2}^2
 \le \frac\nu2\|\nabla H\|_{H^2}^2
                          +C\|\mathcal F\|_{H^2}^2,
\]
so $H\in L^2(0,T;H^3)$. Products in the original momentum and
induction equations now give $u_t,H_t\in L^2(0,T;H^1)$.
The mass equation and the local $CH^2$ regularity give
$\rho_t=-\Div(\rho u)\in C([0,T];H^1)$.
Thus the solution remains in exactly \eqref{eq:strong-class}.

If $T_*<\infty$, the positive lower density bound and the uniform
fixed-$m$ $H^2$ bounds provide a common positive local lifespan
when Proposition~\ref{fix:local} is restarted at any $t_0<T_*$.
Taking $t_0$ sufficiently close to $T_*$ extends the solution past
$T_*$, a contradiction. Hence $T_*=\infty$.
The bounds \eqref{eq:strong-uniform} follow from
\eqref{sc:density-H2}, the closed velocity estimate, and
Lemma~\ref{sl:magnetic}. Local uniqueness, or
Theorem~\ref{thm:weak-strong} on every finite interval, gives
the asserted uniqueness and identification with the weak solutions.

\subsection{Justification at the stated regularity}
For completeness, the differentiated tests above do not silently
require a third spatial derivative of the density. On a local
interval in \eqref{eq:strong-class}, one has
\[
 S\in L^2H^2,\quad f=\nabla S\in L^2H^1,\quad
 S_t\in L^2L^2,\quad f_t\in L^2H^{-1},\quad
 q_t,\omega_t\in L^2L^2.
\]
The assertion for $S_t$ follows directly from
\eqref{sg:acoustic-system}: the shear divergence is
$-\mu bq\cdot\nabla^\perp\omega$, which uses only finite-exponent
products, while the magnetic divergence uses $H\in L^2H^3$.
All other terms have the displayed $L^2L^2$ regularity locally.
The assertion for $q_t$ follows from the differentiated mass equation.
For $\omega_t$, the only mixed terms requiring more than the
$L^\infty H^2$ bounds are $q\nabla\omega$ and $q\nabla S$.
They belong to $L^2L^2$ by $q\in L^\infty L^4$,
$\nabla\omega,\nabla S\in L^2L^4$; the remaining terms follow
directly from \eqref{sg:vorticity-equation}.

The field $\nabla w$ satisfies a transport equation with
$u\in L^1W^{1,\infty}$ and source in $L^1L^2$: its most relevant
products are $\nabla^2S$ and $\nabla^2u\,w$, with
$\nabla^2u\in L^2L^4$ and $w\in L^\infty L^4$.
For its variable weight, apply the transport test to
$k_\rho^{1/2}\partial_\ell w$:
$D_t k_\rho^{1/2}=-(\kappa/2)d k_\rho^{1/2}$, and
$d\in L^2L^\infty$, so the additional weighted source lies in
$L^1L^2$. Spatial Friedrichs regularization and temporal Steklov
averaging therefore justify the weighted transport-energy identity.
For the $f$ test, $A^{-1}f\in L^2H^1$ is an admissible test against
$f_t\in L^2H^{-1}$. Moreover,
$(A^{-1})_t\in L^\infty L^4$ and
$|f|^2\in L^1L^{4/3}$, which justify the time-dependent weight
in that energy identity. The palinstrophy test uses
$\omega_t,\Delta\omega\in L^2L^2$.
For each fixed $m$, the multiplier $\mathsf T$ is bounded on every
$H^s$, because $\alpha(\xi)\le c_s^2/D$. Thus the derivative of
the cross term pairs $f_t\in H^{-1}$ with $\mathsf Tq\in H^1$
and $q_t\in L^2$ with $\mathsf Tf\in L^2$, both integrably.
The commutator identity used in its transport term is first applied
to smooth functions and extended in precisely these spaces.
Positive-density $H^2$ initial data can equivalently be smoothed
in the local construction, with all displayed energy constants
uniform under that smoothing. Passing to the local strong limit
preserves the inequalities and their initial traces. This completes
the proof of Theorem~\ref{thm:strong} at the stated regularity.

\section{Scope and remaining questions}\label{sec:conclusion}

The three results have different hypotheses and should not be conflated.
The weak-existence theorem allows arbitrary sizes of the positive bounded
initial density and the $H^1$ velocity and magnetic norms, for every
$\gamma>1$ and $\beta>1$. It does not give a density maximum or a
positive lower bound for the weak solution at later times.
The comparison theorem requires a positive strong reference of class
\eqref{eq:strong-class}; it does not construct such a reference for all
weak-existence data.

The global strong theorem instead uses \eqref{eq:scaled-data}, a large
background density, and $\gamma>\Gamma_*(\beta)$. Under these hypotheses
the weak solution is uniquely selected by the strong solution. For
$\gamma>3$, the scaled density condition becomes a smallness condition
on the absolute density perturbation, even though no smallness is
required for the fixed velocity and magnetic norms. The range is
unbounded above only with that qualification. The viscosity exponent
also has no upper bound. For $\beta>2$, the sufficient pressure
condition is $\gamma>\beta+1$, and the slower of the viscous and
pressure-relaxation rates supplies the density damping.

The large-background argument in this paper does not establish an
arbitrary-positive-density strong theorem or cover
$\gamma\le\Gamma_*(\beta)$. This limitation of the present method
is not a claim that every such regime is open; the earlier strong
theories cited in the introduction have their own hypotheses.
For $1<\beta\le2$, the finite-stage improvement of the acoustic energy
and the separate initial-layer flux estimate lower the sufficient
pressure threshold to $\max\{5-2\beta,\beta+1\}$. In particular,
$\beta=33/25$ and $\gamma=12/5$ are included, without changing the
scaled initial-data class.
Our estimates do not cover the critical pair
$(\beta,\gamma)=(4/3,7/3)$.
At that pair the decrement $\delta=\gamma-\beta-1$ vanishes, so the
finite-stage iteration no longer lowers the acoustic exponent.
Moreover, its limiting late-time feedback has background-density
power $5-2\beta-\gamma=0$, rather than a strictly negative power.
Thus merely increasing the background density does not close these
estimates at the endpoint. No claim of a necessary threshold or of
optimality is made.

\ifdefined\UseBibTeX
  \bibliographystyle{amsplain}
  \bibliography{references}
\else
\providecommand{\bysame}{\leavevmode\hbox to3em{\hrulefill}\thinspace}
\providecommand{\MR}{\relax\ifhmode\unskip\space\fi MR }
\providecommand{\MRhref}[2]{%
  \href{http://www.ams.org/mathscinet-getitem?mr=#1}{#2}
}
\providecommand{\href}[2]{#2}

\fi
\end{document}